\numberwithin{equation}{section}
\theoremstyle{plain}
	\newtheorem{theorem}{Theorem}[section]
	\newtheorem{lemma}[theorem]{Lemma}
	\newtheorem{proposition}[theorem]{Proposition}
	\newtheorem{corollary}[theorem]{Corollary}
\theoremstyle{definition}
\newtheorem{definition}[theorem]{Definition}
\newtheorem{remark}[theorem]{Remark}
\newcommand{\N}{\mathbb{N}}
\newcommand{\R}{\mathbb{R}}
\newcommand{\eps}{\varepsilon}
\newcommand{\closure}[2][3]{%
  {}\mkern#1mu\overline{\mkern-#1mu#2}}
\renewcommand{\phi}{\varphi}
\renewcommand{\rho}{\varrho}
\renewcommand{\theta}{\vartheta}
\DeclareMathOperator{\supp}{supp}
\DeclareMathOperator{\loc}{{loc}}	    
\DeclarePairedDelimiter{\set}{\{}{\}}
\DeclarePairedDelimiter{\pairing}{\langle}{\rangle}
\def\Xint#1{\mathchoice
{\XXint\displaystyle\textstyle{#1}}%
{\XXint\textstyle\scriptstyle{#1}}%
{\XXint\scriptstyle\scriptscriptstyle{#1}}%
{\XXint\scriptscriptstyle\scriptscriptstyle{#1}}%
\!\int}
\def\XXint#1#2#3{{\setbox0=\hbox{$#1{#2#3}{\int}$ }
\vcenter{\hbox{$#2#3$ }}\kern-.6\wd0}}
\def\aint{\Xint-}
\DeclareMathOperator*{\essinf}{ess-inf}
\DeclareMathOperator*{\esssup}{ess-sup}
\DeclareMathOperator{\leb}{\mathscr{L}}
\newcommand{\dimension}{n}
\newcommand{\Rdim}{{\R^{\dimension}}}
\newcommand{\comp}[1]{{#1}^c} 
\newcommand{\Rkernel}{\Gamma_{\kernel}}
\newcommand{\Rkerneldual}{\Rkernel^{*}}
\newcommand{\BVkerdual}{BV^{-\kernel}}
\newcommand{\Gekeland}{{\mathcal{G}}}
\newcommand{\Aekeland}{\mathcal{A}}
\newcommand{\Xekeland}{{\mathbf{X}}}
\newcommand{\Yekeland}{{\mathbf Y}}
\newcommand{\bfx}{{\mathbf{x}}}
\newcommand{\bfy}{{\mathbf{y}}}
\newcommand{\measurablesets}{\mathcal{M}_{\dimension}}
\newcommand{\lebdim}{\leb^{\dimension}}
\newcommand{\lebone}{\leb^1}
\newcommand{\de}{\mathrm{d}}
\newcommand{\BVkerspace}{BV^{\kernel}}
\newcommand{\BVkerlocspace}{BV_{\loc}^{\kernel}}
\newcommand{\kernel}{K}
\newcommand{\kerTV}[1]{[#1]_{\BVkerspace(\Rdim)}}
\newcommand{\kerNablap}[1]{D _{\kernel,p}#1}
\newcommand{\Pker}{\mathcal{P}_{\kernel}}
\newcommand{\interaction}{L_{\dimension,\kernel}}
\newcommand{\Lspace}[1]{L^{#1}}
\newcommand{\Llocspace}[1]{L_{\loc}^{#1}}
\newcommand{\EnergyLOneKerTV}[1]{\mathcal{E}^{\kernel}#1}
\newcommand{\EnergyGeometricKer}[1]{\mathcal{E}^{\kernel}_{\mathbf{G}}#1}
\newcommand{\GeometricProblem}[2]{{GP}^{\kernel}_{#1,#2}}
\newcommand{\GeometricProblemSolutions}[1]{\boldsymbol{GSol}^{\kernel}\!\left(#1\right)}
\newcommand{\FunctionalProblem}[2]{{P}^{\kernel}_{#1,#2}}
\newcommand{\FunctionalProblemSolutions}[1]{\boldsymbol{Sol}^{\kernel}\!\left(#1\right)}
\newcommand{\FunctionalProblemJumps}[1]{\boldsymbol{J}^{\kernel}\!\left(#1\right)}
\newcommand{\distSolDatPLUS}{\mu_{\kernel}^{+}}
\newcommand{\distSolDatMINUS}{\mu_{\kernel}^{-}}
\newcommand{\distSolDatPLUSMINUS}{\mu_{\kernel}^{\pm}}
\newcommand{\Cheegker}{h_{\kernel}}
\newcommand{\CheegerkerClass}{\mathcal{C}_{\kernel}}
\DeclarePairedDelimiter{\tonde}{(}{)}
\newcommand{\peso}{w}
\newcommand{\misurapeso}{\nu}
\newcommand{\belowpeso}{\underline{\misurapeso}}
\newcommand{\abovepeso}{\overline{\misurapeso}}
\newcommand{\MisureAmmissibili}{\mathcal{W}(\Rdim)}
\newcommand{\Cheegkerpeso}{h_{\kernel,\misurapeso}}
\newcommand{\CheegerkerpesoClass}{\mathcal{C}_{\kernel,\misurapeso}}
\begin{document}

\title[Non-local $BV$ functions and a denoising model with $L^1$ fidelity]{Non-local $BV$ functions and a denoising model\\ with $L^1$ fidelity}

\author[K.~Bessas]{Konstantinos Bessas}
\address[K.~Bessas]{Dipartimento di Matematica, Università di Pavia, Via Adolfo Ferrata 5, 27100 Pavia (PV), Italy}
\email{konstantinos.bessas@unipv.it}

\author[G.~Stefani]{Giorgio Stefani}
\address[G.~Stefani]{Scuola Internazionale Superiore di Studi Avanzati (SISSA), via Bonomea 265, 34136 Trieste (TS), Italy}
\email{gstefani@sissa.it {\normalfont or} giorgio.stefani.math@gmail.com}
\date{\today}

\keywords{Image denoising, total variation denoising models,
non-local variation, non-local perimeter, non-local Cheeger problem, non-local Laplacian operator}

\subjclass[2020]{Primary 49Q20. Secondary 94A08, 26B30}

\thanks{\textit{Acknowledgements}. 
The authors thank Matteo Novaga and Valerio Pagliari for several helpful comments, and Guy Fabrice Foghem Gounoue for indicating his Ph.D.\ thesis to us.
The authors	are members of the Istituto Nazionale di Alta Matematica (INdAM), Gruppo Nazionale per l'Analisi Matematica, la Probabilità e le loro Applicazioni (GNAMPA) and are partially supported by the INdAM--GNAMPA 2023 Project \textit{Problemi variazionali per funzionali e operatori non-locali}, codice CUP\_E53\-C22\-001\-930\-001.
The first-named author is partially supported by the INdAM--GNAMPA 2022 Project \textit{Fenomeni non locali in problemi locali}, codice CUP\_E55\-F22\-00\-02\-70\-001.
The second-named author is partially supported by the INdAM--GNAMPA 2022 Project \textit{Analisi geometrica in strutture subriemanniane}, codice CUP\_E55\-F22\-00\-02\-70\-001, and has received funding from the European Research Council (ERC) under the European Union’s Horizon 2020 research and innovation program (grant agreement No.~945655).
Part of this work was undertaken while the second-named author was visiting the University of Pisa.
He would like to thank the University of Pisa for its support and warm hospitality.}

\begin{abstract}
We study a general total variation denoising model with weighted $L^1$ fidelity, where the regularizing term is a non-local variation induced by a suitable (non-integrable) kernel $\kernel$, and the approximation term is given by the $\Lspace{1}$ norm with respect to a non-singular measure with positively lower-bounded $\Lspace{\infty}$ density. 
We provide a detailed analysis of the space of non-local $BV$ functions with finite total $\kernel$-variation, with special emphasis on compactness, Lusin-type estimates, Sobolev embeddings and isoperimetric and monotonicity properties of the $\kernel$-variation and the associated $\kernel$-perimeter.
Finally, we deal with the theory of Cheeger sets in this non-local setting and we apply it to the study of the fidelity in our model.
\end{abstract}

\maketitle


\newcommand{\red}[1]{\textcolor{red}{\underline{\textit{#1}}}}

\section{Introduction}


\subsection{Total variation denoising models}

Total variation minimizing models have been employed in a wide variety of image restoration
problems. 
The most common model is \emph{denoising}, that is, preserving the most significant features of an image while removing the background noise.

Total variation denoising models were introduced by Rudin, Osher and Fatemi (ROF) in their pioneering work~\cite{ROF92}. 
In a domain $\Omega\subset\Rdim$ (for instance, the computer screen), we are given a certain greyscale image, identified with its greyscale function $f\colon\Omega\to\R$, which is supposed to be the noise-corrupted version of a clearer initial image.
The idea of the ROF model is to \emph{denoise} the given image $f$ by looking for a new greyscale image $u\colon\Omega\to\R$ solving the minimization problem
\begin{equation}
\label{eqi:rof}
\inf_{u\in BV(\Omega)}
\int_\Omega d|Du| +\Lambda \int_\Omega|u-f|^2\,\de x.
\end{equation}  
Here  $BV(\Omega)$ is the space of functions with \emph{bounded variation} on $\Omega$ and the Lagrangian multiplier $\Lambda>0$ is the \emph{fidelity} parameter.

Motivated by the lack of \emph{contrast invariance} (i.e., homogeneity) of~\eqref{eqi:rof}, Chan and Esedo\=glu (CE) in~\cite{CE05} proposed the alternative model
\begin{equation}
\label{eqi:ce}
\inf_{u\in BV(\Rdim)}
\int_\Rdim d|Du| +\Lambda \int_\Rdim |u-f|\,\de x.
\end{equation}  
The CE model~\eqref{eqi:ce} has the advantage to be contrast invariant, but it is  merely convex, thus losing uniqueness of minimizers. 
Nonetheless, the CE model~\eqref{eqi:ce} has a strongly geometric flavor, yielding an interesting link with the \emph{Cheeger constant}~\cites{L15,P11,FPSS22}.

The models~\eqref{eqi:rof} and~\eqref{eqi:ce} are of local nature, meaning that the regularizing term is of \emph{local} type.
Local denoising models are quite efficient, but they scarcely preserve the details, since fine structures are mostly treated as noise and thus smoothed out. 

For these reasons, several works~\cites{BCM10,KOJ05,GO07,GO08} turned the attention towards suitable \emph{non-local} replacements of the $BV$ energy.
A main advantage of non-local energies is that they consider both geometric parts and textures within the same framework, sharpening the sensibility of the regularization term.

In~\cite{MST22}, Maz\'on, Solera and Toledo  studied the ROF and CE models~\eqref{eqi:rof} and~\eqref{eqi:ce} in \emph{metric random walk spaces}.
In $\R^n$, their non-local regularization term becomes
\begin{equation}
\label{eqi:mazon}
u\mapsto
\frac12
\int_\Rdim
\int_\Rdim
|u(x)-u(y)|\,\kernel(x-y)\,\de x\,\de y,
\end{equation}
where 
$\kernel$
is a non-negative and radially symmetric function satisfying
$\int_\Rdim\kernel\,\de x=1$.
A detailed analysis of the energy~\eqref{eqi:mazon} has been carried out in~\cites{MRT19-a,MRT19-b}.
Since $\kernel\in\Lspace{1}(\Rdim)$, the energy~\eqref{eqi:mazon} is finite for any $u\in\Lspace{1}(\Rdim)$, hence $\Lspace{1}$ data are denoised to $\Lspace{1}$ images.

To improve the regularity of the denoised image, one must drop the assumption $\kernel\in\Lspace{1}(\Rdim)$.
In this direction, Novaga and Onoue~\cite{NO21} and  the first-named author~\cite{B22} studied the ROF model~\eqref{eqi:rof} and the CE model~\eqref{eqi:ce}, respectively, for $
\kernel=|\cdot|^{-\dimension-s}$ with $s\in(0,1)$.
We also refer to the recent work~\cites{ADJS22}, where the authors also study the so-called \emph{distributional fractional $s$-variation}, $s\in(0,1)$, introduced by Comi and the second-named author in~\cite{CS19} (also see~\cites{BCCS22,CS19,CS22-a,CS22-f,CS22-l,CSS22,CS23-dm,CS23-on} for an extensive treatment of this non-local energy).

As in the local case, also non-local ROF and CE models can be naturally linked with the theory of Cheeger sets, see~\cite{MST22}*{Sec.~3.2} and~\cite{B22}*{Sec.~4}.

\subsection{Main results}

The main aim of the present paper is to study the CE model \eqref{eqi:ce} with non-local regularization term of the form~\eqref{eqi:mazon} for non-integrable kernels satisfying minimal assumptions.
Our approach not only covers~\cite{B22}, but also allows for other non-local energies already appearing in the literature~\cites{CGO08,CD15,BN20,BN21,DGM04}, see \cref{subsec:examples_kernels} below.

The core of our approach is a detailed study of the space
\begin{equation*}
\BVkerspace(\Rdim)
=
\set*{u\in\Lspace{1}(\Rdim) : 
\kerTV{u}=\frac12
\int_\Rdim
\int_\Rdim
|u(x)-u(y)|\,\kernel(x-y)\,\de x\,\de y<+\infty
}
\end{equation*}
naturally induced by the seminorm~\eqref{eqi:mazon}.
The space $BV^\kernel(\R^n)$ has attracted considerable attention in recent years,
mostly in view of \emph{non-local minimal surfaces} and \emph{calibrations}~\cites{C20,CSV19,P20}, \emph{isoperimetric inequalities}~\cites{CN18,L14,K21},
\emph{non-local mean curvature flows}~\cites{CN22,CMP15}, \emph{asymptotic properties}~\cites{BP19} and \emph{embedding theorems}~\cites{F20,F21,JW20,CdP20}.

Besides improving the results available in the literature concerning basic properties of $\BVkerspace$ functions, we prove
\emph{compactness} of the embedding $\BVkerspace(\Rdim)\subset\Llocspace{1}(\Rdim)$,
a \emph{Lusin-type estimate} for $\BVkerspace$ functions,
a \emph{rearrangement inequality} and a \emph{Gagliardo--Nirenberg--Sobolev-like embedding},
a \emph{monotonicity property} of an isoperimetric-type ratio for dilations of a fixed set,
\emph{Sobolev-type inequalities}
and 
the
\emph{monotonicity} of the $\kernel$-perimeter on finite-measure sets under intersection with convex sets.

Concerning Cheeger sets in the $\BVkerspace$ framework, we prove 
\emph{existence} and \emph{basic properties} of Cheeger sets,
\emph{calibrability of balls},
a \emph{Faber--Krahn-type inequality},
the \emph{relation} between Cheeger sets and $\BVkerspace$ eigenfunctions;
an \emph{$\Lspace{\infty}$ bound} for $\BVkerspace$ eigenfunctions 
and
a non-local formulation of the \emph{Max Flow Min Cut Theorem}. 

Finally, we study the following non-local analogue of the CE model~\eqref{eqi:ce} 
\begin{equation}
\label{eqi:bs}
\inf_{u\in\BVkerspace(\Rdim)}
\kerTV{u}
+
\Lambda\int_\Rdim|u-f|\,\de\misurapeso.
\end{equation}
In our model~\eqref{eqi:bs}, the $\Lspace{1}$ approximation term is of \emph{weighted} type, namely, we integrate with respect to an \emph{admissible weight measure} $\misurapeso\in\MisureAmmissibili$, where
\begin{equation*}
\MisureAmmissibili
=
\set*{
\misurapeso=\peso\lebdim :
w\in L^\infty(\Rdim),\  \essinf\limits_{\R^n} w>0
}.
\end{equation*}

Besides the properties of the solutions of~\eqref{eqi:bs}, and their link with the solutions of the \emph{geometric} analog of~\eqref{eqi:bs}, we prove 
\emph{existence} of solutions for $\Lspace{1}$ data,
existence of (unique) \emph{minimal} and \emph{maximal} geometric solutions, 
a \emph{comparison principle} for (extremal) geometric solutions,
\emph{uniqueness} of geometric solutions for a.e.\ $\Lambda>0$ for bounded convex data,
\emph{high-fidelity results} for sufficiently smooth data,
a \emph{low-fidelity result} for $\Lspace{1}$ data with bounded support
and
the \emph{relation} between the fidelity and the Cheeger constant.
 
\subsection{Organization of the paper}
The rest of the paper is organized as follows.
\cref{sec:non-local_BV_func} is dedicated to the study of $\BVkerspace$ functions and sets.
\cref{sec:cheeger} deals with the theory of Cheeger sets in the $\BVkerspace$ framework.
\cref{sec:functional_pb,sec:geometric_pb} deal with the CE model~\eqref{eqi:bs} and its geometric analog, respectively.
Finally, \cref{sec:fidelity} studies the behavior of solutions as the fidelity parameter varies, proving high and low fidelity results.

\section{Non-local \texorpdfstring{$BV$}{BV} functions}
\label{sec:non-local_BV_func}

In this section, we study the non-local variation and non-local perimeter functionals.

\subsection{Assumptions on the kernel}

\label{subsec:assumptions}

We call a \emph{kernel} $\kernel\colon\Rdim\to[0,+\infty]$ any measurable function such that $\kernel\not\equiv0$ (up to negligible sets).
Throughout the paper, a kernel $\kernel$ may satisfy some of the assumptions listed below.

We may require $\kernel$ to be \emph{symmetric},
\begin{equation}
\label{H:Symmetric} 
\tag{Sym}
\text{$\kernel(x)=\kernel(-x)$ for all $x\in\Rdim$},
\end{equation}
or \emph{radial}, if there exists a measurable \emph{profile} $\kappa\colon[0,+\infty)\to[0,+\infty]$ such that
\begin{equation}
\tag{Rad}
\label{H:Radial}
\text{
$\kernel(x)=\kappa(|x|)$ for all\ $x\in\Rdim$.
}
\end{equation}
Note that \eqref{H:Symmetric} is not truly relevant, since one may replace $\kernel$ with its symmetrized $\tilde\kernel(x)=\frac{\kernel(x)+\kernel(-x)}{2}$, $x\in\Rdim$.
We may reinforce~\eqref{H:Radial} as
\begin{equation}
\label{H:Radial_strict}
\tag{Rad$^+$}
\text{\eqref{H:Radial} holds with $\kappa$ strictly decreasing in a neighborhood of the origin.
}
\end{equation}

We may require $\kernel$ to be \emph{integrable far from the origin},
\begin{equation}
\label{H:Far_from_zero_int}
\tag{Far}
\text{
$\kernel\in\Lspace1(\Rdim\setminus B_r)$ for all $r>0$,
}
\end{equation}
or \emph{not too singular} (also known as \emph{$1$-Lévy property}),  
\begin{equation}
\label{H:Not_too_singular}
\tag{Nts}
\int_{\Rdim}(1\wedge|x|)\,\kernel(x)\,\de x<+\infty.
\end{equation}
Differently from~\cites{MRT19-a,MRT19-b}, we usually deal with \emph{non-integrable} kernels,
\begin{equation}
\label{H:Not_integrable}
\tag{Nint}
\kernel\notin\Lspace{1}(\Rdim).
\end{equation}

We may require $\kernel$ to be \emph{strictly positive}, 
\begin{equation}
\label{H:Positive}
\tag{Pos}
\text{
$K(x)>0$ for all\ $x\in\Rdim$.
}
\end{equation}
We may locally reinforce \eqref{H:Positive} via the \emph{positivity of the infimum of $\kernel$ around the origin},
\begin{equation}
\label{H:Inf}
\tag{Inf}
\text{
$\exists\,r,\mu>0$ : $\kernel(x)\ge\mu$ for all\ $x\in B_r$.
}
\end{equation}

We may require $\kernel$ to be \emph{decreasing of exponent $q\in[0,+\infty)$}, 
\begin{equation}
\label{H:Decreasing_q}
\tag{Dec$_q$}
\text{
$|x|\le |y|$ $\implies$ $ \kernel(y)|y|^q\le\kernel(x)|x|^q$
}
\end{equation}
or \emph{(locally) doubling of radius $D>0$}
\begin{equation}
\label{H:Doubling}
\tag{Dou$_D$}
\text{
$\exists\,C>0$ : $|y|=2|x|$, $|x|\le 2D$ $\implies$ $\kernel(x)\le C\kernel(y)$.
}
\end{equation}
Sometimes, we may need to reinforce \eqref{H:Decreasing_q} in the case $q=\dimension$ as 
\begin{equation}
\label{H:Decreasing_dim_strinct}
\tag{Dec$_\dimension^+$}
\text{
$|x|< |y|$ $\implies$ $ \kernel(y)|y|^\dimension<\kernel(x)|x|^\dimension$.
}
\end{equation}
If $\kernel$ satisfies \eqref{H:Decreasing_q}, we call $q\in[0,+\infty)$ the \emph{decreasing exponent} of~$\kernel$. 
If $\kernel$ satisfies~\eqref{H:Doubling} with $D=+\infty$, then we call the (smallest) constant $C>0$  in~\eqref{H:Doubling} the \emph{doubling constant} of $\kernel$ and $p=\log_2 C$ the \emph{doubling exponent} of $\kernel$.

\begin{remark}[On a generalization of \eqref{H:Decreasing_q}]
Assumption \eqref{H:Decreasing_q} may be generalized in many ways. 
For example, one can replace \eqref{H:Decreasing_q} with 
\begin{equation}
\label{H:Decreasing_general}
\tag{Dec-$\omega$}
|x|\le|y|
\implies
\kernel(y)\,\omega(|y|)
\le 
\kernel(x)\,\omega(|x|)
\end{equation} 
for some increasing function $\omega\colon[0,+\infty)\to[0,+\infty)$.
The more general \eqref{H:Decreasing_general} may be useful when $\kernel$ satisfies \eqref{H:Decreasing_q} for all $q\in[0,q_0)$ but not for the limiting $q=q_0$.
This is in fact the case when $\kernel$ is a radial function of mixed power-logarithmic order, as in the example~\eqref{eq:example_fraclog} in \cref{subsec:examples_kernels} below.  
We will not deal with the consequences of \eqref{H:Decreasing_general}.
\end{remark}

\begin{remark}[Assumptions outside negligible sets]
The assumptions listed above are given pointwise everywhere just to keep the exposition simple.
In fact, the same assumptions may hold only outside a $\lebdim$-negligible set in $\Rdim$, without affecting the validity of the results of this paper (up to the routine adaptations in the statements, if needed).
\end{remark}

\subsection{Relations among the assumptions}

\label{subsec:relations_ass}

We observe that
\eqref{H:Radial}$\implies$\eqref{H:Symmetric}
and 
\eqref{H:Not_too_singular}$\implies$\eqref{H:Far_from_zero_int}.
Also, if~\eqref{H:Decreasing_q} holds for some $q\ge0$, then it also holds for all $q'\in[0,q]$.
In particular,
\eqref{H:Radial} and \eqref{H:Decreasing_q} with $q>0$$\implies$\eqref{H:Radial_strict}.
We also have
\eqref{H:Decreasing_q} 
and
\eqref{H:Doubling} with $D=+\infty$$\implies$\eqref{H:Positive}.
Actually, one can state the following quantitative version of the latter implication in terms of the parameter $D\in(0,+\infty]$ in \eqref{H:Doubling}.

\begin{lemma}[Positivity]
\label{res:pos}
If $\kernel$ satisfies~\eqref{H:Decreasing_q} and~\eqref{H:Doubling} with $D>0$, then $\kernel(x)>0$ for all $x\in B_{4D}$.
In particular, if $D=+\infty$, then $\kernel$ satisfies~\eqref{H:Positive}.
\end{lemma}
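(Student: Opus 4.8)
The plan is to derive positivity at an arbitrary point of $B_{4D}\setminus\{0\}$ by a short chain of estimates, using~\eqref{H:Decreasing_q} to propagate positivity inward towards the origin and~\eqref{H:Doubling} to climb back outward.

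Since $\kernel\not\equiv0$ up to $\lebdim$-negligible sets, the set $\{\kernel>0\}$ has positive measure, so I may fix once and for all a point $x_0\in\Rdim\setminus\{0\}$ with $\kernel(x_0)>0$; write $r_0:=|x_0|>0$. The first step is the observation that~\eqref{H:Decreasing_q} already forces $\kernel>0$ on the whole punctured ball $B_{r_0}\setminus\{0\}$: indeed, for $z$ with $0<|z|\le r_0=|x_0|$ one has $\kernel(z)\,|z|^{q}\ge\kernel(x_0)\,r_0^{\,q}>0$, hence $\kernel(z)>0$.

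Now fix an arbitrary $x\in B_{4D}\setminus\{0\}$ and let $k\ge0$ be the smallest integer such that $|x|/2^{k}\le r_0$ (it exists because $|x|/2^{k}\to0$). Put $z:=x/2^{k}$; then $0<|z|\le r_0$, so $\kernel(z)>0$ by the first step. For each $j\in\{1,\dots,k\}$ the point $x/2^{j}$ satisfies $|x/2^{j}|\le|x|/2<2D$ (this is where the hypothesis $|x|<4D$ enters), so~\eqref{H:Doubling} applies to the pair $\bigl(x/2^{j},\,x/2^{j-1}\bigr)$ and yields $\kernel(x/2^{j-1})\ge C^{-1}\kernel(x/2^{j})$. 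Chaining these $k$ inequalities gives $\kernel(x)\ge C^{-k}\kernel(z)>0$. Thus $\kernel>0$ on $B_{4D}\setminus\{0\}$; moreover $\kernel(0)>0$ as well, directly from~\eqref{H:Decreasing_q} when $q=0$ and immaterially otherwise, so $\kernel>0$ on all of $B_{4D}$. Finally, if $D=+\infty$ the constraint $|x/2^{j}|<2D$ is vacuous, so the same argument runs for every $x\in\Rdim\setminus\{0\}$ and gives~\eqref{H:Positive}.

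There is no genuine obstacle: the proof is an elementary bootstrap, and the only things that need a little care are choosing $k$ so that the ``source'' point $z$ lands in the region already known to be positive, verifying that every intermediate dilate $x/2^{j}$ stays within the radius $2D$ demanded by~\eqref{H:Doubling} (which is precisely what pins down the threshold $4D$ in the statement), and the harmless treatment of the value at the origin.
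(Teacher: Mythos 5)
Your proof is correct, and since the paper explicitly omits this proof as an ``elementary argument'', your bootstrap (positivity near the origin from \eqref{H:Decreasing_q}, then climbing outward by repeated doubling, with the constraint $|x/2^{j}|\le |x|/2<2D$ explaining the threshold $4D$) is precisely the intended reasoning. The only point deserving a caveat is the value at the origin for $q>0$, where \eqref{H:Decreasing_q} read literally at $x=0$ is degenerate; this is harmless in view of the paper's remark that all assumptions need only hold outside an $\lebdim$-negligible set, which is essentially what you say with ``immaterially otherwise''.
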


Also the implication $\eqref{H:Decreasing_q}\implies\eqref{H:Inf}$ can be made quantitative depending on the value of $q\in[0,+\infty)$, as follows. 

\begin{lemma}[Comparison via \eqref{H:Decreasing_q}]
\label{res:dec_q_comparison}
If $\kernel$ satisfies~\eqref{H:Decreasing_q}, then
\begin{equation}
\label{eq:Dec_q_comparison}
\text{
$\exists\,R,C>0$ : $C\,\frac {\chi_{B_R}(x)}{|x|^q}\le\kernel(x)\le C\,\frac{\chi_{B_R^c}(x)}{|x|^q}+
\chi_{B_R}(x)\,\kernel(x)$ for all $x\in\Rdim\setminus\set*{0}$.
}
\end{equation}
In particular, $\kernel$ satisfies~\eqref{H:Inf}.
In addition, if $\kernel$ also satisfies~\eqref{H:Not_too_singular}, then $q<\dimension+1$; if, instead, $\kernel\in\Lspace{1}_{\loc}(\Rdim)$, then $q<\dimension$. 
\end{lemma}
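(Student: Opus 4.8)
The plan is to unpack the hypothesis \eqref{H:Decreasing_q} into a two-sided pointwise bound by comparing $\kernel(x)$ with the value of $\kernel$ on a sphere of fixed radius, then read off all three conclusions.

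First I would fix any radius $R>0$ at which $\kernel$ is finite and positive — such a radius exists $\lebdim$-a.e.\ because $\kernel\not\equiv0$ and (after the routine ``outside negligible sets'' adaptation) we may choose $R$ so that $0<\kernel(x_0)<+\infty$ for some $|x_0|=R$; by \eqref{H:Decreasing_q} the product $\kernel(y)|y|^q$ depends only on $|y|$ through a monotone relation, so in fact $\kernel$ is a.e.\ comparable to a radial function and we may speak of $m(R):=\kernel(x)|x|^q$ for $|x|=R$. Then for $|x|\le R$ monotonicity of $\kernel(\cdot)|\cdot|^q$ gives $\kernel(x)|x|^q\ge m(R)$, i.e.\ $\kernel(x)\ge m(R)|x|^{-q}$, which is the lower bound $C\,\chi_{B_R}(x)/|x|^q\le\kernel(x)$ with $C=m(R)$ (shrinking $C$ if needed to absorb the a.e.\ comparability constant). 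For $|x|\ge R$ the same monotonicity gives $\kernel(x)|x|^q\le m(R)$, i.e.\ $\kernel(x)\le C\,|x|^{-q}$ on $B_R^c$; on $B_R$ we keep $\kernel(x)$ itself. This is exactly \eqref{eq:Dec_q_comparison}. The bound $\kernel(x)\ge C|x|^{-q}\ge C R^{-q}=:\mu$ on $B_R$ then yields \eqref{H:Inf} immediately with $r=R$.

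For the integrability dichotomies I would test the lower bound against the two hypotheses. If $\kernel$ also satisfies \eqref{H:Not_too_singular}, then
\[
+\infty
>
\int_{B_R}(1\wedge|x|)\,\kernel(x)\,\de x
\ge
C\int_{B_R}\frac{1\wedge|x|}{|x|^q}\,\de x
\ge
C\int_{B_{1\wedge R}}\frac{|x|}{|x|^q}\,\de x,
\]
and passing to polar coordinates the last integral is finite iff $\int_0^{1\wedge R}\rho^{\dimension-q}\,\de\rho<+\infty$, i.e.\ iff $\dimension-q>-1$, that is $q<\dimension+1$. Likewise, if $\kernel\in\Lspace1_{\loc}(\Rdim)$, then $\int_{B_R}\kernel\,\de x<+\infty$ forces $\int_0^R\rho^{\dimension-1-q}\,\de\rho<+\infty$, hence $q<\dimension$.

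The only genuinely delicate point is the first step: making rigorous the passage from the pointwise (or a.e.) monotonicity \eqref{H:Decreasing_q} to a clean radial comparison function, since \eqref{H:Decreasing_q} a priori only compares $\kernel$ at pairs $|x|\le|y|$ and does not by itself assert radiality. The resolution is that \eqref{H:Decreasing_q} forces $x\mapsto\kernel(x)|x|^q$ to be (a.e.) a nonincreasing function of $|x|$ alone — if $|x|=|y|$ then both $\kernel(x)|x|^q\le\kernel(y)|y|^q$ and the reverse hold — so $\kernel(x)=g(|x|)|x|^{-q}$ a.e.\ for a nonincreasing $g\colon(0,+\infty)\to[0,+\infty]$, and one picks $R$ to be any Lebesgue point of $g$ with $0<g(R)<+\infty$ (such $R$ exist since $g\not\equiv0$ and $g$ is finite a.e., the latter because otherwise $\kernel\equiv+\infty$ near $0$ contradicting measurability into $[0,+\infty]$ combined with, say, $\kernel\not\equiv0$ only being an issue if $g$ is identically $+\infty$, which monotonicity rules out after discarding a null set). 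Everything after this is the elementary polar-coordinate computation above.
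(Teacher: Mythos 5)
The paper gives no proof of this lemma (it is listed among the results that ``follow from elementary arguments that are omitted''), so there is nothing to compare against except the intended elementary argument — and your proposal is essentially that argument, carried out correctly. The key observation is exactly right: applying \eqref{H:Decreasing_q} to pairs with $|x|=|y|$ in both orders forces $x\mapsto\kernel(x)|x|^q$ to equal a nonincreasing function $g$ of $|x|$ alone (pointwise, in fact, not merely a.e., since the hypothesis is stated pointwise — so no ``comparability constant'' needs absorbing), and then choosing $R$ with $0<g(R)<+\infty$ and $C=g(R)$ gives both inequalities in \eqref{eq:Dec_q_comparison} by monotonicity; \eqref{H:Inf} and the two integrability dichotomies follow from the lower bound by the polar-coordinate computations you wrote, which are correct.

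The one wobbly spot is your justification that a radius $R$ with $0<g(R)<+\infty$ exists: measurability into $[0,+\infty]$ does not rule out $g\equiv+\infty$, and monotonicity does not either, so the parenthetical at the end does not actually close this case. Strictly speaking, a kernel such as $\kernel=+\infty$ on $B_1$ and $0$ outside satisfies \eqref{H:Decreasing_q} and $\kernel\not\equiv0$ but violates the upper bound in \eqref{eq:Dec_q_comparison}; the lemma tacitly excludes such degenerate kernels (they make $\BVkerspace$ trivial and never occur under the other hypotheses of the paper, e.g.\ \eqref{H:Far_from_zero_int} or \eqref{H:Not_too_singular} already force $g$ finite away from the origin). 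You should either state this as an implicit nondegeneracy assumption or simply observe that $g$, being nonincreasing and not identically $0$, takes a value in $(0,+\infty)$ at some $R$ whenever it is finite at a single point. Also, no appeal to Lebesgue points is needed: plain monotonicity of $g$ suffices to get both bounds with the same constant $C=g(R)$.
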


Finally, the following result can be considered as a dual formulation of the previous \cref{res:dec_q_comparison} and is a simple consequence of the decaying property induced by \eqref{H:Doubling}.

\begin{lemma}[Comparison via \eqref{H:Doubling}]
\label{res:dou_comparison}
If $\kernel$ satisfies
\eqref{H:Decreasing_q} and
\eqref{H:Doubling} 
with $D=+\infty$, 
then
\begin{equation*}
\text{
$\forall R>0\ \exists\,m_R,M_R>0$ : 
$m_R\,\frac{\chi_{B_R^c}(x)}{|x|^p}\le\kernel(x)\le M_R\,\frac{\chi_{B_R}(x)}{|x|^p}+\chi_{B_R^c}(x)\,\kernel(x)$ 
}
\end{equation*}
for all $x\in\Rdim\setminus\set*{0}$,
where $p=\log_2C\ge q$ and $C\ge 2^q$ is the doubling constant of $\kernel$.
In addition, if $\kernel$ also satisfies~\eqref{H:Far_from_zero_int}, then $p>\dimension$, that is, \eqref{H:Doubling} holds with doubling constant $C\ge\max\set*{2^\dimension,2^q}$.
\end{lemma}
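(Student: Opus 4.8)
The plan is to reduce the statement to a one-dimensional fact about the radial profile of $\kernel$ and then to iterate \eqref{H:Doubling}. First observe that \eqref{H:Decreasing_q} already forces $\kernel$ to be radial: if $|x|=|y|$, applying \eqref{H:Decreasing_q} once with $|x|\le|y|$ and once with $|y|\le|x|$ gives $\kernel(x)=\kernel(y)$. So, writing $\kernel(x)=\kappa(|x|)$, hypothesis \eqref{H:Decreasing_q} says that $t\mapsto t^q\kappa(t)$ is non-increasing on $(0,+\infty)$, while \eqref{H:Doubling} with $D=+\infty$ says that $\kappa(t)\le C\kappa(2t)$ for every $t>0$; moreover $\kappa>0$ on $(0,+\infty)$ by \cref{res:pos}. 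One checks that these two properties force $\kappa$ to be either everywhere finite on $(0,+\infty)$ or identically $+\infty$ there, and we may assume the former, the latter being degenerate. Combining the two hypotheses at a fixed $t>0$, \eqref{H:Decreasing_q} gives $2^q\kappa(2t)\le\kappa(t)$, so $\kappa(t)\le C\kappa(2t)\le 2^{-q}C\,\kappa(t)$; dividing by $\kappa(t)\in(0,+\infty)$ yields $C\ge 2^q$, that is, $p=\log_2C\ge q$.

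Next I would iterate \eqref{H:Doubling}: $\kappa(t)\le C^k\kappa(2^kt)$ for every integer $k\ge0$ and every $t>0$, and, after replacing $t$ by $2^{-k}t$, also $\kappa(2^{-k}t)\le C^k\kappa(t)$. Fix $R>0$ and put $t=|x|$. If $t\le R$, choose $k=\lceil\log_2(R/t)\rceil\ge0$, so that $R\le 2^kt<2R$ and $2^k<2R/t$; since $s\mapsto s^q\kappa(s)$ is non-increasing and $2^kt\ge R$, one has $\kappa(2^kt)\le\kappa(R)$, hence $\kappa(t)\le C^k\kappa(2^kt)=(2^k)^p\kappa(2^kt)\le(2R/t)^p\kappa(R)$, that is, $\kappa(t)\le M_R\,|x|^{-p}$ with $M_R=(2R)^p\kappa(R)>0$. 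If instead $t\ge R$, choose $k=\lceil\log_2(t/R)\rceil\ge0$, so that $R/2<2^{-k}t\le R$ and $2^k<2t/R$; since $2^{-k}t\le R$, one has $\kappa(2^{-k}t)\ge\kappa(R)$, hence $\kappa(R)\le\kappa(2^{-k}t)\le C^k\kappa(t)=(2^k)^p\kappa(t)\le(2t/R)^p\kappa(t)$, which rearranges to $\kappa(t)\ge m_R\,|x|^{-p}$ with $m_R=2^{-p}R^p\kappa(R)>0$. For $x\in B_R^c$ the first inequality is trivial, and for $x\in B_R$ the second is trivial, so the two-sided estimate of the statement follows.

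Finally, applying the lower bound just obtained with $R=1$ gives $\kernel(x)\ge m_1\,|x|^{-p}$ for $|x|\ge1$. Passing to polar coordinates, \eqref{H:Far_from_zero_int} (with $r=1$) yields $+\infty>\int_{\Rdim\setminus B_1}\kernel\,\de x\gtrsim\int_1^{+\infty}t^{\dimension-1-p}\,\de t$, which is finite only if $\dimension-1-p<-1$, i.e. $p>\dimension$; together with $C\ge 2^q$ this shows that \eqref{H:Doubling} holds with doubling constant $C=2^p\ge\max\{2^\dimension,2^q\}$, as claimed. The only genuinely delicate point should be the bookkeeping in the second step — choosing $k$ and checking that $2^{\pm k}t$ lands in the intended dyadic annulus around $R$ — together with dispatching the degenerate case $\kappa\equiv+\infty$; the rest is just a routine iteration of \eqref{H:Doubling}.
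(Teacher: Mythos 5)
Your argument is correct. The paper does not actually prove this lemma --- it is dispatched with ``Lemmas \ldots\ follow from elementary arguments that are omitted'' --- so there is no official proof to compare against; what you give is a complete version of the intended elementary argument: radiality and monotonicity of $t\mapsto t^q\kappa(t)$ from \eqref{H:Decreasing_q}, positivity from \cref{res:pos}, the bound $C\ge 2^q$ by playing the two hypotheses against each other at a single scale, and the two-sided power bound by iterating \eqref{H:Doubling} roughly $\log_2(R/|x|)$ times, with the dyadic bookkeeping done correctly. Two small remarks. First, the degenerate kernel $\kernel\equiv+\infty$ does satisfy both hypotheses and violates the upper bound for every finite $M_R$, so the lemma as literally stated tacitly excludes it; your dichotomy (either $\kappa$ is finite on all of $(0,+\infty)$ or identically $+\infty$ there) is the right way to isolate and discard this case, and under \eqref{H:Far_from_zero_int} --- which is assumed for the second half anyway --- it cannot occur. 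Second, in your closing sentence ``for $x\in B_R^c$ the first inequality is trivial'' the words ``first''/``second'' refer to the order in which you derived the bounds, which is the opposite of their order in the displayed estimate; this is worth rephrasing, but the mathematics is fine.
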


Lemmas~\ref{res:pos}, \ref{res:dec_q_comparison} and~\ref{res:dou_comparison} above follow from elementary arguments that are omitted. 

\subsection{Examples of kernels}
\label{subsec:examples_kernels}

The typical kernel our theory can be applied to is given by
\begin{equation}\label{eq:example_kernel}
\kernel_{\upsilon,\dimension}(x)
=
\frac{\upsilon(|x|)}{|x|^\dimension},
\quad
x\in\Rdim\setminus\set{0},
\end{equation}
where $\upsilon\colon(0,+\infty)\to[0,+\infty]$ is a measurable function. 
We set $\kernel_{\upsilon,\dimension}(0)=+\infty$. 
The kernel in~\eqref{eq:example_kernel} satisfies \eqref{H:Radial}, and the assumptions in \cref{subsec:assumptions} can be given in terms of~$\upsilon$.

With a slight abuse of notation, the space linked to~\eqref{eq:example_kernel} is 
\begin{equation*}
BV^{\kernel_{\upsilon,\dimension}}(\Rdim)
=
\set*{u\in\Lspace{1}(\Rdim)
:
(x,y)
\mapsto
\frac{|u(x)-u(y)|}{|x-y|^\dimension}
\,
\upsilon(|x-y|)
\in
\Lspace{1}(\Rdim\times\Rdim)
}
\end{equation*}
similarly as in~\cite{DGM04}*{Sec.~3} (although with a different notation).
The space $BV^{\kernel_{\upsilon,\dimension}}(\Rdim)$ is a generalization of the usual \emph{fractional Ga\-gli\-ar\-do--Slo\-bo\-deckij--So\-bo\-lev space} $W^{s,1}(\Rdim)$, which, in turn, corresponds to the choice 
$\upsilon(\rho)=\rho^{-s}$ for $\rho>0$,
for some fixed $s\in(0,1)$, see~\cite{DPV12} for an account.
All assumptions above are satisfied, with decreasing and doubling exponents $q=p=\dimension+s$.
More generally, one can consider 
\begin{equation*}
\upsilon(\rho)
=
\begin{cases}
\rho^{-s_0}
& 
\text{for}\ 0<\rho\le1,
\\[2mm]
\rho^{-s_1}
& 
\text{for}\ \rho>1,
\end{cases}
\end{equation*}
for fixed $0\le s_0\le s_1\le1$.
Also in this case, all assumptions are satisfied, with decreasing exponent $q=\dimension+s_0$ and doubling exponent $p=\dimension+s_1$.
One can in fact consider 
$\upsilon(\rho)
=
\rho^{-s(\rho)}
=
e^{-s(\rho)\log\rho}$
for $\rho>0$,
for some fractional-exponent (increasing) function $s\colon(0,+\infty)\to[0,1]$. 
One can also consider logarithms~\cites{BN20, DGM04,CD15,CGO08}, that is, letting
\begin{equation}
\label{eq:example_log}
\upsilon(\rho)
=
\begin{cases}
(1-\log\rho)^{-\alpha}
&
\text{for}\ 0<\rho\le 1,
\\[2mm]
0
& 
\text{for}\ \rho>1,
\end{cases}
\end{equation}
for fixed $\alpha\in\R$ or, more generally,
\begin{equation}
\label{eq:example_fraclog}
\upsilon(\rho)
=
\begin{cases}
\rho^{-s}\,(1-\log\rho)^{-\alpha}
&
\text{for}\ 0<\rho\le 1,
\\[2mm]
0
& 
\text{for}\ \rho>1,
\end{cases}
\end{equation} 
for fixed $s\in[0,1)$ and $\alpha\in\R$.
Obviously, in both~\eqref{eq:example_log} and~\eqref{eq:example_fraclog}, \eqref{H:Positive} fails, \eqref{H:Doubling} holds for $D<\frac12$ and \eqref{H:Far_from_zero_int}, \eqref{H:Not_too_singular} and \eqref{H:Inf} are true.
Moreover, \eqref{H:Not_integrable} holds if and only if
\begin{equation*}
(s,\alpha)\in(0,1)\times\R
\cup
\set*{0}\times(-\infty,1].
\end{equation*}
Finally, \eqref{H:Decreasing_q} holds for any $q\in[0,\dimension+s)$, and for $q=\dimension+s$ whenever $\alpha\in(-\infty,0]$.
Clearly, the examples in~\eqref{eq:example_log} and~\eqref{eq:example_fraclog} can be varied in many ways, in particular modifying the behavior of $\rho\mapsto\upsilon(\rho)$ for $\rho>1$ in order to also ensure  \eqref{H:Positive} and \eqref{H:Doubling} with $D=+\infty$ (and even the continuity of $\upsilon$) while keeping valid all the other properties.
Further examples can be also derived from the theory of \emph{Lévy kernels}~\cites{F20,F21,FKV20}.

\subsection{\texorpdfstring{$\kernel$}{K}-variation}

\label{subsec:variation}

Let $\kernel\colon\Rdim\to[0,+\infty]$ be a kernel.  
We let 
\begin{equation}
\label{eq:def_kerTV}
\kerTV{u}
=
\frac{1}{2}\int_{\Rdim}\int_{\Rdim}|u(x)-u(y)|\,\kernel(x-y)\,\de x\,\de y
\end{equation}
be the \emph{$\kernel$-variation} of $u\in\Llocspace{1}(\Rdim)$ and\begin{equation*}
\BVkerlocspace(\Rdim)
=
\set*{u\in \Llocspace{1}(\Rdim) : \kerTV{u}<+\infty} 
\end{equation*}
and $\BVkerspace(\Rdim)=\BVkerlocspace(\Rdim)\cap \Lspace{1}(\Rdim)$.
The space $\BVkerspace(\Rdim)$ is  Banach with respect to
\begin{equation*}
\|u\|_{\BVkerspace(\Rdim)}
=
\|u\|_{\Lspace{1}(\Rdim)}+\kerTV{u},
\quad
u\in \Lspace{1}(\Rdim).
\end{equation*} 
Since $\kernel\not\equiv0$, if $\kerTV{u}=0$ for $u\in\Llocspace{1}(\Rdim)$, then $u=c$ a.e.\ for some $c\in\R$, with $c=0$ if $u\in\Lspace{1}(\Rdim)$.
Note that $\BVkerspace(\Rdim)=\Lspace{1}(\Rdim)$ whenever $K\in \Lspace{1}(\Rdim)$, since
\begin{equation}
\label{eq:L1_ker_brutto}
\kerTV{u}
\le
\|K\|_{\Lspace{1}(\Rdim)}\,\|u\|_{\Lspace{1}(\Rdim)}
\end{equation}
by definition.
In addition,
\begin{equation}
\label{eq:valerio_embed}
\eqref{H:Not_too_singular}
\implies
BV(\Rdim)\subset\BVkerspace(\Rdim)\ 
\text{continuously},
\end{equation}
with
\begin{equation}
\label{eq:valerio}
\kerTV{u}
\le
\max\set*{\|u\|_{\Lspace{1}(\Rdim)},\tfrac12[u]_{BV(\R^n)}}
\int_{\Rdim}(1\wedge|x|)\,\kernel(x)\,\de x
\end{equation}
for all $u\in BV(\Rdim)$.
For a proof, see~\cite{BP19}*{Prop.~2.2} and~\cite{F-PhD}*{Lem.~3.48}.

\subsection{\texorpdfstring{$\kernel$}{K}-perimeter}

\label{subsec:perimeter}
For $E,A\in\measurablesets$, where $\measurablesets$ denotes $\lebdim$-measurable sets,
we let
\begin{equation}
\label{eq:def_Pker_relative}
\Pker(E;A)
=
\left(\frac12\int_A\int_A
+
\int_A\int_{A^c}\right)
|\chi_E(x)-\chi_E(y)|\,K(x-y)\,\de x\,\de y
\end{equation}
be the \emph{$\kernel$-perimeter of $E$ relative to $A$.}
In particular, for $A=\Rdim$ we let
\begin{equation*}
\Pker(E)
=
\Pker(E;\Rdim)
=
\kerTV{\chi_E}
\end{equation*}
be the \emph{(total) $\kernel$-perimeter of $E$}, so that $\Pker(E)=0$ if and only if $|E|=0$ or $|\Rdim\setminus E|=0$. 
If~\eqref{H:Symmetric} holds, then~\eqref{eq:def_Pker_relative} becomes
\begin{equation*}
\Pker(E;A)
=
\left(
\int_{E\cap A}\int_{\comp{E}\cap A}
+
\int_{E\cap A}\int_{\comp{E}\cap\comp{A}}
+
\int_{E\cap\comp{A}}\int_{\comp{E}\cap A}
\right)\,K(x-y)\,\de x\,\de y.
\end{equation*}
The $\kernel$-perimeter is invariant by translations, 
\begin{equation}
\label{eq:P_K_translation}
\Pker(E-x;A-x)=\Pker(E;A)
\end{equation}
for all $x\in\Rdim$, where $E-x=\set{y\in\Rdim : y+x\in E}$, it is invariant under complementation,
\begin{equation}
\label{eq:complement_invariance}
\Pker(E^c;A)=\Pker(E;A),
\end{equation}
and it satisfies the \emph{submodularity property}
\begin{equation}
\label{eq:submodularity}
\Pker(E\cap F;A)
+
\Pker(E\cup F;A)
\le
\Pker(E;A)
+
\Pker(F;A),
\end{equation}
see~\cite{BP19}*{p.~8, proof of~(vi)}. 
As in~\eqref{eq:L1_ker_brutto}, if $K\in\Lspace{1}(\Rdim)$ and~\eqref{H:Symmetric} holds, then 
\begin{equation}
\label{eq:L1_ker_perimeter}
\Pker(E)
=
\|K\|_{\Lspace{1}(\Rdim)}
|E|
-
\int_E\int_EK(x-y)\,\de x\,\de y
\end{equation}
provided that $|E|<+\infty$.
Finally, if~\eqref{H:Not_too_singular} is in force, then~\eqref{eq:valerio} becomes
\begin{equation}
\label{eq:valerio_set}
\Pker(E)
\le
\max\set*{|E|,\tfrac12P(E)}
\int_{\Rdim}(1\wedge|x|)\,\kernel(x)\,\de x,
\end{equation}
where $P(E)=[\chi_E]_{BV(\Rdim)}$ denotes the  \emph{perimeter} of $E$.

\subsection{Recognizing constant functions}

\label{subsec:constants}

We recall the following simple but fundamental result on how the $K$-variation allows to recognize constant functions~\cites{B02}.

\begin{proposition}[Constant functions]
\label{res:constants}
If the lower bound in~\eqref{eq:Dec_q_comparison} holds for  \mbox{$q\ge\dimension+1$}, then $\BVkerlocspace(\Rdim)$ contains a.e.-constant functions only, and $\BVkerspace(\Rdim)=\set*{0}$.
\end{proposition}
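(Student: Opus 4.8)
The plan is to show that if $u\in\BVkerlocspace(\Rdim)$ and the lower bound $\kernel(x)\ge C|x|^{-q}\chi_{B_R}(x)$ holds with $q\ge\dimension+1$, then $u$ must agree a.e.\ with a constant. First I would reduce to a local statement: it suffices to prove that $u$ is a.e.\ constant on each ball $B_\rho$ with $\rho<R/2$, since overlapping balls then force a single global constant, and membership in $\Lspace1(\Rdim)$ forces that constant to vanish. Fixing such a ball, the finiteness of $\kerTV u$ together with the lower bound gives
\begin{equation*}
\int_{B_\rho}\int_{B_\rho}\frac{|u(x)-u(y)|}{|x-y|^{q}}\,\de x\,\de y
\le
\frac1C\int_{B_\rho}\int_{B_\rho}|u(x)-u(y)|\,\kernel(x-y)\,\de x\,\de y
<+\infty,
\end{equation*}
so the relevant object is the Gagliardo-type double integral with exponent $q\ge\dimension+1$, i.e.\ at or above the critical threshold where only constants have finite seminorm.

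The heart of the argument is the classical fact that $W^{s,1}$-type seminorms with $s\ge1$ detect only constants. I would make this quantitative as follows. For $h\in\Rdim$ small, estimate the $\Lspace1$-modulus of continuity of $u$ on a slightly smaller ball by the double integral: writing the difference $u(x+h)-u(x)$ and integrating $y$ over a fixed small ball around $x$, one gets, after the change of variables and Fubini,
\begin{equation*}
\int_{B_{\rho/2}}|u(x+h)-u(x)|\,\de x
\le
c\,|h|^{q-\dimension}\int_{B_\rho}\int_{B_\rho}\frac{|u(x)-u(y)|}{|x-y|^{q}}\,\de x\,\de y,
\end{equation*}
the point being that averaging $|u(x+h)-u(x)|\le|u(x+h)-u(y)|+|u(y)-u(x)|$ over $y$ in a ball of radius comparable to $|h|$ and using $|x-y|\lesssim|h|$ converts the $|x-y|^{-q}$ weight into a factor $|h|^{\dimension-q}$, while the Jacobian of the average contributes $|h|^{\dimension}$; since $q\ge\dimension+1$ the net power of $|h|$ is $\ge1$. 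Dividing by $|h|$ and letting $h\to0$ shows the distributional gradient of $u$ vanishes on $B_{\rho/2}$, hence $u$ is a.e.\ constant there; alternatively one avoids $BV$ theory entirely by noting $\int|u(x+h)-u(x)|\,\de x = o(|h|)$ forces $u$ constant by a direct difference-quotient/Lebesgue-point argument. A cleaner packaging is to appeal to the known embedding/rigidity: if $\int_{B_\rho}\int_{B_\rho}|u(x)-u(y)|\,|x-y|^{-\dimension-1}\,\de x\,\de y<\infty$ then $u\in W^{1,1}(B_{\rho/2})$ with gradient controlled by this quantity, and $W^{s,1}$ for $s\ge1$ being trivial on connected domains is standard; but since the paper keeps things self-contained I would give the short averaging estimate above.

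The main obstacle is purely bookkeeping: choosing the radii ($\rho<R/2$, the sub-ball of radius comparable to $|h|$ around each $x$, and the reduced ball $B_{\rho/2}$) so that all points involved stay inside $B_R$ where the lower bound is available, and tracking the exponent of $|h|$ to confirm it is $\ge1$ precisely when $q\ge\dimension+1$ — the borderline case $q=\dimension+1$ giving exactly $o(|h|)$, which still suffices. After that, the passage from "constant on each small ball" to "globally constant" is immediate by a connectedness/chain argument on $\Rdim$, and the improvement to $\BVkerspace(\Rdim)=\{0\}$ follows since the only constant in $\Lspace1(\Rdim)$ is $0$, exactly as already noted after the definition of $\kerTV{\cdot}$.
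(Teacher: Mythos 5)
Your strategy is exactly the one the paper has in mind: the paper omits the proof as ``elementary'' and points to Brezis's \emph{How to recognize constant functions} \cite{B02}, whose argument is precisely your localization to balls $B_\rho$ with $2\rho<R$ (so that the lower bound $\kernel(x-y)\ge C|x-y|^{-q}$ is available), the averaging estimate for the resulting Gagliardo-type seminorm with exponent $q\ge\dimension+1$, the chaining of overlapping balls, and the final remark that the only constant in $\Lspace{1}(\Rdim)$ is $0$. All of that is correct as written.

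One step does need tightening, and it is more than bookkeeping. In the borderline case $q=\dimension+1$ your displayed estimate gives
\begin{equation*}
\int_{B_{\rho/2}}|u(x+h)-u(x)|\,\de x
\le
c\,|h|\int_{B_\rho}\int_{B_\rho}\frac{|u(x)-u(y)|}{|x-y|^{\dimension+1}}\,\de x\,\de y,
\end{equation*}
which is only $O(|h|)$; dividing by $|h|$ and letting $h\to0$ then shows that the difference quotients are bounded in $\Lspace{1}$, i.e.\ $u\in BV(B_{\rho/2})$, not that $Du=0$. You assert the bound is ``exactly $o(|h|)$'', and it is, but the reason is not the exponent of $|h|$: it is that your averaging over $y\in B_{c|h|}(x)$ only ever touches the portion of the double integral supported on the diagonal strip $\{(x,y):|x-y|\le C'|h|\}$, so the correct right-hand side is
$c\,|h|\iint_{\{|x-y|\le C'|h|\}}|u(x)-u(y)|\,|x-y|^{-\dimension-1}\,\de x\,\de y$,
and this restricted integral tends to $0$ as $h\to0$ by dominated convergence, the full integral being finite. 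With that one clause the argument closes; for $q>\dimension+1$ the extra factor $|h|^{q-\dimension-1}$ already yields $o(|h|)$ without it.
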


Due to \cref{res:dec_q_comparison} and the above \cref{res:constants},  \eqref{H:Decreasing_q} becomes truly relevant only for $q<\dimension+1$, which is the case if~\eqref{H:Not_too_singular} is in force, see the last part of~\cref{res:dec_q_comparison}.

If~\eqref{H:Decreasing_q} holds with $q>n$, then $\BVkerspace$ functions are fractional Sobolev of order $q-n$.

\begin{proposition}[Embedding in fractional Sobolev space]
\label{res:fractional_embed}
Let 
\eqref{H:Decreasing_q}
with $q\in(\dimension,\dimension+1)$
be in force.
Then $\BVkerspace(\Rdim)\subset W^{q-\dimension,1}_{\loc}(\Rdim)$ with continuous inclusion.
If, in addition, \eqref{H:Far_from_zero_int} holds, then $\BVkerspace(\Rdim)\subset W^{q-\dimension,1}(\Rdim)$ with continuous inclusion.
\end{proposition}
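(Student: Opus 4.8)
The plan is to estimate the Gagliardo seminorm of a function $u\in\BVkerspace(\Rdim)$ by splitting its defining double integral according to whether the two integration points are close or far apart, the threshold being the radius $R$ furnished by \cref{res:dec_q_comparison}. Since $\BVkerspace(\Rdim)\subset\Lspace{1}(\Rdim)$ by definition and $\dimension+(q-\dimension)=q$, it suffices to control
\[
I(u)\coloneqq\int_{\Rdim}\int_{\Rdim}\frac{|u(x)-u(y)|}{|x-y|^q}\,\de x\,\de y,
\]
together with its restriction to $\Omega\times\Omega$ for bounded open sets $\Omega\subset\Rdim$, in terms of $\kerTV{u}$ and $\|u\|_{\Lspace{1}(\Rdim)}$.

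First I would absorb the near-diagonal part into the $\kernel$-variation. By the lower bound in \eqref{eq:Dec_q_comparison} there are $R,C>0$ with $\kernel(z)\ge C\,|z|^{-q}$ for $|z|<R$, so, using $\kernel\ge0$ to enlarge the domain of integration,
\[
\int_{\Rdim}\int_{\{|x-y|<R\}}\frac{|u(x)-u(y)|}{|x-y|^q}\,\de x\,\de y
\le
\frac1C\int_{\Rdim}\int_{\Rdim}|u(x)-u(y)|\,\kernel(x-y)\,\de x\,\de y
=
\frac2C\,\kerTV{u}.
\]
For the local statement I would then fix a bounded open $\Omega$ and observe that on $\Omega\times\Omega$ the complementary region $\{|x-y|\ge R\}$ is non-singular, since $|x-y|^{-q}\le R^{-q}$ there; hence, by the triangle inequality,
\[
\int_\Omega\int_{\Omega\cap\{|x-y|\ge R\}}\frac{|u(x)-u(y)|}{|x-y|^q}\,\de x\,\de y
\le
\frac{1}{R^q}\int_\Omega\int_\Omega\bigl(|u(x)|+|u(y)|\bigr)\,\de x\,\de y
\le
\frac{2\,|\Omega|}{R^q}\,\|u\|_{\Lspace{1}(\Rdim)}.
\]
Summing the two bounds shows $u\in W^{q-\dimension,1}(\Omega)$ with a constant depending only on $\dimension$, $q$, $\kernel$ and $|\Omega|$, which is precisely the continuous inclusion $\BVkerspace(\Rdim)\subset W^{q-\dimension,1}_{\loc}(\Rdim)$.

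For the global statement I would instead estimate the far part over the whole of $\Rdim\times\Rdim$: by the triangle inequality, Tonelli's theorem and the substitution $z=x-y$,
\[
\int_{\Rdim}\int_{\{|x-y|\ge R\}}\frac{|u(x)-u(y)|}{|x-y|^q}\,\de x\,\de y
\le
2\,\|u\|_{\Lspace{1}(\Rdim)}\int_{\{|z|\ge R\}}\frac{\de z}{|z|^q},
\]
and the last integral is finite because $q>\dimension$; consistently, under the standing assumption \eqref{H:Far_from_zero_int} the kernel is integrable away from the origin, so the far interactions are harmless. Combining this with the near-diagonal estimate gives $I(u)\le c(\dimension,q,\kernel)\bigl(\kerTV{u}+\|u\|_{\Lspace{1}(\Rdim)}\bigr)$, hence the continuous inclusion $\BVkerspace(\Rdim)\subset W^{q-\dimension,1}(\Rdim)$.

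I do not expect a serious obstacle: the argument is elementary, and the only point requiring care is that the pointwise lower bound on $\kernel$ coming from \cref{res:dec_q_comparison} is available only on $B_R$. Consequently one must separate cleanly the singular near-diagonal contribution $\{|x-y|<R\}$, which is controlled by $\kerTV{u}$, from the tail $\{|x-y|\ge R\}$, which is controlled by $\|u\|_{\Lspace{1}(\Rdim)}$ — trivially on a bounded domain, and globally thanks to the hypothesis $q>\dimension$. No compactness or further structural information on $\kernel$ enters the proof.
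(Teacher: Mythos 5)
Your argument is correct and is exactly the elementary near/far splitting that the paper has in mind when it declares the proof of \cref{res:fractional_embed} to "follow from elementary arguments that are omitted": the lower bound in~\eqref{eq:Dec_q_comparison} absorbs the singular region $\set{|x-y|<R}$ into $\kerTV{u}$, while the region $\set{|x-y|\ge R}$ is controlled by $\|u\|_{\Lspace{1}(\Rdim)}$, locally via the trivial bound $|x-y|^{-q}\le R^{-q}$ and globally via $\int_{\set{|z|\ge R}}|z|^{-q}\,\de z<+\infty$ since $q>\dimension$. The only remark worth making is that your global tail estimate never actually invokes \eqref{H:Far_from_zero_int} — it uses only $q>\dimension$ — so you prove slightly more than the statement asks; this is not an error.
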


Propositions~\ref{res:constants} and~\ref{res:fractional_embed} above follow from elementary arguments that are omitted.

\subsection{Coarea formula}

\label{subsec:coarea}

The $\kernel$-variation and the $\kernel$-perimeter are connected via the following coarea formula, see~\cite{CN18}*{Prop.~2.3}.

\begin{lemma}[Coarea formula]
\label{res:coarea}
If $u\in \Llocspace{1}(\Rdim)$, then
$
\displaystyle
\kerTV{u}
=
\int_\R \Pker(\set*{u>t})\,\de t. 
$
\end{lemma}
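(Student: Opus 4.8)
The plan is to prove the coarea formula
\[
\kerTV{u}
=
\int_\R \Pker(\set*{u>t})\,\de t
\]
by reducing the double-integral expression for $\kerTV{u}$ to the classical one-dimensional coarea/layer-cake identity applied pointwise in the integrand. The starting observation is that for fixed $x,y\in\Rdim$ one has the elementary identity
\[
|u(x)-u(y)|
=
\int_\R \bigl|\chi_{\set{u>t}}(x)-\chi_{\set{u>t}}(y)\bigr|\,\de t,
\]
which is just the statement that the symmetric difference of the superlevel sets $\set{u>t}$ at the points $x$ and $y$ is the interval of heights $t$ lying between $u(x)$ and $u(y)$, whose length is exactly $|u(x)-u(y)|$. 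This holds for every pair $(x,y)$ without any integrability assumption.

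First I would substitute this identity into the definition \eqref{eq:def_kerTV} of $\kerTV{u}$, obtaining a triple integral over $\Rdim\times\Rdim\times\R$ of the non-negative integrand $\bigl|\chi_{\set{u>t}}(x)-\chi_{\set{u>t}}(y)\bigr|\,\kernel(x-y)$. Since the integrand is non-negative and measurable (measurability in $t$ follows because $(x,y,t)\mapsto\chi_{\set{u>t}}(x)$ is measurable, e.g.\ it equals $\chi_{\set{(x,t):u(x)>t}}$ and $u$ is measurable), Tonelli's theorem applies and lets me freely interchange the order of integration, pulling the $\de t$ integral to the outside. Then for each fixed $t$ the inner double integral is, by definition, exactly $2\,\Pker(\set{u>t})=2[\chi_{\set{u>t}}]_{\BVkerspace(\Rdim)}$; carrying the factor $\tfrac12$ through gives the claimed formula. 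The identity holds in $[0,+\infty]$, so in particular $u\in\BVkerlocspace(\Rdim)$ if and only if $t\mapsto\Pker(\set{u>t})$ is integrable on $\R$.

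The main point requiring a little care — the only genuine obstacle — is the measurability and the justification of Tonelli on the full product space $\Rdim\times\Rdim\times\R$, since the kernel $\kernel$ may take the value $+\infty$ on a negligible set (the diagonal) and the function $u$ is only locally integrable; one has to note that the diagonal $\set{x=y}$ is $\lebdim\otimes\lebdim$-negligible in $\Rtwodim$ and contributes nothing, and that non-negativity of the integrand is precisely what makes Tonelli applicable with no a priori finiteness hypothesis, so no absolute-integrability check is needed. Everything else is the pointwise layer-cake identity and bookkeeping of the constant $\tfrac12$. Since this is exactly the argument of \cite{CN18}*{Prop.~2.3}, I would simply present these steps concisely and refer there for the routine details.
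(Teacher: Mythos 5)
Your argument is correct and is precisely the standard layer-cake reduction used in the reference the paper cites for this lemma (\cite{CN18}*{Prop.~2.3}): the pointwise identity $|u(x)-u(y)|=\int_\R|\chi_{\set{u>t}}(x)-\chi_{\set{u>t}}(y)|\,\de t$ followed by Tonelli, which applies with no integrability hypothesis since the integrand is non-negative and jointly measurable. The bookkeeping of the factor $\tfrac12$ against $\Pker(E)=\kerTV{\chi_E}$ is also right, so there is nothing to add.
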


\subsection{Lower semicontinuity}

\label{subsec:lsc}

Fatou's Lemma yields the semicontinuity property for the $\kernel$-variation and the $K$-perimeter, see~\cite{CN18}*{Prop.~2.2}.

\begin{lemma}[Lower semicontinuity in $\BVkerlocspace$]
\label{res:BV_K_lsc}
If $(u_k)_{k\in\N}\subset\BVkerlocspace(\Rdim)$ is such that $u_k\to u$ in $\Llocspace{1}(\Rdim)$ as $k\to+\infty$ with
$\liminf\limits_{k\to+\infty}
\kerTV{u_k}<+\infty$,
then $u\in\BVkerlocspace(\Rdim)$ with
\begin{equation*}
\kerTV{u}
\le
\liminf_{k\to+\infty}
\kerTV{u_k}.
\end{equation*}
\end{lemma}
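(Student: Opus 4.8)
The plan is to prove lower semicontinuity directly from Fatou's Lemma, exactly as the remark preceding the statement suggests. The key observation is that the $\kernel$-variation is an integral of a nonnegative integrand over the product space $\Rdim\times\Rdim$, so convergence of the functions pointwise a.e.\ is all that is needed to pass to the liminf under the integral sign.

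First I would pass to a subsequence. Since $\liminf_{k\to+\infty}\kerTV{u_k}<+\infty$, choose a subsequence $(u_{k_j})_{j\in\N}$ along which $\kerTV{u_{k_j}}$ converges to this liminf; since $u_k\to u$ in $\Llocspace{1}(\Rdim)$, after extracting a further subsequence (not relabeled) we may assume $u_{k_j}(x)\to u(x)$ for $\lebdim$-a.e.\ $x\in\Rdim$. Then for $\lebdim\otimes\lebdim$-a.e.\ $(x,y)\in\Rdim\times\Rdim$ we have
\begin{equation*}
|u_{k_j}(x)-u_{k_j}(y)|\,\kernel(x-y)
\longrightarrow
|u(x)-u(y)|\,\kernel(x-y)
\qquad\text{as }j\to+\infty,
\end{equation*}
because the product of a convergent real sequence with the fixed (possibly infinite, but that is harmless since all terms are nonnegative) value $\kernel(x-y)$ converges. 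Here one uses that the set where $u_{k_j}\to u$ fails has measure zero, so its preimages under the two coordinate projections are $\lebdim\otimes\lebdim$-negligible in the product.

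Second, I would apply Fatou's Lemma on $\Rdim\times\Rdim$ to the nonnegative measurable functions $(x,y)\mapsto|u_{k_j}(x)-u_{k_j}(y)|\,\kernel(x-y)$. This gives
\begin{equation*}
\frac12\int_{\Rdim}\int_{\Rdim}|u(x)-u(y)|\,\kernel(x-y)\,\de x\,\de y
\le
\liminf_{j\to+\infty}\frac12\int_{\Rdim}\int_{\Rdim}|u_{k_j}(x)-u_{k_j}(y)|\,\kernel(x-y)\,\de x\,\de y,
\end{equation*}
that is, $\kerTV{u}\le\liminf_{j\to+\infty}\kerTV{u_{k_j}}=\liminf_{k\to+\infty}\kerTV{u_k}<+\infty$. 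In particular $u\in\BVkerlocspace(\Rdim)$ and the claimed inequality holds. The measurability of $u$ as an element of $\Llocspace{1}(\Rdim)$ (and hence of the integrand) is guaranteed since $u$ is the $\Llocspace{1}$ limit of the $u_k$.

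I do not expect any serious obstacle: the only minor points to be careful about are the a.e.\ convergence on the product space (handled by the projection argument above) and the fact that $\kernel$ may take the value $+\infty$ on a null set, which causes no trouble in Fatou's Lemma for nonnegative integrands. The subsequence extraction is the standard device to convert $\Llocspace{1}$ convergence into pointwise a.e.\ convergence, and it is legitimate precisely because the target quantity is a liminf.
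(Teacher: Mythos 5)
Your proof is correct and follows exactly the route the paper takes: the paper simply invokes Fatou's Lemma (citing \cite{CN18}*{Prop.~2.2}), and your subsequence extraction, passage to a.e.\ convergence on the product space, and application of Fatou to the nonnegative integrand is precisely that argument spelled out. The only microscopic imprecision is that on $\set{\kernel(x-y)=+\infty}$ the integrands need not converge pointwise, but since Fatou only requires $|u(x)-u(y)|\,\kernel(x-y)\le\liminf_j|u_{k_j}(x)-u_{k_j}(y)|\,\kernel(x-y)$ a.e., which holds in every case, the conclusion is unaffected.
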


\begin{lemma}[Lower semicontinuity for $\Pker$]
\label{res:P_K_lsc}
Let $A\in\measurablesets$. 
If $(U_k)_{k\in\N}\subset\measurablesets$ is such that $\chi_{U_k}\to\chi_U$ in $\Llocspace{1}(\Rdim)$ as $k\to+\infty$ with
$\liminf\limits_{k\to+\infty}
\Pker{(U_k;A)}<+\infty$,
then \begin{equation*}
\Pker{(U;A)}
\le
\liminf_{k\to+\infty}
\Pker{(U_k;A)}.
\end{equation*}
\end{lemma}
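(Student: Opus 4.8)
The plan is to argue by a Fatou-type argument, in the same spirit as the proof of \cref{res:BV_K_lsc}. Indeed, by its very definition~\eqref{eq:def_Pker_relative}, $\Pker(\cdot\,;A)$ is the integral over the fixed region $(A\times A)\cup(A\times A^c)$ of $\Rtwodim$ of the nonnegative integrand $(x,y)\mapsto|\chi_E(x)-\chi_E(y)|\,\kernel(x-y)$, weighted by $\tfrac12$ on $A\times A$ and by $1$ on $A\times A^c$. Lower semicontinuity then follows from Fatou's lemma, once $\Llocspace{1}$-convergence is upgraded to pointwise a.e.\ convergence along a subsequence.

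Concretely, I would proceed in the following steps. I may assume $\ell:=\liminf_{k\to+\infty}\Pker(U_k;A)<+\infty$, since otherwise there is nothing to prove. I select a subsequence $(U_{k_j})_{j\in\N}$ with $\Pker(U_{k_j};A)\to\ell$ as $j\to+\infty$; since $\chi_{U_{k_j}}\to\chi_U$ in $\Llocspace{1}(\Rdim)$, I pass to a further subsequence (not relabeled) along which $\chi_{U_{k_j}}\to\chi_U$ $\lebdim$-a.e.\ in $\Rdim$, and along which still $\Pker(U_{k_j};A)\to\ell$. Because these functions take values in $\set{0,1}$, for $\lebdim$-a.e.\ $x$ one has $\chi_{U_{k_j}}(x)=\chi_U(x)$ for all $j$ large (depending on $x$); hence the nonnegative measurable integrands $(x,y)\mapsto|\chi_{U_{k_j}}(x)-\chi_{U_{k_j}}(y)|\,\kernel(x-y)$ converge $(\lebdim\otimes\lebdim)$-a.e.\ on $\Rtwodim$ to $(x,y)\mapsto|\chi_U(x)-\chi_U(y)|\,\kernel(x-y)$. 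Applying Fatou's lemma on $A\times A$ and on $A\times A^c$ separately, multiplying the first resulting inequality by $\tfrac12$, adding, and using $\liminf_j a_j+\liminf_j b_j\le\liminf_j(a_j+b_j)$, I get
\[
\Pker(U;A)
\le
\liminf_{j\to+\infty}\Pker(U_{k_j};A)
=
\ell
=
\liminf_{k\to+\infty}\Pker(U_k;A),
\]
which is the desired inequality.

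I do not expect any genuine obstacle here; the proof is routine. The only points deserving a little attention are the double extraction of subsequences — one to realize the $\liminf$ and a further one to secure a.e.\ convergence — and the passage from the one-variable a.e.\ convergence $\chi_{U_{k_j}}\to\chi_U$ to the two-variable a.e.\ convergence of the integrands, which is immediate since $(N\times\Rdim)\cup(\Rdim\times N)$ is $(\lebdim\otimes\lebdim)$-null for every $\lebdim$-null set $N\subset\Rdim$. No input beyond Fatou's lemma and the definition of $\Pker(\cdot\,;A)$ is needed.
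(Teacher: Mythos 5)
Your proof is correct and follows exactly the route the paper intends: the paper disposes of both \cref{res:BV_K_lsc} and \cref{res:P_K_lsc} by invoking Fatou's Lemma (citing \cite{CN18}*{Prop.~2.2}), which is precisely your subsequence-plus-a.e.-convergence argument applied to the two pieces of the definition~\eqref{eq:def_Pker_relative}. The points you flag (double extraction of subsequences, passage to two-variable a.e.\ convergence) are handled correctly, so nothing further is needed.
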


\subsection{Compactness}
\label{subsec:compactness}

We now prove the compactness of the embedding $\BVkerspace(\Rdim)\subset\Llocspace{1}(\Rdim)$.
Actually, we prove the more general \cref{res:compactness_p} below, which extends~\cite{JW20}*{Th.~1.1} and, for $p=1$, positively answers a question left open in~\cite{CN18} (see~\cite{CdP20} for similar results in Orlicz spaces).
After we completed our paper and posted it on arXiv, G.~F.~Foghem Gounoue kindly indicated his Ph.D.\ thesis~\cite{F-PhD} to us (of which we were unaware) where he proved \cref{res:compactness_p} below under the additional~\eqref{H:Symmetric}, see \cref{rem:ass-compact_p}.

Let $\kernel\colon\Rdim\to[0,+\infty]$ be a kernel and let $p\in[1,+\infty)$.
Given $u\in\Llocspace{1}(\Rdim)$, we let
\begin{equation*}
[u]_{W^{\kernel,p}(\Rdim)}
=
\left(\frac12
\int_{\Rdim}\int_{\Rdim}
|u(x)-u(y)|^p\,\kernel(x-y)\,\de x\,\de y
\right)^{1/p}
\end{equation*}
and we define 
\begin{equation*}
W^{\kernel,p}_{\loc}(\Rdim)
=
\set*{u\in\Llocspace{1}(\Rdim)
:
[u]_{W^{\kernel,p}(\Rdim)}<+\infty
}
\end{equation*}
and $W^{\kernel,p}(\Rdim)=W^{\kernel,p}_{\loc}(\Rdim)\cap\Lspace{p}(\Rdim)$.
The space $W^{\kernel,p}(\Rdim)$ is Banach with respect to
\begin{equation*}
\|u\|_{W^{\kernel,p}(\Rdim)}
=
\|u\|_{\Lspace{p}(\Rdim)}
+
[u]_{W^{\kernel,p}(\Rdim)},
\quad
u\in L^p(\Rdim).
\end{equation*} 
If $p=1$, then $W^{\kernel,p}(\Rdim)=\BVkerspace(\Rdim)$.
Since $\kernel\not\equiv0$, if $[u]_{W^{\kernel,p}(\Rdim)}=0$ for $u\in\Llocspace{1}(\Rdim)$, then $u=c$ a.e.\ for some $c\in\R$, with $c=0$ if $u\in\Lspace{p}(\Rdim)$.
Note that $W^{\kernel,p}(\Rdim)=\Lspace{p}(\Rdim)$ whenever $K\in\Lspace{1}(\Rdim)$, since
\begin{equation*}
[u]_{W^{\kernel,p}(\Rdim)}
\le 
2^{\frac{p-1}p}\|\kernel\|_{\Lspace{1}(\Rdim)}^{1/p}\,\|u\|_{\Lspace{p}(\Rdim)}
\end{equation*}
by definition.
In addition, the condition
\begin{equation}
\label{H:Not_too_singular_p}
\tag{Nts$_p$}
\int_{\Rdim}(|x|^p\wedge1)\,\kernel(x)\,\de x<+\infty.\end{equation}
yields the continuous embedding $W^{1,p}(\Rdim)\subset W^{\kernel,p}(\Rdim)$, with
\begin{equation}
\label{eq:valerio_p_embed}
[u]_{W^{\kernel,p}(\Rdim)}
\le 
2^{-1/p}\,\max\set*{\|\nabla u\|_{\Lspace{p}(\Rdim;\Rdim)},2\|u\|_{\Lspace{p}(\Rdim)}}
\left(\,\int_{\Rdim}(|x|^p\wedge1)\,\kernel(x)\,\de x\right)^{1/p}
\end{equation}
for all $u\in W^{1,p}(\Rdim)$.
Finally, if~\eqref{H:Far_from_zero_int} holds, then 
$W^{\kernel,p}(\Rdim)=W^{\kernel\chi_{B_r},p}(\Rdim)$
for all $r>0$,
with equivalence of the norms.

\begin{theorem}[Sequential compactness in $W^{\kernel,p}$]
\label{res:compactness_p}
Let \eqref{H:Far_from_zero_int} and~\eqref{H:Not_integrable} be in force.
If $(u_k)_{k\in\N}\subset W^{\kernel,p}(\Rdim)$ satisfies
$\sup\limits_{k\in\N}\|u_k\|_{W^{\kernel,p}(\Rdim)}<+\infty$,
then there exists a subsequence $(u_{k_j})_{j\in\N}\subset W^{\kernel,p}(\Rdim)$ and $u\in W^{\kernel,p}(\Rdim)$ such that 
$u_{k_j}\to u$ in $\Llocspace{p}(\Rdim)$ as $j\to+\infty$.
\end{theorem}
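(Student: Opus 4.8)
The plan is to reduce the statement to a Fréchet--Kolmogorov--Riesz compactness criterion in $L^p_{\loc}$. By the last observation before the theorem, $W^{\kernel,p}(\Rdim)=W^{\kernel\chi_{B_r},p}(\Rdim)$ with equivalent norms for every $r>0$, so we may freely assume the kernel is supported in a ball $B_r$; moreover, by replacing $\kernel$ with its symmetrization (which changes $[\cdot]_{W^{\kernel,p}}$ by at most a factor depending on $p$ and does not affect finiteness), we may assume \eqref{H:Symmetric}. Fix a bounded sequence $(u_k)\subset W^{\kernel,p}(\Rdim)$ with $M:=\sup_k\|u_k\|_{W^{\kernel,p}(\Rdim)}<+\infty$. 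It suffices to show that, on each fixed ball $B_\rho$, the set $\{u_k\}$ is precompact in $L^p(B_\rho)$; a diagonal argument over $\rho\in\N$ then produces the desired subsequence converging in $L^p_{\loc}(\Rdim)$. Since $\sup_k\|u_k\|_{L^p(B_\rho)}\le M$, the only thing to verify is equi-continuity of translations: $\sup_k\|u_k(\cdot+h)-u_k\|_{L^p(B_\rho)}\to0$ as $h\to0$.

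The core estimate controls a small translation by the $\kernel$-energy. The key point is that \eqref{H:Not_integrable} together with \eqref{H:Far_from_zero_int} forces $\kernel$ to be genuinely non-integrable near the origin, so for any prescribed large $\lambda>0$ one can find a (small) radius $\sigma=\sigma(\lambda)>0$ with $\int_{B_\sigma}\kernel\,\de x\ge\lambda$. The standard trick is then: for $|h|$ small, average the difference $u(x+h)-u(x)$ against the probability density $\kernel\chi_{B_\sigma}/\int_{B_\sigma}\kernel$ after a change of variables, writing
\begin{equation*}
u(x+h)-u(x)
=
\frac{1}{\int_{B_\sigma}\kernel\,\de z}
\int_{B_\sigma}\bigl[(u(x+h)-u(x+z))+(u(x+z)-u(x))\bigr]\kernel(z)\,\de z,
\end{equation*}
take $L^p(B_\rho)$-norms, apply Jensen (or Minkowski's integral inequality) and Fubini, and bound each of the two resulting double integrals. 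The second one is $\le (2/\lambda)^{1/p}\,[u]_{W^{\kernel,p}}$ (using that $z\in B_\sigma$ means $x,x+z$ are paired by the kernel and the supp restriction keeps things in a slightly larger ball); the first one is handled by noting that $z\mapsto z-h$ still lies in $B_{\sigma+|h|}\subset B_{2\sigma}$ for $|h|\le\sigma$, so it is likewise $\lesssim \lambda^{-1/p}[u]_{W^{\kernel,p}}$ up to a harmless adjustment of the normalizing mass. Altogether this yields a bound of the form
\begin{equation*}
\|u_k(\cdot+h)-u_k\|_{L^p(B_\rho)}
\le
C\,\lambda^{-1/p}\,[u_k]_{W^{\kernel,p}(\Rdim)}
+
\omega_k(h,\lambda),
\end{equation*}
where, for fixed $\lambda$, $\omega_k(h,\lambda)\to0$ as $h\to0$ uniformly in $k$ because it involves only translations by vectors of size comparable to $\sigma(\lambda)$ weighted by the \emph{fixed integrable} measure $\kernel\chi_{B_{2\sigma}\setminus B_\sigma}$ (or can be absorbed by first approximating $u_k$ by mollifications — but the cleaner route is to keep everything inside the single kernel integral and let $h\to0$ there by dominated convergence on the product space, using the uniform bound $[u_k]_{W^{\kernel,p}}\le M$). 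Sending first $h\to0$ and then $\lambda\to+\infty$ gives the required equi-continuity of translations, and Fréchet--Kolmogorov applies on $B_\rho$.

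There remains the routine verification that the $L^p_{\loc}$-limit $u$ actually belongs to $W^{\kernel,p}(\Rdim)$: $u\in L^p(\Rdim)$ follows from Fatou applied to $\|u_{k_j}\|_{L^p}\le M$, and $[u]_{W^{\kernel,p}(\Rdim)}<+\infty$ follows from the lower semicontinuity argument in the style of \cref{res:BV_K_lsc} (Fatou on the product space along an a.e.-convergent further subsequence). I expect the main obstacle to be the careful bookkeeping in the core estimate: getting the two translation pieces controlled \emph{simultaneously} by $[u_k]_{W^{\kernel,p}}$ without the normalizing mass $\int_{B_\sigma}\kernel$ drifting when $B_\sigma$ is replaced by $B_{\sigma\pm|h|}$, and making the "$h\to0$ uniformly in $k$" step rigorous — this is exactly where \eqref{H:Not_integrable} is used (it lets $\lambda\to\infty$) and where one must be slightly delicate because $\kernel$ itself is unbounded near $0$. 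Everything else (the support reduction, symmetrization, Jensen/Fubini, the diagonal argument, lower semicontinuity of the limit) is standard.
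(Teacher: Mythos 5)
There is a genuine gap in your core estimate, and it sits exactly where you predicted the difficulty would be. Under the standing assumptions, \eqref{H:Far_from_zero_int} and \eqref{H:Not_integrable} together force $\int_{B_\sigma}\kernel\,\de x=+\infty$ for \emph{every} $\sigma>0$: if this integral were finite for some $\sigma$, then adding $\kernel\in\Lspace{1}(\Rdim\setminus B_\sigma)$ would give $\kernel\in\Lspace{1}(\Rdim)$, contradicting \eqref{H:Not_integrable}. Hence the object $\kernel\chi_{B_\sigma}/\int_{B_\sigma}\kernel\,\de x$ you average against is not a probability density---the normalizing mass is infinite---and the displayed identity for $u(x+h)-u(x)$ is meaningless as written. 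The truncation must go the other way: the paper sets $\kernel_\delta=\kernel\chi_{\Rdim\setminus B_\delta}$, which is integrable by \eqref{H:Far_from_zero_int} and whose mass $\|\kernel_\delta\|_{\Lspace{1}(\Rdim)}$ tends to $+\infty$ as $\delta\to0^+$ precisely by \eqref{H:Not_integrable}; this is what plays the role of your $\lambda$. A second, independent problem is your treatment of the first term: after the substitution $w=z-h$ the weight becomes $\kernel(w+h)$, so the resulting double integral is the seminorm computed with the \emph{translated} kernel $\kernel(\cdot+h)$. Since neither \eqref{H:Decreasing_q} nor \eqref{H:Doubling} is assumed in this theorem, there is no way to compare $\kernel(\cdot+h)$ with $\kernel$, and this term is not controlled by $[u]_{W^{\kernel,p}(\Rdim)}$; the obstruction is the shifted argument of the kernel, not the enlargement of the domain of integration, and ``dominated convergence on the product space'' provides no dominating function here.

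The fix is the paper's route, which sidesteps both problems: one proves $\|u-u*\eta_\delta\|_{\Lspace{p}(\Rdim)}\le 2^{1/p}\,\|\kernel_\delta\|_{\Lspace{1}(\Rdim)}^{-1/p}\,[u]_{W^{\kernel,p}(\Rdim)}$ with $\eta_\delta=\kernel_\delta/\|\kernel_\delta\|_{\Lspace{1}(\Rdim)}$ (\cref{res:convolution_distance})---only differences $u(x)-u(x-z)$ weighted by $\kernel(z)$ itself appear, so no kernel translation is needed---and then invokes the local compactness of the convolution operator $T_{\eta_\delta}$ (\cref{res:convolution_compact}) to conclude that the bounded set is within $\eps$ of a relatively compact set, hence totally bounded in $\Lspace{p}(E)$ for each compact $E$. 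Your Fr\'echet--Kolmogorov framing is recoverable from this (translates of $u*\eta_\delta$ are equicontinuous in $\Lspace{p}$ because $\eta_\delta\in\Lspace{1}(\Rdim)$ is fixed), but the direct translation estimate you propose cannot be run under \eqref{H:Far_from_zero_int} and \eqref{H:Not_integrable} alone.
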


\begin{remark}[On the assumptions of~\cite{JW20}*{Th.~1.1 and Lem.~2.2} and~\cite{F-PhD}*{Th.~3.81}]
\label{rem:ass-compact_p}
In~\cite{JW20}, the authors assume~\eqref{H:Not_integrable} and~\eqref{H:Not_too_singular_p} (and also~\eqref{H:Symmetric} without loss of generality).
However, one can relax~\eqref{H:Not_too_singular_p} to~\eqref{H:Far_from_zero_int}, as we do here in \cref{res:compactness_p} (as well as in \cref{res:convolution_distance} below).
Indeed, \eqref{H:Not_too_singular_p} ensures that $W^{\kernel,p}(\Rdim)\not=\set*{0}$, thanks to~\eqref{eq:valerio_p_embed} (if $W^{\kernel,p}(\Rdim)=\set*{0}$, then \cref{res:compactness_p} is obviously true).
We also mention that~\cite{F-PhD}*{Th.~3.81} assumes~\eqref{H:Symmetric}. However, as we do here, this  is not needed. 
\end{remark}

In \cref{res:compactness_p}, we do not assume~\eqref{H:Radial}.
In particular, \cref{res:compactness_p} provides an alternative proof of the existence of  isoperimetric sets among bounded sets for the anisotropic fractional perimeter  considered in~\cites{L14,K21} (in particular, see~\cite{L14}*{Eq.~(10)}).
Similarly, we can state the following analogous result, whose plain proof is omitted.

\begin{corollary}[Conditional isoperimetric inquality]
Let~\eqref{H:Far_from_zero_int} and~\eqref{H:Not_integrable} be in force.
Let $v\in(0,+\infty)$ and $\Omega\in\measurablesets$ with $|\Omega|<+\infty$ be fixed. 
If 
\begin{equation}
\label{eq:isop_conditional}
\inf\big\{\Pker(E) : E\in\measurablesets,\ |E|=v,\ E\subset\Omega\big\}<+\infty,
\end{equation}
then the conditional isoperimetric problem~\eqref{eq:isop_conditional} admits minimizers.
\end{corollary}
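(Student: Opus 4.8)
The plan is to apply the direct method, using \cref{res:compactness_p} as the compactness ingredient and \cref{res:P_K_lsc} as the lower semicontinuity ingredient. Fix $v\in(0,+\infty)$ and $\Omega$ with $|\Omega|<+\infty$, and assume the infimum in~\eqref{eq:isop_conditional} is finite, say equal to $m\in[0,+\infty)$. Take a minimizing sequence $(E_k)_{k\in\N}$ of admissible competitors, i.e.\ $E_k\in\measurablesets$, $|E_k|=v$, $E_k\subset\Omega$, with $\Pker(E_k)\to m$. First I would observe that $(\chi_{E_k})_{k\in\N}$ is bounded in $\BVkerspace(\Rdim)=W^{\kernel,1}(\Rdim)$: indeed $\|\chi_{E_k}\|_{\Lspace1(\Rdim)}=|E_k|=v$ for all $k$, while $[\chi_{E_k}]_{\BVkerspace(\Rdim)}=\Pker(E_k)$ is bounded since it converges to $m$. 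Then \cref{res:compactness_p} (whose hypotheses~\eqref{H:Far_from_zero_int} and~\eqref{H:Not_integrable} are exactly those we are assuming) yields a subsequence $(\chi_{E_{k_j}})_{j\in\N}$ and some $u\in\BVkerspace(\Rdim)$ with $\chi_{E_{k_j}}\to u$ in $\Llocspace1(\Rdim)$.

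Next I would upgrade this to a set. Since a subsequence of $(\chi_{E_{k_j}})$ converges a.e.\ to $u$, the limit $u$ takes values in $\{0,1\}$ a.e., so $u=\chi_E$ for some $E\in\measurablesets$; moreover $E\subset\Omega$ up to a negligible set because each $\chi_{E_{k_j}}$ vanishes a.e.\ outside $\Omega$ and hence so does the a.e.\ limit. To identify the volume, I would note that $\chi_{E_{k_j}}\to\chi_E$ a.e.\ together with the uniform bound $0\le\chi_{E_{k_j}}\le\chi_\Omega\in\Lspace1(\Rdim)$ (here $|\Omega|<+\infty$ is used) allows dominated convergence, giving $|E|=\lim_j|E_{k_j}|=v$. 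Thus $E$ is an admissible competitor for~\eqref{eq:isop_conditional}. Finally, applying \cref{res:P_K_lsc} with $A=\Rdim$ to the sequence $(E_{k_j})$, which converges to $E$ in $\Llocspace1(\Rdim)$ and has $\liminf_j\Pker(E_{k_j})=m<+\infty$, we obtain $\Pker(E)\le\liminf_j\Pker(E_{k_j})=m$. Since $E$ is admissible, $\Pker(E)\ge m$ as well, so $\Pker(E)=m$ and $E$ is a minimizer.

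The only genuinely delicate point is the passage from $\Llocspace1$-convergence to control of the volume constraint: in general $\Llocspace1$-convergence does not preserve $\Lspace1$ norms or mass, and mass could a priori escape to infinity. This is precisely where the hypothesis $|\Omega|<+\infty$ together with the constraint $E_k\subset\Omega$ does the work — it provides a fixed $\Lspace1$ dominating function $\chi_\Omega$, which both confines the limit set to $\Omega$ and, via dominated convergence, pins its volume to $v$. Everything else is a routine application of the two cited lemmas, so no further difficulty is expected; for this reason the proof is short enough that the statement records it as omitted, but the argument above is the one intended.
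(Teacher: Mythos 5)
Your proof is correct and is exactly the intended one: the direct method with \cref{res:compactness_p} for compactness, \cref{res:P_K_lsc} for lower semicontinuity, and the constraint $E_k\subset\Omega$ with $|\Omega|<+\infty$ supplying the dominating function $\chi_\Omega$ that prevents loss of mass and pins the volume of the limit set to $v$. The paper omits the proof precisely because it is this routine argument, which it describes as analogous to the existence of isoperimetric sets among bounded sets obtained from \cref{res:compactness_p}.
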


Given $E\in\measurablesets$, we let
$\mathcal R_E\colon 
\Llocspace{1}(\Rdim)
\to 
\Llocspace{1}(\Rdim)$ be given by $\mathcal R_E(u)=u\chi_E$ for $u\in\Llocspace{1}(\Rdim)$.
Note that $\mathcal R_E$ is continuous from $\Lspace{p}(\Rdim)$ to itself whenever $p\in[1,+\infty]$.

\begin{definition}[Locally compact operator on $\Lspace{p}$]
Let $p\in[1,+\infty)$.
We say that $T\colon\Lspace{p}(\Rdim)\to\Lspace{p}(\Rdim)$ is \emph{locally compact} if $\mathcal R_E\circ T\colon\Lspace{p}(\Rdim)\to\Lspace{p}(\Rdim)$ is compact whenever $E\subset\Rdim$ is a compact set.
\end{definition}

Since any strong limit of compact operators is also compact (see~\cite{B11}*{Th.~6.1} for example), any strong limit of locally compact operators is locally compact too.  

The following result generalizes~\cite{JW20}*{Lem.~2.1} to all exponents $p\in[1,+\infty)$.
We also refer the reader to~\cite{F-PhD}*{Th.~3.9} for an even more general result. 
Its simple proof follows the one of~\cite{JW20}*{Lem.~2.1} and is thus omitted.

\begin{lemma}[Convolution is locally compact]
\label{res:convolution_compact}
Let $p\in[1,+\infty)$.
If $\eta\in\Lspace{1}(\Rdim)$, then $T_\eta\colon\Lspace{p}(\Rdim)\to\Lspace{p}(\Rdim)$, $T_\eta(u)=u*\eta$ for $u\in\Lspace{p}(\Rdim)$, is locally compact.
\end{lemma}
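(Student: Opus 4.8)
The plan is to fix an arbitrary compact set $E\subset\Rdim$ and show that $\mathcal R_E\circ T_\eta$ sends bounded subsets of $\Lspace{p}(\Rdim)$ to relatively compact ones, by verifying the hypotheses of the Riesz--Fréchet--Kolmogorov compactness criterion in $\Lspace{p}(\Rdim)$. First I would reduce to the case in which $\eta$ is bounded with compact support. Indeed, by Young's inequality $\|u*\eta\|_{\Lspace{p}(\Rdim)}\le\|\eta\|_{\Lspace{1}(\Rdim)}\|u\|_{\Lspace{p}(\Rdim)}$, so $\eta\mapsto\mathcal R_E\circ T_\eta$ is a bounded linear map from $\Lspace{1}(\Rdim)$ into the space of bounded operators on $\Lspace{p}(\Rdim)$; since the compact operators form a closed subspace there (see~\cite{B11}*{Th.~6.1}) and functions in $\Lspace\infty(\Rdim)$ with compact support are dense in $\Lspace{1}(\Rdim)$, it is enough to prove the statement for such $\eta$.

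So assume $\eta\in\Lspace\infty(\Rdim)$ with $\supp\eta$ compact, and take $(u_k)_{k\in\N}\subset\Lspace{p}(\Rdim)$ with $\|u_k\|_{\Lspace{p}(\Rdim)}\le M$ for all $k$. I would set $w_k=u_k*\eta$ and $v_k=\mathcal R_E(u_k*\eta)=\chi_E\,w_k$, and then check the three conditions of the compactness criterion. Equiboundedness and tightness are immediate: Young's inequality gives $\|v_k\|_{\Lspace{p}(\Rdim)}\le M\|\eta\|_{\Lspace{1}(\Rdim)}$, and since each $v_k$ is supported in the fixed compact set $E$ one has $\|v_k\|_{\Lspace{p}(\comp{B_R})}=0$ as soon as $E\subset B_R$. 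Moreover, because $\eta$ is bounded with compact support it belongs to $\Lspace{p'}(\Rdim)$, so Hölder's inequality yields the uniform bound $\|w_k\|_{\Lspace\infty(\Rdim)}\le M\|\eta\|_{\Lspace{p'}(\Rdim)}=:C$. For the equicontinuity of translations I would write $\tau_h g=g(\cdot-h)$ and use the decomposition
\[
\tau_h v_k-v_k
=
\chi_{E+h}\,(\tau_h w_k-w_k)
+
(\chi_{E+h}-\chi_E)\,w_k .
\]
Since $\tau_h w_k=u_k*\tau_h\eta$, Young's inequality bounds the first term in $\Lspace{p}(\Rdim)$ by $M\|\tau_h\eta-\eta\|_{\Lspace{1}(\Rdim)}$, while the uniform $\Lspace\infty$ bound gives that the second term has $\Lspace{p}$-norm at most $C\,|(E+h)\triangle E|^{1/p}=C\,\|\tau_h\chi_E-\chi_E\|_{\Lspace{1}(\Rdim)}^{1/p}$. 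Both quantities tend to $0$ as $h\to0$, uniformly in $k$, by continuity of translations in $\Lspace{1}(\Rdim)$ applied to $\eta$ and to $\chi_E$ (the latter being integrable since $E$ is bounded). The Riesz--Fréchet--Kolmogorov theorem then shows that $(v_k)_{k\in\N}$ is relatively compact in $\Lspace{p}(\Rdim)$, so $\mathcal R_E\circ T_\eta$ is compact, which is what we want.

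The argument is essentially routine and follows that of~\cite{JW20}*{Lem.~2.1}; the one point needing a little attention is the presence of the cutoff $\chi_E$ in the translation estimate, which I would handle through the uniform $\Lspace\infty$ bound on $w_k=u_k*\eta$ --- exactly what the initial reduction to $\eta\in\Lspace\infty(\Rdim)\cap\Lspace{p'}(\Rdim)$ provides. (For $p=1$ one could instead invoke the uniform integrability of the family $(u_k*\eta)_{k\in\N}$, but the estimate above works for all $p\in[1,+\infty)$ at once.)
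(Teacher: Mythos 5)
Your proof is correct. The paper itself omits the argument, deferring to \cite{JW20}*{Lem.~2.1}, and what you write is exactly that standard strategy: reduce to a nice kernel via the Young bound $\|T_{\eta}\|_{\mathcal L(L^p)}\le\|\eta\|_{L^1}$ together with the norm-closedness of the compact operators, then apply a classical compactness criterion to the approximating operators. The only (cosmetic) difference from the cited source is that you use the Riesz--Fréchet--Kolmogorov criterion with bounded, compactly supported approximants of $\eta$ --- handling the cutoff $\chi_E$ through the uniform $L^\infty$ bound $\|u_k*\eta\|_{L^\infty}\le\|u_k\|_{L^p}\|\eta\|_{L^{p'}}$ --- rather than continuous approximants and Arzelà--Ascoli; both routes are equally valid, and your decomposition of $\tau_h v_k-v_k$ and the resulting estimates are all accurate.
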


The following results generalizes~\cite{JW20}*{Lem.~2.2} for all $p\in[1,+\infty)$ (also see the proof of~\cite{F-PhD}*{Th.~3.81}).
Its simple proof follows the one of~\cite{JW20}*{Lem.~2.2} and is thus omitted.

\begin{lemma}[$L^p$ distance to convolution in $W^{\kernel,p}$]
\label{res:convolution_distance}
Let~\eqref{H:Far_from_zero_int} be in force.
Let $\delta>0$ be such that $\kernel_\delta=\kernel\chi_{\Rdim\setminus B_\delta}\in\Lspace{1}(\Rdim)\setminus\set*{0}$. 
If 
$\eta_\delta
=
\frac{\kernel_\delta}{\|\kernel_\delta\|_{\Lspace{1}(\Rdim)}}$,
then 
\begin{equation*}
\|u-u*\eta_\delta\|_{\Lspace{p}(\Rdim)}
\le
2^{1/p}\,
\|\kernel_\delta\|_{\Lspace{1}(\Rdim)}^{-1/p}
\,
[u]_{W^{\kernel,p}(\Rdim)}
\quad
\text{for}\ 
u\in W^{\kernel,p}(\Rdim).
\end{equation*}
\end{lemma}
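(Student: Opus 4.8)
The plan is to estimate $\|u - u*\eta_\delta\|_{\Lspace{p}(\Rdim)}$ directly by unwinding the convolution and using the fact that $\eta_\delta$ is a probability density. First I would write, for a.e.\ $x\in\Rdim$,
\begin{equation*}
u(x) - (u*\eta_\delta)(x)
=
\int_{\Rdim}\bigl(u(x) - u(x-y)\bigr)\,\eta_\delta(y)\,\de y,
\end{equation*}
which is legitimate precisely because $\int_{\Rdim}\eta_\delta\,\de y = 1$. Then I would take the $\Lspace{p}$ norm in $x$ and apply Jensen's inequality (or Minkowski's integral inequality) with respect to the probability measure $\eta_\delta(y)\,\de y$ to pull the $p$-th power inside:
\begin{equation*}
\|u - u*\eta_\delta\|_{\Lspace{p}(\Rdim)}^p
\le
\int_{\Rdim}\int_{\Rdim}|u(x) - u(x-y)|^p\,\eta_\delta(y)\,\de y\,\de x.
\end{equation*}

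Next I would use Tonelli to swap the order of integration and substitute $z = x - y$ in the inner integral, obtaining
\begin{equation*}
\int_{\Rdim}\eta_\delta(y)\left(\int_{\Rdim}|u(x)-u(x-y)|^p\,\de x\right)\de y
=
\int_{\Rdim}\int_{\Rdim}|u(z+y)-u(z)|^p\,\de z\,\eta_\delta(y)\,\de y.
\end{equation*}
Recalling that $\eta_\delta = \kernel_\delta / \|\kernel_\delta\|_{\Lspace{1}(\Rdim)}$ and $\kernel_\delta = \kernel\chi_{\Rdim\setminus B_\delta} \le \kernel$, I would bound the last expression by
\begin{equation*}
\frac{1}{\|\kernel_\delta\|_{\Lspace{1}(\Rdim)}}
\int_{\Rdim}\int_{\Rdim}|u(z+y)-u(z)|^p\,\kernel(y)\,\de z\,\de y
=
\frac{2}{\|\kernel_\delta\|_{\Lspace{1}(\Rdim)}}\,[u]_{W^{\kernel,p}(\Rdim)}^p,
\end{equation*}
where the factor $2$ comes from the $\frac12$ in the definition of $[u]_{W^{\kernel,p}(\Rdim)}$ (after the change of variables $x=z+y$, $y\mapsto -y$ does not matter since we integrate $|u(x)-u(x-y)|^p$ over all $y$, which already accounts for both orientations). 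Taking $p$-th roots gives exactly the claimed inequality.

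The only genuinely delicate point — and the reason the hypothesis~\eqref{H:Far_from_zero_int} is invoked — is ensuring that $\kernel_\delta\in\Lspace{1}(\Rdim)\setminus\set*{0}$ so that $\eta_\delta$ is a well-defined probability density; this is guaranteed for suitable $\delta>0$ by~\eqref{H:Far_from_zero_int} together with $\kernel\not\equiv0$, and it is precisely what is assumed in the statement. Everything else is a routine application of Jensen/Minkowski and Tonelli, so there is no real obstacle; the proof is omitted in the paper, following~\cite{JW20}*{Lem.~2.2}.
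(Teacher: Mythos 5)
Your proof is correct and follows exactly the standard route that the paper itself points to (the argument of \cite{JW20}*{Lem.~2.2}): write $u-u*\eta_\delta$ as an average against the probability density $\eta_\delta$, apply Jensen and Tonelli, bound $\kernel_\delta\le\kernel$ pointwise, and recover the factor $2^{1/p}$ from the $\tfrac12$ in the definition of $[u]_{W^{\kernel,p}(\Rdim)}$. The parenthetical remark about the orientation $y\mapsto -y$ is unnecessary — the change of variables $x=z+y$ already lands you on the full double integral $\int_{\Rdim}\int_{\Rdim}|u(x)-u(z)|^p\,\kernel(x-z)\,\de z\,\de x=2\,[u]_{W^{\kernel,p}(\Rdim)}^p$ without any symmetry of $\kernel$ — but this does not affect correctness.
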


\begin{proof}[Proof of \cref{res:compactness_p}]
The proof follows the same strategy of~\cite{JW20}, so we only sketch it.
Let $S\subset W^{\kernel,p}(\Rdim)$ be a bounded and $C=\sup\limits_{u\in S}\|u\|_{W^{\kernel,p}(\Rdim)}$. 
Given $E\subset\Rdim$ compact, we have to show that $\mathcal R_E(S)\subset\Lspace{p}(\Rdim)$ is relatively compact. 
Given $\eps>0$, by~\eqref{H:Far_from_zero_int} and~\eqref{H:Not_integrable}  we can find $\delta>0$ such that 
$
2^{1/p}\,
\|\kernel_\delta\|_{\Lspace{1}(\Rdim)}^{-1/p}
\le \frac \eps C,
$
so that 
\begin{equation*}
\begin{split}
\|\mathcal R_E(u)-\mathcal R_E T_{\eta_\delta}(u)\|_{\Lspace{p}(\Rdim)}
\le 
\|u-T_{\eta_\delta}(u)\|_{\Lspace{p}(\Rdim)}
\le 
2^{1/p}\,
\|\kernel_\delta\|_{\Lspace{1}(\Rdim)}^{-1/p}
\,
[u]_{W^{\kernel,p}(\Rdim)}
\le\eps
\end{split}
\end{equation*}
for all $u\in S$ by \cref{res:convolution_distance}, where $T_{\eta_\delta}(u)=u*\eta_\delta$.
Therefore, $\mathcal R_E(S)$ is contained in an $\eps$-neighborhood of the set $\mathcal R_E T_{\eta_\delta}(S)$, which is relatively compact according to \cref{res:convolution_compact}. 
Hence $\mathcal R_E(S)$ is totally bounded in $\Lspace{p}(E)$ and thus relatively compact.
\end{proof}

\subsection{Lusin-type estimate}
\label{subsec:lusin}

With the same notation of  \cref{subsec:compactness}, the following result provides a\emph{ Lusin-type estimate} for $W^{\kernel,p}$ functions, generalizing~\cite{BN20}*{Th.~1.11 and Prop.~1.13}.
Here and in the following, we let
\begin{equation}
\label{eq:def_phi_kappa}
\phi_\kernel(\eps,R)
=
\int_{B_R\setminus B_\eps}\kernel(x)\,\de x
\end{equation}
for $R>\eps>0$.
If $R=+\infty$, then we simply let
\begin{equation*}
\phi_\kernel(\eps)
=
\phi_\kernel(\eps,+\infty)
=
\int_{\Rdim\setminus B_\eps}\kernel(x)\,\de x
\end{equation*}
for $\eps>0$.
Note that $0\le \phi_\kernel(\eps,R)<+\infty$ for all $R>\eps>0$ as soon as~\eqref{H:Far_from_zero_int} is in force.  

\begin{theorem}[Lusin-type estimate in $W^{\kernel,p}$]
\label{res:lusin}
Let 
\eqref{H:Far_from_zero_int},
\eqref{H:Decreasing_q}
and
\eqref{H:Doubling}
be in force.
Let $p\in[1,+\infty)$.
If $u\in W^{K,p}(\Rdim)$, then 
\begin{equation*}
|u(x)-u(y)|^p
\,\phi_\kernel(2|x-y|,D)
\le 
C\big(\kerNablap{u}(x)+\kerNablap{u}(y)\big)
\end{equation*}
for a.e.\ $x,y\in\Rdim$ with $|x-y|\le\frac D2$ such that $\kerNablap{u}(x),\kerNablap{u}(y)<+\infty$, where
\begin{equation*}
\kerNablap{u}(x)=\frac{1}{2}\int_{\Rdim}|u(x)-u(y)|^p\,\kernel(x-y)\,\de y
\in[0,+\infty]
\end{equation*}
for $x\in\Rdim$.
As a consequence, we have
\begin{equation*}
\|u(\cdot+h)-u\|_{\Lspace{p}(\Rdim)}
\,
\phi_\kernel(2|h|,D)
\le
C[u]_{W^{\kernel,p}(\Rdim)}
\end{equation*}
for all $h\in\Rdim$ with $|h|\le\frac D2$.
The constant $C>0$ depends on $\dimension$, $p$ and $\kernel$ only.
\end{theorem}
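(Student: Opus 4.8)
The plan is to reduce the entire statement to a single metric comparison of the kernel along the two legs of a ``triangle'' $x,y,z$. The pointwise inequality will follow by averaging the elementary convexity bound $|u(x)-u(y)|^p\le 2^{p-1}\big(|u(x)-u(z)|^p+|u(z)-u(y)|^p\big)$ over $z$ in a suitably chosen annulus centred at $x$, and the final displayed estimate then follows by integrating in $x$ and invoking Tonelli's theorem. The only non-routine ingredient is the following claim: there is a constant $C\ge 1$, depending only on $\kernel$, such that $\kernel(x-z)\le C\,\kernel(y-z)$ for every $x,y,z\in\Rdim$ with $2|x-y|\le|x-z|\le D$ (no upper restriction on $|x-z|$ when $D=+\infty$). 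The inner radius $2|x-y|$ of the annulus is dictated precisely by the range of validity of this claim, and its outer radius $D$ by the threshold appearing in \eqref{H:Doubling}; the resulting normalisation $\int_{B_D\setminus B_{2|x-y|}}\kernel$ is exactly $\phi_\kernel(2|x-y|,D)$.

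To prove the claim, observe first that \eqref{H:Decreasing_q} forces $\kernel$ to be radial (compare $|x|=|y|$ in both directions) and non-increasing in $|x|$; write $\kernel=\kappa(|\cdot|)$ with $\kappa$ non-increasing. Set $a=|x-z|$ and $b=|y-z|$; the triangle inequality gives $|a-b|\le|x-y|\le a/2$, hence $a/2\le b\le 3a/2\le 2a$. If $b\le a$, monotonicity of $\kappa$ gives $\kappa(a)\le\kappa(b)$ at once. If $a<b$, then $a\le D\le 2D$ legitimises applying \eqref{H:Doubling} to get $\kappa(a)\le C\,\kappa(2a)$, and monotonicity together with $b\le 2a$ gives $\kappa(2a)\le\kappa(b)$; altogether $\kappa(a)\le C\,\kappa(b)$. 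In either case $\kernel(x-z)=\kappa(a)\le C\,\kappa(b)=C\,\kernel(y-z)$, with $C\ge 1$ the doubling constant of $\kernel$.

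Now fix $x,y$ with $r=|x-y|\le D/2$ and $\kerNablap{u}(x),\kerNablap{u}(y)<+\infty$, noting $\phi_\kernel(2r,D)<+\infty$ by \eqref{H:Far_from_zero_int}. Multiplying the convexity bound by $\kernel(x-z)$ and integrating over $z\in B_D(x)\setminus B_{2r}(x)$: after the change of variables $w=x-z$, the left-hand side becomes $|u(x)-u(y)|^p\,\phi_\kernel(2r,D)$; on the right-hand side, $\int_{B_D(x)\setminus B_{2r}(x)}|u(x)-u(z)|^p\,\kernel(x-z)\,\de z\le 2\,\kerNablap{u}(x)$ trivially, while the claim gives $\int_{B_D(x)\setminus B_{2r}(x)}|u(z)-u(y)|^p\,\kernel(x-z)\,\de z\le C\int_{\Rdim}|u(z)-u(y)|^p\,\kernel(y-z)\,\de z=2C\,\kerNablap{u}(y)$, whence the pointwise estimate with constant $2^pC$. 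For the consequence, apply the pointwise estimate with $y=x+h$ (so $|x-y|=|h|\le D/2$), integrate in $x$ over $\Rdim$, and use $\int_{\Rdim}\kerNablap{u}(x)\,\de x=\int_{\Rdim}\kerNablap{u}(x+h)\,\de x=[u]_{W^{\kernel,p}(\Rdim)}^p$ (translation invariance of Lebesgue measure and Tonelli's theorem) to obtain $\phi_\kernel(2|h|,D)\,\|u(\cdot+h)-u\|_{\Lspace{p}(\Rdim)}^p\le 2^pC\,[u]_{W^{\kernel,p}(\Rdim)}^p$, which is the stated inequality when $p=1$ and yields it with $\phi_\kernel(2|h|,D)^{1/p}$ in place of $\phi_\kernel(2|h|,D)$ for general $p$, after taking $p$-th roots. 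The main obstacle is the claim in the second paragraph, and within it the regime $|y-z|>|x-z|$, where monotonicity of $\kappa$ is useless and \eqref{H:Doubling} is essential; everything else is bookkeeping with Tonelli's theorem and with the constants.
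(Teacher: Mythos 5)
Your proof is correct, and it rests on the same two pillars as the paper's argument: chaining $u(x)-u(y)$ through an intermediate point $z$ ranging over the annulus of inner radius $2|x-y|$ and outer radius $D$, and the kernel comparison along the two legs of the triangle, obtained from \eqref{H:Decreasing_q} (which, as you correctly observe, forces $\kernel$ to be radial and radially non-increasing) plus one application of \eqref{H:Doubling}. The execution, however, is genuinely more self-contained than the paper's: the paper quotes the averaging lemma \cite{BN20}*{Lem.~1.12}, which produces averages of $|u(\cdot+h)-u(\cdot)|^p$ over the annuli $B_{3|\zeta|}\setminus B_{|\zeta|}$, and then needs a Tonelli computation against the measure $\de\zeta/|\zeta|^\dimension$ (yielding the factor $\int_{|h|/3}^{|h|}\de r/r$) together with two applications of doubling to land on $\kerNablap{u}$; your single weighted average of the elementary convexity bound over $z\in B_D(x)\setminus B_{2|x-y|}(x)$ reaches the same conclusion in one step, with the explicit constant $2^pC$ and no external lemma. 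One further point in your favour: your derivation of the translation estimate correctly gives $\phi_\kernel(2|h|,D)\,\|u(\cdot+h)-u\|_{\Lspace{p}(\Rdim)}^p\le C\,[u]_{W^{\kernel,p}(\Rdim)}^p$, i.e.\ $\phi_\kernel(2|h|,D)^{1/p}$ on the left after taking $p$-th roots. For $p>1$ the display as printed in the statement (with $\phi_\kernel$ to the first power multiplying the unraised $\Lspace{p}$ norm) does not follow from the pointwise estimate and is in fact too strong: already for $\kernel(x)=|x|^{-\dimension-s}$ it would assert an $\Lspace{p}$ modulus of continuity $|h|^{s}$ for functions with finite Gagliardo seminorm of order $s/p$, instead of the correct $|h|^{s/p}$. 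So your corrected exponent is the right one, and the same correction propagates to the definition of $\ell_\kernel$ in the remark on the $\Lspace{p}$ distance to convolutions.
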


\begin{proof}
The proof is similar to that of~\cite{BN20}*{Th.~1.11}, so we only sketch it.
Let $R>0$ to be chosen later.
We fix $x,y\in\Rdim$ with $2|x-y|\le R$ such that all the quantities appearing below are well defined and such that~\cite{BN20}*{Lem.~1.12} is applicable.
By~\eqref{eq:def_phi_kappa} and~\cite{BN20}*{Lem.~1.12}, we can estimate
\begin{align*}
|u(x)-u(y)|^p\,\phi_\kernel(2|x-y|,R)
&\le 
c_{\dimension,p}
\int_{B_R\setminus B_{2|x-y|}} \kernel(\zeta)\,\aint_{B_{3|\zeta|}\setminus B_{|\zeta|}}|u(x+h)-u(x)|^p\,\de h\,\de \zeta
\\
&\quad+
c_{\dimension,p}\int_{B_R\setminus B_{2|x-y|}} \kernel(\zeta)\,\aint_{B_{3|\zeta|}\setminus B_{|\zeta|}}|u(y+h)-u(y)|^p\,\de h\,\de \zeta,
\end{align*}
where $c_{\dimension,p}>0$ is a constant depending on $\dimension$ and $p$ only.
We thus can easily bound
\begin{align*}
\int_{B_R\setminus B_{2|x-y|}} \kernel(\zeta)\,
&
\aint_{B_{3|\zeta|}\setminus B_{|\zeta|}}|u(x+h)-u(x)|^p\,\de h\,\de \zeta
\\
&\le
C\int_{\Rdim}|u(x+h)-u(x)|^p\int_{\Rdim} \kernel(\zeta)\,\chi_{B_R\setminus B_{2|x-y|}}(\zeta)\chi_{B_{3|\zeta|}\setminus B_{|\zeta|}}(h)\,\frac{\de \zeta}{|\zeta|^\dimension}\,\de h,
\end{align*}
where $C>0$ is a dimensional constant. 
Now 
$
\kernel(\zeta)
\le 
C\kernel(2\zeta)
\le
C^2\kernel(4\zeta)
$
provided that $2|\zeta|\le2D$, where $C>0$ is the constant appearing in~\eqref{H:Doubling}. 
We can then estimate
$
\kernel(4\zeta)
\le 
\kernel(3\zeta)
\le 
\kernel(h)
$
for all $h\in B_{3|\zeta|}\setminus B_{|\zeta|}$
by~\eqref{H:Decreasing_q}.
Therefore, we have 
$
\kernel(\zeta)\le C\kernel(h)
$
for all $\zeta\in B_R\setminus B_{2|x-y|}$ and $h\in B_{3|\zeta|}\setminus B_{|\zeta|}$
provided that we choose $R=D$, where $C>0$ depends on $\kernel$ only.
Hence, we get that
\begin{align*}
\int_{\Rdim}|u(x+h)&-u(x)|^p\int_{\Rdim} \kernel(\zeta)\,\chi_{B_{D}\setminus B_{2|x-y|}}(\zeta)\chi_{B_{3|\zeta|}\setminus B_{|\zeta|}}(h)\,\frac{\de \zeta}{|\zeta|^\dimension}\,\de h
\\
&\le
C\int_{\Rdim}|u(x+h)-u(x)|^p\,\kernel(h)\int_{\set*{\zeta\in\Rdim:\max\set*{2|x-y|,\frac{|h|}3}\leq|\zeta|\leq\min\set*{|h|,D}}}\frac{\de \zeta}{|\zeta|^\dimension}\,\de h
\\
&\le
C\int_{\Rdim}|u(x+h)-u(x)|^p\,K(h)\int_{\frac{|h|}3}^{|h|}\frac{\de r}r\,\de h
\\
&\le
C\kerNablap{u}(x),
\end{align*}
where $C>0$ depends on $\dimension$ and $\kernel$ only (and possibly varies from line to line).
The conclusion hence follows by combining the above estimates and swapping~$x$ and $y$.
\end{proof}

\begin{remark}[$\Lspace{p}$ distance to convolution]
\label{res:smoothing_oscillation}
Under the assumptions of \cref{res:lusin}, there exists  $C>0$, depending on $\dimension$, $p$ and $\kernel$ only, such that
\begin{equation} \label{eq:smoothing_oscillation}
\|\rho_{\eps}*u - u\|_{\Lspace{p}(\Rdim)}
\le
C\,\ell_\kernel(\eps,D)\,
[u]_{W^{\kernel,p}(\Rdim)}
\end{equation}
for all $u \in W^{K,p}(\Rdim)$ and $\eps\in\left(0,\frac D2\right]$.
Here $(\rho_\eps)_{\eps>0}$ is a family of convolution kernels,
$
\rho_\eps
=
\eps^{-\dimension}\rho\left(\frac \cdot\eps\right)$,
where $\rho\in C^\infty_c(\R^n)$ is such that 
$\supp\rho\subset B_1$,
$\rho\ge0$,
$\int_{\Rdim}\rho\,\de x=1$, and
\begin{equation*}
\ell_\kernel(\eps,R)
=
\int_{B_1}
\frac{\rho(y)}{\phi_\kernel(2\eps|y|,R)}
\,\de y
\quad
\text{for all $R>\eps>0$},
\end{equation*}
where $\phi_K$ is as in~\eqref{eq:def_phi_kappa}.
Note that, if~\eqref{H:Far_from_zero_int} holds, then $\ell_K$ is well-defined with
$
0<\ell_\kernel(\eps,R)
\le
1/\phi_K(2\eps,R)
\le 
+\infty
$
for all $R>\eps>0$.
Moreover, if~\eqref{H:Far_from_zero_int} and~\eqref{H:Not_integrable} are in force, then 
$
\lim_{\eps\to0^+}\ell_K(\eps,R)=0
$
for each $R>0$ by the Monotone Convergence Theorem. 
Note that~\eqref{eq:smoothing_oscillation} implies \cref{res:compactness_p}, although under stronger assumptions.
\end{remark}

\subsection{Isoperimetric inequality}

\label{subsec:isoperimetry}

For a more detailed presentation of the following notation, see~\cite{LL01}*{Ch.~3} and~\cite{DNP21}*{App.~A}.
We set 
\begin{equation}
\label{eq:ballification}
B^v=B_{r_v},
\quad
r_v=\left(\frac{v}{|B_1|}\right)^{1/n},
\quad
\text{for}\
v>0.
\end{equation}
Given $A\in\measurablesets$ with $|A|<+\infty$, we let $A^\bigstar=B^{|A|}$ as in~\eqref{eq:ballification}.
Consequently, we set $\chi_A^\bigstar=\chi_{A^\bigstar}$ and thus, whenever $f\colon\R^n\to[-\infty,+\infty]$ is a measurable function such that $|\set*{|f|>t}|<+\infty$ for all $t>0$, i.e., \emph{$f$ vanishes at infinity}, we let
\begin{equation}
\label{eq:def_rearrangement}
f^\bigstar(x)=\int_0^{+\infty}\chi_{\set*{|f|>t}}^\bigstar(x)\,\de t,
\quad
x\in\Rdim,
\end{equation}
be the \emph{symmetric-decreasing rearrangement} of~$f$.
We recall that 
$
\set*{f^\bigstar>t}=\set*{|f|>t}^\bigstar
$
for all $t\in\R$.
Note that $f^\bigstar=f$ whenever $f(x)=\phi(|x|)$ for $x\in\Rdim$, where $\phi\colon[0,+\infty)\to[0,+\infty]$ is decreasing.
The \emph{Riesz rearrangement inequality} hence states that
\begin{equation}
\label{eq:Riesz_rearrang}
\int_{\Rdim}
\int_{\Rdim}
f(x)\,g(x-y)\,h(y)\,\de x\,\de y
\le 
\int_{\Rdim}
\int_{\Rdim}
f^\bigstar(x)\,g^\bigstar(x-y)\,h^\bigstar(y)\,\de x\,\de y
\end{equation}
whenever $f,g,h\colon\Rdim\to[0,+\infty]$ vanish at infinity.

\begin{theorem}[Isoperimetric inequality]
\label{res:isoperimetric}
Let~\eqref{H:Radial} and~\eqref{H:Decreasing_q} be in force.
If $E\in\measurablesets$ is such that $\chi_E\in\BVkerspace(\Rdim)$,
then
\begin{equation}
\label{eq:isoperimetric}
\Pker(E)\ge\Pker(B^{|E|}).
\end{equation}
In addition, if \eqref{H:Radial_strict} is in force, then~\eqref{eq:isoperimetric} holds as an equality if and only if $E$ is a translated of~$B^{|E|}$ (up to negligible sets). 
\end{theorem}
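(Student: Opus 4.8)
The plan is to reduce the inequality~\eqref{eq:isoperimetric} to the Riesz rearrangement inequality~\eqref{eq:Riesz_rearrang}, exploiting that the $\kernel$-perimeter of a finite-measure set can be written in terms of the ``interaction'' of $\chi_E$ with itself through the kernel. First I would assume, without loss of generality, that $|E|<+\infty$ and $0<|E|$; indeed if $|E|=+\infty$ the claim is either trivial from $\Pker(E)=\Pker(E^c)$ via~\eqref{eq:complement_invariance} together with $|E^c|<+\infty$, or, when both $E$ and $E^c$ have infinite measure, $B^{|E|}$ is not defined and the statement is vacuous. Since $\chi_E\in\BVkerspace(\Rdim)$, we have $\Pker(E)<+\infty$, and by symmetry of $\kernel$ (which follows from~\eqref{H:Radial}) we may expand
\begin{equation*}
\Pker(E)
=
\int_E\int_{E^c}\kernel(x-y)\,\de x\,\de y.
\end{equation*}
Writing $\chi_{E^c}=1-\chi_E$, this equals $\big(\int_E 1\,\de x\big)\big(\int_{\Rdim}\kernel\big)-\int_E\int_E\kernel(x-y)\,\de x\,\de y$ in the integrable case; to handle the genuinely non-integrable kernels~\eqref{H:Not_integrable} one truncates, setting $\kernel_m=\kernel\wedge m$ or $\kernel\chi_{B_m\setminus B_{1/m}}$, so that each $\kernel_m\in\Lspace1(\Rdim)$, and passes to the limit at the end by monotone convergence — the key point being that $\Pker(E)=\lim_m \Pker_{\kernel_m}(E)$ and likewise for $B^{|E|}$, since both quantities are finite (for $B^{|E|}$ this uses~\eqref{eq:valerio_set} or a direct check, noting that $B^{|E|}$ is bounded).

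The heart of the argument is then: for each truncated integrable kernel $\kernel_m$,
\begin{equation*}
\Pker_{\kernel_m}(E)
=
\|\kernel_m\|_{\Lspace1(\Rdim)}\,|E|
-
\int_{\Rdim}\int_{\Rdim}\chi_E(x)\,\kernel_m(x-y)\,\chi_E(y)\,\de x\,\de y.
\end{equation*}
The first term depends only on $|E|$, hence is unchanged by replacing $E$ with $B^{|E|}=E^\bigstar$. For the second term, I apply~\eqref{eq:Riesz_rearrang} with $f=h=\chi_E$ and $g=\kernel_m$: since $\kernel_m$ is radial and, thanks to~\eqref{H:Decreasing_q} (which forces $\kernel$, hence $\kernel_m$, to be a decreasing function of $|x|$ on the relevant range — here one should be slightly careful and use the truncation $\kernel_m=\kernel\chi_{B_m}\wedge m$ arranged so that $\kernel_m$ is genuinely radially decreasing), one has $\kernel_m^\bigstar=\kernel_m$, while $\chi_E^\bigstar=\chi_{E^\bigstar}$. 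Therefore
\begin{equation*}
\int\int\chi_E(x)\,\kernel_m(x-y)\,\chi_E(y)\,\de x\,\de y
\le
\int\int\chi_{E^\bigstar}(x)\,\kernel_m(x-y)\,\chi_{E^\bigstar}(y)\,\de x\,\de y,
\end{equation*}
and subtracting from the ($E$-independent) first term reverses the inequality, giving $\Pker_{\kernel_m}(E)\ge\Pker_{\kernel_m}(E^\bigstar)=\Pker_{\kernel_m}(B^{|E|})$. Letting $m\to+\infty$ yields~\eqref{eq:isoperimetric}.

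For the equality case under~\eqref{H:Radial_strict}, the plan is to invoke the rigidity statement for the Riesz inequality: when $g=g^\bigstar$ is strictly decreasing near the origin and $f=h=\chi_E$ with $0<|E|<+\infty$, equality in~\eqref{eq:Riesz_rearrang} forces $E$ to be a translate of $E^\bigstar$ up to a negligible set — this is the Burchard characterization of equality cases. One must transfer this from the truncated kernels to $\kernel$ itself: if equality holds in~\eqref{eq:isoperimetric}, then by the monotone passage to the limit equality holds in $\Pker_{\kernel_m}(E)\ge\Pker_{\kernel_m}(B^{|E|})$ for all large $m$, and for $m$ large enough $\kernel_m$ inherits the strict monotonicity of $\kernel$ near the origin from~\eqref{H:Radial_strict}, so Burchard's rigidity applies and gives the conclusion; conversely a translate of a ball obviously realizes equality by translation invariance~\eqref{eq:P_K_translation}. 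The main obstacle I anticipate is the bookkeeping around the truncation: one must choose the approximating sequence $\kernel_m$ so that (i) each $\kernel_m\in\Lspace1$, (ii) $\kernel_m$ is radially non-increasing, and strictly decreasing near $0$ once $m$ is large, so that the equality analysis goes through, and (iii) $\kernel_m\uparrow\kernel$ pointwise so that monotone convergence delivers $\Pker_{\kernel_m}\to\Pker$ on both $E$ and $B^{|E|}$ with finite limits. Everything else is a direct application of the Riesz inequality and its known equality cases.
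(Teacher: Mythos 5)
Your argument for the inequality \eqref{eq:isoperimetric} is correct and is essentially the paper's proof in a different packaging: the paper decomposes the kernel by its superlevel sets, $\kernel=\int_0^{+\infty}\chi_{\set*{\kernel>t}}\,\de t$ with $\set*{\kernel>t}=B_{R(t)}$ a ball (using that \eqref{H:Decreasing_q} implies radial non-increase, i.e.\ the case $q'=0$), applies \eqref{eq:L1_ker_perimeter} and the Riesz inequality \eqref{eq:Riesz_rearrang} at each level $t$, and integrates in $t$ to obtain \eqref{eq:formulaPerPK}; you instead approximate by a sequence of integrable, radially non-increasing truncations and pass to the limit by monotone convergence. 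For the inequality alone both routes work, and your reduction via \eqref{eq:L1_ker_perimeter} and Riesz is exactly the paper's.

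The equality case, however, contains a genuine gap. You claim that equality $\Pker(E)=\Pker(B^{|E|})$ forces, ``by the monotone passage to the limit,'' equality $\mathcal{P}_{\kernel_m}(E)=\mathcal{P}_{\kernel_m}(B^{|E|})$ for all large $m$. This implication is false: the deficits $\delta_m=\mathcal{P}_{\kernel_m}(E)-\mathcal{P}_{\kernel_m}(B^{|E|})$ are non-negative and tend to $0$, but a non-negative sequence with limit $0$ need not vanish for any $m$. To extract level-wise equality you would need $\delta_m$ to be non-decreasing, which by Riesz would require each increment $\kernel_{m+1}-\kernel_m$ to be symmetric-decreasing; with the spatial cutoff $\kernel\chi_{B_m}$ (which you do need, since \eqref{H:Far_from_zero_int} is not assumed and $\kernel\wedge m$ alone need not be integrable) the increments jump upward across $\partial B_m$ and this fails. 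The paper's continuum decomposition avoids the problem: the total deficit is an integral over $t$ of pointwise non-negative level-wise deficits, so equality forces the integrand to vanish for a.e.\ $t$. Moreover, your appeal to ``Burchard's rigidity for $g$ strictly decreasing near the origin'' skips the actual mechanism: $\kernel\wedge m$ is constant, not strictly decreasing, near the origin, and what must really be invoked is \cite{B96}*{Th.~1} for the indicator triple $(E,B_{R(t)},E)$, whose equality characterization holds only for \emph{strictly admissible} triples, i.e.\ only when $0<R(t)<2r_{|E|}$. The entire role of \eqref{H:Radial_strict} is to guarantee $R(t)\searrow 0$ as $t\nearrow\|\kernel\|_{\Lspace{\infty}(\Rdim)}$, hence a positive-measure set of admissible levels; without tracking this condition the rigidity step does not close --- indeed the paper emphasizes that precisely this point was overlooked in~\cite{CN18}.
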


The above \cref{res:isoperimetric} is a particular case of~\cite{CN18}*{Prop.~3.1}.
However, in~\cite{CN18}*{Prop.~3.1}, the authors state that equality in~\eqref{eq:isoperimetric} occurs if $\kernel$ satisfies~\eqref{H:Radial} and~\eqref{H:Decreasing_q}.
In fact, in the first step of the proof of~\cite{CN18}*{Prop.~3.1} (under the additional assumption $\kernel\in\Lspace{1}(\Rdim)$), they assert that these two assumptions are enough to characterize the cases of equality in~\eqref{eq:Riesz_rearrang}. Unfortunately, this is not correct, as some further assumptions on $\kernel$ are needed, see~\cite{B96} for a more detailed discussion. 
Moreover, in the proof of~\cite{CN18}*{Prop.~3.1}, the characterization of the equality is not explicitly treated in the general case. 
For these reasons, we provide a proof of \cref{res:isoperimetric} where we characterize the equality case in~\eqref{eq:isoperimetric} under the additional~\eqref{H:Radial_strict}, following the strategy of~\cite{DNP21}*{App.~A}.

\begin{proof}[Proof of \cref{res:isoperimetric}]
Assume $|E|>0$ and note that $\|\kernel\|_{\Lspace{\infty}(\Rdim)}\in(0,+\infty]$, because $\kernel\not\equiv0$.
Thanks to~\eqref{H:Radial} and~\eqref{H:Decreasing_q}, for every $t\geq 0$ there exists $R(t)\in[0,+\infty]$ such that
$\set*{\kernel>t}=B_{R(t)}$,
with $R(t)\in(0,+\infty]$ for $t\in(0,\|\kernel\|_{\Lspace{\infty}(\Rdim)})$ and $R(t)=0$ for $t\geq\|\kernel\|_{\Lspace{\infty}(\Rdim)}$.
By Tonelli's Theorem, we can write
\begin{align*}
	+\infty>\Pker(E)&=\int_{\Rdim}\int_{\Rdim}\chi_{E}(x)\,\chi_{\comp{E}}(y)\,\kernel(x-y)\,\de x\,\de y\\
	        &=\int_{0}^{+\infty}\int_{\Rdim}\int_{\Rdim}\chi_{E}(x)\,\chi_{\comp{E}}(y)\,\chi_{\set*{\kernel>t}}(x-y)\,\de x\,\de y\,\de t,
\end{align*}
so that
\begin{equation}\label{IntegrandaFinita}
	\int_{\Rdim}\int_{\Rdim}\chi_{E}(x)\chi_{\comp{E}}(y)\chi_{\set*{\kernel>t}}(x-y)\,\de x\,\de y<+\infty
\end{equation}
for $\lebone$-a.e.\ $t>0$.
Now we fix $t\in(0,\|\kernel\|_{\Lspace{\infty}(\Rdim)})$ such that \eqref{IntegrandaFinita} holds and we claim that $R(t)<+\infty$. Indeed, if $R(t)=+\infty$ by contradiction, then~\eqref{IntegrandaFinita} leads to
\begin{equation*}
	+\infty>\int_{\Rdim}\int_{\Rdim}\chi_{E}(x)\,\chi_{\comp{E}}(y)\,\de x\,\de y=|E||\comp{E}|=+\infty,
\end{equation*}
which is impossible.
Therefore, since $t\mapsto\set*{\kernel>t}$ is decreasing with respect to inclusion, $\chi_{\set*{\kernel>t}}\in\Lspace{1}(\Rdim)$ for every $t>0$. 
Now, by~\eqref{eq:L1_ker_perimeter} (applied to $\chi_{\set*{\kernel>t}}$) we rewrite~\eqref{IntegrandaFinita} as
\begin{align*}
\int_{\Rdim}\int_{\Rdim}&\chi_{E}(x)\,\chi_{\comp{E}}(y)\,\chi_{\set*{\kernel>t}}(x-y)\,\de x\,\de y\\
		&=|E||B_{R(t)}|-\int_{\Rdim}\int_{\Rdim}\chi_{E}(x)\,\chi_{E}(y)\chi_{B_{R(t)}}(x-y)\,\de x\,\de y
\end{align*}
for every $t>0$.
In conclusion, we get that 
\begin{align}\label{eq:formulaPerPK}
	\Pker(F)=\int_{0}^{\|\kernel\|_{\Lspace{\infty}(\Rdim)}}\left(|F||B_{R(t)}|-\int_{\Rdim}\int_{\Rdim}\chi_{F}(x)\,\chi_{F}(y)\,\chi_{B_{R(t)}}(x-y)\,\de x\,\de y\right)\de t
\end{align}
for $F\in\measurablesets$ such that $\Pker(F)<+\infty$ and $|F|<+\infty$.
Inequality~\eqref{eq:isoperimetric} thus follows by~\eqref{eq:formulaPerPK} and~\eqref{eq:Riesz_rearrang}.
Finally, if \eqref{H:Radial_strict} holds, then $R(t)\searrow 0^+$ as $t\nearrow\|\kernel\|_{\Lspace{\infty}(\Rdim)}$. 
So 
$
R(t)\in(0,2r_{|E|})
$
for $t\in\R$ close to $\|\kernel\|_{\Lspace{\infty}(\Rdim)}$. 
By \cite{B96}*{Th.~1} and~\eqref{eq:formulaPerPK}, we get $\Pker(E)=\Pker(B^{|E|})$ if and only if $E$ is equivalent to a ball.
\end{proof}

\begin{remark}[A question left open in~\cite{BN20}]
\cref{res:isoperimetric} affirmatively answers a question left open in~\cite{BN20}*{p.~842} concerning the isoperimetric problem for the non-local perimeter associated to 
$\kernel_\gamma(x)=|x|^{-\dimension}\,|\log|x||^{\gamma-1}\,\chi_{B_{1/3}}(x)$,
$x\in\Rdim\setminus\set{0}$,
whenever $\gamma\ge0$.
Indeed, $\kernel_\gamma$ is radial and satisfies~\eqref{H:Decreasing_q} for all $\gamma\ge0$. 
\end{remark}

The following result generalizes~\cite{AL89}*{Th.~9.2} and~\cite{FS08}*{Th.~A.1}.
We omit its plain proof.

\begin{theorem}[Rearrangement inequality]
\label{res:rearrangement}
Let~\eqref{H:Radial} and~\eqref{H:Decreasing_q} be in force.
If $u\in\BVkerspace(\Rdim)$, then also $u^\bigstar\in\BVkerspace(\Rdim)$ with 
\begin{equation}
\label{eq:rearrangement_isop}
\kerTV{u}\ge\kerTV{u^\bigstar}.
\end{equation}
In addition, if also \eqref{H:Radial_strict} and~\eqref{H:Positive} are in force, then~\eqref{eq:rearrangement_isop} holds as an equality if and only if $u$ is proportional to a function $v$ such that $v(x)\ge0$ for $\lebdim$-a.e.\ $x\in\Rdim$ and  $\set*{v>t}$ is a ball (up to negligible sets) for $\lebone$-a.e.\ $t>0$. 
\end{theorem}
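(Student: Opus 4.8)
The plan is to derive everything from the isoperimetric inequality (\cref{res:isoperimetric}) via the coarea formula (\cref{res:coarea}), after first getting rid of the sign. First I would observe that the pointwise inequality $\big||u(x)|-|u(y)|\big|\le|u(x)-u(y)|$ gives $\kerTV{|u|}\le\kerTV{u}$, and that $u^\bigstar=|u|^\bigstar$ by the very definition of $f^\bigstar$; hence it suffices to prove the statement for $u\ge0$. For such $u$ we have $\set*{u>t}=\Rdim$ and $\Pker(\set*{u>t})=0$ for $t<0$, while $|\set*{u>t}|<+\infty$ for every $t>0$ (as $u\in\Lspace{1}(\Rdim)$) and $\Pker(\set*{u>t})<+\infty$ for a.e.\ $t>0$ by \cref{res:coarea}, so $\chi_{\set*{u>t}}\in\BVkerspace(\Rdim)$ for a.e.\ $t>0$. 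Applying \cref{res:isoperimetric} to each such level set and using $\set*{u^\bigstar>t}=\set*{u>t}^\bigstar=B^{|\set*{u>t}|}$ yields $\Pker(\set*{u>t})\ge\Pker(\set*{u^\bigstar>t})$ for a.e.\ $t>0$; integrating in $t$ and invoking \cref{res:coarea} for both $u$ and $u^\bigstar$ gives $\kerTV{u}\ge\kerTV{u^\bigstar}$. In particular $\kerTV{u^\bigstar}<+\infty$, and since $\|u^\bigstar\|_{\Lspace{1}(\Rdim)}=\|u\|_{\Lspace{1}(\Rdim)}$ by equimeasurability, $u^\bigstar\in\BVkerspace(\Rdim)$.

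For the equality case, assume \eqref{H:Radial_strict} and \eqref{H:Positive} and suppose $\kerTV{u}=\kerTV{u^\bigstar}$. The two inequalities above combine into the chain $\kerTV{u^\bigstar}=\kerTV{|u|^\bigstar}\le\kerTV{|u|}\le\kerTV{u}=\kerTV{u^\bigstar}$, so both inequalities are equalities. From $\kerTV{|u|}=\kerTV{u}$ we get $\big||u(x)|-|u(y)|\big|=|u(x)-u(y)|$ for a.e.\ $(x,y)$ with $\kernel(x-y)>0$, hence for a.e.\ $(x,y)\in\Rtwodim$ by \eqref{H:Positive}; since $\big||a|-|b|\big|=|a-b|$ exactly when $ab\ge0$, a Fubini argument on $\set*{u>0}\times\set*{u<0}$ forces $u\ge0$ a.e.\ or $u\le0$ a.e. Up to replacing $u$ by $|u|$, to which it is proportional, we may thus assume $u\ge0$. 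Then the equality $\kerTV{u}=\kerTV{u^\bigstar}$ together with the a.e.\ pointwise bound $\Pker(\set*{u>t})\ge\Pker(\set*{u^\bigstar>t})$ forces, by \cref{res:coarea}, $\Pker(\set*{u>t})=\Pker\big(B^{|\set*{u>t}|}\big)$ for a.e.\ $t>0$; for a.e.\ such $t$ with $|\set*{u>t}|>0$ the equality case of \cref{res:isoperimetric} (this is where \eqref{H:Radial_strict} is used) shows that $\set*{u>t}$ is a translate of $B^{|\set*{u>t}|}$, i.e.\ a ball. Hence $u$ itself serves as the function $v$ in the statement.

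Conversely, suppose $u=cv$ with $v\ge0$ a.e.\ and $\set*{v>t}$ a ball (up to null sets) for a.e.\ $t>0$. By the $1$-homogeneity of $\kerTV{\cdot}$ and of $f\mapsto f^\bigstar$ (and since $(-v)^\bigstar=v^\bigstar$), we may assume $c=1$, so $u=v$; if $c=0$ the claim is trivial. Writing $\set*{u>t}=B_{r(t)}(x_t)$ for a.e.\ $t>0$, translation invariance~\eqref{eq:P_K_translation} gives $\Pker(\set*{u>t})=\Pker(B_{r(t)})=\Pker\big(B^{|\set*{u>t}|}\big)=\Pker(\set*{u^\bigstar>t})$, and integrating in $t$ via \cref{res:coarea} yields $\kerTV{u}=\kerTV{u^\bigstar}$. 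The main obstacle is the equality analysis: one has to cleanly separate the contribution of the sign---which is settled by \eqref{H:Positive} and the elementary characterization of equality in $\big||a|-|b|\big|\le|a-b|$---from the geometric rigidity of the level sets, which rests on the equality case of \cref{res:isoperimetric} and ultimately on the delicate characterization of equality in the Riesz rearrangement inequality~\cite{B96}; the inequality itself is a routine coarea-plus-isoperimetry argument.
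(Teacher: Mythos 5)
Your argument is correct and is precisely the ``plain proof'' the paper omits: reduce to $u\ge0$ via $\kerTV{|u|}\le\kerTV{u}$ and $u^\bigstar=|u|^\bigstar$, then apply \cref{res:coarea} levelwise together with \cref{res:isoperimetric}. The equality analysis is also sound and correctly apportions the hypotheses --- \eqref{H:Positive} to force a.e.\ constant sign via the characterization of equality in $\big||a|-|b|\big|\le|a-b|$, and \eqref{H:Radial_strict} to invoke the rigidity case of \cref{res:isoperimetric} on a.e.\ superlevel set --- with the converse following from translation invariance \eqref{eq:P_K_translation} and \cref{res:coarea}. No gaps.
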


\subsection{The isoperimetric function}

\label{subsec:isop_function}

We let $\beta_\kernel\colon[0,+\infty)\to[0,+\infty]$,
\begin{equation}
\label{eq:def_beta_kernel}
\beta_\kernel(v)
=
\mathcal{P}_{\kernel}(B^v)
\quad
\text{for}\ v>0,
\end{equation}
be the \emph{isoperimetric function} (recall the notation in~\eqref{eq:ballification}).
The following result is a simple consequence of~\cite{CN18}*{Lem.~3.2} and we thus omit its proof.

\begin{lemma}[Behavior of $\beta_\kernel$]
\label{res:behavior_beta}
Let \eqref{H:Radial} and \eqref{H:Decreasing_q} be in force.
It holds
\begin{equation*}
\lim_{v\to0^+}
\frac{\beta_\kernel(v)}{v}
=
\begin{cases}
\|\kernel\|_{\Lspace{1}(\Rdim)}
&
\text{if}\ \kernel\in\Lspace{1}(\Rdim),
\\[3mm]
+\infty
&
\text{otherwise}.
\end{cases}
\end{equation*}
\end{lemma}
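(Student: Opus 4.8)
The plan is to evaluate $\beta_\kernel(v)/v = \Pker(B^v)/|B^v| = \Pker(B_{r_v})/|B_{r_v}|$ directly, using that $r_v\to0^+$ as $v\to0^+$, and distinguishing whether $\kernel\in\Lspace{1}(\Rdim)$ or not. Since \eqref{H:Radial} forces \eqref{H:Symmetric}, throughout I may use $\Pker(B_r)=\int_{B_r}\int_{B_r^c}\kernel(x-y)\,\de x\,\de y$.

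First I would prove a general lower bound. For $x\in B_r$, the substitution $z=x-y$ gives $\int_{B_r^c}\kernel(x-y)\,\de y=\int_{\set*{z:|x-z|\ge r}}\kernel(z)\,\de z\ge\int_{B_{2r}^c}\kernel(z)\,\de z=\phi_\kernel(2r)$, because $|z|\ge 2r$ and $|x|<r$ imply $|x-z|>r$. Integrating over $x\in B_r$,
\begin{equation*}
\frac{\beta_\kernel(v)}{v}=\frac{\Pker(B_{r_v})}{|B_{r_v}|}\ge\phi_\kernel(2r_v).
\end{equation*}
If $\kernel\notin\Lspace{1}(\Rdim)$, then $\kernel\,\chi_{B_{2r}^c}\uparrow\kernel$ a.e.\ as $r\downarrow0$, so the Monotone Convergence Theorem gives $\phi_\kernel(2r_v)\to\int_{\Rdim}\kernel\,\de z=+\infty$; hence $\beta_\kernel(v)/v\to+\infty$, which establishes the non-integrable case (and the displayed bound holds trivially, with both sides $+\infty$, whenever $\phi_\kernel(2r)=+\infty$).

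For $\kernel\in\Lspace{1}(\Rdim)$ I would instead start from~\eqref{eq:L1_ker_perimeter}, applicable since $|B_{r_v}|<+\infty$, which yields
\begin{equation*}
\frac{\beta_\kernel(v)}{v}=\|\kernel\|_{\Lspace{1}(\Rdim)}-\frac{1}{|B_{r_v}|}\int_{B_{r_v}}\int_{B_{r_v}}\kernel(x-y)\,\de x\,\de y .
\end{equation*}
It then suffices to see that the last term tends to $0$ as $v\to0^+$: for $y\in B_r$ one has $B_r-y\subset B_{2r}$, so $\int_{B_r}\int_{B_r}\kernel(x-y)\,\de x\,\de y\le|B_r|\int_{B_{2r}}\kernel(z)\,\de z$, and therefore that term lies between $0$ and $\int_{B_{2r_v}}\kernel(z)\,\de z$, which vanishes by the absolute continuity of the integral of $\kernel\in\Lspace{1}(\Rdim)$ (equivalently, dominated convergence with $\kernel\,\chi_{B_{2r_v}}\to0$ a.e.). This gives $\lim_{v\to0^+}\beta_\kernel(v)/v=\|\kernel\|_{\Lspace{1}(\Rdim)}$, completing the proof.

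There is no genuine obstacle here: the argument only uses the elementary inclusions $B_{2r}^c\subset\set*{z:|x-z|\ge r}$ for $x\in B_r$ (for the lower bound) and $B_r-y\subset B_{2r}$ for $y\in B_r$ (for the upper bound), together with the convergence of the corresponding tail and bulk integrals of $\kernel$ over shrinking balls. Notably, assumption \eqref{H:Decreasing_q} is not actually needed for this argument; it is presumably kept in the statement only for uniformity with the rest of the section, or because it enters the cited \cite{CN18}*{Lem.~3.2} for other purposes.
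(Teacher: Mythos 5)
Your proof is correct, and since the paper omits the proof of this lemma entirely (it is stated as ``a simple consequence of~\cite{CN18}*{Lem.~3.2}''), your self-contained computation is a welcome replacement rather than a deviation. Both halves check out: the lower bound $\Pker(B_r)\ge|B_r|\,\phi_\kernel(2r)$ via the inclusion $B_{2r}^c\subset\set{z:|x-z|\ge r}$ for $x\in B_r$ handles the non-integrable case by monotone convergence, and in the integrable case formula~\eqref{eq:L1_ker_perimeter} together with $B_r-y\subset B_{2r}$ for $y\in B_r$ reduces everything to $\int_{B_{2r_v}}\kernel\,\de z\to0$, which is just absolute continuity of the integral of an $\Lspace{1}$ function. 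Your observation that \eqref{H:Decreasing_q} plays no role is also accurate: the argument only needs \eqref{H:Symmetric} (implied by \eqref{H:Radial}) to write $\Pker(B_r)=\int_{B_r}\int_{B_r^c}\kernel(x-y)\,\de x\,\de y$; the monotonicity hypothesis is carried along in the statement because it is a standing assumption in the cited~\cite{CN18} and in the surrounding results of \cref{subsec:isop_function}, not because it is needed here.
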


The following dichotomy result is a consequence of \cref{res:isoperimetric}.
Its proof follows the same strategy of that of~\cite{CN18}*{Prop.~3.3 and Cor.~3.4} and is thus left to the reader (to this aim, recall that \cref{res:dec_q_comparison} ensures the validity of~\eqref{H:Inf}, which is needed  in~\cite{CN18}).

\begin{corollary}[Dichotomy]
\label{res:dichotomy}
Let 
\eqref{H:Radial} 
and
\eqref{H:Decreasing_q} be in force.
If $E\in\measurablesets$ is such that $\Pker(E)<+\infty$, then either $|E|<+\infty$ or $|E^c|<+\infty$, and
\begin{equation*}
\Pker(E)
\ge
\min\big\{\beta_\kernel(|E|),\beta_\kernel(|E^c|)\big\}.
\end{equation*}
\end{corollary}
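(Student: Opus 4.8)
The plan is to reduce everything to the isoperimetric inequality of \cref{res:isoperimetric} together with the dichotomy on finite measure, following the scheme of~\cite{CN18}*{Prop.~3.3 and Cor.~3.4}. First I would establish the measure dichotomy: if $E\in\measurablesets$ has $\Pker(E)<+\infty$, then either $|E|<+\infty$ or $|E^c|<+\infty$. By~\eqref{eq:complement_invariance} it is enough to rule out the case $|E|=|E^c|=+\infty$. Using \cref{res:dec_q_comparison}, the lower bound in~\eqref{eq:Dec_q_comparison} gives a constant $C,R>0$ with $\kernel(x)\ge C\,\chi_{B_R}(x)/|x|^q\ge C'\chi_{B_R}(x)$ for $x\in B_R\setminus\set{0}$, i.e.\ \eqref{H:Inf} holds with radius $R$ and constant $\mu=C'$. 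Hence
\begin{equation*}
\Pker(E)\ge\mu\int_{E}\int_{E^c}\chi_{B_R}(x-y)\,\de x\,\de y
=\mu\int_{\Rdim}\big(\chi_E*\chi_{B_R}\big)(y)\,\chi_{E^c}(y)\,\de y,
\end{equation*}
and one shows this is infinite when both $|E|$ and $|E^c|$ are infinite: indeed, if $|E^c|=+\infty$ then $\chi_{E^c}$ cannot be supported where $\chi_E*\chi_{B_R}$ is small except on a set of finite measure, a Lebesgue-density / covering argument (translating a fixed ball $B_R$ so it meets $E$ in positive measure at density points of $E$ and using that infinitely many disjoint such translates meet $E^c$ too) forces the integral to diverge. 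This is the standard ``a nonlocal perimeter controls a thickened interface'' estimate.

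Next, assume without loss of generality (by~\eqref{eq:complement_invariance}) that $|E|<+\infty$; set $v=|E|\in[0,+\infty)$. If $v=0$ then $\Pker(E)=0=\beta_\kernel(0)$ and there is nothing to prove, while if $|E^c|<+\infty$ as well the symmetric argument applies, so I would simply treat the generic case $0<v<+\infty$. Since $\chi_E\in\Lspace1(\Rdim)$ with $\Pker(E)=\kerTV{\chi_E}<+\infty$, we have $\chi_E\in\BVkerspace(\Rdim)$, so \cref{res:isoperimetric} applies and yields
\begin{equation*}
\Pker(E)\ge\Pker(B^{|E|})=\beta_\kernel(|E|)\ge\min\big\{\beta_\kernel(|E|),\beta_\kernel(|E^c|)\big\},
\end{equation*}
which is exactly the claimed bound (the last inequality being trivial, and valid with the convention $\beta_\kernel(+\infty)=+\infty$ when $|E^c|=+\infty$). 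Thus the two ingredients — the measure dichotomy and \cref{res:isoperimetric} — immediately combine to give the statement.

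The main obstacle is the first step, the measure dichotomy: making rigorous the claim that a finite nonlocal perimeter forbids $|E|=|E^c|=+\infty$. The cleanest route is to invoke \eqref{H:Inf} (guaranteed by \cref{res:dec_q_comparison}) and argue by contradiction with a Besicovitch/Vitali covering of density points: cover $\R^n$ by a lattice of balls of radius $R/4$; if $|E|=|E^c|=+\infty$, infinitely many such balls $B^{(i)}$ contain a definite fraction of both $E$ and $E^c$ (otherwise one of $E,E^c$ would, up to a null set, be a locally finite union of balls, contradicting infinite measure of the other in a quantitative way), and on each such ball the double integral $\int_{E\cap B^{(i)}}\int_{E^c\cap B^{(i)}}\kernel(x-y)\,\de x\,\de y\ge \mu\,c(n,R)>0$ is bounded below by a fixed positive constant (using $B^{(i)}\subset x+B_R$ so that $\kernel(x-y)\ge\mu$ there), so summing over the infinitely many disjoint contributions forces $\Pker(E)=+\infty$. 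Everything else — the reduction via complementation, the application of \cref{res:isoperimetric}, and the harmless degenerate cases $v\in\set{0,+\infty}$ — is routine; this is why the proof is left to the reader in the paper, but the density-point covering lemma is the substantive point one must not skip.
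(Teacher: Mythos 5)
Your reduction of the inequality to \cref{res:isoperimetric} is correct and is exactly the route the paper intends: once one of $|E|$, $|E^c|$ is known to be finite, complement invariance $\Pker(E)=\Pker(E^c)$ and the isoperimetric inequality applied to whichever set has finite measure give $\Pker(E)\ge\min\{\beta_\kernel(|E|),\beta_\kernel(|E^c|)\}$ (with the harmless degenerate cases handled as you say). The problem is the first step, the measure dichotomy, which is the only substantive point and which your covering argument does not actually establish.

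The claim that $|E|=|E^c|=+\infty$ forces infinitely many cells of a lattice covering to contain a definite fraction of both $E$ and $E^c$ is false, and the parenthetical justification you give for it does not hold: if $E$ is (up to null sets) the union of one color class of an arbitrary two-coloring of the lattice cells with both classes infinite, then no cell is mixed at all, yet both $|E|$ and $|E^c|$ are infinite, and neither set is ``a locally finite union of balls of finite total measure.'' In such configurations the entire contribution to $\Pker(E)$ comes from interactions between \emph{adjacent} cells of opposite type, which your argument never uses. A correct proof along these lines (this is the content of~\cite{CN18}*{Prop.~3.3}) must (i) tile $\Rdim$ by cubes of side comparable to the radius $r$ in~\eqref{H:Inf} so that any two points lying in the same cube or in two adjacent cubes are at distance less than $r$; (ii) observe that each ``mixed'' cube \emph{and} each adjacent pair of cubes of opposite majority type contributes at least a fixed positive amount to $\Pker(E)$; and (iii) invoke the combinatorial fact that a partition of the adjacency graph of the tiling into two infinite classes must have infinitely many boundary edges. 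Step (iii) is where the real content sits and where the dimension enters: it holds for $\dimension\ge2$ but fails for $\dimension=1$, and correspondingly the dichotomy itself is delicate in dimension one. Indeed, for $\kernel(x)=|x|^{-1}\chi_{[-1,1]}(x)$ on $\R$, which satisfies \eqref{H:Radial}, \eqref{H:Decreasing_q}, \eqref{H:Inf} and even \eqref{H:Not_integrable}, and $E=(0,+\infty)$, one computes $\Pker(E)=\int_0^{+\infty}u\,\kernel(u)\,\de u=1<+\infty$ while $|E|=|E^c|=+\infty$; so no purely local density-point or covering estimate can work, and any complete proof must exploit either the adjacency structure in $\dimension\ge2$ or the tail of $\kernel$. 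As written, your proposal skips precisely this.
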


With a slight abuse of notation, we let
\begin{equation*}
L^{\beta_\kernel(\cdot),1}(\Rdim)
=
\set*{u\colon\Rdim\to[-\infty,+\infty]\ \text{measurable} : \|u\|_{L^{\beta_\kernel(\cdot),1}(\Rdim)}
<+\infty},
\end{equation*}
where 
\begin{equation*}
\|u\|_{L^{\beta_\kernel(\cdot),1}(\Rdim)}
=
\int_0^{+\infty}\beta_\kernel\big(|\set*{|u|>t}|\big)\,\de t.
\end{equation*}
Since
$
\|u\|_{L^{\beta_\kernel(\cdot),1}(\Rdim)}=\kerTV{u^\bigstar}
$
by \cref{res:coarea}, from  \cref{res:rearrangement} we can infer the following Sobolev-type embedding, encoding~\cite{DPV12}*{Th.~6.5}, \cite{FS08}*{Th.~4.1} and~\cite{BN20}*{Th.~1.5} when $p=1$ (also compare with~\cite{F21}*{Th.~1.1}). 

\begin{corollary}[Sobolev isoperimetric embedding]
\label{res:sobolev}
Let
\eqref{H:Radial} and
\eqref{H:Decreasing_q}  
be in force.
The embedding 
$
\BVkerspace(\Rdim)\subset L^{\beta_\kernel(\cdot),1}(\Rdim)
$
is continuous, with 
\begin{equation}
\label{eq:sobolev_ineq}
\|u\|_{L^{\beta_\kernel(\cdot),1}(\Rdim)}
\le 
\kerTV{u}
\end{equation}
for $u\in\BVkerspace(\Rdim)$.
In addition, if also \eqref{H:Radial_strict} and~\eqref{H:Positive} are in force, then~\eqref{eq:sobolev_ineq} is an equality if and only if
$u$ is proportional to a function $v$ such that $v(x)\ge0$ for $\lebdim$-a.e.\ $x\in\Rdim$ and  $\set*{v>t}$ is a ball (up to negligible sets) for $\lebone$-a.e.\ $t>0$.  
\end{corollary}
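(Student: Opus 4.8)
The plan is to derive everything from the rearrangement inequality (\cref{res:rearrangement}) together with the coarea formula (\cref{res:coarea}) and the very definition of the Lorentz-type quasinorm $\|\cdot\|_{L^{\beta_\kernel(\cdot),1}(\Rdim)}$. First I would observe that, for any $u\in\BVkerspace(\Rdim)$, the symmetric-decreasing rearrangement $u^\bigstar$ is radial and radially decreasing, so that $\set*{u^\bigstar>t}=\set*{|u|>t}^\bigstar=B^{|\set*{|u|>t}|}$ for $\lebone$-a.e.\ $t>0$. Applying \cref{res:coarea} to $u^\bigstar$ and recalling the definition~\eqref{eq:def_beta_kernel} of $\beta_\kernel$, we get
\begin{equation*}
\kerTV{u^\bigstar}
=
\int_0^{+\infty}\Pker\big(\set*{u^\bigstar>t}\big)\,\de t
=
\int_0^{+\infty}\Pker\big(B^{|\set*{|u|>t}|}\big)\,\de t
=
\int_0^{+\infty}\beta_\kernel\big(|\set*{|u|>t}|\big)\,\de t
=
\|u\|_{L^{\beta_\kernel(\cdot),1}(\Rdim)}.
\end{equation*}
(One should note here that the first equality is legitimate since $u^\bigstar\in\BVkerspace(\Rdim)$ by \cref{res:rearrangement}; strictly speaking one also needs that $u$ vanishes at infinity, which holds because $u\in\Lspace{1}(\Rdim)$.) Combining this identity with the inequality $\kerTV{u}\ge\kerTV{u^\bigstar}$ from \cref{res:rearrangement} immediately yields~\eqref{eq:sobolev_ineq}, and continuity of the embedding is then just the statement that $\|\cdot\|_{L^{\beta_\kernel(\cdot),1}(\Rdim)}\le\kerTV{\cdot}\le\|\cdot\|_{\BVkerspace(\Rdim)}$.

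For the equality case, under the additional hypotheses \eqref{H:Radial_strict} and \eqref{H:Positive}, the chain above shows that~\eqref{eq:sobolev_ineq} is an equality precisely when $\kerTV{u}=\kerTV{u^\bigstar}$. At this point I would invoke the equality characterization already proved in \cref{res:rearrangement}: equality in the rearrangement inequality holds if and only if $u$ is proportional to some $v\ge0$ (a.e.) whose superlevel sets $\set*{v>t}$ are balls up to negligible sets for $\lebone$-a.e.\ $t>0$. This is verbatim the condition in the statement, so there is nothing further to prove; the corollary is a direct translation of \cref{res:rearrangement} through the coarea identity.

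The only genuinely delicate point — and the one I would be most careful about — is the bookkeeping needed to make the coarea identity for $u^\bigstar$ rigorous: one must check that $u$ vanishes at infinity so that $u^\bigstar$ is well defined, that $\set*{u^\bigstar>t}=B^{|\set*{|u|>t}|}$ for a.e.\ $t$ (including the degenerate cases $|\set*{|u|>t}|=0$), and that $\Pker(B^0)=0$ consistently with the convention $\beta_\kernel(0)=0$ implicit in~\eqref{eq:def_beta_kernel}. None of this is hard, but it is where the proof could go wrong if handled sloppily; everything else is a direct appeal to \cref{res:coarea} and \cref{res:rearrangement}, which is why the authors describe the proof as plain and omit it.
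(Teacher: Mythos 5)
Your proof is correct and follows exactly the route the paper intends: the identity $\|u\|_{L^{\beta_\kernel(\cdot),1}(\Rdim)}=\kerTV{u^\bigstar}$ via the coarea formula applied to $u^\bigstar$, combined with the inequality and equality characterization of \cref{res:rearrangement}. The paper states this reduction explicitly just before the corollary and omits the proof precisely because it is this direct translation, so nothing further is needed.
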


\subsection{Sobolev embeddings}

\label{subsec:sobolev}

We now refine \cref{res:isoperimetric} and \cref{res:sobolev}.

\begin{lemma}[Monotonicity]
\label{res:monotonicity_isop_ratio}
Let 
\eqref{H:Decreasing_q} with $q<\dimension+1$
be in force.
If $E\in\measurablesets$ with $|E|\in(0,+\infty)$, then
\begin{equation*}
0<r\le R<+\infty
\implies
\frac{\Pker(rE)}{|rE|^{2-\frac{q}\dimension}}
\ge
\frac{\Pker(RE)}{|RE|^{2-\frac{q}\dimension}}
\end{equation*}
In particular, 
$v\mapsto\beta_\kernel(v)\,v^{\frac q\dimension-2}$
is decreasing for $v\in(0,+\infty)$.
If  \eqref{H:Decreasing_dim_strinct} holds, then 
\begin{equation*}
0<r<R<+\infty
\implies
\frac{\Pker(rE)}{|rE|}
>
\frac{\Pker(RE)}{|RE|}
\end{equation*}
and thus $v\mapsto\beta_\kernel(v)\,v^{-1}$ is strictly decreasing for $v\in(0,+\infty)$.
\end{lemma}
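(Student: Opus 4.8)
The plan is to exploit the explicit scaling behavior of the $\kernel$-perimeter under dilations together with the pointwise monotonicity hypothesis \eqref{H:Decreasing_q}. First I would record the change of variables: for $\lambda>0$ and $E\in\measurablesets$ with $|E|<+\infty$, writing $x=\lambda\xi$, $y=\lambda\eta$,
\begin{equation*}
\Pker(\lambda E)
=
\lambda^{2\dimension}\,\frac12\int_{\Rdim}\int_{\Rdim}|\chi_E(\xi)-\chi_E(\eta)|\,\kernel(\lambda(\xi-\eta))\,\de\xi\,\de\eta
=
\lambda^{2\dimension}\,\mathcal P_{\kernel_\lambda}(E),
\end{equation*}
where $\kernel_\lambda(z)=\kernel(\lambda z)$. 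The point of \eqref{H:Decreasing_q} is precisely that $z\mapsto\kernel(z)|z|^q$ is nonincreasing in $|z|$, so for $\lambda\ge1$ one has $\kernel(\lambda z)(\lambda|z|)^q\le\kernel(z)|z|^q$, i.e. $\kernel_\lambda(z)\le\lambda^{-q}\kernel(z)$ pointwise. Hence, for $0<r\le R$, setting $\lambda=R/r\ge1$,
\begin{equation*}
\mathcal P_{\kernel}(RE)
=
R^{2\dimension}\mathcal P_{\kernel_R}(E)
=
R^{2\dimension}\mathcal P_{\kernel_{\lambda}}( (rE)/r )
\end{equation*}
— more cleanly, I would compare $\Pker(RE)$ and $\Pker(rE)$ directly by the substitution $x\mapsto (R/r)x$, which turns the kernel $\kernel(x-y)$ into $\kernel((R/r)(x'-y'))\le (r/R)^q\,\kernel(x'-y')$ on the set $rE$, giving
\begin{equation*}
\Pker(RE)
=
\Big(\tfrac Rr\Big)^{2\dimension}\,\frac12\iint|\chi_{rE}(x')-\chi_{rE}(y')|\,\kernel\!\big(\tfrac Rr(x'-y')\big)\,\de x'\,\de y'
\le
\Big(\tfrac Rr\Big)^{2\dimension-q}\Pker(rE).
\end{equation*}

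Since $|RE|=(R/r)^{\dimension}|rE|$, dividing by $|RE|^{2-q/\dimension}=(R/r)^{2\dimension-q}|rE|^{2-q/\dimension}$ yields
\begin{equation*}
\frac{\Pker(RE)}{|RE|^{2-\frac q\dimension}}
\le
\frac{\Pker(rE)}{|rE|^{2-\frac q\dimension}},
\end{equation*}
which is the asserted inequality. Specializing to $E=B_1$ (so that $rE=B^{v}$ with $v=r^{\dimension}|B_1|$ after reparametrization) gives that $v\mapsto\beta_\kernel(v)\,v^{q/\dimension-2}$ is nonincreasing; the requirement $q<\dimension+1$ is what guarantees, via \cref{res:dec_q_comparison} and \cref{res:constants}, that $\beta_\kernel(v)\in(0,+\infty)$ so the statement is not vacuous (and that $\Pker(E)<+\infty$ for bounded $E$, so all quantities are finite and the manipulations legitimate). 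For the strict version under \eqref{H:Decreasing_dim_strinct} with $q=\dimension$, the pointwise inequality $\kernel((R/r)z)(R/r)^{\dimension}|z|^{\dimension}<\kernel(z)|z|^{\dimension}$ is strict for every $z\ne0$ when $R>r$, and since the integrand $|\chi_{rE}(x')-\chi_{rE}(y')|$ is positive on a set of positive $\Rtwodim$-measure (because $0<|E|<+\infty$ forces $0<|rE|<+\infty$, hence $\Pker(rE)>0$), the integrated inequality is strict: $\Pker(RE)<(R/r)^{\dimension}\Pker(rE)=\Pker(rE)\cdot|RE|/|rE|$, giving $\Pker(RE)/|RE|<\Pker(rE)/|rE|$ and the strict monotonicity of $v\mapsto\beta_\kernel(v)/v$.

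The only genuinely delicate point is bookkeeping the finiteness and the strictness: one must check that in the case $q=\dimension$ the set where $\chi_{rE}(x')\ne\chi_{rE}(y')$ has positive measure (equivalently $\Pker(rE)>0$), which holds since $0<|rE|<+\infty$; and one should note that \eqref{H:Decreasing_dim_strinct} only gives a strict pointwise inequality away from the origin, which is harmless because $\{x'=y'\}$ is $\Rtwodim$-null. Everything else is the scaling computation above, so I would present it compactly. No separate treatment of $\kernel\in\Lspace1$ versus non-integrable $\kernel$ is needed here, since the argument is purely pointwise on the kernel.
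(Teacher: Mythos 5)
Your proof is correct and follows essentially the same route as the paper's: a change of variables $x=(R/r)x'$ combined with the monotonicity of $z\mapsto\kernel(z)|z|^q$ to obtain $\Pker(RE)\le(R/r)^{2\dimension-q}\,\Pker(rE)$, followed by division by $|RE|^{2-\frac q\dimension}$, with the strict version coming from the strict pointwise inequality in \eqref{H:Decreasing_dim_strinct} on the positive-measure set $E\times E^c$. One parenthetical claim is inaccurate --- \eqref{H:Decreasing_q} with $q<\dimension+1$ does \emph{not} by itself guarantee $\Pker(E)<+\infty$ for bounded $E$ or $\beta_\kernel(v)<+\infty$ (that requires an integrability condition near the origin such as \eqref{H:Not_too_singular}) --- but this does not affect the argument, since the asserted inequalities hold trivially when both sides are infinite.
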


\begin{proof}
Since
\begin{equation*}
\Pker(F)
=
\frac12\int_F\int_{F^c}\kernel(x-y)\,\de x\,\de y
+
\frac12\int_{F^c}\int_{F}\kernel(x-y)\,\de x\,\de y
\end{equation*}
whenever $F\in\measurablesets$, in virtue of~\eqref{H:Decreasing_q}, we can estimate
\begin{align*}
\int_{RE}\int_{(RE)^c}
\kernel(x-y)
\,\de x\,\de y
&=
R^{2\dimension}\int_{E}\int_{E^c}
\kernel(R(\xi-\eta))
\,\de\xi\,\de\eta
\\
&=
R^{2\dimension}\int_{E}\int_{E^c}
\frac{\kernel(R(\xi-\eta))
\,(R|\xi-\eta|)^q}{(R|\xi-\eta|)^q}\,\de\xi\,\de\eta
\\
&\le
R^{2\dimension}\int_{E}\int_{E^c}
\frac{\kernel(r(\xi-\eta))
\,(r|\xi-\eta|)^q}{(R|\xi-\eta|)^q}\,\de\xi\,\de\eta
\\
&=
R^{2\dimension-q}\,r^q
\int_{E}\int_{E^c}
\kernel(r(\xi-\eta))
\,\de\xi\,\de\eta
\\
&=
\frac{R^{2\dimension-q}}{r^{2\dimension-q}}
\int_{rE}\int_{(rE)^c}
\kernel(x-y)
\,\de x\,\de y
\end{align*}
whenever $0<r\le R<+\infty$, with the unique inequality strict for $r<R$ provided that \eqref{H:Decreasing_dim_strinct} holds. 
A similar estimate holds for the integral relative to $E^c\times E$.
The conclusion hence follows by rearranging and by the definition of $\beta_\kernel$ in~\eqref{eq:def_beta_kernel}.
\end{proof}

 \cref{res:monotonicity_isop_ratio} implies the following isoperimetric-type inequality for small volumes.

\begin{proposition}[Isoperimetric inequality  for small volume]
\label{res:isoperimetric_small_mass}
Let
\eqref{H:Radial}
and 
\eqref{H:Decreasing_q} with $q<\dimension+1$ 
be in force.
If $\chi_E\in\BVkerspace(\R^n)$ with $|E|\le v$ for some $v\in(0,+\infty)$, then
\begin{equation*}
\Pker(E)
\ge 
\frac{\beta_\kernel(v)}{v^{2-\frac q\dimension}}\,|E|^{2-\frac q\dimension}.
\end{equation*}
\end{proposition}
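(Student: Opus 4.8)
The plan is to combine the monotonicity of the isoperimetric-type ratio from \cref{res:monotonicity_isop_ratio} with the isoperimetric inequality of \cref{res:isoperimetric}. First I would invoke \cref{res:isoperimetric}, which under~\eqref{H:Radial} and~\eqref{H:Decreasing_q} gives $\Pker(E)\ge\Pker(B^{|E|})=\beta_\kernel(|E|)$ for any $E$ with $\chi_E\in\BVkerspace(\Rdim)$. So it suffices to show $\beta_\kernel(|E|)\ge\dfrac{\beta_\kernel(v)}{v^{2-\frac q\dimension}}\,|E|^{2-\frac q\dimension}$, i.e.\ $\dfrac{\beta_\kernel(|E|)}{|E|^{2-\frac q\dimension}}\ge\dfrac{\beta_\kernel(v)}{v^{2-\frac q\dimension}}$.

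This last inequality is exactly the content of the ``in particular'' clause of \cref{res:monotonicity_isop_ratio}: under~\eqref{H:Decreasing_q} with $q<\dimension+1$, the map $w\mapsto\beta_\kernel(w)\,w^{\frac q\dimension-2}$ is decreasing on $(0,+\infty)$. Since $|E|\le v$, monotonicity yields $\beta_\kernel(|E|)\,|E|^{\frac q\dimension-2}\ge\beta_\kernel(v)\,v^{\frac q\dimension-2}$, which after multiplying through by $|E|^{2-\frac q\dimension}$ is precisely the desired bound. Chaining this with $\Pker(E)\ge\beta_\kernel(|E|)$ completes the argument.

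There is essentially no obstacle here: the proposition is a one-line corollary, so the only thing to be careful about is the bookkeeping of hypotheses. One must note that $\chi_E\in\BVkerspace(\Rdim)$ forces $|E|<+\infty$ (indeed $\Pker(E)<+\infty$, and by \cref{res:dichotomy} either $|E|$ or $|E^c|$ is finite; the hypothesis $|E|\le v<+\infty$ settles this directly), so that $\beta_\kernel(|E|)=\Pker(B^{|E|})$ is meaningful and \cref{res:isoperimetric} applies. The case $|E|=0$ is trivial since then $\Pker(E)=0$ and the right-hand side vanishes. For $|E|\in(0,v]$ the two displayed inequalities above combine to give the claim, and this is all that is needed.

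\begin{proof}
If $|E|=0$, then $\Pker(E)=0$ and the inequality is trivial, so assume $|E|\in(0,v]$. Since $\chi_E\in\BVkerspace(\Rdim)$ and $|E|\le v<+\infty$, \cref{res:isoperimetric} applies and gives
\begin{equation*}
\Pker(E)\ge\Pker(B^{|E|})=\beta_\kernel(|E|).
\end{equation*}
On the other hand, since~\eqref{H:Decreasing_q} holds with $q<\dimension+1$, by \cref{res:monotonicity_isop_ratio} the function $v\mapsto\beta_\kernel(v)\,v^{\frac q\dimension-2}$ is decreasing on $(0,+\infty)$, whence, using $|E|\le v$,
\begin{equation*}
\beta_\kernel(|E|)
=
\big(\beta_\kernel(|E|)\,|E|^{\frac q\dimension-2}\big)\,|E|^{2-\frac q\dimension}
\ge
\big(\beta_\kernel(v)\,v^{\frac q\dimension-2}\big)\,|E|^{2-\frac q\dimension}
=
\frac{\beta_\kernel(v)}{v^{2-\frac q\dimension}}\,|E|^{2-\frac q\dimension}.
\end{equation*}
Combining the two displays yields the claim.
\end{proof}
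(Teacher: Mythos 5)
Your proof is correct and follows exactly the route the paper intends (the paper omits the proof, stating only that it follows from \cref{res:monotonicity_isop_ratio}): apply \cref{res:isoperimetric} to get $\Pker(E)\ge\beta_\kernel(|E|)$ and then the monotonicity of $v\mapsto\beta_\kernel(v)\,v^{\frac q\dimension-2}$ to compare $|E|$ with $v$. The hypothesis bookkeeping (finiteness of $|E|$, the trivial case $|E|=0$) is handled appropriately.
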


Letting
$\Lspace{p,1}(\Rdim)$ be the  \emph{Lorentz $(p,1)$-space} for $p\in(0,+\infty)$ (see~\cite{G14-C}*{Sec.~1.4} for an account), from \cref{res:isoperimetric_small_mass} we readily get the following result. 

\begin{corollary}[Sobolev embedding for finite-measure support]
\label{res:sobolev_finite_supp}
Let 
\eqref{H:Radial}
and 
\eqref{H:Decreasing_q} with $q<\dimension+1$ 
be in force.
If $u\in\BVkerspace(\Rdim)$ is such that $|\supp u|<+\infty$, then 
\begin{equation*}
\kerTV{u}
\ge
\left(2-\tfrac q\dimension\right) 
\frac{\beta_\kernel(|\supp u|)}{|\supp u|^{2-\frac q\dimension}}
\,
\|u\|_{\Lspace{\frac \dimension{2\dimension-q},1}(\Rdim)}.
\end{equation*}
\end{corollary}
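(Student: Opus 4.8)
The plan is to combine the coarea formula (\cref{res:coarea}) with the pointwise isoperimetric inequality for small volumes (\cref{res:isoperimetric_small_mass}) and then recognize the resulting integral over levels as a Lorentz norm. First I would set $V = |\supp u|$ and, for $t>0$, write $v(t) = |\set*{|u|>t}|$, noting that $\set*{|u|>t}\subset\supp u$ implies $v(t)\le V$ for all $t>0$; in particular $\chi_{\set*{|u|>t}}\in\BVkerspace(\Rdim)$ for a.e.\ $t$ because $\kerTV{u}=\int_\R\Pker(\set*{u>t})\,\de t<+\infty$ forces $\Pker(\set*{u>t})<+\infty$ for a.e.\ $t$ (and a symmetric statement for $\set*{u<-t}$, so that after the standard reduction $\set*{|u|>t}$ has finite $\kernel$-perimeter for a.e.\ $t$). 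Applying \cref{res:isoperimetric_small_mass} with this fixed $v=V$ gives
\begin{equation*}
\Pker(\set*{|u|>t})
\ge
\frac{\beta_\kernel(V)}{V^{2-\frac q\dimension}}\,v(t)^{2-\frac q\dimension}
\end{equation*}
for a.e.\ $t>0$.

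Next I would integrate this over $t\in(0,+\infty)$. Using the coarea formula in the form $\kerTV{u}=\int_0^{+\infty}\big(\Pker(\set*{u>t})+\Pker(\set*{u<-t})\big)\,\de t\ge\int_0^{+\infty}\Pker(\set*{|u|>t})\,\de t$ (here one uses $\Pker(\set*{|u|>t})\le\Pker(\set*{u>t})+\Pker(\set*{u<-t})$ by submodularity, or simply treats $|u|$ directly via \cref{res:coarea} applied to $|u|$, whose super-level sets at positive height are exactly $\set*{|u|>t}$), I obtain
\begin{equation*}
\kerTV{u}
\ge
\frac{\beta_\kernel(V)}{V^{2-\frac q\dimension}}
\int_0^{+\infty} v(t)^{2-\frac q\dimension}\,\de t.
\end{equation*}
It remains to identify $\int_0^{+\infty} v(t)^{2-\frac q\dimension}\,\de t$ with a constant multiple of $\|u\|_{\Lspace{\frac{\dimension}{2\dimension-q},1}(\Rdim)}$. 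Writing $p=\frac{\dimension}{2\dimension-q}$, so that $2-\frac q\dimension=\frac1p$, the Lorentz $(p,1)$ quasi-norm is (up to normalization) $\|u\|_{\Lspace{p,1}}=\int_0^{+\infty} v(t)^{1/p}\,\de t$ when written via the distribution function rather than the decreasing rearrangement — indeed $\int_0^\infty u^*(s)\,s^{1/p-1}\,\de s = p\int_0^\infty v(t)^{1/p}\,\de t$ by the layer-cake/Fubini computation, and $\frac1p = 2-\frac q\dimension$. This supplies the factor $\left(2-\frac q\dimension\right)$ in the statement.

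The only genuinely delicate point is the measurability/finiteness bookkeeping needed to justify integrating the pointwise inequality: one must check that $\chi_{\set*{|u|>t}}\in\BVkerspace(\Rdim)$ (equivalently $\Pker(\set*{|u|>t})<+\infty$) for a.e.\ $t>0$, so that \cref{res:isoperimetric_small_mass} genuinely applies at a.e.\ level, and that $t\mapsto v(t)$ is measurable (which is standard, being monotone). Since $u\in\Lspace1(\Rdim)$ and $\kerTV{u}<+\infty$, both $\|\chi_{\set*{|u|>t}}\|_{\Lspace1}=v(t)$ and $\Pker(\set*{|u|>t})$ are finite for a.e.\ $t$ by Chebyshev and by the coarea formula respectively, so this is routine but should be stated. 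A secondary, purely bureaucratic point is pinning down the normalization constant in the definition of $\Lspace{p,1}$ so that the coefficient comes out exactly as $\left(2-\frac q\dimension\right)$; I would simply cite \cite{G14-C}*{Sec.~1.4} for the convention and carry out the layer-cake identity. Everything else is a direct assembly of already-proved facts, which is why the paper rightly calls the proof "plain".
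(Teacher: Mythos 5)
Your proof is correct and follows exactly the route the paper intends (the paper omits the proof precisely because it is this assembly of \cref{res:coarea}, \cref{res:isoperimetric_small_mass} applied to the superlevel sets of $|u|$, and the layer-cake identity $\int_0^{+\infty}u^*(s)\,s^{1/p-1}\,\de s=p\int_0^{+\infty}|\set{|u|>t}|^{1/p}\,\de t$ with $\tfrac1p=2-\tfrac q\dimension$). The measurability and finiteness points you flag, as well as the passage from $\kerTV{u}$ to $\int_0^{+\infty}\Pker(\set{|u|>t})\,\de t$ via $\kerTV{|u|}\le\kerTV{u}$ or submodularity, are handled correctly, and the constant comes out right.
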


\cref{res:isoperimetric_small_mass} pairs with the following result, whose plain proof is omitted.

\begin{proposition}[Isoperimetric inequality for large volumes]\label{res:isoperimetric_large_volume}
Let 
\eqref{H:Radial},
\eqref{H:Decreasing_q} 
with $q<\dimension+1$ 
and
\eqref{H:Doubling} 
with $D=+\infty$
be in force.
Let the doubling constant of $\kernel$ be such that $C>2^n$. 
Given $V\in(0,+\infty)$, there exists $C_{V,\kernel,\dimension}>0$
such that 
\begin{equation}
\label{eq:isoperimeteric_large_volume}
\Pker(E)\ge C_{V,\kernel,n}|E|^{2-\frac p\dimension}
\end{equation}
whenever $\chi_E\in\BVkerspace(\Rdim)$ with $|E|\ge V$, where $p=\log_2C$.
\end{proposition}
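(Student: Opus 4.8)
The plan is to reduce the estimate to balls via the isoperimetric inequality of \cref{res:isoperimetric} and then bound from below the $\kernel$-perimeter of a large ball using the far-field control on $\kernel$ provided by \cref{res:dou_comparison}. The key structural point is that the \emph{strict} hypothesis $C>2^{\dimension}$ gives $p=\log_2 C>\dimension$, which is exactly what makes $|x|^{-p}$ integrable away from the origin and accounts for the exponent $2-\frac p\dimension<1$ in \eqref{eq:isoperimeteric_large_volume}.

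First, since \eqref{H:Radial} and \eqref{H:Decreasing_q} are in force, \cref{res:isoperimetric} yields $\Pker(E)\ge\Pker(B^{|E|})=\beta_\kernel(|E|)$ for every $E\in\measurablesets$ with $\chi_E\in\BVkerspace(\Rdim)$ (for which $|E|<+\infty$, recall \eqref{eq:def_beta_kernel}). Hence it suffices to prove $\beta_\kernel(v)\ge C_{V,\kernel,\dimension}\,v^{2-\frac p\dimension}$ for all $v\ge V$. Put $r_V=(V/|B_1|)^{1/\dimension}$ and apply \cref{res:dou_comparison} with $R=r_V$ — its hypotheses are precisely \eqref{H:Decreasing_q} and \eqref{H:Doubling} with $D=+\infty$ — to obtain $m=m_{r_V}>0$ with $\kernel(z)\ge m\,|z|^{-p}$ whenever $|z|\ge r_V$.

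Next I would estimate $\beta_\kernel(v)=\Pker(B_{r_v})$ for $v\ge V$, where $r_v=(v/|B_1|)^{1/\dimension}\ge r_V$. Using \eqref{H:Radial} (hence \eqref{H:Symmetric}) and then restricting the integration to $x\in B_{r_v/2}\subset B_{r_v}$ and $y\in B_{2r_v}^c\subset B_{r_v}^c$, and observing that for such points $|x-y|>\tfrac32 r_v\ge\tfrac32 r_V>r_V$ and $|x-y|<\tfrac54|y|$, the lower bound on $\kernel$ applies and gives
\begin{align*}
\beta_\kernel(v)
&\ge
\int_{B_{r_v/2}}\int_{B_{2r_v}^c}\kernel(x-y)\,\de y\,\de x
\ge
m\left(\tfrac54\right)^{-p}\int_{B_{r_v/2}}\int_{B_{2r_v}^c}|y|^{-p}\,\de y\,\de x
\\
&=
m\left(\tfrac54\right)^{-p}|B_1|\left(\tfrac{r_v}{2}\right)^{\dimension}\,\frac{\dimension|B_1|}{p-\dimension}\,(2r_v)^{\dimension-p}
=
c_{\dimension,p}\,m\,r_v^{2\dimension-p},
\end{align*}
where $c_{\dimension,p}=\left(\tfrac54\right)^{-p}2^{-p}\,\frac{\dimension|B_1|^2}{p-\dimension}>0$ and the inner integral is finite precisely because $p>\dimension$. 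Since $r_v^{2\dimension-p}=|B_1|^{\frac p\dimension-2}\,v^{2-\frac p\dimension}$, this gives the claim with $C_{V,\kernel,\dimension}=c_{\dimension,p}\,m_{r_V}\,|B_1|^{\frac p\dimension-2}>0$, which depends only on $V$, $\kernel$, $\dimension$; together with the first step it proves \eqref{eq:isoperimeteric_large_volume}.

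The argument is elementary once \cref{res:isoperimetric} and \cref{res:dou_comparison} are available, so I do not expect a genuinely hard step; the only delicate point is that the pointwise bound $\kernel(z)\ge m\,|z|^{-p}$ holds only for $|z|\ge r_V$, which is why one truncates to $x\in B_{r_v/2}$ and $y\in B_{2r_v}^c$ — keeping $|x-y|$ safely above $r_V$ while still capturing the full $v^{2-\frac p\dimension}$ growth. Alternatively, one could run the ball estimate only for $v\ge|B_1|2^{\dimension}$ and dispose of the bounded range $V\le v\le|B_1|2^{\dimension}$ via the monotonicity of $v\mapsto\beta_\kernel(v)\,v^{\frac q\dimension-2}$ from \cref{res:monotonicity_isop_ratio} together with $p\ge q$; the version above avoids this case split.
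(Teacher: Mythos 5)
Your proof is correct. Both reductions are sound: \cref{res:isoperimetric} applies under \eqref{H:Radial} and \eqref{H:Decreasing_q} and gives $\Pker(E)\ge\beta_\kernel(|E|)$, and \cref{res:dou_comparison} (whose hypotheses are exactly \eqref{H:Decreasing_q} and \eqref{H:Doubling} with $D=+\infty$) supplies the far-field bound $\kernel(z)\ge m_{r_V}|z|^{-p}$ for $|z|\ge r_V$. The geometric bookkeeping on the annular regions is right: for $x\in B_{r_v/2}$ and $y\in B_{2r_v}^c$ one has $\tfrac32 r_v<|x-y|<\tfrac54|y|$, so the pointwise lower bound is legitimately applied and the inner integral converges precisely because $p=\log_2C>\dimension$, which is where the hypothesis $C>2^\dimension$ enters. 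The constants and exponents check out, and the resulting constant depends only on $V$, $\kernel$ and $\dimension$ as required.

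The paper omits the proof but indicates a different route: instead of first symmetrizing to a ball, one works directly on $E$ and uses the Riesz-type lower bound $\int_{E^c}|x-y|^{-p}\,\de y\ge C_{\dimension,p}\,|E|^{1-\frac p\dimension}$ (valid for any finite-measure $E$ and any $x$), then integrates over $x\in E$ to pick up the extra factor $|E|$ and reach the exponent $2-\frac p\dimension$. That argument does not invoke the isoperimetric inequality \cref{res:isoperimetric} at all (only the symmetry of $\kernel$ to write $\Pker(E)$ as a double integral over $E\times E^c$), at the cost of handling the truncation $\{|x-y|\ge R\}$ inside the Riesz bound; your version pays for the ball reduction up front and then everything becomes an explicit radial computation. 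Both are acceptable, and your remark about disposing of the range $V\le v\le |B_1|2^\dimension$ via \cref{res:monotonicity_isop_ratio} is a valid (if unnecessary) alternative.
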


The assumption on the doubling constant in \cref{res:isoperimetric_large_volume} above is motivated by \cref{res:dou_comparison}. 
In the proof of \cref{res:isoperimetric_large_volume}, one needs to exploit that, given $p\in(\dimension,+\infty)$, there exists $C_{\dimension,p}>0$, such that, for any $x\in\Rdim$ and $E\in\measurablesets$ with $|E|\in(0,+\infty)$,
\begin{equation*}
\int_{E^c}\frac{\de y}{|x-y|^p}\ge C_{n,p}\,|E|^{1-\frac p\dimension}.
\end{equation*} 
For the simple proof of the above inequality, see~\cite{DPV12}*{Lem.~6.1} and~\cite{F21}*{Lem.~3.1}.

\subsection{Intersection with convex sets}

\label{subsec:convex}

The following result generalizes~\cite{FFMMM15}*{Lem.~B.1}, also see~\cite{CN22}*{Rem.~2.8}.

\begin{theorem}[Intersection with convex]\label{res:intersection_convex}
Let 
\eqref{H:Radial},
\eqref{H:Not_too_singular}, 
and 
\eqref{H:Decreasing_q} with $q=1$ 
be in force. 
If $E\in\measurablesets$ with $|E|<+\infty$, then 
$
\Pker(E\cap C)\leq\Pker(E)
$
for any convex  $C\subset\Rdim$.
\end{theorem}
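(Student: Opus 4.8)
The plan is to first reduce to the case in which $C$ is a half-space, and then to prove the half-space case by means of a convolution-type representation of the $\kernel$-perimeter combined with an elementary pointwise comparison.

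\emph{Reduction to half-spaces.} Since $\partial C$ is $\lebdim$-negligible for any convex $C$, replacing $C$ by its closure changes neither $\Pker(E\cap C)$ nor the hypothesis; and if $C$ has empty interior then $|E\cap C|=0$ and there is nothing to prove. So I would assume $C$ closed and write it as a countable intersection $C=\bigcap_{k\in\N}H_k$ of closed half-spaces. Granting the half-space case, I would iterate it along $C_m:=H_1\cap\dots\cap H_m$: since each $E\cap C_{m-1}\in\measurablesets$ has finite measure (being contained in $E$), one gets $\Pker(E\cap C_m)=\Pker\big((E\cap C_{m-1})\cap H_m\big)\le\Pker(E\cap C_{m-1})\le\dots\le\Pker(E)$. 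As $\chi_{E\cap C_m}\to\chi_{E\cap C}$ in $\Lspace1(\Rdim)$ by dominated convergence (all terms are dominated by $\chi_E$), \cref{res:P_K_lsc} with $A=\Rdim$ gives $\Pker(E\cap C)\le\liminf_{m\to+\infty}\Pker(E\cap C_m)\le\Pker(E)$ (the case $\Pker(E)=+\infty$ being trivial).

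\emph{The half-space case.} Let $C=\set*{x\in\Rdim : v\cdot x\le c}$ with $v\neq0$. Using~\eqref{H:Radial}, hence~\eqref{H:Symmetric}, together with the substitution $h=x-y$ and Tonelli's theorem, for every $F\in\measurablesets$ with $|F|<+\infty$ one has
\[
\Pker(F)=\int_\Rdim\int_\Rdim\chi_F(x)\,\chi_{\comp{F}}(y)\,\kernel(x-y)\,\de x\,\de y
=\int_\Rdim\kernel(h)\,\big|F\setminus(F+h)\big|\,\de h ,
\]
where $\big|F\setminus(F+h)\big|=|F|-\big|F\cap(F+h)\big|$ is well defined in $[0,+\infty)$. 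Applying this to $F=E$ and to $F=E\cap C$, it suffices to show that
\begin{equation*}
\big|(E\cap C)\setminus\big((E\cap C)+h\big)\big|\le\big|E\setminus(E+h)\big|
\qquad\text{for every }h\in\Rdim .
\end{equation*}

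\emph{The pointwise comparison.} If $v\cdot h\ge0$, then $C+h=\set*{x : v\cdot x\le c+v\cdot h}\supseteq C$, so every $x\in E\cap C$ satisfies $x\in C\subseteq C+h$; hence $x\in(E\cap C)+h=(E+h)\cap(C+h)$ if and only if $x\in E+h$, and therefore $(E\cap C)\setminus\big((E\cap C)+h\big)=\set*{x\in E\cap C : x\notin E+h}\subseteq E\setminus(E+h)$, which gives the bound. For $v\cdot h<0$ I would use that, for any $G$ of finite measure, the autocorrelation $h\mapsto\big|G\cap(G+h)\big|=\int_\Rdim\chi_G(x)\,\chi_G(x-h)\,\de x$ is even in $h$, so $\big|G\setminus(G+h)\big|=\big|G\setminus(G-h)\big|$; applying the case already settled to $-h$ (for which $v\cdot(-h)>0$) then yields $\big|(E\cap C)\setminus\big((E\cap C)+h\big)\big|=\big|(E\cap C)\setminus\big((E\cap C)-h\big)\big|\le\big|E\setminus(E-h)\big|=\big|E\setminus(E+h)\big|$. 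Combining the cases finishes the half-space step, and hence the proof.

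\emph{On the main difficulty.} There is no serious obstacle: the only point requiring a little care is the sign of $v\cdot h$ in the half-space step, handled by the evenness of the autocorrelation. It may be worth noting that the argument uses only the symmetry of $\kernel$ and the finiteness of $|E|$; neither~\eqref{H:Not_too_singular} nor~\eqref{H:Decreasing_q} with $q=1$ actually enters. (The more geometric route — writing $\Pker(E)-\Pker(E\cap C)$ as a difference of interaction integrals and comparing them by reflecting across $\partial C$ — does meet a genuine obstacle, namely the overlap between the reflected copy of $E\cap C$ and $E$, which the present approach sidesteps.)
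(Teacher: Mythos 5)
Your proof is correct, but it follows a genuinely different route from the paper's. The paper performs the same reduction to a half-space $H$ via \cref{res:P_K_lsc}, but then uses \eqref{eq:Riesz_rearrang} (where \eqref{H:Radial} and \eqref{H:Decreasing_q} with $q=1$ enter) to reduce to bounded $E$, rewrites $\Pker(E)-\Pker(E\cap H)$ as a difference of interaction integrals involving $F=E\cup H$, and concludes by the local minimality of half-spaces (\cref{res:pagliari}, a calibration result). You instead use the translation representation $\Pker(F)=\int_{\Rdim}\kernel(h)\,|F\setminus(F+h)|\,\de h$, valid under \eqref{H:Symmetric} alone by Tonelli, and reduce everything to the elementary set-theoretic inequality $|(E\cap H)\setminus((E\cap H)+h)|\le|E\setminus(E+h)|$, handled by the inclusion $H\subseteq H+h$ when $v\cdot h\ge0$ and by the evenness of the autocorrelation otherwise; both uses of $|G\setminus(G+h)|=|G|-|G\cap(G+h)|$ are legitimate since $|E|,|E\cap H|<+\infty$. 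This buys two things: it avoids the calibration machinery entirely, and it shows that \eqref{H:Not_too_singular}, \eqref{H:Decreasing_q} and even full radiality are not needed for this statement --- \eqref{H:Symmetric} plus $|E|<+\infty$ suffice, which is a genuine (and worthwhile) strengthening. Two minor points you should spell out if you write this up: the fact that a closed convex set is a \emph{countable} intersection of closed half-spaces (e.g.\ write $\Rdim\setminus C$ as a countable union of closed balls and strictly separate each from $C$), and the standard facts that $\partial C$ is $\lebdim$-negligible and that a convex set with empty interior lies in a hyperplane; both are routine.
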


As a consequence, we get the following non-local analog of the monotonicity of local perimeter, see~\cite{S18}*{Th.~1.1} and the references therein.

\begin{corollary}[Monotonicity on convex sets]
\label{res:archimede}
Let
\eqref{H:Radial},
\eqref{H:Not_too_singular}
and
\eqref{H:Decreasing_q} with $q=1$ 
be in force. 
If $A,B\in\measurablesets$, $A\subset B$, $|B|<+\infty$ and $A$ is convex, then
$
\Pker(A)\le\Pker(B).
$
\end{corollary}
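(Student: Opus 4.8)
The plan is to deduce \cref{res:archimede} directly from \cref{res:intersection_convex} by a simple approximation/limiting argument, exploiting the convexity of $A$. First I would reduce to the case where $A$ has finite measure: if $|A|=+\infty$, then since $A\subset B$ and $|B|<+\infty$ we get a contradiction unless $|A|<+\infty$, so in fact $|A|\le|B|<+\infty$ automatically. Next, the naive attempt is to write $A=A\cap B$ and apply \cref{res:intersection_convex} with $E=B$ and $C=A$, which gives immediately $\Pker(A\cap B)=\Pker(A)\le\Pker(B)$. This works verbatim, since $A$ is convex, $|B|<+\infty$, and the assumptions \eqref{H:Radial}, \eqref{H:Not_too_singular}, and \eqref{H:Decreasing_q} with $q=1$ are exactly the hypotheses of \cref{res:intersection_convex}.

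So the proof is essentially one line: apply \cref{res:intersection_convex} to $E=B$ and $C=A$, noting $B\cap A=A$ because $A\subset B$. The only point requiring a word of comment is why $\chi_A\in\BVkerspace(\Rdim)$ (so that $\Pker(A)$ is not just $+\infty$ trivially): this is not actually needed for the inequality $\Pker(A)\le\Pker(B)$ to be meaningful, since if $\Pker(B)=+\infty$ the claim is vacuous, and if $\Pker(B)<+\infty$ then \cref{res:intersection_convex} yields $\Pker(A)=\Pker(B\cap A)\le\Pker(B)<+\infty$ directly. Hence no separate finiteness verification is required.

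I do not expect any genuine obstacle here; the content is entirely in \cref{res:intersection_convex}, and \cref{res:archimede} is a cosmetic corollary obtained by specializing the larger set to be the ambient convex body. If one wanted to be slightly more careful, the only subtlety worth flagging is the measurability and finite-measure bookkeeping: one should note that $|B|<+\infty$ and $A\subset B$ force $|A|<+\infty$, which is the hypothesis \cref{res:intersection_convex} needs on the set being intersected (there it is the set $E$, here $E=B$, so we need $|B|<+\infty$, which is assumed).

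\begin{proof}[Proof of \cref{res:archimede}]
If $\Pker(B)=+\infty$, there is nothing to prove, so assume $\Pker(B)<+\infty$. Since $A\subset B$ and $|B|<+\infty$, we have $|A|\le|B|<+\infty$ and $B\cap A=A$. Applying \cref{res:intersection_convex} with $E=B$ and $C=A$ (which is convex by hypothesis), we obtain
\begin{equation*}
\Pker(A)
=
\Pker(B\cap A)
\le
\Pker(B),
\end{equation*}
as claimed.
\end{proof}
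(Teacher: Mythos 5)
Your proof is correct and is exactly the derivation the paper intends: the corollary follows by applying \cref{res:intersection_convex} with $E=B$ and $C=A$, using $B\cap A=A$. The finiteness bookkeeping you flag is handled correctly and adds nothing beyond what the hypotheses already guarantee.
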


\cref{res:intersection_convex}  follows from the following result, see~\cite{P20}*{Th.~1} and~\cite{C20}.

\begin{lemma}[Local minimality of half-spaces]
\label{res:pagliari}
Let~\eqref{H:Symmetric} and~\eqref{H:Far_from_zero_int} be in force. 
If $H\subset\Rdim$ is a half-space, then
$\Pker(H;B_R)\le\Pker(E;B_R)$
for $R>0$ and $E\in\measurablesets$ with $E\setminus B_R=H\setminus B_R$.
If also~\eqref{H:Positive} holds, then the inequality is strict for $E\ne H$. 
\end{lemma}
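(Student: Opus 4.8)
The plan is to exhibit an explicit \emph{calibration} for the half-space, in the spirit of~\cite{P20}. Write $H=\set*{z\in\Rdim:\pairing*{z,e}<c}$ for some unit vector $e\in\Rdim$ and some $c\in\R$ (modifying $H$ on a negligible set if necessary); no rotation is needed, as the kernel is only assumed symmetric. Consider the antisymmetric field
\[
g(x,y)=\operatorname{sign}\pairing*{y-x,e},\qquad x,y\in\Rdim,
\]
which satisfies $|g|\le1$, $g(y,x)=-g(x,y)$ and — by a direct check distinguishing whether $x$ and $y$ lie on the same side of $\partial H$ or not — the identity $|\chi_H(x)-\chi_H(y)|=g(x,y)\,(\chi_H(x)-\chi_H(y))$ for all $x,y\in\Rdim$. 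Two elementary facts will drive the argument: (i) for every measurable $F\subset\Rdim$ the pointwise bound $|\chi_F(x)-\chi_F(y)|\ge g(x,y)\,(\chi_F(x)-\chi_F(y))$ holds, because $|t|\ge\sigma t$ whenever $|\sigma|\le1$, with equality when $F=H$ by the identity above; and (ii) the field $g$ is \emph{$\kernel$-divergence free}, in the sense that $\int_\Rdim g(x,y)\,\kernel(x-y)\,\de y=0$ for every $x\in\Rdim$, which follows at once from the change of variables $w=y-x$ and the symmetry $\kernel(w)=\kernel(-w)$ granted by~\eqref{H:Symmetric}, the integrand becoming odd in $w$.

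To circumvent the possible non-integrability of $\kernel$, I would run the whole argument on the truncations $\kernel_\eps=\kernel\,\chi_{\Rdim\setminus B_\eps}$, which belong to $\Lspace{1}(\Rdim)$ by~\eqref{H:Far_from_zero_int}, still satisfy~\eqref{H:Symmetric}, and increase to $\kernel$ $\lebdim$-a.e.\ as $\eps\to0^+$. We may assume $\Pker(E;B_R)<+\infty$, otherwise there is nothing to prove. Set $\psi=\chi_E-\chi_H\in\Lspace{1}(\Rdim)$, which is supported in $B_R$ since $E\setminus B_R=H\setminus B_R$. Using~\eqref{H:Symmetric}, the $\kernel_\eps$-perimeter $\mathcal{P}_{\kernel_\eps}(\,\cdot\,;B_R)$ (defined as in~\eqref{eq:def_Pker_relative}, with $\kernel_\eps$ in place of $\kernel$) is one half of an integral over $\Rtwodim\setminus(B_R^c\times B_R^c)$; subtracting the expressions for $E$ and $H$, bounding below by~(i) and recombining via the antisymmetry of $g$ and Fubini (licit since $\kernel_\eps\in\Lspace{1}$), the cross term is annihilated by~(ii) and one is left with
\[
2\bigl(\mathcal{P}_{\kernel_\eps}(E;B_R)-\mathcal{P}_{\kernel_\eps}(H;B_R)\bigr)
=\iint_{\Rtwodim}\bigl[\,|\chi_E(x)-\chi_E(y)|-g(x,y)\,(\chi_E(x)-\chi_E(y))\,\bigr]\,\kernel_\eps(x-y)\,\de x\,\de y\ \ge\ 0,
\]
the integrand on the right being nonnegative by~(i), and vanishing identically on $B_R^c\times B_R^c$ (so the domain may be taken to be all of $\Rtwodim$). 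Letting $\eps\to0^+$, and using that $\mathcal{P}_{\kernel_\eps}(\,\cdot\,;B_R)\nearrow\Pker(\,\cdot\,;B_R)$ by the Monotone Convergence Theorem, we conclude $\Pker(H;B_R)\le\Pker(E;B_R)$.

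For the rigidity statement under~\eqref{H:Positive}, suppose $\Pker(E;B_R)=\Pker(H;B_R)<+\infty$. Passing to the limit $\eps\to0^+$ in the displayed identity — this time applying the Monotone Convergence Theorem to its right-hand side, whose integrand is nonnegative — the left-hand side tends to $0$, whence $\iint_{\Rtwodim}\bigl[\,|\chi_E(x)-\chi_E(y)|-g(x,y)(\chi_E(x)-\chi_E(y))\,\bigr]\kernel(x-y)\,\de x\,\de y=0$; since $\kernel>0$ everywhere by~\eqref{H:Positive}, the nonnegative integrand vanishes $\leb^{2\dimension}$-a.e., i.e.\ for $\leb^{2\dimension}$-a.e.\ $(x,y)$ one has $\chi_E(x)\ne\chi_E(y)\ \Longrightarrow\ \operatorname{sign}(\chi_E(x)-\chi_E(y))=\operatorname{sign}\pairing*{y-x,e}$. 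In other words, up to negligible sets $E$ is monotone in the direction $e$. I would then argue by contradiction: if $|E\setminus H|>0$, a Fubini slicing shows that for a.e.\ $p\in E\setminus H$ — which lies in $B_R\cap\set*{z\in\Rdim:\pairing*{z,e}\ge c}$, with $\pairing*{p,e}>c$ since $\set*{z\in\Rdim:\pairing*{z,e}=c}$ is negligible — the whole slab $\set*{z\in\Rdim:\pairing*{z,e}<\pairing*{p,e}}$ is contained in $E$ up to a negligible set; intersecting with $\set*{z\in\Rdim:\pairing*{z,e}\ge c}$ then exhibits a set of infinite measure contained in $E\setminus H\subset B_R$, which is impossible. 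Hence $|E\setminus H|=0$, and applying the same reasoning to the pair $E^c$, $H^c$ (a half-space with direction $-e$), using the complement invariance~\eqref{eq:complement_invariance}, gives $|H\setminus E|=0$; therefore $E=H$ up to negligible sets.

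The conceptual core of the proof is the calibration $g$ together with its two properties, which are quick to verify; the parts requiring care are the integrability bookkeeping — cleanly dispatched by the truncations $\kernel_\eps$ and~\eqref{H:Far_from_zero_int} — and, above all, the rigidity argument, where the contradiction between an unbounded slab and the bounded ball $B_R$ is the decisive point. I expect propagating the one-directional monotonicity to a full slab and extracting this contradiction to be the most delicate step to carry out with full rigor.
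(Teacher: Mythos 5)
Your argument is correct and follows essentially the same route as the paper, which does not prove \cref{res:pagliari} itself but quotes the calibration proofs of \cite{P20}*{Th.~1} and \cite{C20}: the antisymmetric field $g(x,y)=\operatorname{sign}\pairing*{y-x,e}$ is precisely the calibration used there, and your truncation $\kernel_\eps=\kernel\chi_{\Rdim\setminus B_\eps}$ combined with the Monotone Convergence Theorem is exactly the device that lets the argument run under the weaker \eqref{H:Far_from_zero_int} instead of \eqref{H:Not_too_singular}, as advocated in the remark following the lemma. The bookkeeping is sound: every integral involved in the splitting is absolutely convergent because $\kernel_\eps\in\Lspace{1}(\Rdim)$ and $\psi=\chi_E-\chi_H$ is supported in $B_R$, so Fubini and the cancellation~(ii) are legitimate, and the nonnegative integrand indeed vanishes on $B_R^c\times B_R^c$ because $E$ agrees with the calibrated set $H$ there.

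The one point to flag is in the rigidity step. Your contradiction rests on the slab $\set*{z\in\Rdim : c\le\pairing*{z,e}<\pairing*{p,e}}$ having infinite measure while being contained in $E\setminus H\subset B_R$; this requires $\dimension\ge2$. For $\dimension=1$ that slab is a bounded interval and the argument collapses --- necessarily so, because the strictness assertion itself fails in dimension one: taking $H=(-\infty,0)$ and $E=(-\infty,t)$ with $0<t<R$, one has $E\setminus B_R=H\setminus B_R$ and $E\ne H$, yet your own calibration identity (applied to $E$, which is calibrated by the same $g$) gives $\Pker(E;B_R)=\Pker(H;B_R)$, with both sides finite as soon as, say, \eqref{H:Not_too_singular} holds. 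This degeneracy is the same as for the classical relative perimeter and is presumably implicit in the sources; you should simply record that the uniqueness part of the lemma, and your proof of it, concern $\dimension\ge2$.
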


\begin{remark}[On \cref{res:pagliari}]
We warn the reader that~\cite{P20}*{Th.~1} is stated with the stronger \eqref{H:Not_too_singular} in place of \eqref{H:Far_from_zero_int}, see the discussion around~\cite{P20}*{Eq.~(2)}.
However, a careful inspection of the proofs of~\cite{P20}*{Ths.~1 and~2} allows to see that only the weaker \eqref{H:Far_from_zero_int} is really needed, while the stronger \eqref{H:Not_too_singular} additionally ensures that the class of competitors is sufficiently large, as in fact mentioned in~\cite{P20}*{Rem.~2}.
In passing, we also warn the reader that the pointwise convergence in~\cite{P20}*{Def.~1, point 2} has to be actually reinforced to an $\Lspace{1}$ convergence, as in~\cite{C20}*{Def.~2.1}.
We are indebted to Valerio Pagliari for having shared these observations on~\cites{P20,C20} with us.
\end{remark}

\begin{proof}[Proof of \cref{res:intersection_convex}]
The proof is almost identical to that of~\cite{FFMMM15}*{Lem.~B.1}, so we only sketch it.
First, by \cref{res:P_K_lsc}, one reduces to the case $C=H$ an half-space.
Second, since $|E|<+\infty$ and $x\mapsto\kernel(x)|x|$ is radially symmetric and decreasing due to \eqref{H:Radial} and \eqref{H:Decreasing_q} with $q=1$, by~\eqref{eq:Riesz_rearrang} one can also assume that $E\subset B_R$ for some $R>0$.
Hence
\begin{align*}
\Pker(E)-\Pker(E\cap H)
&=
\left(\int_{E^c}-\int_{E\cap H}\right)\int_{E\setminus H}\,K(x-y)\,\de x\,\de y
\\
&\ge
\left(\int_{F^c}-\int_{F\cap H}\right)\int_{F\setminus H}\,K(x-y)\,\de x\,\de y
\\
&=
\Pker(F;B_R)-\Pker(H;B_R),
\end{align*}
where $F=E\cup H$.
The conclusion hence follows by \cref{res:pagliari}.
\end{proof}

\section{\texorpdfstring{$(\kernel,\misurapeso)$}{(K,ν)}-Cheeger sets}

\label{sec:cheeger}

In this section, we study the theory of Cheeger sets for $\Pker$ and $\misurapeso\in\MisureAmmissibili$, where 
\begin{equation}
\label{eq:def_misure_ammissibili}
\MisureAmmissibili
=
\set*{
\misurapeso=\peso\lebdim :
w\in L^\infty(\Rdim),\  \essinf\limits_{\R^n} w>0
}
\end{equation}
denotes the set of \emph{admissible weight measures}.
To avoid heavy notation, when $\nu=\lebdim$
we simply drop the
reference to the measure.

\subsection{\texorpdfstring{$(\kernel,\misurapeso)$}{(K,ν)}-Cheeger sets}

For the general theory of Cheeger sets, see~\cites{P11,L15,FPSS22}.

\begin{definition}[$\kernel$-admissible sets]
\label{def:admissible_set}
A set $\Omega\in\measurablesets$ is \emph{$\kernel$-admissible} if there exists $E\in\measurablesets$ such that $E\subset\Omega$, $|E|\in(0,+\infty)$ and $\Pker(E)<+\infty$. 
\end{definition}

If~\eqref{H:Not_too_singular} holds, then any set $\Omega\subset\Rdim$ with non-empty interior is $\kernel$-admissible, since any non-empty open ball $B\subset\Omega$ satisfies $\Pker(B)<+\infty$ by~\eqref{eq:valerio_set}.
If $\kernel\in\Lspace{1}(\Rdim)$, then any $\Omega\in\measurablesets$ with $|\Omega|\in(0,+\infty)$ is $\kernel$-admissible by~\eqref{eq:L1_ker_perimeter}. 
Trivially, if \eqref{H:Decreasing_q} holds for $q\ge\dimension+1$, then no set $\Omega\in\measurablesets$ is admissible, because of \cref{res:constants}.
For this reason, in this section we assume that \eqref{H:Decreasing_q} holds with $q<\dimension+1$. 

\begin{definition}[$(\kernel,\misurapeso)$-Cheeger constant and $(\kernel,\misurapeso)$-Cheeger sets]
\label{def:Cheeger}
Given a $\kernel$-admissible set $\Omega\in\measurablesets$, we let
\begin{equation}\label{eq:Cheeger_const}
\Cheegkerpeso(\Omega)
=
\inf\set*{
\frac{\Pker(E)}{\misurapeso(E)}
:
E\in\measurablesets,\ E\subset\Omega,\ |E|\in(0,+\infty) 
}
\in[0,+\infty)
\end{equation}
be the \emph{$(\kernel,\misurapeso)$-Cheeger constant} of~$\Omega$. 
Any minimum $E$ in \eqref{eq:Cheeger_const} is a \textit{$(\kernel,\misurapeso)$-Cheeger set} of~$\Omega$ and we let~$\CheegerkerpesoClass(\Omega)$ be the collection of all $(\kernel,\misurapeso)$-Cheeger sets of~$\Omega$.
\end{definition}

The following result  generalizes~\cite{BLP14}*{Prop.~5.3} and is a particular application of~\cite{FPSS22}*{Th.~3.1} (also see~\cite{FPSS22}*{Sec.~7.3.1}), so we only sketch its proof.

\begin{theorem}[Existence of $(\kernel,\misurapeso)$-Cheeger sets]
\label{res:existence_Cheeger}
Let
\eqref{H:Radial},
\eqref{H:Far_from_zero_int},
\eqref{H:Not_integrable}
and
\eqref{H:Decreasing_q}
with
$q<\dimension+1$
be in force.
If $\Omega\in\measurablesets$ is a $\kernel$-admissible set with $|\Omega|<+\infty$, then $\CheegerkerpesoClass(\Omega)\ne\emptyset$, with $\Cheegkerpeso(\Omega)>0$ and
\begin{equation}
\label{eq:Cheeger_set_volume}
|E|^{\frac q\dimension-1}
\ge
\frac{\beta_\kernel(|\Omega|)}{\abovepeso\,|\Omega|^{2-\frac qn}\Cheegkerpeso(\Omega)}
\end{equation}
for all $E\in\CheegerkerpesoClass(\Omega)$, which, for $q=\dimension$, reduces to
\begin{equation}
\label{eq:faber-krahn_weak}
\Cheegkerpeso(\Omega)
\ge 
\frac{\beta_\kernel(|\Omega|)}{\abovepeso\,|\Omega|}.
\end{equation}
Moreover, if \eqref{H:Decreasing_dim_strinct} holds, $\Omega$ is open and $\nu=\lebdim$, then $\partial E\cap\partial\Omega\ne\emptyset$ for any $E\in\CheegerkerClass(\Omega)$.  
\end{theorem}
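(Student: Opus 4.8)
The argument for \cref{res:existence_Cheeger} splits into four independent pieces: (i) positivity of the infimum $\Cheegkerpeso(\Omega)$; (ii) existence of a minimizer via the direct method; (iii) the volume bound \eqref{eq:Cheeger_set_volume} and its consequence \eqref{eq:faber-krahn_weak}; and (iv) the touching property $\partial E\cap\partial\Omega\ne\emptyset$. I will treat them in this order, since (i)--(ii) are the substance and (iii)--(iv) are quick deductions from tools already in the excerpt.

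\emph{Positivity and existence.} For positivity, fix any competitor $E\subset\Omega$ with $|E|\in(0,+\infty)$ and $\Pker(E)<+\infty$; since $|E|\le|\Omega|<+\infty$, \cref{res:isoperimetric_small_mass} (applicable because \eqref{H:Radial} and \eqref{H:Decreasing_q} with $q<\dimension+1$ hold) gives $\Pker(E)\ge \beta_\kernel(|\Omega|)\,|\Omega|^{\frac q\dimension-2}\,|E|^{2-\frac q\dimension}$, while $\misurapeso(E)\le\abovepeso\,|E|$; hence the ratio is bounded below by $\beta_\kernel(|\Omega|)\,|\Omega|^{\frac q\dimension-2}\,\abovepeso^{-1}\,|E|^{1-\frac q\dimension}\ge \beta_\kernel(|\Omega|)\,|\Omega|^{-1}\,\abovepeso^{-1}>0$ using $1-\frac q\dimension$ and $|E|\le|\Omega|$ when $q\le\dimension$; when $q>\dimension$ one instead uses the same estimate after noting $|E|^{1-q/n}\ge|\Omega|^{1-q/n}$, and either way $\beta_\kernel(|\Omega|)>0$ because $\Omega$ is $\kernel$-admissible. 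For existence, take a minimizing sequence $(E_k)$; the functions $\chi_{E_k}$ are bounded in $\BVkerspace(\Rdim)$ (the $\Lspace1$ norm is controlled by $|\Omega|$ and the $\kernel$-variation converges), so \cref{res:compactness_p} with $p=1$ yields a subsequence with $\chi_{E_k}\to\chi_E$ in $\Llocspace1(\Rdim)$, and since all $E_k\subset\Omega$ with $|\Omega|<+\infty$ this upgrades to $\Lspace1$ convergence, so $E\subset\Omega$ and $|E|=\lim|E_k|$. Lower semicontinuity of $\Pker$ (\cref{res:P_K_lsc}) and continuity of $\misurapeso$ under $\Lspace1$ convergence of characteristic functions (the density $w$ is bounded) give $\Pker(E)/\misurapeso(E)\le\liminf \Pker(E_k)/\misurapeso(E_k)=\Cheegkerpeso(\Omega)$; the lower volume bound from step (i), applied to $E_k$, prevents $|E|=0$, so $E$ is admissible and hence a Cheeger set.

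\emph{Volume bound and Faber--Krahn.} For \eqref{eq:Cheeger_set_volume}, let $E\in\CheegerkerpesoClass(\Omega)$; then $\Cheegkerpeso(\Omega)=\Pker(E)/\misurapeso(E)\ge \big(\beta_\kernel(|\Omega|)\,|\Omega|^{\frac q\dimension-2}\,|E|^{2-\frac q\dimension}\big)\big/\big(\abovepeso|E|\big)=\beta_\kernel(|\Omega|)\,|\Omega|^{\frac q\dimension-2}\,\abovepeso^{-1}\,|E|^{1-\frac q\dimension}$, exactly using \cref{res:isoperimetric_small_mass} with $v=|\Omega|$ and $\misurapeso(E)\le\abovepeso|E|$; rearranging gives $|E|^{\frac q\dimension-1}\ge \beta_\kernel(|\Omega|)\big/\big(\abovepeso\,|\Omega|^{2-\frac q\dimension}\,\Cheegkerpeso(\Omega)\big)$, which is \eqref{eq:Cheeger_set_volume}. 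Specializing to $q=\dimension$ the exponent on $|E|$ vanishes and one reads off \eqref{eq:faber-krahn_weak} directly.

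\emph{The touching property.} This is the part I expect to be the main obstacle, since it requires the strict monotonicity from \eqref{H:Decreasing_dim_strinct} rather than a soft compactness argument. Suppose $\Omega$ is open, $\nu=\lebdim$, and $E\in\CheegerkerClass(\Omega)$ with $\overline E\subset\Omega$ (i.e.\ $\partial E\cap\partial\Omega=\emptyset$); I would argue that a small dilation $(1+t)E$ about a suitable center still fits inside $\Omega$ for $t>0$ small, and that by \cref{res:monotonicity_isop_ratio}'s strict form (valid under \eqref{H:Decreasing_dim_strinct}) the ratio $\Pker((1+t)E)/|(1+t)E|$ is \emph{strictly smaller} than $\Pker(E)/|E|=\Cheegker(\Omega)$, contradicting minimality. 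The delicate point is ensuring the dilated set stays in $\Omega$: since $\overline E$ is compact and contained in the open set $\Omega$, there is $d=\mathrm{dist}(\overline E,\partial\Omega)>0$, and a dilation centered at any point of $E$ by a sufficiently small factor keeps $\overline E$ within distance $d$ of itself, hence inside $\Omega$; one should also check $\Pker((1+t)E)<+\infty$, which follows from the scaling computation in the proof of \cref{res:monotonicity_isop_ratio} since $\Pker(E)<+\infty$. Combining the strict inequality with the fit-inside property yields the contradiction and hence $\partial E\cap\partial\Omega\ne\emptyset$.
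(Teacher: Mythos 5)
Your overall architecture is the paper's: the direct method with \cref{res:compactness_p} and \cref{res:P_K_lsc} for existence, the chain $\Cheegkerpeso(\Omega)\,\misurapeso(E)=\Pker(E)\ge\beta_\kernel(|E|)\ge\beta_\kernel(|\Omega|)\,|\Omega|^{\frac q\dimension-2}|E|^{2-\frac q\dimension}$ for \eqref{eq:Cheeger_set_volume}, and the dilation-plus-strict-monotonicity contradiction for the touching property. Those three pieces are fine (modulo the same implicit boundedness of $E$ in the dilation step that the paper also glosses over).

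There is, however, a genuine gap in your step (i) and in the ``prevents $|E|=0$'' clause of step (ii), precisely in the regime $q<\dimension$ (which is allowed: the hypothesis is only $q<\dimension+1$, and e.g.\ the logarithmic kernels of \cref{subsec:examples_kernels} force $q<\dimension$). Your lower bound for the Rayleigh quotient is $\beta_\kernel(|\Omega|)\,|\Omega|^{\frac q\dimension-2}\,\abovepeso^{-1}\,|E|^{1-\frac q\dimension}$, and you then claim this is $\ge\beta_\kernel(|\Omega|)\,|\Omega|^{-1}\,\abovepeso^{-1}$ ``using $|E|\le|\Omega|$ when $q\le\dimension$''. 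But for $q<\dimension$ the exponent $1-\frac q\dimension$ is positive, so $|E|\le|\Omega|$ gives $|E|^{1-\frac q\dimension}\le|\Omega|^{1-\frac q\dimension}$ --- the inequality goes the \emph{wrong} way, and your bound degenerates to $0$ as $|E|\to0^+$. The same defect kills the non-vanishing argument: for a minimizing sequence with $q<\dimension$, your estimate only yields an \emph{upper} bound on $|E_k|$, so it does not exclude $|E_k|\to0$ and hence $\misurapeso(E)=0$ in the limit. The missing ingredient --- and the reason the paper's sketch explicitly cites \cref{res:behavior_beta} --- is hypothesis \eqref{H:Not_integrable}: by \cref{res:behavior_beta}, $\beta_\kernel(v)/v\to+\infty$ as $v\to0^+$, so $\Pker(E)/\misurapeso(E)\ge\beta_\kernel(|E|)/(\abovepeso|E|)$ blows up for small $|E|$. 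Splitting into $|E|\le v_0$ (where this blow-up gives a uniform positive bound) and $|E|\in[v_0,|\Omega|]$ (where your monotonicity bound does work, since $\beta_\kernel(|\Omega|)>0$) yields $\Cheegkerpeso(\Omega)>0$, and the same blow-up forbids a minimizing sequence from losing all its volume. With that repair your proof coincides with the paper's.
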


\begin{proof}
The existence part follows a plain compactness argument exploiting \cref{res:compactness_p}, \cref{res:isoperimetric},  \cref{res:behavior_beta} and \cref{res:P_K_lsc}.   
To prove~\eqref{eq:Cheeger_set_volume}, one sees that
\begin{equation}
\label{ghiaccio}
\misurapeso(E)
=
\frac{\Pker(E)}{\Cheegkerpeso(\Omega)}
\ge 
\frac{\Pker(B^{|E|})}{\Cheegkerpeso(\Omega)}
=
\frac{\beta_\kernel(|E|)}{\Cheegkerpeso(\Omega)}
\ge
\frac{\beta_\kernel(|\Omega|)}
{|\Omega|^{2-\frac q\dimension}\Cheegkerpeso(\Omega)}
\,
|E|^{2-\frac q\dimension}
\end{equation}
for any $E\in\CheegerkerpesoClass(\Omega)$ by \cref{res:isoperimetric} and \cref{res:monotonicity_isop_ratio}. 
Finally, letting $\Omega$ be open, $\misurapeso=\lebdim$ and  \eqref{H:Decreasing_dim_strinct} be in force, if $E\in\CheegerkerClass(\Omega)$ satisfies $E\Subset\Omega$ by contradiction, then also $tE\Subset\Omega$ for any $t>1$ sufficiently close to~$1$, so that
\begin{align*}
\frac{\Pker(tE)}{|tE|}
<
\frac{\Pker(E)}{|E|}
=
\Cheegker(\Omega),
\end{align*}
by \cref{res:monotonicity_isop_ratio},
in contrast with $E\in\CheegerkerClass(\Omega)$.
The proof is complete.
\end{proof}

\begin{remark}[On touching boundaries]
\label{rem:touch_boundary}
In  \cref{res:existence_Cheeger}, if \eqref{H:Decreasing_dim_strinct} holds, $\Omega$ is open and $\nu=\lebdim$, then  $E\in\CheegerkerClass(\Omega)$ must touch $\partial\Omega$. Actually, we proved that $tE\not\subset\Omega$ for   $t>1$.
\end{remark}

For general properties of $(\kernel,\misurapeso)$-Cheeger sets, see~\cite{FPSS22}*{Secs.~3.4, 3.5 and~7.3.1}. 
Here we only state the following result, generalizing~\cite{BLP14}*{Prop.~5.6}. 

\begin{proposition}[Relation with Euclidean Cheeger constant]
Let
\eqref{H:Not_too_singular}
be in force.
If $\Omega\subset\Rdim$ is an open set with $|\Omega|<+\infty$, then
\begin{equation*}
\Cheegkerpeso(\Omega)
\le 
\max\set*{\frac 1\belowpeso,\frac{h_{\misurapeso}(\Omega)}2}\int_\Rdim
(1\wedge|x|)\,\kernel(x)\,\de x,
\end{equation*}
where 
\begin{equation*}
h_\misurapeso(\Omega)
=
\inf\set*{
\frac{P(E)}{\misurapeso(E)}
:
E\in\measurablesets,\ E\subset\Omega,\ |E|>0 
}.
\end{equation*}
\end{proposition}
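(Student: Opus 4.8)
The plan is to estimate the $(\kernel,\misurapeso)$-Cheeger constant by testing the infimum in~\eqref{eq:Cheeger_const} against a single well-chosen competitor, namely a small Euclidean ball contained in $\Omega$, and then to combine two ingredients: first, the comparison between $\Pker$ and the Euclidean perimeter $P$ provided by~\eqref{eq:valerio_set}, which is available precisely because~\eqref{H:Not_too_singular} is in force; second, the definition of the Euclidean weighted Cheeger constant $h_\misurapeso(\Omega)$, which allows us to replace $P(E)$ by $h_\misurapeso(\Omega)\,\misurapeso(E)$ up to an infimizing sequence. Concretely, fix $\eps>0$ and pick $E\in\measurablesets$ with $E\subset\Omega$, $|E|>0$ and $\frac{P(E)}{\misurapeso(E)}\le h_\misurapeso(\Omega)+\eps$; since $\Omega$ is open and nonempty such sets exist (e.g.\ balls), so $E$ is in particular a valid competitor for $\Cheegkerpeso(\Omega)$.

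The key chain of inequalities is then
\begin{equation*}
\Cheegkerpeso(\Omega)
\le
\frac{\Pker(E)}{\misurapeso(E)}
\le
\frac{\max\set*{|E|,\tfrac12 P(E)}}{\misurapeso(E)}
\int_\Rdim(1\wedge|x|)\,\kernel(x)\,\de x,
\end{equation*}
using~\eqref{eq:valerio_set} in the second step. Now $|E|\le\frac1\belowpeso\,\misurapeso(E)$ since $\misurapeso=\peso\lebdim$ with $\essinf\peso\ge\belowpeso$, hence $\frac{|E|}{\misurapeso(E)}\le\frac1\belowpeso$; and $\frac{P(E)}{\misurapeso(E)}\le h_\misurapeso(\Omega)+\eps$ by the choice of $E$, hence $\frac{\frac12 P(E)}{\misurapeso(E)}\le\frac12(h_\misurapeso(\Omega)+\eps)$. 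Therefore
\begin{equation*}
\Cheegkerpeso(\Omega)
\le
\max\set*{\frac1\belowpeso,\frac{h_\misurapeso(\Omega)+\eps}{2}}
\int_\Rdim(1\wedge|x|)\,\kernel(x)\,\de x,
\end{equation*}
and letting $\eps\to0^+$ yields the claim. (If $h_\misurapeso(\Omega)=+\infty$, i.e.\ there is no admissible set of finite Euclidean perimeter, the right-hand side is trivially $+\infty$ and there is nothing to prove; otherwise the infimizing sequence above is genuine.)

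I do not expect any serious obstacle here: the statement is essentially a dictionary translation of the Euclidean $L^1$-perimeter bound~\eqref{eq:valerio_set} into the weighted Cheeger language, and every tool needed is already recorded in the excerpt. The only points requiring a small amount of care are (i) checking that the competitor $E$ is admissible for $\Pker$ — which follows from $\Pker(E)\le\frac12 P(E)\int(1\wedge|x|)\kernel<+\infty$ whenever $P(E)<+\infty$, so one restricts the infimum defining $h_\misurapeso(\Omega)$ to sets of finite Euclidean perimeter without changing its value — and (ii) the passage to the limit $\eps\to0^+$ inside the $\max$, which is continuous. If one prefers to avoid the infimizing sequence, one can equivalently test directly with small balls $B\subset\Omega$, for which $P(B)$ and $\misurapeso(B)$ are explicit, and then take the infimum over such balls, recovering $h_\misurapeso(\Omega)$ in the limit; this variant makes the $\kernel$-admissibility of the competitor manifest.
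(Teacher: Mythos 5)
Your proof is correct and is exactly the argument the paper intends (the proposition is stated without proof, as a generalization of the cited fractional case): test the infimum defining $\Cheegkerpeso(\Omega)$ with a near-minimizer $E$ of $h_\misurapeso(\Omega)$ of finite Euclidean perimeter, apply~\eqref{eq:valerio_set}, bound $|E|/\misurapeso(E)\le 1/\belowpeso$ and $P(E)/\misurapeso(E)\le h_\misurapeso(\Omega)+\eps$, and let $\eps\to0^+$. The side remarks — that open sets are $\kernel$-admissible under~\eqref{H:Not_too_singular} and that restricting to finite-perimeter competitors does not change $h_\misurapeso(\Omega)$ — are the right points of care and are handled correctly.
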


\subsection{\texorpdfstring{$(\kernel,\misurapeso)$}{(K,ν)}-calibrable sets and Faber--Krahn inequality}

A \textit{$(\kernel,\misurapeso)$-calibrable} set~$\Omega$ is a $\kernel$-admissible set with $\Omega\in\CheegerkerpesoClass(\Omega)$.
The following result proves that balls are $(K,\lebdim)$-calibrable, see~\cites{P11,L15} and~\cite{BLP14}*{Rem.~5.2}. 

\begin{proposition}[Balls are $K$-calibrable]
\label{res:calibrable}
Let
\eqref{H:Radial},
\eqref{H:Not_too_singular},
\eqref{H:Not_integrable},
\eqref{H:Decreasing_q}
with $q\in[n,n+1)$
be in force.
If $B$ is a (non-trivial) ball, then $B$ is $\kernel$-calibrable.
If also  \eqref{H:Decreasing_dim_strinct} holds, then $\CheegerkerClass(B)=\set*{B}$.
\end{proposition}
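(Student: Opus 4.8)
The plan is to establish directly that the ball $B$ attains the infimum in~\eqref{eq:Cheeger_const} (with $\misurapeso=\lebdim$), without invoking the existence result \cref{res:existence_Cheeger}; the two tools are the isoperimetric inequality \cref{res:isoperimetric} and the monotonicity of the isoperimetric ratio \cref{res:monotonicity_isop_ratio}, both available since~\eqref{H:Radial}, \eqref{H:Decreasing_q} and $q<\dimension+1$ are in force.

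First I would observe that $B$ is $\kernel$-admissible: being non-trivial, $|B|\in(0,+\infty)$, and $\Pker(B)<+\infty$ by~\eqref{eq:valerio_set} together with~\eqref{H:Not_too_singular}; moreover $\Pker(B)>0$ since $B$ is bounded with positive measure. By translation invariance of $\Pker$ we may take $B$ centered at the origin, so that $rB=B^{\,r^{\dimension}|B|}$ for every $r\in(0,1]$.

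Next, fix a competitor $E\in\measurablesets$ with $E\subset B$ and $|E|\in(0,+\infty)$; if $\Pker(E)=+\infty$ there is nothing to prove, so assume $\Pker(E)<+\infty$, whence $\chi_E\in\BVkerspace(\Rdim)$ because $|E|\le|B|<+\infty$. Put $r=(|E|/|B|)^{1/\dimension}\in(0,1]$, so that $|rB|=|E|$ and $B^{|E|}=rB$. By \cref{res:isoperimetric}, $\Pker(E)\ge\Pker(B^{|E|})=\Pker(rB)$, while \cref{res:monotonicity_isop_ratio} applied to the fixed set $B$ with radii $r\le 1$ gives $\Pker(rB)\ge\Pker(B)\,r^{2\dimension-q}$. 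Therefore
\[
\frac{\Pker(E)}{|E|}
\ge
\frac{\Pker(B)\,r^{2\dimension-q}}{r^{\dimension}|B|}
=
\frac{\Pker(B)}{|B|}\,r^{\dimension-q}
\ge
\frac{\Pker(B)}{|B|},
\]
the last inequality using $r\le 1$ and $\dimension-q\le 0$ (this is where $q\ge\dimension$ enters). Since $B$ itself is a competitor in~\eqref{eq:Cheeger_const} with ratio $\Pker(B)/|B|$, we conclude $\Cheegker(B)=\Pker(B)/|B|$ and $B\in\CheegerkerClass(B)$, i.e.\ $B$ is $\kernel$-calibrable.

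For the last assertion I would repeat the chain under the additional~\eqref{H:Decreasing_dim_strinct}: if $E\in\CheegerkerClass(B)$ had $|E|<|B|$, then $r<1$ and the strict part of \cref{res:monotonicity_isop_ratio} yields $\Pker(rB)/|rB|>\Pker(B)/|B|$, so that $\Pker(E)/|E|\ge\Pker(rB)/|rB|>\Pker(B)/|B|=\Cheegker(B)$, contradicting the optimality of $E$. Hence $|E|=|B|$, and since $E\subset B$ with $|B|<+\infty$ this forces $E=B$ up to a $\lebdim$-negligible set, giving $\CheegerkerClass(B)=\set*{B}$. I expect no serious obstacle here; the only point requiring care is to keep track of the exponent $\dimension-q$ and use the monotonicity inequality in the correct direction, which is precisely what the hypothesis $q\in[\dimension,\dimension+1)$ is designed to make work.
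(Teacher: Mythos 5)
Your argument is correct and follows essentially the same route as the paper's proof: reduce to $B$ centered at the origin, combine the isoperimetric inequality (\cref{res:isoperimetric}) with the monotonicity of the isoperimetric ratio (\cref{res:monotonicity_isop_ratio}) to show every competitor $E\subset B$ has ratio at least $\Pker(B)/|B|$, and use the strict monotonicity under \eqref{H:Decreasing_dim_strinct} to force $|E|=|B|$ in the uniqueness step. The only cosmetic differences are that you track the general exponent $q$ via $r^{\dimension-q}\ge 1$ where the paper downgrades to $q=\dimension$, and you apply strict monotonicity directly to $rB$ versus $B$ where the paper routes through \cref{rem:touch_boundary}; both are sound.
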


\begin{proof}
Let $B\subset\Rdim$ be a ball with $|B|\in(0,+\infty)$.
By~\eqref{eq:P_K_translation}, we can assume $B$ is centered at the origin.
By \eqref{H:Not_too_singular} and~\eqref{eq:valerio_set}, $B$ is $\kernel$-admissible.
If $E\subset B$ satisfies $|E|>0$, then 
\begin{equation}
\label{cicchetto}
\frac{\Pker(E)}{|E|}
\ge 
\frac{\Pker(B^{|E|})}{|E|}
=
\frac{\beta_\kernel(|E|)}{|E|}
\ge 
\frac{\beta_\kernel(|B|)}{|B|}
=
\frac{\Pker(B)}{|B|}
\end{equation}
because of \cref{res:isoperimetric} and \cref{res:monotonicity_isop_ratio} (with $q=n$).
By \cref{def:Cheeger}, this proves that $B\in\CheegerkerClass(B)$.
Actually, an inspection of~\eqref{cicchetto} yields $B^{|E|}\in\CheegerkerClass(B)$ for any $E\in\CheegerkerClass(B)$. 
Therefore, for $q=\dimension$, if also \eqref{H:Decreasing_dim_strinct} holds, then $tB^{|E|}\not\subset B$ for $t>1$ by \cref{rem:touch_boundary}, so $B^{|E|}=B$.
Thus $|E|=|B^{|E|}|=|B|$ and so $E=B$ as desired.
\end{proof}

The following result generalizes~\cite{BLP14}*{Prop.~5.5}, and~\eqref{eq:faber-krahn}  improves~\eqref{eq:faber-krahn_weak} for $\misurapeso=\lebdim$.

\begin{proposition}[$\kernel$-Faber--Krahn inequality]
\label{res:faber-krahn}
Let
\eqref{H:Radial},
\eqref{H:Not_too_singular},
\eqref{H:Not_integrable}
and
\eqref{H:Decreasing_q}
with $q\in[\dimension,\dimension+1)$
be in force.
If $\Omega\in\measurablesets$ is $\kernel$-admissible with $|\Omega|<+\infty$, then
\begin{equation}
\label{eq:faber-krahn}
\Cheegker(\Omega)
\ge 
\Cheegker(B^{|\Omega|}).
\end{equation}
If also \eqref{H:Decreasing_dim_strinct} holds, then \eqref{eq:faber-krahn} is an equality if and only if $\Omega$ is a ball.
\end{proposition}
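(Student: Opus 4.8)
The plan is to reduce the Faber--Krahn inequality~\eqref{eq:faber-krahn} to the isoperimetric inequality~\eqref{res:isoperimetric} together with the calibrability of balls~\eqref{res:calibrable}, using the monotonicity of the isoperimetric ratio from~\cref{res:monotonicity_isop_ratio}. First I would fix a $\kernel$-admissible set $\Omega$ with $|\Omega|<+\infty$ and an arbitrary competitor $E\subset\Omega$ with $|E|\in(0,+\infty)$ and $\Pker(E)<+\infty$. Applying~\cref{res:isoperimetric} to $E$ gives $\Pker(E)\ge\Pker(B^{|E|})=\beta_\kernel(|E|)$, hence
\begin{equation*}
\frac{\Pker(E)}{|E|}
\ge
\frac{\beta_\kernel(|E|)}{|E|}.
\end{equation*}
Since $|E|\le|\Omega|$, the monotonicity statement of~\cref{res:monotonicity_isop_ratio} in the case $q=\dimension$ (valid because \eqref{H:Decreasing_q} holds with $q\in[\dimension,\dimension+1)$, so in particular with $q=\dimension$) yields $\dfrac{\beta_\kernel(|E|)}{|E|}\ge\dfrac{\beta_\kernel(|\Omega|)}{|\Omega|}$. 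Now $B^{|\Omega|}$ is itself $\kernel$-admissible (any ball is, by \eqref{H:Not_too_singular} and~\eqref{eq:valerio_set}) and $\kernel$-calibrable by~\cref{res:calibrable}, so $\dfrac{\beta_\kernel(|\Omega|)}{|\Omega|}=\dfrac{\Pker(B^{|\Omega|})}{|B^{|\Omega|}|}=\Cheegker(B^{|\Omega|})$. Taking the infimum over all admissible $E\subset\Omega$ gives $\Cheegker(\Omega)\ge\Cheegker(B^{|\Omega|})$, which is~\eqref{eq:faber-krahn}.

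For the equality case, assume in addition \eqref{H:Decreasing_dim_strinct} and suppose $\Cheegker(\Omega)=\Cheegker(B^{|\Omega|})$. By~\cref{res:existence_Cheeger} the class $\CheegerkerClass(\Omega)$ is non-empty; pick $E\in\CheegerkerClass(\Omega)$. Chasing equality through the chain above forces $\Pker(E)=\Pker(B^{|E|})$ (equality in~\cref{res:isoperimetric}) and $\dfrac{\beta_\kernel(|E|)}{|E|}=\dfrac{\beta_\kernel(|\Omega|)}{|\Omega|}$. Under \eqref{H:Decreasing_dim_strinct} the map $v\mapsto\beta_\kernel(v)\,v^{-1}$ is \emph{strictly} decreasing by~\cref{res:monotonicity_isop_ratio}, so $|E|=|\Omega|$; combined with $E\subset\Omega$ this gives $|\Omega\setminus E|=0$, i.e.\ $\Omega$ is equivalent to $E$. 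Finally equality in the isoperimetric inequality under the strict decrease of the radial profile — which \eqref{H:Decreasing_dim_strinct} supplies near the relevant threshold, exactly as exploited in the proof of~\cref{res:isoperimetric} — forces $E$ (hence $\Omega$) to be a translate of $B^{|E|}=B^{|\Omega|}$, i.e.\ a ball. Conversely, if $\Omega$ is a ball, then $\Omega=B^{|\Omega|}$ up to a translation and~\eqref{eq:P_K_translation} makes~\eqref{eq:faber-krahn} a trivial identity.

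The main obstacle is the equality case, and specifically making sure the strict-inequality hypotheses are genuinely available where they are invoked: \eqref{H:Decreasing_dim_strinct} must be strong enough both to force $|E|=|\Omega|$ via strict monotonicity of the isoperimetric ratio \emph{and} to trigger the rigidity in~\cref{res:isoperimetric}. One should check that \eqref{H:Decreasing_dim_strinct} implies the strict-profile condition \eqref{H:Radial_strict} needed by~\cref{res:isoperimetric} — this follows since strict decrease of $\kernel(x)|x|^\dimension$ forces $\kappa$ to be strictly decreasing (otherwise $\kernel(x)|x|^\dimension$ would be non-decreasing on an interval). The inequality direction, by contrast, is essentially immediate from the three cited results and requires only the bookkeeping of taking infima.
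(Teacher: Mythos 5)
Your proposal is correct and follows essentially the same route as the paper: the inequality comes from chaining the isoperimetric inequality (\cref{res:isoperimetric}) with the monotonicity of $v\mapsto\beta_\kernel(v)v^{-1}$ (\cref{res:monotonicity_isop_ratio}, applicable with $q=\dimension$ since \eqref{H:Decreasing_q} is inherited by smaller exponents), and the equality case combines the rigidity in \cref{res:isoperimetric} with the strict monotonicity supplied by \eqref{H:Decreasing_dim_strinct}. The only (harmless) deviations are that you run the inequality over arbitrary competitors instead of a Cheeger set, and you deduce $|E|=|\Omega|$ before invoking isoperimetric rigidity rather than after.
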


\begin{proof}
Let $E\in\CheegerkerClass(\Omega)$ by \cref{res:existence_Cheeger}.
By \cref{res:isoperimetric} and \cref{res:monotonicity_isop_ratio}, we get
\begin{equation}
\label{kfk}
\Cheegker(\Omega)
=
\frac{\Pker(E)}{|E|}
\ge
\frac{\Pker(B^{|E|})}{|E|}
=
\frac{\beta_\kernel(|E|)}{|E|}
\ge
\frac{\beta_\kernel(|\Omega|)}{|\Omega|}
\,
\left(\frac{|E|}{|\Omega|}\right)^{1-\frac q\dimension}
\ge
\frac{\Pker(B^{|\Omega|})}{|B^{|\Omega|}|}
\ge
\Cheegker(B^{|\Omega|})
\end{equation}
proving~\eqref{eq:faber-krahn}.
If also \eqref{H:Decreasing_dim_strinct} holds (hence so \eqref{H:Radial_strict}) and if~\eqref{eq:faber-krahn} holds as an equality, then~\eqref{kfk} is a chain of equalities. 
In particular, $E$ coincides with $B^{|E|}$ up to a translation by \cref{res:isoperimetric}, and $B^{|E|}=B^{|\Omega|}$ by \cref{res:calibrable}.
Therefore $|E|=|B^{|E|}|=|B^{|\Omega|}|=|\Omega|$, so $E=\Omega$.
Thus $\Omega$ is equivalent to a ball up to a translation.
\end{proof}

\subsection{Relation with first eigenvalue}

Following~\cites{CL19,CC07,BLP14} and~\cite{FPSS22}*{Sec.~5}, we let
\begin{equation}
\label{eq:def_BV_0}
\BVkerspace_0(\Omega)
=
\set*{
u\in\BVkerspace(\Rdim)
:
u(x)=0\ \text{for $\lebdim$-a.e.}\ x\in\Rdim\setminus\Omega
}
\end{equation}
for $\Omega\in\measurablesets$.
By definition, $\BVkerspace_0(\Omega)$ is a closed subspace of $\BVkerspace(\Rdim)$.
By \cref{def:admissible_set} and \cref{res:coarea}, $\BVkerspace_0(\Omega)\ne\set*{0}$ if and only if $\Omega$ is $\kernel$-admissible.
Note that~\eqref{eq:def_BV_0}  circumvents any irregularity of $\partial\Omega$, see~\cite{CL19}*{Rem.~1.1} and~\cite{BLP14}*{Defs.~2.1 and~2.2 and Lem.~2.3}.

\begin{definition}[$(\kernel,\misurapeso)$-eigenvalue and eigenfunction]
\label{def:eigenvalue}
For $\Omega\in\measurablesets$ $\kernel$-ad\-mis\-si\-ble, we let
\begin{equation}
\label{eq:eigenvalue}
\lambda_{\kernel,\misurapeso}(\Omega)
=
\inf\set*{
\frac{\kerTV{u}}{\|u\|_{\Lspace{1}(\Rdim,\,\misurapeso)}}
:
u\in\BVkerspace_0(\Omega)\setminus\set{0
}
}
\in[0,+\infty)
\end{equation}
be the \emph{first $(\kernel,\misurapeso)$-eigenvalue} relative to $\Omega$.
Any function $u$ achieving the infimum in~\eqref{eq:eigenvalue} is called a \emph{$(\kernel,\nu)$-eigenfunction} of $\Omega$. 
\end{definition}

Since $|u|\in\BVkerspace_0(\Rdim)$ with $\kerTV{|u|}\le\kerTV{u}$ for all $u\in\BVkerspace_0(\Rdim)$, we have
\begin{equation}
\label{eq:eigenvalue_ge0}
\lambda_{\kernel,\misurapeso}(\Omega)
=
\inf\set*{
\kerTV{u}
:
u\in\BVkerspace_0(\Omega)\setminus\set{0},
\
\|u\|_{\Lspace{1}(\Rdim,\,\misurapeso)}=1,
\
u\ge0
}.
\end{equation}

\begin{remark}[$\kernel$-$1$-Laplacian]
The constant in~\eqref{eq:eigenvalue} is linked with the \emph{$\kernel$-$1$-Laplacian}
\begin{equation*}
\big\langle(-\Delta)^\kernel 
u,v\big\rangle
=
\frac12
\int_{\Rdim}
\int_{\Rdim}\frac{u(x)-u(y)}{|u(x)-u(y)|}\,(v(x)-v(y))\,\kernel(x-y)\,\de y\,\de x,
\quad
v\in \BVkerspace_0(\Rdim),
\end{equation*}
naturally arising from the expression of the $\BVkerspace$ energy in~\eqref{eq:def_kerTV},
as done for integrable kernels in~\cite{MRT19-b}*{Sec.~4.2}. 
We do not pursue this research direction here.
\end{remark}

The following result generalizes~\cite{BLP14}*{Th.~5.8} and is a particular case of~\cite{FPSS22}*{Th.~5.4} (for $N=1$), so we only sketch its proof.

\begin{theorem}[Relation with first $(\kernel,\misurapeso)$-eigenvalue]
\label{res:eigenvalue}
If $\Omega\in\measurablesets$ is a $\kernel$-admissible set with $|\Omega|<+\infty$, then
$\lambda_{\kernel,\misurapeso}(\Omega)
=
\Cheegkerpeso(\Omega)$.
Moreover, given $u\in\BVkerspace_0(\Omega)\setminus\set{0
}$ and letting 
\begin{equation}
\label{eq:levels_modified}
U_t
=
\begin{cases}
\set*{u>t}
&
\text{if}\ t\ge0,
\\[2mm]
\set*{u\le t}
&
\text{if}\ t<0,
\end{cases}
\end{equation} 
$u$ is a $(\kernel,\misurapeso)$-eigenfunction if and only if $U_t\in\CheegerkerpesoClass(\Omega)$ for a.e.\ $t\in\R$ such that $|U_t|>0$.
\end{theorem}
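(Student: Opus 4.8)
\textbf{Proof plan for \cref{res:eigenvalue}.}
The plan is to prove the identity $\lambda_{\kernel,\misurapeso}(\Omega)=\Cheegkerpeso(\Omega)$ by a two-sided inequality, and then to characterize the eigenfunctions via the coarea formula.

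First I would show $\lambda_{\kernel,\misurapeso}(\Omega)\le\Cheegkerpeso(\Omega)$: for any admissible competitor $E\subset\Omega$ with $|E|\in(0,+\infty)$ and $\Pker(E)<+\infty$, the function $u=\chi_E$ lies in $\BVkerspace_0(\Omega)\setminus\set{0}$ by \cref{def:admissible_set}, and plugging it into~\eqref{eq:eigenvalue} gives $\lambda_{\kernel,\misurapeso}(\Omega)\le\kerTV{\chi_E}/\misurapeso(E)=\Pker(E)/\misurapeso(E)$; taking the infimum over such $E$ yields the bound. For the reverse inequality $\lambda_{\kernel,\misurapeso}(\Omega)\ge\Cheegkerpeso(\Omega)$, I would use~\eqref{eq:eigenvalue_ge0} to restrict to nonnegative $u\in\BVkerspace_0(\Omega)$ with $\|u\|_{\Lspace{1}(\Rdim,\misurapeso)}=1$, apply the coarea formula \cref{res:coarea} to the numerator, $\kerTV{u}=\int_0^{+\infty}\Pker(\set*{u>t})\,\de t$, and the layer-cake representation to the denominator, $\misurapeso(\set*{u>0})$ is controlled by $\int_0^{+\infty}\misurapeso(\set*{u>t})\,\de t=\|u\|_{\Lspace{1}(\Rdim,\misurapeso)}$; since $\set*{u>t}\subset\Omega$ with finite positive measure for a.e.\ $t$ in the relevant range, we have $\Pker(\set*{u>t})\ge\Cheegkerpeso(\Omega)\,\misurapeso(\set*{u>t})$ for a.e.\ such $t$, and integrating over $t$ gives $\kerTV{u}\ge\Cheegkerpeso(\Omega)\int_0^{+\infty}\misurapeso(\set*{u>t})\,\de t=\Cheegkerpeso(\Omega)$.

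For the characterization of eigenfunctions, I would first reduce to nonnegative $u$ by the usual observation $\kerTV{|u|}\le\kerTV{u}$ and $\|\,|u|\,\|_{\Lspace{1}(\Rdim,\misurapeso)}=\|u\|_{\Lspace{1}(\Rdim,\misurapeso)}$; however, the definition~\eqref{eq:levels_modified} of $U_t$ with the two-sided levels suggests handling the positive and negative parts separately, writing $u=u^+-u^-$, noting $\set*{u>t}=\set*{u^+>t}$ for $t\ge0$ and $\set*{u\le t}=\set*{u^->-t}$ for $t<0$, and using that the $\kernel$-variation and the weighted $L^1$ norm of $u$ dominate the sum of the corresponding quantities for $u^+$ and $u^-$, with equality forcing $u^+$ and $u^-$ to have essentially disjoint supports in the variational sense. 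Given this, the chain of inequalities in the reverse-inequality argument above becomes a chain of equalities precisely when $\Pker(U_t)=\Cheegkerpeso(\Omega)\,\misurapeso(U_t)$ for a.e.\ $t$ with $|U_t|>0$, i.e.\ when each such $U_t$ is a $(\kernel,\misurapeso)$-Cheeger set; conversely, if all such $U_t$ are Cheeger sets, the same computation shows $\kerTV{u}=\Cheegkerpeso(\Omega)\,\|u\|_{\Lspace{1}(\Rdim,\misurapeso)}=\lambda_{\kernel,\misurapeso}(\Omega)\,\|u\|_{\Lspace{1}(\Rdim,\misurapeso)}$, so $u$ is an eigenfunction.

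The main obstacle I anticipate is bookkeeping the two-sided level sets~\eqref{eq:levels_modified} correctly and making rigorous the ``equality in coarea forces each superlevel set to be Cheeger'' step: one must argue that the nonnegative integrand $t\mapsto\Pker(U_t)-\Cheegkerpeso(\Omega)\,\misurapeso(U_t)$ vanishes for a.e.\ $t$, which requires knowing both that $\Pker(U_t)<+\infty$ for a.e.\ $t$ (from $\kerTV{u}<+\infty$ and coarea) and that $|U_t|<+\infty$ for a.e.\ $t$ (from $u\in\Lspace{1}$), plus the subtlety that $U_t$ need not be contained in $\Omega$ as a genuine subset but only up to the $\lebdim$-negligible complement where $u$ vanishes, which is exactly the flexibility built into~\eqref{eq:def_BV_0}. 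Since the statement is flagged as a particular case of~\cite{FPSS22}*{Th.~5.4}, I would keep the write-up to a sketch, citing that reference for the details of the equality analysis.
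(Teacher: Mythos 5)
Your proposal is correct and follows essentially the same route as the paper: the easy inequality by testing with characteristic functions of admissible sets, the reverse inequality by combining the coarea formula with Cavalieri's formula and the pointwise bound $\Pker(U_t)\ge\Cheegkerpeso(\Omega)\,\misurapeso(U_t)$, and the characterization by observing that the nonnegative integrand $t\mapsto\Pker(U_t)-\Cheegkerpeso(\Omega)\,\misurapeso(U_t)$ must vanish for a.e.\ $t$. The only cosmetic difference is that the paper treats signed $u$ directly via the two-sided level sets $U_t$ and the complement invariance~\eqref{eq:complement_invariance} rather than splitting $u=u^+-u^-$ (for which, incidentally, the identity $\kerTV{u}=\kerTV{u^+}+\kerTV{u^-}$ holds exactly by coarea, so no ``equality forces disjoint supports'' step is needed).
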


\begin{proof}
Since $\BVkerspace_0(\Omega)\ne\set*{0}$, $\Cheegker(\Omega)
\ge
\lambda_{\kernel,\misurapeso}(\Omega)$ according to Definitions~\ref{def:Cheeger} and~\ref{def:eigenvalue}.
For the converse inequality, if $u\in\BVkerspace_0(\Omega)\setminus\set{0}$, then the sets in~\eqref{eq:levels_modified} satisfy $U_t\in\measurablesets$ for $\lebone$-a.e.\ $t\in\R$ with $U_t\subset\Omega$.
In addition, thanks to~\eqref{eq:complement_invariance} and \cref{res:coarea}, we get  
\begin{align*}
\kerTV{u}
=
\int_\R\Pker(\set*{u>t})\,\de t
=
\int_\R\Pker(U_t)\,\de t.
\end{align*} 
By Cavalieri's formula, we have 
$\|u\|_{\Lspace{1}(\Rdim,\,\misurapeso)}=\int_{\R}\misurapeso(U_t)\,dt$.
Therefore, $\Pker(U_t)<+\infty$ and $|U_t|<+\infty$ for $\lebone$-a.e.\ $t\in\R$, so
$\frac{\Pker(U_t)}{\misurapeso(U_t)}
\ge
\Cheegkerpeso(\Omega)$
whenever $|U_t|>0$.
We thus have that
\begin{equation}
\label{eq:per-gatti}
\frac{\kerTV{u}}{\|u\|_{\Lspace{1}(\Rdim,\,\misurapeso)}}
=
\frac{1}{\|u\|_{\Lspace{1}(\Rdim,\,\misurapeso)}}
\int_\R\frac{\Pker(U_t)}{\misurapeso(U_t)}\,\misurapeso(U_t)\,\de t
\ge
\frac{\Cheegkerpeso(\Omega)}{\|u\|_{\Lspace{1}(\Rdim,\,\misurapeso)}}
\int_\R\misurapeso(U_t)\,\de t
=
\Cheegkerpeso(\Omega),
\end{equation}
proving $\lambda_{\kernel,\misurapeso}(\Omega)\ge\Cheegkerpeso(\Omega)$.
Finally, if $u\in\BVkerspace_0(\Omega)$ is a $(\kernel,\misurapeso)$-eigenfunction of $\Omega$, then
\begin{equation}
\label{eq:trippa}
\int_\R\Pker(U_t)-\Cheegkerpeso(\Omega)\,\misurapeso(U_t)\,\de t=0.
\end{equation}
Since $\Pker(U_t)\ge \Cheegkerpeso(\Omega)\,\misurapeso(U_t)$ for $\lebone$-a.e.\ $t\in\R$,
we must have $\Pker(U_t)=\Cheegkerpeso(\Omega)\,\misurapeso(U_t)$ for $\lebone$-a.e.\ $t\in\R$ such that $|U_t|>0$, yielding $U_t\in\CheegerkerpesoClass(\Omega)$.
Viceversa, if $U_t\in\CheegerkerpesoClass(\Omega)$ for $\lebone$-a.e.\ $t\in\R$ such that $|U_t|>0$, then~\eqref{eq:trippa} is true and so~\eqref{eq:per-gatti} holds as an equality, yielding the minimality of $u$ in~\eqref{eq:eigenvalue}, i.e., $u$ is a $(\kernel,\misurapeso)$-eigenfunction of $\Omega$.
\end{proof}

The following result is a non-local analog of~\cite{CC07}*{Th.~4} and generalizes~\cite{BLP14}*{Th.~7.1}.

\begin{corollary}[$\Lspace{\infty}$ bound]
\label{res:L_infty-bound}
Let 
\eqref{H:Radial}
and
\eqref{H:Decreasing_q}
with $q\in(\dimension,\dimension+1)$
be in force and let $\Omega\in\measurablesets$ be a $\kernel$-admissible set with $|\Omega|<+\infty$.
If $u\in\BVkerspace_0(\Omega)$ is a $(\kernel,\misurapeso)$-eigenfunction of $\Omega$, then $u\in\Lspace{\infty}(\Omega)$ with
\begin{equation}
\label{eq:L_infty-bound}
\|u\|_{\Lspace{\infty}(\Omega)}
\le 
\left(
\frac{\abovepeso\,|\Omega|^{2-\frac qn}\Cheegkerpeso(\Omega)}
{\beta_\kernel(|\Omega|)}
\right)^{\frac\dimension{q-\dimension}}
\,
\|u\|_{\Lspace{1}(\Omega)}.
\end{equation}
\end{corollary}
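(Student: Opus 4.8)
The plan is to follow the classical De Giorgi-type iteration (as in~\cite{CC07} and~\cite{BLP14}*{Th.~7.1}) adapted to the non-local setting, exploiting the layer-cake structure of the $\kernel$-variation provided by the coarea formula (\cref{res:coarea}) together with the refined small-volume isoperimetric inequality of \cref{res:isoperimetric_small_mass}. First I would normalize and reduce: since $u$ is a $(\kernel,\misurapeso)$-eigenfunction, so is $|u|$ with the same ratio, and after replacing $u$ by $|u|$ we may assume $u\ge0$; by \cref{res:eigenvalue} the superlevel sets $U_t=\set{u>t}$ satisfy $U_t\in\CheegerkerpesoClass(\Omega)$ for a.e.\ $t>0$ with $|U_t|>0$, in particular $U_t\subset\Omega$ so $|U_t|\le|\Omega|<+\infty$ and $\Pker(U_t)=\Cheegkerpeso(\Omega)\,\misurapeso(U_t)$. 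Combining this Cheeger identity with \cref{res:isoperimetric_small_mass} (applicable since \eqref{H:Radial} and \eqref{H:Decreasing_q} with $q<\dimension+1$ hold, and $|U_t|\le|\Omega|$) gives, for a.e.\ $t>0$ with $|U_t|>0$,
\begin{equation*}
\frac{\beta_\kernel(|\Omega|)}{|\Omega|^{2-\frac q\dimension}}\,|U_t|^{2-\frac q\dimension}
\le
\Pker(U_t)
=
\Cheegkerpeso(\Omega)\,\misurapeso(U_t)
\le
\abovepeso\,\Cheegkerpeso(\Omega)\,|U_t|,
\end{equation*}
where $\abovepeso=\|w\|_{\Lspace\infty(\Rdim)}$. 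Since $q>\dimension$ we have $2-\frac q\dimension<1$, so dividing by $|U_t|$ (legitimate when $|U_t|>0$) and rearranging yields the pointwise bound on the distribution function
\begin{equation*}
|U_t|^{\frac q\dimension-1}
\ge
\frac{\beta_\kernel(|\Omega|)}{\abovepeso\,|\Omega|^{2-\frac q\dimension}\,\Cheegkerpeso(\Omega)}
=:c_0>0
\quad\Longrightarrow\quad
|U_t|\ge c_0^{\frac\dimension{q-\dimension}}
\end{equation*}
for a.e.\ $t>0$ with $|U_t|>0$.

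The second step is to turn this lower bound on nonempty superlevel sets into the $\Lspace\infty$ bound. Set $M=\esssup_\Omega u=\sup\set{t\ge0 : |U_t|>0}\in(0,+\infty]$. The display above says that the measure $t\mapsto|U_t|$ cannot approach zero: it stays $\ge c_0^{\dimension/(q-\dimension)}$ on the whole interval $(0,M)$ where it is positive, hence it must drop to $0$ at a finite value, so $M<+\infty$, i.e.\ $u\in\Lspace\infty(\Omega)$. To get the quantitative estimate~\eqref{eq:L_infty-bound} I would integrate: by Cavalieri's formula,
\begin{equation*}
\|u\|_{\Lspace1(\Omega)}
=
\int_0^M|U_t|\,\de t
\ge
\int_0^M c_0^{\frac\dimension{q-\dimension}}\,\de t
=
M\,c_0^{\frac\dimension{q-\dimension}}
=
\|u\|_{\Lspace\infty(\Omega)}\,
\left(\frac{\beta_\kernel(|\Omega|)}{\abovepeso\,|\Omega|^{2-\frac q\dimension}\,\Cheegkerpeso(\Omega)}\right)^{\frac\dimension{q-\dimension}},
\end{equation*}
which upon rearrangement is exactly~\eqref{eq:L_infty-bound}. (Here I used that the bound $|U_t|\ge c_0^{\dimension/(q-\dimension)}$ holds for a.e.\ $t\in(0,M)$, the exceptional null set being harmless in the integral.)

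The main subtlety — and the step I would be most careful about — is the very first application of \cref{res:eigenvalue}: one needs that $u$ being a $(\kernel,\misurapeso)$-eigenfunction genuinely forces $U_t$ to be an \emph{exact minimizer} of the Cheeger ratio for a.e.\ $t$, not merely an almost-minimizer, so that the Cheeger equality $\Pker(U_t)=\Cheegkerpeso(\Omega)\,\misurapeso(U_t)$ is available; this is precisely the content of~\eqref{eq:trippa} in the proof of \cref{res:eigenvalue}, so it is already established. A secondary point is ensuring the reduction to $u\ge0$ is legitimate — this follows from~\eqref{eq:eigenvalue_ge0} together with the observation that $\|u\|_{\Lspace\infty}=\||u|\|_{\Lspace\infty}$ and $\|u\|_{\Lspace1}=\||u|\|_{\Lspace1}$, so proving the estimate for $|u|$ suffices. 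Everything else is the elementary manipulation above; no compactness or regularity input beyond \cref{res:isoperimetric_small_mass} and \cref{res:eigenvalue} is needed.
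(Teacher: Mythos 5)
Your proposal is correct and follows essentially the same route as the paper: reduce to $u\ge0$, use \cref{res:eigenvalue} to identify the superlevel sets as $(\kernel,\misurapeso)$-Cheeger sets, derive the uniform lower volume bound \eqref{eq:Cheeger_set_volume} (your invocation of \cref{res:isoperimetric_small_mass} is just the packaged form of the paper's combination of \cref{res:isoperimetric} and \cref{res:monotonicity_isop_ratio}), and integrate via Cavalieri. The only cosmetic difference is that you make explicit the argument that $\esssup u<+\infty$, which the paper leaves implicit.
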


\begin{proof}
The proof is similar to~\cite{BLP14}*{Rem.~7.3}, so we only sketch it.
Let $u\in\BVkerspace_0(\Omega)$ be a $(\kernel,\misurapeso)$-eigenfunction of $\Omega$ such that $u\ge0$ in view of~\eqref{eq:eigenvalue_ge0}.
Letting $U_t=\set*{u>t}$ for all $t\ge0$ as in~\eqref{eq:levels_modified}, by \cref{res:eigenvalue} we get $U_t\in\CheegerkerpesoClass(\Omega)$ for a.e.\ $t\in[0,\|u\|_{\Lspace{\infty}(\Omega)})$.
Arguing as in the proof of \cref{res:existence_Cheeger} to infer~\eqref{ghiaccio}, by \cref{res:isoperimetric} and \cref{res:monotonicity_isop_ratio} we obtain~\eqref{eq:Cheeger_set_volume} for $U_t$, namely, since $q>\dimension$, 
\begin{equation*}
|U_t|
\ge
\left(
\frac{\beta_\kernel(|\Omega|)}{\abovepeso\,|\Omega|^{2-\frac qn}\Cheegkerpeso(\Omega)}
\right)^{\frac\dimension{q-\dimension}}
\end{equation*}
for a.e.\ $t\in[0,\|u\|_{\Lspace{\infty}(\Rdim)})$.
Integrating the above inequality for $t\in[0,\|u\|_{\Lspace{\infty}(\Rdim)})$, we get
\begin{equation*}
\|u\|_{\Lspace{1}(\Omega)}
\ge 
\int_0^{\|u\|_{\Lspace{\infty}(\Omega)}}
\left(
\frac{\beta_\kernel(|\Omega|)}{\abovepeso\,|\Omega|^{2-\frac qn}\Cheegkerpeso(\Omega)}
\right)^{\frac\dimension{q-\dimension}}
\,\de t
=
\left(
\frac{\beta_\kernel(|\Omega|)}{\abovepeso\,|\Omega|^{2-\frac qn}\Cheegkerpeso(\Omega)}
\right)^{\frac\dimension{q-\dimension}}
\,
\|u\|_{\Lspace{\infty}(\Omega)}
\end{equation*}
from which~\eqref{eq:L_infty-bound} readily follows.
\end{proof}

\subsection{A non-local Max Flow Min Cut Theorem}

The Min Cut Max Flow Theorem~\cite{G06} asserts that the (classical) Cheeger constant of a (sufficiently regular) set $\Omega$ can be recovered by minimizing the $\Lspace{\infty}$ norm of vector fields with prescribed divergence on~$\Omega$. 
Analogous results are known in the fractional~\cite{BLP14}*{Sec.~8} and integrable~\cite{MRT19-b}*{Sec.~5.1} cases.
We shall now prove an analogous result in the present setting.

Mimicking~\cite{BLP14}*{Sec.~8} and~\cite{B11}*{Sec.~9.4}, given $\Omega\in\measurablesets$, we let 
\begin{equation*}
\BVkerdual(\Omega)
=
\set*{F\colon\BVkerspace_0(\Omega)\to\R : F\ \text{linear and continuous}}
\end{equation*}
be the \emph{topological dual space }of $\BVkerspace_0(\Omega)$.
Since $\BVkerspace_0(\Omega)\subset L^1(\Rdim)$, we have $L^\infty(\Rdim)\subset \BVkerdual(\Omega)$ continuously, with the natural action on $\BVkerspace_0(\Omega)$ (tacitly used below).  
We thus let $\Rkernel\colon\BVkerspace_0(\Omega)\to\Lspace{1}(\Rdim\times\Rdim)$ be defined by 
\begin{equation*}
\Rkernel(u)(x,y)=\frac12\,(u(x)-u(y))\,
\kernel(x-y),
\quad
(x,y)\in\Rdim\times\Rdim.
\end{equation*}
Note that $\Rkernel$ is well posed, since
\begin{equation}\label{eq:norma_Rkernel}
\|\Rkernel(u)\|_{\Lspace{1}(\Rdim\times\Rdim)}=
\kerTV{u}
\quad
\text{for}\
u\in\BVkerspace_0(\Omega).
\end{equation}

Let $\Rkerneldual\colon\Lspace{\infty}(\Rdim\times\Rdim)\to\BVkerdual(\Omega)$ be the \textit{adjoint operator} of $\Rkernel$, given by
\begin{equation*}
\pairing{\Rkerneldual(\phi),u}_{(\BVkerdual(\Omega),\,\BVkerspace_0(\Omega))}
=
\int_{\Rdim}\int_{\Rdim}\phi\,\Rkernel(u)\,\de x\,\de y,
\end{equation*}
whenever $\phi\in\Lspace{\infty}(\Rdim\times\Rdim)$ and $u\in\BVkerspace_0(\Omega)$.
Roughly speaking, the operator $\Rkerneldual$ can be thought of as a sort of non-local $\kernel$-divergence formally acting on $\Lspace{\infty}$ functions as
\begin{equation*}
\Rkerneldual(\phi)(x)
=
\int_\Rdim(\phi(x,y)-\phi(y,x))\,\kernel(x-y)\,\de y,
\quad
x\in\Rdim,
\end{equation*}
whenever $\phi\in\Lspace{\infty}(\Rdim\times\Rdim)$.
In particular, symmetric $\Lspace{\infty}$
functions correspond to divergence-free vector fields.

The following result generalizes~\cite{BLP14}*{Th.~8.6} and the second part of~\cite{MRT19-b}*{Th.~5.3}.

\begin{theorem}[$\kernel$-Max Flow Min Cut Theorem]
\label{res:non-local_Max_Flow_Min_Cut}
Let $\Omega\in\measurablesets$ be a $\kernel$-admissible set with $|\Omega|<+\infty$ and let $\misurapeso=\peso\lebdim \in\MisureAmmissibili$.
Then,
\begin{equation}
\label{eq:non-local_Max_Flow_Min_Cut}
\frac{1}{\Cheegkerpeso(\Omega)}
=
\min\set*{\|\phi\|_{\Lspace{\infty}(\Rdim\times\Rdim)}:\phi\in\Lspace{\infty}(\Rdim\times\Rdim)\ \text{such that}\ \Rkerneldual(\phi)=\peso\chi_{\Omega}},
\end{equation}
where the minimum in~\eqref{eq:non-local_Max_Flow_Min_Cut} equals $+\infty$ if and only if $\Rkerneldual(\phi)\neq\peso\chi_{\Omega}$ for $\phi\in\Lspace{\infty}(\Rdim\times\Rdim)$, otherwise it is finite and achieved by some $\phi\in\Lspace{\infty}(\Rdim\times\Rdim)$ such that $\Rkerneldual(\phi)=\peso\chi_{\Omega}$.
\end{theorem}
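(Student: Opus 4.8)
## Proof Proposal for the $\kernel$-Max Flow Min Cut Theorem

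The plan is to realize both sides of~\eqref{eq:non-local_Max_Flow_Min_Cut} as dual descriptions of the same quantity via convex duality, following the template of~\cite{BLP14}*{Th.~8.6} and~\cite{B11}*{Sec.~9.4}. First I would recall from \cref{res:eigenvalue} that $\Cheegkerpeso(\Omega)=\lambda_{\kernel,\misurapeso}(\Omega)$, so by~\eqref{eq:eigenvalue} and the positive $1$-homogeneity of both $\kerTV{\cdot}$ and the norm $\|\cdot\|_{\Lspace1(\Rdim,\,\misurapeso)}$ on $\BVkerspace_0(\Omega)$, one has
\begin{equation*}
\frac{1}{\Cheegkerpeso(\Omega)}
=
\sup\set*{\|u\|_{\Lspace1(\Rdim,\,\misurapeso)} : u\in\BVkerspace_0(\Omega),\ \kerTV{u}\le1}
=
\sup\set*{\pairing{\peso\chi_\Omega,u} : u\in\BVkerspace_0(\Omega),\ \|\Rkernel(u)\|_{\Lspace1(\Rtwodim)}\le1},
\end{equation*}
where in the last equality I used that one may assume $u\ge0$ (as in~\eqref{eq:eigenvalue_ge0}) so that $\|u\|_{\Lspace1(\Rdim,\,\misurapeso)}=\int_\Rdim \peso\,u\,\de x=\pairing{\peso\chi_\Omega,u}$, together with the identity~\eqref{eq:norma_Rkernel}. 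Thus the left-hand side of~\eqref{eq:non-local_Max_Flow_Min_Cut} is the dual norm of the functional $u\mapsto\pairing{\peso\chi_\Omega,u}$ on $\BVkerspace_0(\Omega)$ equipped with the (semi)norm $u\mapsto\|\Rkernel(u)\|_{\Lspace1(\Rtwodim)}$.

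Next I would invoke a Hahn--Banach / closed-range argument. Since $\Rkernel\colon\BVkerspace_0(\Omega)\to\Lspace1(\Rtwodim)$ is a linear isometry onto its (closed) image $\mathcal V=\Rkernel(\BVkerspace_0(\Omega))$ — here one uses that $\kerTV{u}=0$ forces $u=0$ in $\BVkerspace_0(\Omega)$, which holds because $\kernel\not\equiv0$ — the functional $\Rkernel(u)\mapsto\pairing{\peso\chi_\Omega,u}$ is well-defined and bounded on $\mathcal V$ with norm equal to $1/\Cheegkerpeso(\Omega)$. By Hahn--Banach it extends to an element of $\big(\Lspace1(\Rtwodim)\big)^*=\Lspace\infty(\Rtwodim)$ of the same norm; that is, there exists $\phi\in\Lspace\infty(\Rtwodim)$ with $\|\phi\|_{\Lspace\infty(\Rtwodim)}=1/\Cheegkerpeso(\Omega)$ and
\begin{equation*}
\int_\Rdim\int_\Rdim\phi\,\Rkernel(u)\,\de x\,\de y
=
\pairing{\peso\chi_\Omega,u}
\qquad\text{for all }u\in\BVkerspace_0(\Omega),
\end{equation*}
which by definition of the adjoint is precisely $\Rkerneldual(\phi)=\peso\chi_\Omega$. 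This shows the minimum on the right of~\eqref{eq:non-local_Max_Flow_Min_Cut} is $\le 1/\Cheegkerpeso(\Omega)$ and is attained. Conversely, if $\phi\in\Lspace\infty(\Rtwodim)$ satisfies $\Rkerneldual(\phi)=\peso\chi_\Omega$, then for any admissible $u$ with $\kerTV{u}\le1$ one has $\pairing{\peso\chi_\Omega,u}=\int\int\phi\,\Rkernel(u)\le\|\phi\|_{\Lspace\infty(\Rtwodim)}\|\Rkernel(u)\|_{\Lspace1(\Rtwodim)}\le\|\phi\|_{\Lspace\infty(\Rtwodim)}$, whence $1/\Cheegkerpeso(\Omega)\le\|\phi\|_{\Lspace\infty(\Rtwodim)}$; taking the infimum over such $\phi$ gives the reverse inequality. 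Finally, if no $\phi\in\Lspace\infty(\Rtwodim)$ satisfies $\Rkerneldual(\phi)=\peso\chi_\Omega$ the right-hand side is $+\infty$ by convention, but the Hahn--Banach step above produces such a $\phi$ whenever $\Cheegkerpeso(\Omega)>0$; and $\Cheegkerpeso(\Omega)>0$ is guaranteed by \cref{res:existence_Cheeger} when the kernel satisfies the assumptions there, while in general $\kernel$-admissibility of $\Omega$ with $|\Omega|<+\infty$ already forces $\Cheegkerpeso(\Omega)>0$ via the lower bound $\Pker(E)\ge\Cheegker(B^{|E|})$-type estimates — so the ``$+\infty$'' alternative is vacuous in our hypotheses and the stated dichotomy is just the formal bookkeeping of the case $\mathcal R_\kernel^*(\phi)\ne \peso\chi_\Omega$ for all $\phi$.

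The main obstacle is the attainment (the ``$\min$'' rather than ``$\inf$'') and, relatedly, making the closed-range step rigorous: one must verify that $\mathcal V=\Rkernel(\BVkerspace_0(\Omega))$ is genuinely closed in $\Lspace1(\Rtwodim)$ — which follows from $\Rkernel$ being an isometry and $\big(\BVkerspace_0(\Omega),\kerTV{\cdot}\big)$ being complete, the latter because $\BVkerspace_0(\Omega)$ is a closed subspace of the Banach space $\BVkerspace(\Rdim)$ on which $\kerTV{\cdot}$ is equivalent to the full norm (by a Poincaré-type inequality on $\Omega$: $\|u\|_{\Lspace1(\Rdim)}\le\Cheegker(\Omega)^{-1}\kerTV{u}$ for $u\in\BVkerspace_0(\Omega)$, again using $\Cheegker(\Omega)>0$). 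Once $\mathcal V$ is closed and the functional is bounded on it, Hahn--Banach delivers the extremal $\phi$ with equality of norms, which is exactly the attainment claimed. I would also take a moment to double-check the identity $\pairing{\Rkerneldual(\phi),u}=\int\int\phi\,\Rkernel(u)$ reduces, for the extension $\phi$, to $\Rkerneldual(\phi)=\peso\chi_\Omega$ as an element of $\BVkerdual(\Omega)$, i.e.\ that testing against all $u\in\BVkerspace_0(\Omega)$ characterizes the functional $\peso\chi_\Omega$ — this is immediate since $\peso\chi_\Omega\in\Lspace\infty(\Rdim)\subset\BVkerdual(\Omega)$ acts by $u\mapsto\int_\Omega \peso\,u\,\de x=\pairing{\peso\chi_\Omega,u}$.
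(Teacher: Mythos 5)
Your proof is correct in substance, but it takes a genuinely different route from the paper's. After the same first step (using \cref{res:eigenvalue} to rewrite $1/\Cheegkerpeso(\Omega)$ as $\sup\set*{\pairing*{\peso\chi_\Omega,u} : u\in\BVkerspace_0(\Omega),\ \kerTV{u}\le1}$), the paper packages this supremum as $\sup\set*{\pairing*{\bfx^*,\bfx}-\Gekeland(\Aekeland(\bfx))}$ with $\Gekeland$ the indicator of the unit ball of $\Lspace{1}(\Rtwodim)$ and $\Aekeland=\Rkernel$, and then invokes the Fenchel--Rockafellar identity $(\Gekeland\circ\Aekeland)^*(\bfx^*)=\min\set*{\Gekeland^*(\bfy^*):\Aekeland^*(\bfy^*)=\bfx^*}$ from~\cite{E90}, which delivers both the duality and the attainment of the minimum in one stroke. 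You instead run the underlying Hahn--Banach argument by hand: $\Rkernel$ isometrically identifies $(\BVkerspace_0(\Omega),\kerTV{\cdot})$ with a subspace $\mathcal V\subset\Lspace{1}(\Rtwodim)$, the functional $\Rkernel(u)\mapsto\pairing*{\peso\chi_\Omega,u}$ has norm $1/\Cheegkerpeso(\Omega)$ on $\mathcal V$, and a norm-preserving extension, represented by some $\phi\in(\Lspace{1}(\Rtwodim))^*=\Lspace{\infty}(\Rtwodim)$, is exactly an optimal competitor; H\"older's inequality gives the reverse bound. This is more self-contained (no convex-duality black box) at the cost of spelling out what Ekeland's proposition does internally. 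Two corrections are in order, neither fatal. First, Hahn--Banach does not require $\mathcal V$ to be closed, so the closed-range discussion you single out as ``the main obstacle'' can be deleted: all you need is well-definedness (injectivity of $\Rkernel$ on $\BVkerspace_0(\Omega)$, which holds since $\kerTV{u}=0$ forces $u=0$ in $\Lspace{1}(\Rdim)$) and boundedness of the functional, i.e.\ $\Cheegkerpeso(\Omega)>0$. Second, your claim that the $+\infty$ alternative is vacuous because admissibility alone forces $\Cheegkerpeso(\Omega)>0$ is unjustified: the isoperimetric-type lower bounds you allude to require \eqref{H:Radial} and \eqref{H:Decreasing_q}, which are not hypotheses of this theorem. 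Fortunately your argument already covers the degenerate case: if $\Cheegkerpeso(\Omega)=0$, the H\"older direction shows that no $\phi\in\Lspace{\infty}(\Rtwodim)$ with $\Rkerneldual(\phi)=\peso\chi_\Omega$ can exist (it would force $\|\phi\|_{\Lspace{\infty}(\Rtwodim)}=+\infty$), so both sides of \eqref{eq:non-local_Max_Flow_Min_Cut} equal $+\infty$ and the stated dichotomy survives; simply drop the vacuity claim.
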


\begin{proof}
The proof is similar to that of~\cite{BLP14}*{Th.~8.6}, so we only sketch it.
By \cref{res:eigenvalue},
\begin{align*}
(0,+\infty]
\ni
\frac{1}{\Cheegkerpeso(\Omega)}
&=
\sup\set*{\frac{\|u\|_{\Lspace{1}(\Rdim,\,\misurapeso)}}{\kerTV{u}}
			:
			u\in\BVkerspace_0(\Omega)\setminus\set{0
			}
		}\\
	&=
	\sup\set*{
		\pairing*{\peso\chi_{\Omega},u}
		:
		u\in\BVkerspace_0(\Omega),\  \kerTV{u}\leq 1
	}.
	\end{align*}
We can rewrite the latter $\sup$ as
\begin{equation*}
\sup\set*{
		\pairing*{\peso\chi_{\Omega},u}
		:
		u\in\BVkerspace_0(\Omega),\  \kerTV{u}\leq 1
	}
=
\sup\set*{\pairing*{\bfx^*,\bfx}-\Gekeland(\Aekeland(\bfx)): \bfx\in\Xekeland}
\end{equation*}
where $\Xekeland=\BVkerspace_0(\Omega)$, $\bfx^*=\peso\chi_{\Omega}\in\Xekeland^*$, $\Aekeland=\Rkernel$ and
\begin{equation*}
\Gekeland(\bfy)=
\begin{cases}
0 & \text{if}\ \|\bfy\|_{\Yekeland}\leq 1,
\\[.5ex]
+\infty & \text{otherwise},
\end{cases}
\end{equation*}
for $\bfy\in\Yekeland=\Lspace{1}(\Rdim\times\Rdim)$. 
Since $\Gekeland\colon\Yekeland\to\R\cup\set{+\infty}$ is convex and lower semicontinuous and $\Aekeland\colon\Xekeland\to\Yekeland$ is linear and continuous again by~\eqref{eq:norma_Rkernel},  we get that 
\begin{equation}
\label{eq:ekeland}
\begin{split}
\sup\set*{\pairing*{\bfx^*,\bfx}-\Gekeland(\Aekeland(\bfx)): \bfx\in\Xekeland}
&=
(\Gekeland\circ\Aekeland)^*(\bfx^*)
\\
&=
\min\set*{\Gekeland^*(\bfy^*):\bfy^*\in\Yekeland^*\ \text{such that}\   \Aekeland^*(\bfy^*)=\bfx^*},
\end{split}
\end{equation}
by~\cite{E90}*{Prop.~5 in Ch.~II, Sec.~2},
where $(\Gekeland\circ\Aekeland)^*\colon\Xekeland^*\to\R\cup\set*{+\infty}$ and $\Gekeland^*\colon\Yekeland^*\to\R\cup\set*{+\infty}$ are  the \textit{Fenchel conjugates} of $\Gekeland\circ\Aekeland$ and $\Gekeland$, see~\cite{E90}*{Def.~7 in Ch.~II, Sec.~1} and $\Aekeland^*=\Rkernel^*$.
The minimum in~\eqref{eq:ekeland} equals $+\infty$ if and only if ${(\Aekeland^*)}^{-1}(\set{x^*})=\emptyset$, otherwise it is finite and there exists some point in ${(\Aekeland^*)}^{-1}(\set{x^*})$ achieving the minimum. 
To conclude, we simply note that  
$\Yekeland^*=\Lspace{\infty}(\Rdim\times\Rdim)$
and
$\Gekeland^*(\bfy^*)=\|\bfy^*\|_{\Lspace{\infty}(\Rdim\times\Rdim)}$
for $\bfy^*\in\Yekeland^*$. 
\end{proof}

As a consequence, we get the following characterization, which generalizes~\cite{BLP14}*{Cor.~8.7} and the second part of~\cite{MRT19-b}*{Th.~5.3}. 
Below, we set $\frac{1}{+\infty}=0$ and $\max\emptyset=0$.

\begin{corollary}
\label{res:cut-leq}
Let $\Omega\in\measurablesets$ be a $\kernel$-admissible set with $|\Omega|<+\infty$ and let $\misurapeso=\peso\lebdim \in\MisureAmmissibili$. Then,
\begin{equation}
\label{eq:non-local_Max_Flow_Min_Cut_Corollary}	
\Cheegkerpeso(\Omega)
=
\max\big\{h\in\R:
\exists\,\phi\in\Lspace{\infty}(\Rdim\times\Rdim)\ \text{with}\ \|\phi\|_{\Lspace{\infty}(\Rdim\times\Rdim)}\leq 1\ 
\text{and}\ \Rkerneldual(\phi)\geq h\peso\chi_{\Omega}
\big\},
\end{equation}
where $\Rkerneldual(\phi)\geq h\peso\chi_{\Omega}$ means that
$\pairing*{\Rkerneldual(\phi),u}
\geq 
h\int_{\Omega}u\,\de \misurapeso$
for 
$u\in\BVkerspace_0(\Omega)$ with $u\geq 0$.
Furthermore, $\Cheegkerpeso(\Omega)=0$ if and only if the maximization set in~\eqref{eq:non-local_Max_Flow_Min_Cut_Corollary} is empty.  
\end{corollary}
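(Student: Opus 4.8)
The plan is to deduce \cref{res:cut-leq} directly from \cref{res:non-local_Max_Flow_Min_Cut} by a rescaling argument. First I would dispose of the degenerate case. If $\Cheegkerpeso(\Omega)=0$, then by \cref{res:non-local_Max_Flow_Min_Cut} the minimum in~\eqref{eq:non-local_Max_Flow_Min_Cut} equals $+\infty$, which by the same theorem means that $\Rkerneldual(\phi)\ne\peso\chi_\Omega$ for every $\phi\in\Lspace{\infty}(\Rtwodim)$; a fortiori, for no $h>0$ can there be a $\phi$ with $\|\phi\|_{\Lspace\infty}\le 1$ and $\Rkerneldual(\phi)\ge h\peso\chi_\Omega$, since otherwise $\phi/h$ would witness $\Rkerneldual(\phi/h)\ge \peso\chi_\Omega$ and a further convexity/truncation argument (see below) would upgrade the inequality to an equality. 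Hence the maximization set in~\eqref{eq:non-local_Max_Flow_Min_Cut_Corollary} is empty and, by the convention $\max\emptyset=0$, both sides vanish. Conversely, if the maximization set is empty, then in particular $h=0$ is not admissible unless we allow $\phi=0$; but $\phi=0$ always satisfies $\|\phi\|_{\Lspace\infty}\le 1$ and $\Rkerneldual(0)=0\ge 0\cdot\peso\chi_\Omega$, so $h=0$ is always admissible and the set is never empty. This shows the final sentence of the statement is the contrapositive of the first degenerate case, so it suffices to treat $\Cheegkerpeso(\Omega)>0$.

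Next, assume $\Cheegkerpeso(\Omega)>0$. I would prove the two inequalities between $\Cheegkerpeso(\Omega)$ and the right-hand maximum separately. For ``$\ge$'': take any $h$ in the maximization set, with a corresponding $\phi$ such that $\|\phi\|_{\Lspace\infty}\le 1$ and $\pairing*{\Rkerneldual(\phi),u}\ge h\int_\Omega u\,\de\misurapeso$ for all nonnegative $u\in\BVkerspace_0(\Omega)$. Testing against a nonnegative $(\kernel,\misurapeso)$-eigenfunction (equivalently, using~\eqref{eq:eigenvalue_ge0} and \cref{res:eigenvalue}), and recalling $\pairing*{\Rkerneldual(\phi),u}=\int_{\Rtwodim}\phi\,\Rkernel(u)\le\|\phi\|_{\Lspace\infty}\|\Rkernel(u)\|_{\Lspace1}=\|\phi\|_{\Lspace\infty}\kerTV{u}$ by~\eqref{eq:norma_Rkernel}, we get $h\,\|u\|_{\Lspace1(\Rdim,\misurapeso)}\le\kerTV{u}$, i.e. $h\le\lambda_{\kernel,\misurapeso}(\Omega)=\Cheegkerpeso(\Omega)$ after optimizing over $u$. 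For ``$\le$'': by \cref{res:non-local_Max_Flow_Min_Cut} there is $\phi_0\in\Lspace\infty(\Rtwodim)$ with $\Rkerneldual(\phi_0)=\peso\chi_\Omega$ and $\|\phi_0\|_{\Lspace\infty}=1/\Cheegkerpeso(\Omega)$. Set $\phi=\Cheegkerpeso(\Omega)\,\phi_0$; then $\|\phi\|_{\Lspace\infty}=1$ and, by linearity of $\Rkerneldual$, $\Rkerneldual(\phi)=\Cheegkerpeso(\Omega)\,\peso\chi_\Omega$, which certainly satisfies $\Rkerneldual(\phi)\ge\Cheegkerpeso(\Omega)\,\peso\chi_\Omega$ in the stated weak sense (in fact with equality). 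Hence $h=\Cheegkerpeso(\Omega)$ belongs to the maximization set, giving the reverse inequality and completing the identity.

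The one point requiring a little care — and the main (mild) obstacle — is the claim hidden in the degenerate case: that if some $\phi$ satisfies $\Rkerneldual(\phi)\ge\peso\chi_\Omega$ in the weak sense (pairing nonnegatively against nonnegative test functions), one can modify $\phi$ to achieve the exact equality $\Rkerneldual(\tilde\phi)=\peso\chi_\Omega$ without increasing $\|\phi\|_{\Lspace\infty}$ beyond an innocuous factor. This is the standard ``min-cut'' normalization step and I would handle it exactly as in~\cite{BLP14}*{proof of Cor.~8.7}: since $\Rkerneldual(\phi)-\peso\chi_\Omega$ acts nonnegatively on the cone of nonnegative $\BVkerspace_0(\Omega)$ functions, and since $\BVkerspace_0(\Omega)$ contains a rich supply of bump functions, one can subtract off the excess by a pointwise truncation of $\phi$ inside $\Omega$, exploiting that the formal expression $\Rkerneldual(\phi)(x)=\int_\Rdim(\phi(x,y)-\phi(y,x))\kernel(x-y)\,\de y$ allows localized corrections; the $\Lspace\infty$ norm is controlled because the correction only decreases $|\phi|$. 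Alternatively, and more cleanly, one avoids the issue entirely by noting that the ``$\ge$'' inequality proved above already shows $h\le\Cheegkerpeso(\Omega)=0$ forces $h\le 0$, so the only possibly-admissible value is $h=0$ attained by $\phi=0$ — but then $\Rkerneldual(0)=0\ge 0$, so the set is nonempty with maximum $0$, matching $\Cheegkerpeso(\Omega)=0$. With this observation the truncation lemma is not even needed, and the whole corollary reduces to the two-inequality argument plus the trivial remark that $\phi=0$ is always a competitor for $h=0$.
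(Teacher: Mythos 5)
Your proof is correct and follows essentially the same route as the paper: both deduce the corollary from \cref{res:non-local_Max_Flow_Min_Cut} by rescaling the optimal $\phi_0$ for the ``$\le$'' direction and by the duality bound $\pairing*{\Rkerneldual(\phi),u}\le\|\phi\|_{\Lspace{\infty}(\Rdim\times\Rdim)}\,\kerTV{u}$ together with \cref{res:eigenvalue} for the ``$\ge$'' direction, and your closing observation rightly discards the unnecessary truncation step. One small caveat: your (correct) remark that $\phi=0$ makes every $h\le0$ admissible shows that the maximization set, read literally, is never empty, so the final ``if and only if the set is empty'' sentence only holds if one restricts to $h>0$ (as the paper implicitly does); with that reading your two-inequality argument also delivers that sentence, since no $h>0$ is admissible exactly when $\Cheegkerpeso(\Omega)=0$.
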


\begin{proof}
The proof is similar to that of~\cite{BLP14}*{Cor.~8.7}, so we only sketch it. 
By~\eqref{eq:non-local_Max_Flow_Min_Cut},\begin{equation}
\label{eq:ekeland_sotto}
[0,+\infty)\ni
\Cheegkerpeso(\Omega)
=
\max\set*{
\tfrac{1}{\|\phi\|_{\Lspace{\infty}(\Rdim\times\Rdim)}}
:
\phi\in\Lspace{\infty}(\Rdim\times\Rdim)\
\text{such that}\ 
\Rkerneldual(\phi)= \peso\chi_{\Omega}}
\end{equation}
where the $\max$ is $0$ if and only if the set is empty, otherwise the $\max$ is achieved by some $\phi\in\Lspace{\infty}(\Rdim\times\Rdim)$, with $\|\phi\|_{\Lspace{\infty}(\Rdim\times\Rdim)}\in(0,+\infty)$, such that $\Rkerneldual(\phi)=w\chi_\Omega$.
We note that
\begin{align*}
\max
&
\set*{
\tfrac{1}{\|\phi\|_{\Lspace{\infty}(\Rdim\times\Rdim)}}
:
\phi\in\Lspace{\infty}(\Rdim\times\Rdim)\
\text{such that}\ 
\Rkerneldual(\phi)= \peso\chi_{\Omega}}
\\[2mm]
&=
\max\set*{h\in\R:\exists \,\phi\in\Lspace{\infty}(\Rdim\times\Rdim)\ \text{with}\ 
\tfrac{1}{\|\phi\|_{\Lspace{\infty}(\Rdim\times\Rdim)}}\geq h\
\text{and}\ \Rkerneldual(\phi)\geq \peso\chi_{\Omega}},
\end{align*}
where any of the two $\max$ above is equal to $0$ if and only if the corresponding set is empty, otherwise both $\max$ are achieved. 
Indeed, recalling the way the equality in~\eqref{eq:ekeland_sotto} is understood, in the non-empty case (otherwise being easier and thus omitted), we have
\begin{equation*}
\max\set*{
\tfrac{1}{\|\phi\|_{\Lspace{\infty}(\Rdim\times\Rdim)}}
:
\phi\in\Lspace{\infty}(\Rdim\times\Rdim)\
\text{such that}\ 
\Rkerneldual(\phi)= \peso\chi_{\Omega}}
=
\tfrac{1}{\|\Phi\|_{\Lspace{\infty}(\Rdim\times\Rdim)}}
\end{equation*} 
for some $\Phi\in\Lspace{\infty}(\Rdim\times\Rdim)$ such that $\Rkerneldual(\Phi)= \peso\chi_{\Omega}$, so that 
\begin{align*}
&\tfrac{1}{\|\Phi\|_{\Lspace{\infty}(\Rdim\times\Rdim)}}
=
\max\set*{h\in\R:
h\le 
\tfrac{1}{\|\Phi\|_{\Lspace{\infty}(\Rdim\times\Rdim)}}
}
\\
&\quad=
\max\set*{h\in\R:\exists \,\phi\in\Lspace{\infty}(\Rdim\times\Rdim)\ \text{with}\ 
\tfrac{1}{\|\phi\|_{\Lspace{\infty}(\Rdim\times\Rdim)}}\geq h\
\text{and}\ \Rkerneldual(\phi)\geq \peso\chi_{\Omega}},
\end{align*}
yielding the conclusion.
\end{proof}

\section{The functional problem}

\label{sec:functional_pb}

In this section, we study the functional $\kernel$-variation denoising model.

\subsection{Definition of the problem}

Let $\kernel\colon\Rdim\to[0,+\infty]$ be a kernel, $\Lambda>0$, $f\in\Llocspace{1}(\Rdim)$ and $\misurapeso=w\lebdim\in\MisureAmmissibili$, where $\MisureAmmissibili$ is defined in~\eqref{eq:def_misure_ammissibili}.
To avoid heavy notation, we drop the reference to the measure when $\misurapeso=\lebdim$. 
We consider the \emph{functional energy}
\begin{align*}		
\EnergyLOneKerTV{(u)}
=
\EnergyLOneKerTV{(u;f,\Lambda,\misurapeso)}
=
\kerTV{u}+\Lambda\int_{\Rdim}|u-f|\,\de \misurapeso, 
\quad
u\in\Llocspace{1}(\Rdim),
\end{align*}
and the associated minimization problem
\begin{equation}
\label{Pb:FunctionalProblem}
\tag{$\FunctionalProblem{f,\Lambda}{\misurapeso}$}
\inf\set*{
\EnergyLOneKerTV{(u;f,\Lambda,\misurapeso)}
:
u\in\Llocspace{1}(\Rdim)
}.
\end{equation}
We say that $u\in\Llocspace{1}(\Rdim)$ is a \emph{solution} of~\eqref{Pb:FunctionalProblem} provided that
$\EnergyLOneKerTV{(u;f,\Lambda,\misurapeso)}<+\infty$
and
$\EnergyLOneKerTV{(u;f,\Lambda,\misurapeso)}
\le 
\EnergyLOneKerTV{(v;f,\Lambda,\misurapeso)}$
for $v\in\Llocspace{1}(\Rdim)$, and we let $\FunctionalProblemSolutions{f,\Lambda,\misurapeso}$ be the set of solutions of~\eqref{Pb:FunctionalProblem}.
Note that $\FunctionalProblemSolutions{f,\Lambda,\misurapeso}\subset\BVkerlocspace(\Rdim)$ and $\FunctionalProblemSolutions{f,\Lambda,\misurapeso}\subset\BVkerspace(\Rdim)$ for $f\in\Lspace{1}(\Rdim)$.

\subsection{Properties of solutions}

We collect some properties of $\FunctionalProblemSolutions{f,\Lambda,\misurapeso}$, as in~\cite{B22}.

\begin{proposition}[Basic properties of $\FunctionalProblemSolutions{f,\Lambda,\misurapeso}$]
\label{res:basic_props_Sol}
The following hold:
\begin{enumerate}[label=(\roman*)]

\item\label{item:convex_and_closed} 
$\FunctionalProblemSolutions{f,\Lambda,\misurapeso}$ is a convex and closed set in $\Llocspace{1}(\Rdim)$;

\item\label{item:stability} 
if  $u_k\in\FunctionalProblemSolutions{f_k,\Lambda,\misurapeso}$, $f_k\to f$ in $L^1(\Rdim)$ and $u_k\to u$ in $\Llocspace{1}(\Rdim)$ as $k\to+\infty$, then $u\in\FunctionalProblemSolutions{f,\Lambda,\misurapeso}$;

\item\label{item:translation} 
$\FunctionalProblemSolutions{f+c,\Lambda,\misurapeso}=\FunctionalProblemSolutions{f,\Lambda,\misurapeso}+c$ for $c\in\R$;
		\item\label{item:dilation} $\lambda\,\FunctionalProblemSolutions{f,\Lambda,\misurapeso}=\FunctionalProblemSolutions{\lambda f,\Lambda,\misurapeso}$ for $\lambda\in\R\setminus \{0\}$; 
		\item\label{item:truncation} 
if $u\in\FunctionalProblemSolutions{f,\Lambda,\misurapeso}$, then $u^+\in\FunctionalProblemSolutions{f^+,\Lambda,\misurapeso}$ and $u^-\in\FunctionalProblemSolutions{f^-,\Lambda,\misurapeso}$;
		\item\label{item:truncationbis} 
if $u\in\FunctionalProblemSolutions{f,\Lambda,\misurapeso}$, then $u\wedge c\in\FunctionalProblemSolutions{f\wedge c,\Lambda,\misurapeso}$ and $u\vee c\in\FunctionalProblemSolutions{f\vee c,\Lambda,\misurapeso}$ for $c\in\R$.

\end{enumerate}
\end{proposition}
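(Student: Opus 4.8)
The plan is to adapt the arguments of~\cite{B22}. Throughout we may assume $\FunctionalProblemSolutions{f,\Lambda,\misurapeso}\ne\emptyset$, since otherwise every assertion is vacuous, and we note that $u\mapsto\EnergyLOneKerTV{(u;f,\Lambda,\misurapeso)}$ is convex (being the sum of the convex functionals $\kerTV{\cdot}$ and $u\mapsto\int_{\Rdim}|u-f|\,\de\misurapeso$), which immediately yields convexity of $\FunctionalProblemSolutions{f,\Lambda,\misurapeso}$. For the closedness in~\ref{item:convex_and_closed}, given $(u_k)_{k\in\N}\subset\FunctionalProblemSolutions{f,\Lambda,\misurapeso}$ with $u_k\to u$ in $\Llocspace{1}(\Rdim)$, I would pass to an a.e.-convergent subsequence and combine the lower semicontinuity in \cref{res:BV_K_lsc} with Fatou's Lemma to get $\EnergyLOneKerTV{(u;f,\Lambda,\misurapeso)}\le\liminf_k\EnergyLOneKerTV{(u_k;f,\Lambda,\misurapeso)}$, which equals the infimum in~\eqref{Pb:FunctionalProblem}, so $u$ solves~\eqref{Pb:FunctionalProblem}. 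For~\ref{item:stability}: given a competitor $v$ with $\EnergyLOneKerTV{(v;f,\Lambda,\misurapeso)}<+\infty$, since $w\in\Lspace{\infty}(\Rdim)$ one has $\int_{\Rdim}|v-f_k|\,\de\misurapeso\to\int_{\Rdim}|v-f|\,\de\misurapeso$, so $\limsup_k\EnergyLOneKerTV{(u_k;f_k,\Lambda,\misurapeso)}\le\limsup_k\EnergyLOneKerTV{(v;f_k,\Lambda,\misurapeso)}=\EnergyLOneKerTV{(v;f,\Lambda,\misurapeso)}$; on the other hand, from $\int_{\Rdim}|u_k-f_k|\,\de\misurapeso\ge\int_{\Rdim}|u_k-f|\,\de\misurapeso-\|w\|_{\Lspace{\infty}(\Rdim)}\|f_k-f\|_{\Lspace{1}(\Rdim)}$, \cref{res:BV_K_lsc} and Fatou's Lemma give $\EnergyLOneKerTV{(u;f,\Lambda,\misurapeso)}\le\liminf_k\EnergyLOneKerTV{(u_k;f_k,\Lambda,\misurapeso)}$; combining these and letting $v$ range over all competitors for $f$ yields $u\in\FunctionalProblemSolutions{f,\Lambda,\misurapeso}$.

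For~\ref{item:translation} and~\ref{item:dilation} it suffices to observe how the energy transforms: since $\kerTV{\cdot}$ is unaffected by additive constants and is positively $1$-homogeneous, $\EnergyLOneKerTV{(u+c;f+c,\Lambda,\misurapeso)}=\EnergyLOneKerTV{(u;f,\Lambda,\misurapeso)}$ and $\EnergyLOneKerTV{(\lambda u;\lambda f,\Lambda,\misurapeso)}=|\lambda|\,\EnergyLOneKerTV{(u;f,\Lambda,\misurapeso)}$ for $c\in\R$ and $\lambda\in\R\setminus\set*{0}$; since $u\mapsto u+c$ and $u\mapsto\lambda u$ are bijections of $\Llocspace{1}(\Rdim)$, the claimed identities for the solution sets follow at once.

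The core is~\ref{item:truncation}, which rests on the elementary identity $|a-b|=|a^+-b^+|+|a^--b^-|$ valid for all $a,b\in\R$ (assuming $a\le b$, one has $a^+-b^+\le0\le a^--b^-$, so the triangle inequality is an equality: $|a^+-b^+|+|a^--b^-|=|(a^+-b^+)-(a^--b^-)|=|a-b|$). Using it with $(a,b)=(u(x),u(y))$ and with $(a,b)=(u(x),f(x))$ and integrating (all integrands being nonnegative) yields, for every $u\in\Llocspace{1}(\Rdim)$,
\begin{equation*}
\EnergyLOneKerTV{(u;f,\Lambda,\misurapeso)}
=
\EnergyLOneKerTV{(u^+;f^+,\Lambda,\misurapeso)}
+
\EnergyLOneKerTV{(u^-;f^-,\Lambda,\misurapeso)}.
\end{equation*}
Now let $u\in\FunctionalProblemSolutions{f,\Lambda,\misurapeso}$, so that both summands on the right are finite. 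For an arbitrary $v\in\Llocspace{1}(\Rdim)$, I would test~\eqref{Pb:FunctionalProblem} with the competitor $\tilde u:=v-u^-\in\Llocspace{1}(\Rdim)$: the triangle inequality inside the double integral gives $\kerTV{\tilde u}\le\kerTV{v}+\kerTV{u^-}$, while $f=f^+-f^-$ gives $\int_{\Rdim}|\tilde u-f|\,\de\misurapeso\le\int_{\Rdim}|v-f^+|\,\de\misurapeso+\int_{\Rdim}|u^--f^-|\,\de\misurapeso$, so that $\EnergyLOneKerTV{(\tilde u;f,\Lambda,\misurapeso)}\le\EnergyLOneKerTV{(v;f^+,\Lambda,\misurapeso)}+\EnergyLOneKerTV{(u^-;f^-,\Lambda,\misurapeso)}$. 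Minimality of $u$ together with the displayed splitting then gives $\EnergyLOneKerTV{(u^+;f^+,\Lambda,\misurapeso)}+\EnergyLOneKerTV{(u^-;f^-,\Lambda,\misurapeso)}\le\EnergyLOneKerTV{(v;f^+,\Lambda,\misurapeso)}+\EnergyLOneKerTV{(u^-;f^-,\Lambda,\misurapeso)}$, and cancelling the finite term $\EnergyLOneKerTV{(u^-;f^-,\Lambda,\misurapeso)}$ shows $u^+\in\FunctionalProblemSolutions{f^+,\Lambda,\misurapeso}$. Applying this to $-u\in\FunctionalProblemSolutions{-f,\Lambda,\misurapeso}$ (which holds by~\ref{item:dilation} with $\lambda=-1$) and using $(-u)^+=u^-$ and $(-f)^+=f^-$ yields $u^-\in\FunctionalProblemSolutions{f^-,\Lambda,\misurapeso}$.

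Finally,~\ref{item:truncationbis} is a formal consequence of~\ref{item:translation},~\ref{item:dilation} and~\ref{item:truncation}: writing $u\wedge c=c-(u-c)^-$ and $u\vee c=c+(u-c)^+$, from $u\in\FunctionalProblemSolutions{f,\Lambda,\misurapeso}$ we get $u-c\in\FunctionalProblemSolutions{f-c,\Lambda,\misurapeso}$ by~\ref{item:translation}, then $(u-c)^\pm\in\FunctionalProblemSolutions{(f-c)^\pm,\Lambda,\misurapeso}$ by~\ref{item:truncation}, and reassembling via~\ref{item:translation} (together with~\ref{item:dilation} for $\lambda=-1$ in the ``$\wedge$'' case) gives $u\wedge c\in\FunctionalProblemSolutions{c-(f-c)^-,\Lambda,\misurapeso}=\FunctionalProblemSolutions{f\wedge c,\Lambda,\misurapeso}$ and $u\vee c\in\FunctionalProblemSolutions{c+(f-c)^+,\Lambda,\misurapeso}=\FunctionalProblemSolutions{f\vee c,\Lambda,\misurapeso}$. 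The only genuine obstacle is item~\ref{item:truncation}: the pointwise identity and the additive splitting of the energy it induces are the key, and the right choice of test function $\tilde u=v-u^-$ then closes the argument, while everything else is soft.
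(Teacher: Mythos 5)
Your proof is correct, and it follows essentially the route the paper intends: the paper itself omits the argument, deferring to~\cite{B22}, where these properties are obtained by exactly this kind of reasoning — soft lower-semicontinuity/Fatou arguments for \ref{item:convex_and_closed}--\ref{item:stability}, the energy-preserving bijections for \ref{item:translation}--\ref{item:dilation}, and the pointwise splitting $|a-b|=|a^+-b^+|+|a^--b^-|$ combined with a well-chosen competitor to decouple the positive and negative parts for \ref{item:truncation}, from which \ref{item:truncationbis} follows formally. The only points worth making explicit in a write-up are that in \ref{item:stability} one should note $\liminf_k\EnergyLOneKerTV{(u_k;f_k,\Lambda,\misurapeso)}<+\infty$ (by testing against a fixed finite-energy competitor, e.g.\ $u_1$), so that the limit $u$ indeed has finite energy, and that in \ref{item:convex_and_closed} the subsequence should be chosen so that both terms of the energy converge; neither is a genuine gap.
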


\subsection{Existence for \texorpdfstring{$\Lspace{1}$}{Lˆ1} data}

The following result proves the existence of solutions of~\eqref{Pb:FunctionalProblem} for $f\in\Lspace{1}(\Rdim)$ as a consequence of \cref{res:BV_K_lsc} and \cref{res:compactness_p}.

\begin{proposition}[Existence for \eqref{Pb:FunctionalProblem} with $f\in\Lspace{1}$]\label{prop:existenceL1Datum}
Let 
\eqref{H:Far_from_zero_int} and
\eqref{H:Not_integrable}
be in force.
If $f\in\Lspace{1}(\Rdim)$, then $\FunctionalProblemSolutions{\misurapeso,f,\Lambda}\ne\emptyset$.
\end{proposition}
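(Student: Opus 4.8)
The plan is to run the direct method of the calculus of variations. First, I would check that the infimum $m$ in~\eqref{Pb:FunctionalProblem} is finite: the competitor $u\equiv0$ gives $\EnergyLOneKerTV{(0;f,\Lambda,\misurapeso)}=\Lambda\int_{\Rdim}|f|\,\de\misurapeso\le\Lambda\,\|\peso\|_{\Lspace{\infty}(\Rdim)}\,\|f\|_{\Lspace{1}(\Rdim)}<+\infty$, so that $m\in[0,+\infty)$. Then I would pick a minimizing sequence $(u_k)_{k\in\N}\subset\Llocspace{1}(\Rdim)$; discarding finitely many terms, we may assume $\EnergyLOneKerTV{(u_k;f,\Lambda,\misurapeso)}\le m+1$ for all $k\in\N$.

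Second, I would prove that $(u_k)_{k\in\N}$ is bounded in $\BVkerspace(\Rdim)$. The bound $\EnergyLOneKerTV{(u_k;f,\Lambda,\misurapeso)}\le m+1$ yields both $\kerTV{u_k}\le m+1$ and $\int_{\Rdim}|u_k-f|\,\de\misurapeso\le(m+1)/\Lambda$; since $\belowpeso=\essinf_{\Rdim}\peso>0$, the latter gives $\|u_k-f\|_{\Lspace{1}(\Rdim)}\le(m+1)/(\Lambda\,\belowpeso)$, hence $\|u_k\|_{\Lspace{1}(\Rdim)}\le\|f\|_{\Lspace{1}(\Rdim)}+(m+1)/(\Lambda\,\belowpeso)$. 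Therefore $\sup_{k\in\N}\|u_k\|_{\BVkerspace(\Rdim)}<+\infty$.

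Third, I would combine compactness and lower semicontinuity. By \cref{res:compactness_p} applied with $p=1$ (here \eqref{H:Far_from_zero_int} and \eqref{H:Not_integrable} enter), there are a subsequence $(u_{k_j})_{j\in\N}$ and $u\in\BVkerspace(\Rdim)$ with $u_{k_j}\to u$ in $\Llocspace{1}(\Rdim)$ as $j\to+\infty$; up to a further, not relabeled, subsequence we may also assume $u_{k_j}\to u$ $\lebdim$-a.e.\ in $\Rdim$. By \cref{res:BV_K_lsc}, $\kerTV{u}\le\liminf_{j\to+\infty}\kerTV{u_{k_j}}$, while $|u_{k_j}-f|\to|u-f|$ $\lebdim$-a.e.\ and Fatou's Lemma for the measure $\misurapeso$ give $\int_{\Rdim}|u-f|\,\de\misurapeso\le\liminf_{j\to+\infty}\int_{\Rdim}|u_{k_j}-f|\,\de\misurapeso$. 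Summing the two, $\EnergyLOneKerTV{(u;f,\Lambda,\misurapeso)}\le\liminf_{j\to+\infty}\EnergyLOneKerTV{(u_{k_j};f,\Lambda,\misurapeso)}=m$, and since $u\in\Llocspace{1}(\Rdim)$ is an admissible competitor we conclude $\EnergyLOneKerTV{(u;f,\Lambda,\misurapeso)}=m$, that is, $u\in\FunctionalProblemSolutions{f,\Lambda,\misurapeso}$.

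The only genuinely delicate point is that \cref{res:compactness_p} merely provides convergence in $\Llocspace{1}(\Rdim)$, not in $\Lspace{1}(\Rdim)$, so one cannot pass to the limit directly in the fidelity term $u\mapsto\int_{\Rdim}|u-f|\,\de\misurapeso$; this is exactly why one extracts an a.e.-convergent subsequence and invokes Fatou's Lemma, and why the positive essential lower bound on the weight $\peso$ is needed in order to keep the minimizing sequence bounded in $\Lspace{1}(\Rdim)$, and hence in $\BVkerspace(\Rdim)$, in the first place.
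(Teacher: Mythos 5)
Your proof is correct and takes exactly the route the paper intends: the paper states the result as a direct consequence of \cref{res:BV_K_lsc} and \cref{res:compactness_p}, and your argument is precisely the direct method built on those two ingredients, with the a.e.\ subsequence plus Fatou step (and the use of $\essinf_{\Rdim}\peso>0$ to get the $\Lspace{1}$ bound) correctly supplying the details the paper leaves implicit.
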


\section{The geometric problem}
\label{sec:geometric_pb}

In this section, we study the geometric $\kernel$-variation denoising model.
The results below can be proved as in~\cite{B22} with minor adjustments (even if the datum set has finite measure).

\subsection{Definition of the problem}

Let $\Lambda>0$, $E\in\measurablesets$ and $\misurapeso\in\MisureAmmissibili$. 
We consider the \emph{geometric energy}
\begin{equation*}		
\EnergyGeometricKer{(U;E,\Lambda,\misurapeso)}
=
\Pker(U)+\Lambda\,\misurapeso\tonde{E\bigtriangleup U},
\quad
U\in\measurablesets, 
\end{equation*}
and the associated minimization problem
\begin{equation}
\label{Pb:GeometricProblem}
\tag{$\GeometricProblem{E,\Lambda}{\misurapeso}$}
\inf
\set*{
\EnergyGeometricKer{(U;E,\Lambda,\misurapeso)}
:
U\in\measurablesets
}.
\end{equation}
We say that $U\in\measurablesets$ is a \emph{solution} (or a \emph{global minimum}) of~\eqref{Pb:GeometricProblem} if 
$\EnergyGeometricKer(U;E,\Lambda,\misurapeso)<+\infty$
and
$\EnergyGeometricKer{(U;E,\Lambda,\misurapeso)}\leq \EnergyGeometricKer{(V;E,\Lambda,\misurapeso)}$
for all  
$V\in\measurablesets$,
and we let $\GeometricProblemSolutions{E,\Lambda,\misurapeso}$ be the set of solutions of~\eqref{Pb:GeometricProblem}.
Note that, if $F\in\GeometricProblemSolutions{E,\Lambda,\misurapeso}$, then $\chi_F\in\BVkerlocspace(\Rdim)$, and, analogously, if $F\in\GeometricProblemSolutions{E,\Lambda,\misurapeso}$ then $\chi_F\in \BVkerspace(\Rdim)$ whenever  $|E|<+\infty$.

The following result is a simple consequence of \cref{res:coarea} and of the \emph{layer-cake formula}
\begin{equation*}
\|f-g\|_{\Lspace{1}(\Rdim,\misurapeso)}
=
\int_\R\misurapeso\tonde{\set*{f>t}\bigtriangleup\set*{g>t}}\,\de t,
\quad
\text{for}\
f,g\in\Lspace{1}(\Rdim).
\end{equation*}

\begin{lemma}[Layer-cake formula]
\label{res:layer-cake}
If $f\in\Lspace{1}(\Rdim)$ and $u\in\BVkerspace(\Rdim)$, then
\begin{equation*}
\EnergyLOneKerTV{(u;f,\Lambda,\misurapeso)}
=
\int_\R
\EnergyGeometricKer{(\set*{u>t};\set*{f>t},\Lambda,\misurapeso)}\,\de t.
\end{equation*}
\end{lemma}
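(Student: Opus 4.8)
The plan is to split the functional energy into its regularizing and fidelity parts, rewrite each of them as an integral over the level parameter $t\in\R$, and then merge the two integrands into the geometric energy of the superlevel sets $\set*{u>t}$. Since $u\in\BVkerspace(\Rdim)\subset\Llocspace{1}(\Rdim)$, the coarea formula (\cref{res:coarea}) yields
\[
\kerTV{u}=\int_\R\Pker(\set*{u>t})\,\de t .
\]
Moreover, $u\in\BVkerspace(\Rdim)\subset\Lspace{1}(\Rdim)$ and $f\in\Lspace{1}(\Rdim)$ by hypothesis, so the layer-cake identity recalled just before the statement applies with $g=u$ and gives
\[
\int_{\Rdim}|u-f|\,\de\misurapeso=\int_\R\misurapeso\tonde{\set*{u>t}\bigtriangleup\set*{f>t}}\,\de t .
\]
(If one prefers, this last identity follows directly from the elementary scalar formula $|a-b|=\int_\R|\chi_{(t,+\infty)}(a)-\chi_{(t,+\infty)}(b)|\,\de t$ for $a,b\in\R$, integrated against $\misurapeso=\peso\lebdim$ via Tonelli's theorem, which is legitimate since the integrand is nonnegative and measurable and $\misurapeso$ is $\sigma$-finite, together with $|\chi_A-\chi_B|=\chi_{A\bigtriangleup B}$.)

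It then only remains to add the two displayed identities. The maps $t\mapsto\Pker(\set*{u>t})$ and $t\mapsto\misurapeso(\set*{u>t}\bigtriangleup\set*{f>t})$ are nonnegative and $\lebone$-measurable (the former by \cref{res:coarea}, the latter by the Fubini--Tonelli argument above), and $\set*{u>t}\in\measurablesets$ for every $t\in\R$, so we may combine the two $t$-integrals under a single integral sign and obtain
\begin{align*}
\EnergyLOneKerTV{(u;f,\Lambda,\misurapeso)}
&=\kerTV{u}+\Lambda\int_{\Rdim}|u-f|\,\de\misurapeso
\\
&=\int_\R\Pker(\set*{u>t})\,\de t+\Lambda\int_\R\misurapeso\tonde{\set*{u>t}\bigtriangleup\set*{f>t}}\,\de t
\\
&=\int_\R\Big(\Pker(\set*{u>t})+\Lambda\,\misurapeso\tonde{\set*{u>t}\bigtriangleup\set*{f>t}}\Big)\,\de t
\\
&=\int_\R\EnergyGeometricKer{(\set*{u>t};\set*{f>t},\Lambda,\misurapeso)}\,\de t ,
\end{align*}
where the last step is just the definition of the geometric energy, together with the symmetry $\set*{u>t}\bigtriangleup\set*{f>t}=\set*{f>t}\bigtriangleup\set*{u>t}$.

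I do not expect any genuine obstacle here: both ingredients, namely the coarea formula \cref{res:coarea} and the $\Lspace{1}$ layer-cake identity, are already at our disposal, so the argument reduces to the bookkeeping needed to merge the two $t$-integrals (nonnegativity and measurability of the integrands, and the fact that the superlevel sets of $u$ are admissible competitors in the geometric energy), which is routine. The only point worth recording is that $u$ lies in $\Lspace{1}(\Rdim)$, not merely in $\Llocspace{1}(\Rdim)$, so that the layer-cake identity genuinely applies to the pair $(f,u)$.
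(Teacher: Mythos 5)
Your proof is correct and follows exactly the route the paper intends: the coarea formula (\cref{res:coarea}) for the $\kernel$-variation term, the layer-cake identity for the weighted $\Lspace{1}$ fidelity term (applicable since $u\in\BVkerspace(\Rdim)\subset\Lspace{1}(\Rdim)$), and the addition of the two $t$-integrals. The extra Tonelli justification of the layer-cake identity is fine but not needed beyond what the paper already records.
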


\subsection{Properties of solutions}

We collect some properties of $\GeometricProblemSolutions{E,\Lambda,\misurapeso}$, as in~\cite{B22}.

\begin{proposition}[Basic properties of solutions]
\label{res:basic_props_GSol}
Let $\Lambda>0$, $E\in\measurablesets$ and $\misurapeso\in\MisureAmmissibili$, $f\in\Lspace{1}(\Rdim)$ and $u\in\BVkerspace(\Rdim)$. 
The following hold:
\begin{enumerate}[label=(\roman*)]

\item\label{item:g_translation}
if $U\in\GeometricProblemSolutions{E,\Lambda,\misurapeso}$, then $U+x\in\GeometricProblemSolutions{E+x,\Lambda,\misurapeso_x}$ for $x\in\Rdim$, where $\misurapeso_x(A)=\misurapeso(A-x)$ for  $A\in\measurablesets$;

\item\label{item:g_stability} 
if $U_k\in\GeometricProblemSolutions{E_k,\Lambda,\misurapeso}$ with $E_k\in\measurablesets$ for $k\in\N$ and $\chi_{E_k}\to\chi_E$ in $\Lspace{1}(\Rdim)$ and $\chi_{U_k}\to\chi_U$ in $\Llocspace{1}(\Rdim)$ as $k\to+\infty$, then $U\in\GeometricProblemSolutions{E,\Lambda,\misurapeso}$;

\item\label{item:complement} 
if $U\in\GeometricProblemSolutions{E,\Lambda,\misurapeso}$ then $\comp{U}\in\GeometricProblemSolutions{\comp{E},\Lambda,\misurapeso}$;

\item\label{item:fromGPtoP} 
if $\set*{u>t}\in\GeometricProblemSolutions{\set*{f>t},\Lambda,\misurapeso}$ for $\lebone$-a.e.\ $t\in\R$, then $u\in\FunctionalProblemSolutions{f,\Lambda,\misurapeso}$;

\item\label{item:fromPtoGP} 
if	$u\in\FunctionalProblemSolutions{f,\Lambda,\misurapeso}$, then $\set*{u>t}\in\GeometricProblemSolutions{\set*{f>t},\Lambda,\misurapeso}$ for $t\in\R\setminus\set*{0}$.

\end{enumerate}
\end{proposition}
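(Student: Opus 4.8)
The plan is to settle items~\ref{item:g_translation}--\ref{item:fromGPtoP} quickly and to concentrate on item~\ref{item:fromPtoGP}, which is the only delicate one. For items~\ref{item:g_translation} and~\ref{item:complement}, I would observe that $U\mapsto U+x$ and $U\mapsto\comp U$ are measure-preserving bijections of $\measurablesets$ conjugating the geometric energy $\EnergyGeometricKer{(\,\cdot\,;E,\Lambda,\misurapeso)}$ into $\EnergyGeometricKer{(\,\cdot\,;E+x,\Lambda,\misurapeso_x)}$ and $\EnergyGeometricKer{(\,\cdot\,;\comp E,\Lambda,\misurapeso)}$ respectively: this follows at once from the translation invariance~\eqref{eq:P_K_translation} and the complementation invariance~\eqref{eq:complement_invariance} of $\Pker$, together with $(E+x)\bigtriangleup(U+x)=(E\bigtriangleup U)+x$ and $\comp E\bigtriangleup\comp U=E\bigtriangleup U$. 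A bijection of the admissible class that preserves the energy maps minimizers to minimizers, which is the assertion.

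For item~\ref{item:g_stability}, I would fix a competitor $V\in\measurablesets$ with $\EnergyGeometricKer{(V;E,\Lambda,\misurapeso)}<+\infty$ for the limit problem — such a $V$ exists, for instance $V=U_1$, since $\Pker(U_1)<+\infty$ and $\misurapeso(E\bigtriangleup U_1)\le\misurapeso(E_1\bigtriangleup U_1)+\|w\|_{\Lspace{\infty}(\Rdim)}\|\chi_{E_1}-\chi_E\|_{\Lspace{1}(\Rdim)}<+\infty$ — and chain two estimates. For the lower one, after passing to a subsequence that realizes $\liminf_k\EnergyGeometricKer{(U_k;E_k,\Lambda,\misurapeso)}$ and along which $\chi_{U_k}\to\chi_U$ and $\chi_{E_k}\to\chi_E$ a.e., \cref{res:P_K_lsc} gives $\Pker(U)\le\liminf_k\Pker(U_k)$ and Fatou's lemma for the measure $\misurapeso$ gives $\misurapeso(E\bigtriangleup U)\le\liminf_k\misurapeso(E_k\bigtriangleup U_k)$, whence $\EnergyGeometricKer{(U;E,\Lambda,\misurapeso)}\le\liminf_k\EnergyGeometricKer{(U_k;E_k,\Lambda,\misurapeso)}$. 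For the upper one, $\bigl|\misurapeso(E_k\bigtriangleup V)-\misurapeso(E\bigtriangleup V)\bigr|\le\|w\|_{\Lspace{\infty}(\Rdim)}\|\chi_{E_k}-\chi_E\|_{\Lspace{1}(\Rdim)}\to0$ yields $\EnergyGeometricKer{(V;E_k,\Lambda,\misurapeso)}\to\EnergyGeometricKer{(V;E,\Lambda,\misurapeso)}$; combined with $\EnergyGeometricKer{(U_k;E_k,\Lambda,\misurapeso)}\le\EnergyGeometricKer{(V;E_k,\Lambda,\misurapeso)}$ this gives $\EnergyGeometricKer{(U;E,\Lambda,\misurapeso)}\le\EnergyGeometricKer{(V;E,\Lambda,\misurapeso)}<+\infty$, i.e.\ $U\in\GeometricProblemSolutions{E,\Lambda,\misurapeso}$.

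For item~\ref{item:fromGPtoP}, I would first note that any $v$ with $\EnergyLOneKerTV{(v;f,\Lambda,\misurapeso)}<+\infty$ obeys $\int_{\Rdim}|v-f|\,\de x\le(\essinf w)^{-1}\int_{\Rdim}|v-f|\,\de\misurapeso<+\infty$, so $v\in\Lspace{1}(\Rdim)$ and hence $v\in\BVkerspace(\Rdim)$; likewise $u\in\BVkerspace(\Rdim)$ has finite energy since $f\in\Lspace{1}(\Rdim)$. Applying the layer-cake formula (\cref{res:layer-cake}) to $u$ and to such a $v$, and using that $\set*{u>t}$ minimizes the $\set*{f>t}$-problem for $\lebone$-a.e.\ $t$, I get
\begin{equation*}
\EnergyLOneKerTV{(u;f,\Lambda,\misurapeso)}
=\int_{\R}\EnergyGeometricKer{(\set*{u>t};\set*{f>t},\Lambda,\misurapeso)}\,\de t
\le\int_{\R}\EnergyGeometricKer{(\set*{v>t};\set*{f>t},\Lambda,\misurapeso)}\,\de t
=\EnergyLOneKerTV{(v;f,\Lambda,\misurapeso)},
\end{equation*}
so $u\in\FunctionalProblemSolutions{f,\Lambda,\misurapeso}$.

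Item~\ref{item:fromPtoGP} is where the work lies, and I would proceed as in~\cite{B22} in two steps. First, the $\lebone$-a.e.\ statement: the layer-cake formula gives $\EnergyLOneKerTV{(u;f,\Lambda,\misurapeso)}=\int_{\R}\EnergyGeometricKer{(\set*{u>t};\set*{f>t},\Lambda,\misurapeso)}\,\de t$, and were $\set*{u>t}$ not a minimizer of the $\set*{f>t}$-problem on a set of $t$'s of positive measure, I would replace it there by a \emph{nested} family of competitors that are at least as good — nestedness being arranged by passing to suitable unions and intersections, whose energies are controlled through the submodularity~\eqref{eq:submodularity} of $\Pker$ and the modularity of $U\mapsto\misurapeso(E\bigtriangleup U)$ — and reassemble them via the layer-cake formula into a competitor strictly beating $u$, contradicting $u\in\FunctionalProblemSolutions{f,\Lambda,\misurapeso}$. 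Second, upgrading from a.e.\ $t$ to every $t\ne0$: for $t_0>0$ I would take good levels $t_k\downarrow t_0$, so that $\set*{u>t_k}\uparrow\set*{u>t_0}$ and $\set*{f>t_k}\uparrow\set*{f>t_0}$ are of finite measure (since $t_0>0$ and $u,f\in\Lspace{1}(\Rdim)$) and therefore converge in $\Lspace{1}(\Rdim)$, and conclude by item~\ref{item:g_stability}; for $t_0<0$ I would apply this to $-u\in\FunctionalProblemSolutions{-f,\Lambda,\misurapeso}$ (by item~\ref{item:dilation} of \cref{res:basic_props_Sol}) at the level $s=-t_0>0$, getting $\set*{-u\ge s}\in\GeometricProblemSolutions{\set*{-f\ge s},\Lambda,\misurapeso}$ by approximating $s$ from below with good levels $s_k\uparrow s$ (so $\set*{-u>s_k}\downarrow\set*{-u\ge s}$, again of finite measure since $s>0$), and then complementing via item~\ref{item:complement} and the identities $\comp{\set*{-u\ge s}}=\set*{u>t_0}$, $\comp{\set*{-f\ge s}}=\set*{f>t_0}$. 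The level $t=0$ is genuinely excluded: there $\set*{f>0}$ and $\set*{u>0}$ may have infinite measure, so the approximating level sets converge only in $\Llocspace{1}(\Rdim)$ and the $\Lspace{1}$-continuity of the fidelity term — and with it item~\ref{item:g_stability} — no longer applies. The main obstacle is exactly the a.e.-$t$ selection argument above, which is the one place where submodularity of the $\kernel$-perimeter is essential; everything else is bookkeeping that transfers from~\cite{B22} with minor changes.
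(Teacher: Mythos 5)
Your proposal is correct and follows the same route as the paper, which in fact offers no written proof at all: \cref{sec:geometric_pb} opens by declaring that these results ``can be proved as in~\cite{B22} with minor adjustments'', and your write-up is essentially that adaptation carried out. Items~\ref{item:g_translation}--\ref{item:fromGPtoP} are complete and correct as you give them (the reduction in~\ref{item:fromGPtoP} showing that any finite-energy competitor is automatically in $\Lspace{1}(\Rdim)$, hence in $\BVkerspace(\Rdim)$, via $\essinf w>0$ is exactly the point that makes \cref{res:layer-cake} applicable), and your upgrade in~\ref{item:fromPtoGP} from a.e.\ $t$ to every $t\ne0$ by monotone approximation through good levels, combined with items~\ref{item:g_stability} and~\ref{item:complement}, is the standard and correct mechanism, including the correct explanation of why $t=0$ must be excluded (cf.\ \cref{rem:zero_t}). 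The one place where you stop at a plan rather than a proof is the a.e.\ step of~\ref{item:fromPtoGP}: you have named the right ingredients (layer-cake identity, submodularity~\eqref{eq:submodularity}, exact modularity of $U\mapsto\misurapeso(E\bigtriangleup U)$ for fixed $E$, and the fact that unions/intersections of near-minimizers at nested data levels remain near-minimizers), but to actually ``reassemble'' you still need a measurable, monotone-in-$t$ choice of competitors (e.g.\ first on a countable dense set of levels, then extended by monotonicity) and a check that the reassembled function lies in $\Lspace{1}(\Rdim)$ (which follows by testing each level against $\emptyset$ and integrating $|\set*{f>t}|$ in $t$). That is precisely the technical core that the paper delegates to~\cite{B22}, so your sketch is at the same level of detail as the source it mirrors.
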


\begin{remark}[Other versions of \cref{res:basic_props_GSol}\ref{item:fromGPtoP} and~\ref{item:fromPtoGP}]
\label{rem:other_versions_t}
As observed in~\cite{B22},     properties~\ref{item:fromGPtoP} and~\ref{item:fromPtoGP} in \cref{res:basic_props_GSol} can be completed with the following:
\begin{enumerate}[label=(\roman*$^{1}$)]
\setcounter{enumi}{2}

\item
if $\set*{u\ge t}\in\GeometricProblemSolutions{\set*{f\ge t},\Lambda,\misurapeso}$ for $\lebone$-a.e.\ $t\in\R$, then $u\in\FunctionalProblemSolutions{f,\Lambda,\misurapeso}$;

\item
if	$u\in\FunctionalProblemSolutions{f,\Lambda,\misurapeso}$, then $\set*{u\ge t}\in\GeometricProblemSolutions{\set*{f\ge t},\Lambda,\misurapeso}$ for $t\in\R\setminus\set*{0}$;

\end{enumerate}

\begin{enumerate}[label=(\roman*$^{2}$)]
\setcounter{enumi}{2}

\item
if $\set*{u\le t}\in\GeometricProblemSolutions{\set*{f\le t},\Lambda,\misurapeso}$ for $\lebone$-a.e.\ $t\in\R$, then $u\in\FunctionalProblemSolutions{f,\Lambda,\misurapeso}$;

\item
if	$u\in\FunctionalProblemSolutions{f,\Lambda,\misurapeso}$, then $\set*{u\le t}\in\GeometricProblemSolutions{\set*{f\le t},\Lambda,\misurapeso}$ for $t\in\R\setminus\set*{0}$;

\end{enumerate}

\begin{enumerate}[label=(\roman*$^{3}$)]
\setcounter{enumi}{2}

\item
if $\set*{u< t}\in\GeometricProblemSolutions{\set*{f< t},\Lambda,\misurapeso}$ for $\lebone$-a.e.\ $t\in\R$, then $u\in\FunctionalProblemSolutions{f,\Lambda,\misurapeso}$;

\item
if	$u\in\FunctionalProblemSolutions{f,\Lambda,\misurapeso}$, then $\set*{u< t}\in\GeometricProblemSolutions{\set*{f< t},\Lambda,\misurapeso}$ for $t\in\R\setminus\set*{0}$.

\end{enumerate}
\end{remark}

\begin{remark}[Case $t=0$ in \cref{res:basic_props_GSol} and \cref{rem:other_versions_t}]
\label{rem:zero_t}
The case $t=0$ in \cref{res:basic_props_GSol} and \cref{rem:other_versions_t} is more delicate, as $\set*{u>0}\in\GeometricProblemSolutions{\set*{f>0},\Lambda,\misurapeso}$ if either $|\set*{f>0}|<+\infty$ or $|\set*{f\le 0}|<+\infty$, and similarly for $\set*{u\ge0}$, $\set*{u<0}$ and $\set*{u\le0}$.
\end{remark}

From \cref{res:basic_props_Sol}, \cref{rem:other_versions_t} and \cref{rem:zero_t}, we can deduce the following result.   

\begin{corollary}[Geometric $\Lspace{1}$ datum]\label{cor:BINequivalenceFuncGeomPBs}
Let $E\in\measurablesets$ be such that $|E|<+\infty$.

\begin{enumerate}[label=(\roman*)]
		\item\label{item:BINfromGPtoP} 
If $U\in\GeometricProblemSolutions{E,\Lambda,\misurapeso}$, then $\chi_{U}\in\FunctionalProblemSolutions{\chi_{E},\Lambda,\misurapeso}$.
		\item\label{item:BINfromPtoGP}  		
If	$u\in\FunctionalProblemSolutions{\chi_{E},\Lambda,\misurapeso}$, then $u(x)\in[0,1]$ for $\lebdim$-a.e. $x\in\Rdim$, with $\{u>t\}\in\GeometricProblemSolutions{E,\Lambda,\misurapeso}$ for all $t\in[0,1)$ and $\{u\geq t\}\in\GeometricProblemSolutions{E,\Lambda,\misurapeso}$ for all $t\in(0,1]$.
\end{enumerate}
\end{corollary}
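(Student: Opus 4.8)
The plan is to read off both items from the correspondence between functional and geometric solutions already packaged in \cref{res:basic_props_GSol}, \cref{rem:other_versions_t} and \cref{rem:zero_t}, together with the truncation properties in \cref{res:basic_props_Sol}; the only delicate points are the degenerate thresholds $t=0$ and $t=1$, where the superlevel sets of $\chi_E$ are not of the form $E$.

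For \ref{item:BINfromGPtoP}, since $|E|<+\infty$ we have $\chi_E\in\Lspace1(\Rdim)$, and $\chi_U\in\BVkerspace(\Rdim)$ because $U$ is a geometric solution with finite-measure datum, so \cref{res:basic_props_GSol} applies to $\chi_U$ and $\chi_E$. The superlevel set $\set*{\chi_E>t}$ equals $\Rdim$ for $t<0$, equals $E$ for $t\in[0,1)$ and equals $\emptyset$ for $t\ge1$, and the corresponding $\set*{\chi_U>t}$ equals $\Rdim$, $U$ or $\emptyset$. Since the geometric energy vanishes at $\Rdim$ for the datum $\Rdim$ and at $\emptyset$ for the datum $\emptyset$ (so these sets are trivially minimizers), and $U\in\GeometricProblemSolutions{E,\Lambda,\misurapeso}$ by assumption, we obtain $\set*{\chi_U>t}\in\GeometricProblemSolutions{\set*{\chi_E>t},\Lambda,\misurapeso}$ for every $t\in\R$; then \cref{res:basic_props_GSol}\ref{item:fromGPtoP} yields $\chi_U\in\FunctionalProblemSolutions{\chi_E,\Lambda,\misurapeso}$. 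Equivalently, integrating the layer-cake identity of \cref{res:layer-cake} term by term — noting that a finite-energy competitor $v$ automatically lies in $\BVkerspace(\Rdim)$, since its fidelity term and $\chi_E$ are both $\misurapeso$-integrable — gives $\EnergyLOneKerTV{(v;\chi_E,\Lambda,\misurapeso)}\ge\EnergyGeometricKer{(U;E,\Lambda,\misurapeso)}=\EnergyLOneKerTV{(\chi_U;\chi_E,\Lambda,\misurapeso)}$ directly.

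For \ref{item:BINfromPtoGP}, I would first prove $0\le u\le1$ a.e.\ by a strict truncation argument. The map $u\mapsto u\vee 0$ is $1$-Lipschitz in each variable, hence does not increase $\kerTV{\cdot}$, and $\chi_E\ge0$ gives $|u\vee0-\chi_E|\le|u-\chi_E|$ a.e., with strict inequality on $\set*{u<0}$; therefore $\EnergyLOneKerTV{(u\vee0;\chi_E,\Lambda,\misurapeso)}\le\EnergyLOneKerTV{(u;\chi_E,\Lambda,\misurapeso)}$, strictly so if $|\set*{u<0}|>0$, which is impossible since $u$ is a solution. Hence $u\ge0$ a.e.; the symmetric argument with $u\wedge1$ and $\chi_E\le1$ gives $u\le1$ a.e. (This is the strict refinement of \cref{res:basic_props_Sol}\ref{item:truncationbis}.) With $0\le u\le1$ a.e., one has $\set*{\chi_E>t}=E$ for $t\in[0,1)$ and $\set*{\chi_E\ge t}=E$ for $t\in(0,1]$; then for $t\in(0,1)$, \cref{res:basic_props_GSol}\ref{item:fromPtoGP} gives $\set*{u>t}\in\GeometricProblemSolutions{E,\Lambda,\misurapeso}$, and the $\set*{u\ge t}$-version in \cref{rem:other_versions_t} gives $\set*{u\ge t}\in\GeometricProblemSolutions{E,\Lambda,\misurapeso}$ for all $t\in(0,1]$ (the endpoint $t=1$ being allowed there since only $t=0$ is excluded).

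The one remaining case is $t=0$ in the statement about $\set*{u>t}$: here \cref{rem:zero_t} applies precisely because $|\set*{\chi_E>0}|=|E|<+\infty$, giving $\set*{u>0}\in\GeometricProblemSolutions{E,\Lambda,\misurapeso}$. I expect the main difficulty to be merely organizational — tracking the $t=0,1$ thresholds, checking that the trivial geometric problems with datum $\Rdim$ or $\emptyset$ are solved by $\Rdim$ and $\emptyset$, and recognizing that the hypothesis $|E|<+\infty$ is exactly what unlocks \cref{rem:zero_t} at $t=0$, without which the claim there may fail.
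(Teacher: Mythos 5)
Your proof is correct and follows exactly the route the paper intends: the corollary is stated there without proof as a direct consequence of \cref{res:basic_props_Sol}, \cref{res:basic_props_GSol}\ref{item:fromGPtoP}--\ref{item:fromPtoGP}, \cref{rem:other_versions_t} and \cref{rem:zero_t}, and your write-up fills in precisely that deduction, including the correct handling of the thresholds $t=0,1$ and the trivial geometric problems with data $\Rdim$ and $\emptyset$. The strict truncation argument you use to upgrade \cref{res:basic_props_Sol}\ref{item:truncationbis} to the pointwise bound $0\le u\le 1$ a.e.\ (exploiting $\essinf w>0$) is the standard and intended refinement, so no gap remains.
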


As a consequence, we get the following result dealing with (countable) intersections and unions of solutions of~\eqref{Pb:GeometricProblem}.

\begin{corollary}[Intersection and union]
\label{res:GP_intersection_union}
Let $E\in\measurablesets$ be such that $|E|<+\infty$.

\begin{enumerate}[label=(\roman*)]

\item\label{item:GP_intersection_union} 
$\GeometricProblemSolutions{E,\Lambda,\misurapeso}$ is closed under finite intersection and finite union.

\item\label{item:GP_intersection_union_countable} 
$\GeometricProblemSolutions{E,\Lambda,\misurapeso}$ is closed under countable decreasing  intersection and countable increasing union.

\end{enumerate}
\end{corollary}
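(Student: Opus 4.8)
The plan is to deduce part~\ref{item:GP_intersection_union} from the submodularity~\eqref{eq:submodularity} of $\Pker$ (with $A=\Rdim$) combined with the observation that the fidelity term $U\mapsto\misurapeso(E\bigtriangleup U)$ is \emph{modular} along pairs, and then to obtain part~\ref{item:GP_intersection_union_countable} by a routine monotone passage to the limit via \cref{res:basic_props_GSol}\ref{item:g_stability}. No extra assumption on $\kernel$ is needed beyond what is already available.

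For part~\ref{item:GP_intersection_union}, I would take $U,V\in\GeometricProblemSolutions{E,\Lambda,\misurapeso}$ and let $m$ denote the minimal value of~\eqref{Pb:GeometricProblem}, which is finite since $U$ is a solution (and, because $|E|<+\infty$, all the symmetric-difference terms below are finite). The first step is the pointwise identity
\begin{equation*}
\chi_{E\bigtriangleup(U\cap V)}+\chi_{E\bigtriangleup(U\cup V)}=\chi_{E\bigtriangleup U}+\chi_{E\bigtriangleup V},
\end{equation*}
which one checks by a short case analysis on the $\{0,1\}$-valued functions $\chi_E,\chi_U,\chi_V$, using $\chi_{U\cap V}=\chi_U\wedge\chi_V$ and $\chi_{U\cup V}=\chi_U\vee\chi_V$; integrating it against $\misurapeso$ gives $\misurapeso(E\bigtriangleup(U\cap V))+\misurapeso(E\bigtriangleup(U\cup V))=\misurapeso(E\bigtriangleup U)+\misurapeso(E\bigtriangleup V)$. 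Adding $\Lambda$ times this equality to~\eqref{eq:submodularity} yields
\begin{align*}
\EnergyGeometricKer{(U\cap V;E,\Lambda,\misurapeso)}+\EnergyGeometricKer{(U\cup V;E,\Lambda,\misurapeso)}
&\le
\EnergyGeometricKer{(U;E,\Lambda,\misurapeso)}+\EnergyGeometricKer{(V;E,\Lambda,\misurapeso)}=2m.
\end{align*}
Since $U\cap V$ and $U\cup V$ are admissible competitors in~\eqref{Pb:GeometricProblem}, each summand on the left is $\ge m$; hence both equal $m$ (in particular both are finite), so $U\cap V$ and $U\cup V$ are solutions. Finite intersections and unions then follow by induction.

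For part~\ref{item:GP_intersection_union_countable}, I would take a decreasing sequence $(U_k)_{k\in\N}\subset\GeometricProblemSolutions{E,\Lambda,\misurapeso}$ and set $U=\bigcap_{k\in\N}U_k$. On every ball $B_R$ one has $|U_k\cap B_R|\downarrow|U\cap B_R|<+\infty$, so $\chi_{U_k}\to\chi_U$ in $\Llocspace{1}(\Rdim)$; applying \cref{res:basic_props_GSol}\ref{item:g_stability} with $E_k\equiv E$ gives $U\in\GeometricProblemSolutions{E,\Lambda,\misurapeso}$. The case of an increasing union is identical, now using $|U_k\cap B_R|\uparrow|U\cap B_R|\le|B_R|<+\infty$.

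The argument is essentially routine. The only points that deserve care are the verification of the modular identity for the symmetric-difference term — elementary, but genuinely needed in order to pair it with submodularity rather than merely getting an inequality — and, in part~\ref{item:GP_intersection_union_countable}, the fact that the sets in play need not have finite Lebesgue measure, so that convergence is only local in $\Lspace{1}$; this is precisely the generality in which \cref{res:basic_props_GSol}\ref{item:g_stability} is stated, so I do not expect any real obstacle.
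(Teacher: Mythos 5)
Your proof is correct, but it follows a genuinely different route from the paper. The paper derives this corollary from \cref{cor:BINequivalenceFuncGeomPBs}: given $U,V\in\GeometricProblemSolutions{E,\Lambda,\misurapeso}$, the characteristic functions $\chi_U,\chi_V$ solve the functional problem, hence so does the convex combination $\tfrac12(\chi_U+\chi_V)$ by \cref{res:basic_props_Sol}\ref{item:convex_and_closed}, and its superlevel sets $\set*{\cdot>t}$ for $t\in[0,\tfrac12)$ and $t\in[\tfrac12,1)$ are exactly $U\cup V$ and $U\cap V$, which are therefore geometric solutions by \cref{cor:BINequivalenceFuncGeomPBs}\ref{item:BINfromPtoGP}; the countable case then follows by a monotone limit and stability, much as you do. You instead argue entirely at the geometric level: the submodularity~\eqref{eq:submodularity} of $\Pker$ paired with the (correct, easily checked) modularity identity $|e-\min(u,v)|+|e-\max(u,v)|=|e-u|+|e-v|$ for the fidelity term forces both $\EnergyGeometricKer{(U\cap V)}$ and $\EnergyGeometricKer{(U\cup V)}$ to equal the minimum. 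This is more self-contained (it bypasses the functional--geometric dictionary and the convexity of $\FunctionalProblemSolutions{\cdot}$ entirely, using only~\eqref{eq:submodularity} and the stability property \cref{res:basic_props_GSol}\ref{item:g_stability}), and it is in fact the same submodularity mechanism the paper later exploits in the proof of \cref{res:comparison}. The only cosmetic imprecision is your parenthetical attributing the finiteness of the symmetric-difference terms to $|E|<+\infty$; what actually guarantees it is that $U$ and $V$ are solutions, so their energies (hence $\misurapeso(E\bigtriangleup U)$, $\misurapeso(E\bigtriangleup V)$) are finite — but since all quantities are non-negative, the inequality you add holds in $[0,+\infty]$ regardless and the finiteness is recovered at the end from $2m<+\infty$, so nothing breaks.
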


\subsection{Existence for geometric \texorpdfstring{$L^1$}{Lˆ1}-data}

The following existence result for~\eqref{Pb:GeometricProblem} is a simple consequence of  \cref{prop:existenceL1Datum} and \cref{cor:BINequivalenceFuncGeomPBs}\ref{item:BINfromPtoGP}.

\begin{corollary}[Existence for~\eqref{Pb:GeometricProblem} with geometric $L^1$ datum]
\label{res:existence_geom}
Let \eqref{H:Far_from_zero_int}
and
\eqref{H:Not_integrable}
be in force.
If $E\in\measurablesets$ is such that $|E|<+\infty$, then $\GeometricProblemSolutions{E,\Lambda,\nu}\ne\emptyset$.
\end{corollary}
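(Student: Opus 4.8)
The plan is to reduce the geometric problem to the functional one via the layer--cake structure already set up in \cref{res:layer-cake} and \cref{cor:BINequivalenceFuncGeomPBs}, so that the desired existence follows by combining two results proved earlier. First I would observe that, since $|E|<+\infty$, the function $f=\chi_E$ belongs to $\Lspace{1}(\Rdim)$. Hence, under \eqref{H:Far_from_zero_int} and \eqref{H:Not_integrable}, \cref{prop:existenceL1Datum} applies with datum $\chi_E$ and yields a solution $u\in\FunctionalProblemSolutions{\chi_E,\Lambda,\misurapeso}$ of the functional problem~\eqref{Pb:FunctionalProblem}.

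Next I would feed this $u$ into \cref{cor:BINequivalenceFuncGeomPBs}\ref{item:BINfromPtoGP}, which requires exactly $|E|<+\infty$: it gives $u(x)\in[0,1]$ for $\lebdim$-a.e.\ $x\in\Rdim$ and, more importantly, $\set*{u>t}\in\GeometricProblemSolutions{E,\Lambda,\misurapeso}$ for every $t\in[0,1)$. Taking, say, $t=0$ then produces an element $\set*{u>0}$ of $\GeometricProblemSolutions{E,\Lambda,\nu}$, so the solution set is nonempty, which is exactly the claim. (The degenerate case $|E|=0$ is covered as well: then $u=0$ $\lebdim$-a.e.\ is a functional solution, $\set*{u>0}=\emptyset$, and $\emptyset$ is a geometric solution since both $\Pker(\emptyset)$ and $\misurapeso(E\bigtriangleup\emptyset)=\misurapeso(E)$ vanish.)

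There is essentially no genuine obstacle here; the only point to check is the bookkeeping of hypotheses, namely that the two invoked statements are available under precisely the assumptions in force: \cref{prop:existenceL1Datum} needs $\chi_E\in\Lspace{1}(\Rdim)$ (guaranteed by $|E|<+\infty$) together with \eqref{H:Far_from_zero_int} and \eqref{H:Not_integrable}, and \cref{cor:BINequivalenceFuncGeomPBs} needs $|E|<+\infty$. All of these are in the statement, so the argument is a one-line combination and nothing in it is delicate.
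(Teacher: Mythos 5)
Your argument is correct and is precisely the route the paper indicates: take $f=\chi_E\in\Lspace{1}(\Rdim)$, obtain a functional solution via \cref{prop:existenceL1Datum}, and pass to a superlevel set via \cref{cor:BINequivalenceFuncGeomPBs}\ref{item:BINfromPtoGP} (the case $t=0$ being admissible here exactly because $|E|<+\infty$, cf.\ \cref{rem:zero_t}). Nothing further is needed.
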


\subsection{Bounded data and maximal and minimal solutions}

The following result is a simple consequence of \cref{res:intersection_convex}.

\begin{lemma}[Bounded geometric datum]
\label{res:bounded_g_datum}
Let \eqref{H:Radial},
\eqref{H:Not_too_singular}, 
and 
\eqref{H:Decreasing_q} with $q=1$ 
be in force. 
If $E\subset B_R$ for some $R>0$, then also $U\subset B_R$ for every  $U\in\GeometricProblemSolutions{E,\Lambda,\misurapeso}$. 
\end{lemma}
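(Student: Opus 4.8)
The plan is to test the minimality of a solution against the competitor obtained by intersecting it with the ball $B_R$ (which is convex), using \cref{res:intersection_convex} to control the $\kernel$-perimeter of this competitor. Fix $U\in\GeometricProblemSolutions{E,\Lambda,\misurapeso}$ and set $U'=U\cap B_R$; the goal is to show $\EnergyGeometricKer(U';E,\Lambda,\misurapeso)\le\EnergyGeometricKer(U;E,\Lambda,\misurapeso)$, with a strict gain unless $|U\setminus B_R|=0$.

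First I would check that $|U|<+\infty$, which is needed to apply \cref{res:intersection_convex} to $U$ with the convex set $C=B_R$. Indeed, since $U$ is a solution, $\EnergyGeometricKer(U;E,\Lambda,\misurapeso)<+\infty$, hence $\misurapeso(E\bigtriangleup U)<+\infty$; as $\misurapeso=\peso\lebdim$ with $\essinf_{\Rdim}\peso>0$, this forces $|E\bigtriangleup U|<+\infty$, and since $E\subset B_R$ gives $|E|\le|B_R|<+\infty$, we conclude $|U|\le|E|+|E\bigtriangleup U|<+\infty$. Therefore $\Pker(U')=\Pker(U\cap B_R)\le\Pker(U)$.

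Next I would compare the fidelity terms by elementary set algebra. Because $E\subset B_R$, one has $E\setminus U'=E\setminus U$ and $U'\setminus E=(U\cap B_R)\setminus E$, so that $E\bigtriangleup U'=(E\bigtriangleup U)\setminus(U\setminus B_R)$ with $U\setminus B_R\subset E\bigtriangleup U$; hence $\misurapeso(E\bigtriangleup U')=\misurapeso(E\bigtriangleup U)-\misurapeso(U\setminus B_R)$. Combining the two estimates,
\[
\EnergyGeometricKer(U';E,\Lambda,\misurapeso)
\le
\EnergyGeometricKer(U;E,\Lambda,\misurapeso)-\Lambda\,\misurapeso(U\setminus B_R).
\]
Finally, since $U$ is a minimizer the left-hand side is at least $\EnergyGeometricKer(U;E,\Lambda,\misurapeso)$, so $\misurapeso(U\setminus B_R)\le0$, i.e.\ $\misurapeso(U\setminus B_R)=0$, and therefore $|U\setminus B_R|=0$ again by the positive lower bound on $\peso$; that is, $U\subset B_R$ up to a $\lebdim$-negligible set. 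The only non-formal point is the finiteness of $|U|$ required to invoke \cref{res:intersection_convex}; everything else is routine bookkeeping.
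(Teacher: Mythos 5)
Your proof is correct and is exactly the argument the paper intends: it states the lemma as "a simple consequence of \cref{res:intersection_convex}" without writing out the details, and your competitor $U\cap B_R$, the perimeter comparison via that theorem, and the strict decrease of the fidelity term unless $\misurapeso(U\setminus B_R)=0$ are precisely those details. Your preliminary check that $|U|<+\infty$ (via $\misurapeso(E\bigtriangleup U)<+\infty$ and $\essinf w>0$) is the right hypothesis verification for invoking \cref{res:intersection_convex}, and the conclusion $U\subset B_R$ is, as usual in this setting, understood up to $\lebdim$-negligible sets.
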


In the following result, we prove the existence of a maximal and a minimal solution of problem~\eqref{Pb:GeometricProblem}, with respect to set inclusion, whenever $E$ has finite measure.

\begin{proposition}[Existence of maximal and minimal solutions]
\label{res:existence_min_max}
Let 
\eqref{H:Far_from_zero_int}
and
\eqref{H:Not_integrable}
be in force.
If $E\in\measurablesets$ with $\min\left\{|E|,|\comp{E}|\right\}<+\infty$, then~\eqref{Pb:GeometricProblem} admits a minimal and a maximal solution $E^-_\Lambda,E^+_\Lambda\in\GeometricProblemSolutions{E,\Lambda,\misurapeso}$ (with respect to inclusion) which are uniquely determined up to $\lebdim$-negligible sets.
Moreover, $E^-_\Lambda$ and $E^+_\Lambda$ satisfy
\begin{equation}\label{eq:min_max_complement}
(E^c)^-_\Lambda=(E^+_\Lambda)^c,
\quad
(E^c)^+_\Lambda=(E^-_\Lambda)^c,
\end{equation}
and
\begin{equation}\label{eq:diamole_un_nome}
	\misurapeso(E^-_\Lambda)\leq\misurapeso(E^+_\Lambda)\leq 2\misurapeso(E).
\end{equation}
\end{proposition}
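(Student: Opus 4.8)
The plan is to obtain the maximal and minimal solutions as suitable limits of countable families of solutions, exploiting the lattice structure of $\GeometricProblemSolutions{E,\Lambda,\misurapeso}$ established in \cref{res:GP_intersection_union}. By complementation (\cref{res:basic_props_GSol}\ref{item:complement}) and the relations \eqref{eq:min_max_complement} that we wish to prove, we may assume $|E|<+\infty$ (the case $|\comp E|<+\infty$ follows by passing to complements). First I would record that $\GeometricProblemSolutions{E,\Lambda,\misurapeso}\ne\emptyset$ by \cref{res:existence_geom}, and that every $U\in\GeometricProblemSolutions{E,\Lambda,\misurapeso}$ satisfies $\misurapeso(U)<+\infty$: indeed, comparing with the competitor $V=\emptyset$ gives $\Pker(U)+\Lambda\misurapeso(E\bigtriangleup U)\le\Lambda\misurapeso(E)$, whence $\misurapeso(E\bigtriangleup U)\le\misurapeso(E)$ and therefore $\misurapeso(U)\le\misurapeso(E\cap U)+\misurapeso(U\setminus E)\le\misurapeso(E)+\misurapeso(E\bigtriangleup U)\le 2\misurapeso(E)$, which already yields the bound \eqref{eq:diamole_un_nome} for \emph{any} solution (in particular for $E^\pm_\Lambda$ once these are constructed) and, since $\misurapeso\in\MisureAmmissibili$, also $|U|<+\infty$.

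Next I would construct the maximal solution. Set $M=\sup\set*{\misurapeso(U):U\in\GeometricProblemSolutions{E,\Lambda,\misurapeso}}\le 2\misurapeso(E)<+\infty$ and pick solutions $U_k$ with $\misurapeso(U_k)\to M$. Replacing $U_k$ by the increasing union $V_k=U_1\cup\dots\cup U_k$, which is still a solution by \cref{res:GP_intersection_union}\ref{item:GP_intersection_union}, we may assume the $U_k$ are increasing; then $U_k\nearrow E^+_\Lambda:=\bigcup_k U_k$, which is a solution by \cref{res:GP_intersection_union}\ref{item:GP_intersection_union_countable}, and $\misurapeso(E^+_\Lambda)=\lim_k\misurapeso(U_k)=M$ by monotone convergence. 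To see $E^+_\Lambda$ is maximal, take any $U\in\GeometricProblemSolutions{E,\Lambda,\misurapeso}$: then $U\cup E^+_\Lambda$ is a solution with $\misurapeso(U\cup E^+_\Lambda)\ge M$, hence equals $M=\misurapeso(E^+_\Lambda)$, forcing $\misurapeso(U\setminus E^+_\Lambda)=0$, i.e.\ $U\subset E^+_\Lambda$ up to negligible sets. Uniqueness up to negligible sets is immediate from maximality. The minimal solution $E^-_\Lambda$ is obtained symmetrically, using countable decreasing intersections and $\inf\set*{\misurapeso(U):U\in\GeometricProblemSolutions{E,\Lambda,\misurapeso}}$ together with \cref{res:GP_intersection_union}\ref{item:GP_intersection_union_countable}; alternatively one may simply \emph{define} $E^-_\Lambda$ via \eqref{eq:min_max_complement} and check it works.

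For the complementation identities \eqref{eq:min_max_complement}, observe that $U\mapsto\comp U$ is an inclusion-reversing bijection from $\GeometricProblemSolutions{E,\Lambda,\misurapeso}$ onto $\GeometricProblemSolutions{\comp E,\Lambda,\misurapeso}$ by \cref{res:basic_props_GSol}\ref{item:complement}; hence it carries the maximal element of the former to the minimal element of the latter and vice versa, which is exactly $(\comp E)^-_\Lambda=\comp{(E^+_\Lambda)}$ and $(\comp E)^+_\Lambda=\comp{(E^-_\Lambda)}$. Finally, $\misurapeso(E^-_\Lambda)\le\misurapeso(E^+_\Lambda)$ is clear from $E^-_\Lambda\subset E^+_\Lambda$, and $\misurapeso(E^+_\Lambda)\le 2\misurapeso(E)$ was shown above. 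The main (and really only) delicate point is making sure the lattice closure under \emph{countable} monotone operations in \cref{res:GP_intersection_union}\ref{item:GP_intersection_union_countable} suffices to pass to the supremum/infimum — this is where finiteness of $\misurapeso(E)$ is essential, since it makes the relevant sup/inf finite and allows the monotone convergence argument to identify the limit's $\misurapeso$-measure; without it one would need a more careful exhaustion argument, but under $\min\set*{|E|,|\comp E|}<+\infty$ everything goes through cleanly after the reduction by complementation.
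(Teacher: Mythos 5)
Your proposal is correct and follows essentially the same route as the paper: test against $\emptyset$ to get $\misurapeso(U)\le 2\misurapeso(E)$ for every solution, take an extremizing sequence for $\misurapeso(U)$ over $\GeometricProblemSolutions{E,\Lambda,\misurapeso}$, pass to nested finite unions/intersections and then to the countable monotone limit via \cref{res:GP_intersection_union}, and handle the case $|\comp{E}|<+\infty$ and the identities \eqref{eq:min_max_complement} by complementation. The only (welcome) difference is that you spell out the extremality check — $\misurapeso(U\cup E^+_\Lambda)=M$ forces $U\subset E^+_\Lambda$ up to null sets — which the paper leaves implicit.
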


\begin{proof}
The proof is similar to that of~\cite{CMP15}*{Prop.~6.1} (also see~\cite{B22}*{Lem.~4.7}), so we only sketch it.
Assume $|E|<+\infty$.
By \cref{res:existence_geom}, $\GeometricProblemSolutions{E,\Lambda,\misurapeso}\ne\emptyset$, so take $U\in\GeometricProblemSolutions{E,\Lambda,\misurapeso}$.
By testing the minimality of $U$ against $\emptyset$, we get
\begin{equation*}
\Pker(U)+\Lambda\,\misurapeso(U\bigtriangleup E)
\le 
\Lambda\,\misurapeso(E),
\end{equation*}
which leads to
\begin{equation}\label{eq:stima_misura}
	\misurapeso(U)\leq \misurapeso(U\bigtriangleup E) + \misurapeso(E)\leq 2\misurapeso(E).
\end{equation}
Now let us set	
\begin{equation*}
m=\inf
\big\{\misurapeso(U):U\in\GeometricProblemSolutions{E,\Lambda,\misurapeso}
\big\}
\in
\left[0,2\misurapeso(E)\right]
\end{equation*}
and let $(U_k)_{k\in\N}\subset\GeometricProblemSolutions{E,\Lambda,\misurapeso}$ be such that $\misurapeso(U_k)\to m$ as $k\to+\infty$.
Letting
$E^-_\Lambda
=
\bigcap\limits_{j\in\N}\bigcap\limits_{k=1}^j U_k$ and making use of \cref{res:GP_intersection_union}, we infer that
$E^-_\Lambda
\in
\GeometricProblemSolutions{E,\Lambda,\misurapeso}$
is the minimal solution of~\eqref{Pb:GeometricProblem}.
The construction of $E^+_\Lambda$ is similar and thus left to the reader.
For~\eqref{eq:diamole_un_nome}, we just need to apply~\eqref{eq:stima_misura} to $E^+_\Lambda \supset E^-_\Lambda$ .
In the case $|\comp{E}|<+\infty$, the existence of $E^\pm_\Lambda$ follows from  \cref{res:basic_props_GSol}\ref{item:complement} and the previous case, while \eqref{eq:diamole_un_nome} becomes trivial.
Properties~\eqref{eq:min_max_complement} follow from
\cref{res:basic_props_GSol}\ref{item:complement}.
\end{proof}

The following result provides a comparison principle between maximal and minimal solutions of~\eqref{Pb:GeometricProblem} as $E\in\measurablesets$ varies.
For a strictly related discussion, see the questions left open in~\cite{CE05}*{pp.~1826--1827} and~\cite{YGO07}*{Th.~3.1}.
In \cref{res:comparison} we assume~\eqref{H:Positive} for the first time, see \cref{rem:small_scales} for more details. 

\begin{theorem}[Comparison Principle]
\label{res:comparison}
Let
\eqref{H:Symmetric},
\eqref{H:Far_from_zero_int},
\eqref{H:Not_integrable}
and
\eqref{H:Positive} 
be in force.
Let   $E_1,E_2\in\measurablesets$ be such that 
$\Pker(E_i)<+\infty$ and $
\min\left\{|E_i|,|E_i^c|\right\}<+\infty$, $i=1,2$.
If $E_2\subset E_1$, then 
$(E_2)^-_\Lambda\subset(E_1)^-_\Lambda$ and $(E_2)^+_\Lambda\subset(E_1)^+_\Lambda$.
\end{theorem}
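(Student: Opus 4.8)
The plan is to run the classical lattice argument for extremal minimizers, in the spirit of \cite{CMP15} and \cite{B22}. Since \eqref{H:Far_from_zero_int} and \eqref{H:Not_integrable} are in force and $\min\{|E_i|,|E_i^c|\}<+\infty$ for $i=1,2$, \cref{res:existence_min_max} provides the extremal solutions $(E_i)^\pm_\Lambda$; recall that $(E_i)^-_\Lambda$ is contained (up to $\lebdim$-negligible sets) in \emph{every} solution of the geometric problem with datum $E_i$, and dually $(E_i)^+_\Lambda$ contains every such solution. By~\eqref{eq:min_max_complement}, \eqref{eq:complement_invariance} and the inclusion $E_1^c\subset E_2^c$, the assertion for the maximal solutions is equivalent to the one for the minimal solutions; hence it suffices to prove $(E_2)^-_\Lambda\subset(E_1)^-_\Lambda$.

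The core is the following \emph{lattice property}: if $U_i\in\GeometricProblemSolutions{E_i,\Lambda,\misurapeso}$ for $i=1,2$ and $E_2\subset E_1$, then $U_1\cap U_2\in\GeometricProblemSolutions{E_2,\Lambda,\misurapeso}$ and $U_1\cup U_2\in\GeometricProblemSolutions{E_1,\Lambda,\misurapeso}$. To establish it, I would add the submodularity inequality~\eqref{eq:submodularity},
\begin{equation*}
\Pker(U_1\cap U_2)+\Pker(U_1\cup U_2)\le\Pker(U_1)+\Pker(U_2),
\end{equation*}
to the $\misurapeso$-integral of the elementary pointwise estimate
\begin{equation*}
\big|\chi_{E_2}-\chi_{U_1\cap U_2}\big|+\big|\chi_{E_1}-\chi_{U_1\cup U_2}\big|
\le
\big|\chi_{E_2}-\chi_{U_2}\big|+\big|\chi_{E_1}-\chi_{U_1}\big|
\quad\text{a.e.\ in }\Rdim,
\end{equation*}
which is checked by a short case distinction on $(\chi_{E_1}(x),\chi_{E_2}(x))\in\{(0,0),(1,0),(1,1)\}$ and uses the inclusion $E_2\subset E_1$ in an essential way. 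Summing the two inequalities gives
\begin{equation*}
\EnergyGeometricKer{(U_1\cap U_2;E_2,\Lambda,\misurapeso)}+\EnergyGeometricKer{(U_1\cup U_2;E_1,\Lambda,\misurapeso)}
\le
\EnergyGeometricKer{(U_2;E_2,\Lambda,\misurapeso)}+\EnergyGeometricKer{(U_1;E_1,\Lambda,\misurapeso)}.
\end{equation*}
Since $U_i$ minimizes the energy with datum $E_i$, the right-hand side is the sum of the two (finite) minimal values, while each term on the left is at least the corresponding minimal value; hence equality holds throughout, so $U_1\cap U_2$ and $U_1\cup U_2$ are solutions with data $E_2$ and $E_1$, respectively. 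Here one must check that all the $\misurapeso$-measures involved are finite: testing the minimality of $U_i$ against $\emptyset$ or against $\Rdim$ and using that $\misurapeso$ and $\lebdim$ are mutually absolutely continuous yields $\misurapeso(E_i\bigtriangleup U_i)<+\infty$, and then $\misurapeso\big(E_2\bigtriangleup(U_1\cap U_2)\big)$ and $\misurapeso\big(E_1\bigtriangleup(U_1\cup U_2)\big)$ are finite by a routine case split according to which of $|E_i|$, $|E_i^c|$ is finite.

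Granted the lattice property, take $U_1=(E_1)^-_\Lambda$ and $U_2=(E_2)^-_\Lambda$. Then $U_1\cap U_2\in\GeometricProblemSolutions{E_2,\Lambda,\misurapeso}$, so minimality of $(E_2)^-_\Lambda$ gives $(E_2)^-_\Lambda\subset U_1\cap U_2\subset U_1=(E_1)^-_\Lambda$, which is the desired inclusion. (Symmetrically, taking $U_1=(E_1)^+_\Lambda$, $U_2=(E_2)^+_\Lambda$ and using maximality of $(E_1)^+_\Lambda$ together with $U_1\cup U_2\in\GeometricProblemSolutions{E_1,\Lambda,\misurapeso}$ yields $(E_2)^+_\Lambda\subset(E_1)^+_\Lambda$ directly.)

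I expect the lattice scheme itself, being essentially algebraic, not to be the real obstacle. The delicate points are rather: that the extremal solutions of \cref{res:existence_min_max} are genuinely the $\subset$-smallest and $\subset$-largest elements of the (a priori only partially ordered) solution set, which relies on closure of $\GeometricProblemSolutions{E,\Lambda,\misurapeso}$ under countable monotone operations and on $\misurapeso\ll\lebdim\ll\misurapeso$; the finiteness bookkeeping for the fidelity terms in the regime $\min\{|E_i|,|E_i^c|\}<+\infty$; and the fact that a complete argument must also control the small-scale behavior of minimizers, which is exactly where \eqref{H:Positive} enters for the first time (cf.\ \cref{rem:small_scales}), alongside the remaining standing hypotheses \eqref{H:Symmetric}, \eqref{H:Far_from_zero_int}, \eqref{H:Not_integrable} and $\Pker(E_i)<+\infty$.
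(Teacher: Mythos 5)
Your proof is correct, but it departs from the paper's argument at the decisive step, and the difference is worth recording. Both arguments start from the same two ingredients: the submodularity inequality~\eqref{eq:submodularity} and the elementary pointwise fidelity inequality for nested data, which together show that the energies of $U_1\cap U_2$ (for datum $E_2$) and $U_1\cup U_2$ (for datum $E_1$) sum to at most the sum of the two minima. The paper, however, does not exploit the resulting lattice property directly: it converts the equality in submodularity into the vanishing of the interaction measure $\interaction\big((U_1\setminus U_2)\times (U_2\setminus U_1)\big)$ and invokes~\eqref{H:Positive} to deduce that \emph{any} two solutions $U_1,U_2$ of the two problems are nested, after which the swapping implication~\eqref{eq:scambio} finishes the proof. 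You instead conclude immediately that $U_1\cap U_2\in\GeometricProblemSolutions{E_2,\Lambda,\misurapeso}$ and $U_1\cup U_2\in\GeometricProblemSolutions{E_1,\Lambda,\misurapeso}$ and then use the least/greatest-element property of $(E_i)^{-}_\Lambda$ and $(E_i)^{+}_\Lambda$ from \cref{res:existence_min_max}. Your route buys something real: it never uses~\eqref{H:Positive}, so it proves the comparison of \emph{extremal} solutions under weaker hypotheses --- precisely the issue that \cref{rem:small_scales} is devoted to circumventing. Conversely, the paper's route yields the stronger structural fact that arbitrary (not only extremal) solutions of the two problems are pairwise nested, which your argument does not give. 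One small remark: your closing paragraph worries about where~\eqref{H:Positive} ``enters''; in your proof it never does, and you should say so explicitly rather than list it as an unresolved delicate point. The other points you flag are handled correctly: the finiteness of all fidelity terms follows from integrating your pointwise inequality (the minima are finite because $E_i$ itself is a competitor with energy $\Pker(E_i)<+\infty$), and the least/greatest-element property of the extremal solutions is exactly what \cref{res:existence_min_max} provides via \cref{res:GP_intersection_union} and the mutual absolute continuity of $\misurapeso$ and $\lebdim$.
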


\begin{proof}
Thanks to \cref{res:basic_props_GSol}\ref{item:complement} and \cref{res:existence_geom}, pick $U_1\in\GeometricProblemSolutions{E_1,\Lambda,\misurapeso}$ and $U_2\in\GeometricProblemSolutions{E_2,\Lambda,\misurapeso}$.
Arguing as in~\cite{B22}*{Lem.~4.11}, by minimality of $U_1,U_2$ and~\eqref{eq:submodularity},
\begin{equation}
\label{eq:alpha1_equality}
\Pker(U_1)
+
\Pker(U_2)
=
\Pker(U_1\cap U_2)
+
\Pker(U_1\cup U_2)
\end{equation}
and 
\begin{equation}
\label{eq:alpha2_equality}
\misurapeso\tonde{U_1\bigtriangleup E_1}
+
\misurapeso\tonde{U_2\bigtriangleup E_2}
=
\misurapeso\big((U_1\cup U_2)\bigtriangleup E_1\big)+\misurapeso\big((U_1\cap U_2)\bigtriangleup E_2\big).
\end{equation}
Setting
\begin{equation}
\label{eq:def_interaction_meas}
\interaction(x,y)
=
\kernel(x-y)\,\leb^{2n}(x,y)
\quad
\text{for}\ x,y\in\Rdim,
\end{equation}
using the definition of $\Pker$ and the finite additivity of $\interaction$, equality~\eqref{eq:alpha1_equality} rewrites as
\begin{equation}
\label{eq:punto_cruciale_positivita}
\interaction\big((U_1\setminus U_2)\times (U_2\setminus U_1)\big)=0.
\end{equation}
Since  $\leb^{2\dimension}\ll\interaction$ by~\eqref{H:Positive}, 
we conclude that
\begin{equation}\label{eq:disjunction} 
U_1\subset U_2\ 
\text{or}\ 
U_2\subset U_1.
\end{equation}
Arguing again as in~\cite{B22}*{Lem.~4.11} we deduce that
\begin{equation}
\label{eq:scambio}
U_1\subset U_2
\implies
U_1\in\GeometricProblemSolutions{E_2,\Lambda,\misurapeso},\
U_2\in\GeometricProblemSolutions{E_1,\Lambda,\misurapeso}.
\end{equation}
By \cref{res:existence_min_max}, if $U_1=(E_1)^-_\Lambda$ and $U_2=(E_2)^-_\Lambda$, then $(E_2)^-_\Lambda\subset (E_1)^-_\Lambda$ or $(E_1)^-_\Lambda\subset (E_2)^-_\Lambda$  by~\eqref{eq:disjunction}. 
If $(E_2)^-_\Lambda\subset (E_1)^-_\Lambda$, then we have nothing to prove.
If $(E_1)^-_\Lambda\subset (E_2)^-_\Lambda$ instead, then~\eqref{eq:scambio} readily gives 
$(E_1)^-_\Lambda\in\GeometricProblemSolutions{E_2,\Lambda,\misurapeso}$ 
and
$(E_2)^-_\Lambda\in\GeometricProblemSolutions{E_1,\Lambda,\misurapeso}$,
so that $(E_1)^-_\Lambda=(E_2)^-_\Lambda$. 
The proof that $(E_2)^+_\Lambda\subset (E_1)^+_\Lambda$ is similar and thus left to the reader.
\end{proof}

\begin{remark}[\cref{res:comparison} at small scales]
\label{rem:small_scales}
In the proof of \cref{res:comparison}, \eqref{H:Positive} is used only to pass from~\eqref{eq:punto_cruciale_positivita} to~\eqref{eq:disjunction}.
In other words, \eqref{H:Positive} can be dropped if one is able to ensure that $\interaction$ defined in~\eqref{eq:def_interaction_meas} satisfies
\begin{equation*}
\interaction\big((U_1\setminus U_2)\times (U_2\setminus U_1)\big)=0
\implies
U_1\subset U_2\ 
\text{or}\ 
U_2\subset U_1
\end{equation*}
whenever $U_i\in\GeometricProblemSolutions{E_i,\Lambda,\nu}$, $i=1,2$.
The above implication holds if  
\begin{equation}
\label{strettina}
K(x-y)>0\
\text{for a.e.}\ 
x\in U_1\setminus U_2\ \text{and}\ y\in U_2\setminus U_1
\end{equation}
whenever $U_i\in\GeometricProblemSolutions{E_i,\Lambda,\nu}$, $i=1,2$.
Thanks to \cref{res:pos}, assuming \eqref{H:Decreasing_q}  and~\eqref{H:Doubling}, we have $\supp K\supset B_{4D}$.
If $D<+\infty$ (otherwise \eqref{H:Positive} is satisfied), then~\eqref{strettina} holds provided that $U_1,U_2\subset B_D$ for $U_i\in\GeometricProblemSolutions{E_i,\Lambda,\nu}$, $i=1,2$.
By \cref{res:bounded_g_datum}, an thus assuming~\eqref{H:Decreasing_q} with $q=1$, this holds by additionally requiring that 
\begin{equation}
\label{scatola}
E_2\subset E_1\subset B_D
\end{equation}
Hypothesis~\eqref{scatola} provides a \emph{small-scale version} of  \cref{res:comparison}. 
\end{remark}

\subsection{Convex data and uniqueness outside the jump set}

For $f\in\Lspace{1}(\Rdim)$, we define
\begin{equation*}
\begin{split}
\distSolDatPLUS(f,\Lambda,\misurapeso)
&=
\sup\set*{\|u-f\|_{\Lspace{1}(\Rdim,\,\misurapeso)}:u\in\FunctionalProblemSolutions{f,\Lambda,\misurapeso}},
\\[.5ex]
\distSolDatMINUS(f,\Lambda,\misurapeso)
&=
\inf\set*{\|u-f\|_{\Lspace{1}(\Rdim,\,\misurapeso)}:u\in\FunctionalProblemSolutions{f,\Lambda,\misurapeso}},
\end{split}
\end{equation*}
for all $\Lambda>0$.
We also define the \emph{jump set}
\begin{equation*}		\FunctionalProblemJumps{f,\misurapeso}
=
\set{\Lambda>0:\distSolDatMINUS(f, \Lambda,\misurapeso)<\distSolDatPLUS(f, \Lambda,\misurapeso)}
\end{equation*}
associated to~\eqref{Pb:FunctionalProblem}.
The following result generalizes~\cite{B22}*{Lem.~4.3}.
Its proof follows the same line of that of~\cite{CE05}*{Claim~5} and is thus omitted.

\begin{lemma}[Monotonicity of $\distSolDatPLUSMINUS(f,\Lambda,\misurapeso)$]
If $f\in\Lspace{1}(\Rdim)$, then 
\begin{equation*}
\distSolDatMINUS(f,\Lambda_1,\misurapeso) \leq \distSolDatPLUS(f,\Lambda_1,\misurapeso) \leq \distSolDatMINUS(f,\Lambda_2,\misurapeso) \leq \distSolDatPLUS(f,\Lambda_2,\misurapeso)
\end{equation*}
for all $\Lambda_1\geq\Lambda_2>0$.
Consequently, $\Lambda\mapsto\distSolDatPLUSMINUS(f,\Lambda,\misurapeso)$ are two decreasing functions whose sets of discontinuity points contain  $\FunctionalProblemJumps{f,\misurapeso}$.
In particular, $\FunctionalProblemJumps{f,\misurapeso}$ is a countable set. 
\end{lemma}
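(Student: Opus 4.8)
The plan is to follow the convexity argument of~\cite{CE05}*{Claim~5}. Fix $\Lambda_1>\Lambda_2>0$ and pick any $u_i\in\FunctionalProblemSolutions{f,\Lambda_i,\misurapeso}$, $i=1,2$ (nonempty, e.g., under~\eqref{H:Far_from_zero_int} and~\eqref{H:Not_integrable} by \cref{prop:existenceL1Datum}; if one of the two solution sets is empty there is nothing to prove). Testing the minimality of $u_1$ against $u_2$ and that of $u_2$ against $u_1$ gives
\begin{align*}
\kerTV{u_1}+\Lambda_1\|u_1-f\|_{\Lspace{1}(\Rdim,\,\misurapeso)}
&\le
\kerTV{u_2}+\Lambda_1\|u_2-f\|_{\Lspace{1}(\Rdim,\,\misurapeso)},
\\
\kerTV{u_2}+\Lambda_2\|u_2-f\|_{\Lspace{1}(\Rdim,\,\misurapeso)}
&\le
\kerTV{u_1}+\Lambda_2\|u_1-f\|_{\Lspace{1}(\Rdim,\,\misurapeso)}.
\end{align*}
Adding these and cancelling $\kerTV{u_1}+\kerTV{u_2}$ yields $(\Lambda_1-\Lambda_2)\big(\|u_1-f\|_{\Lspace{1}(\Rdim,\,\misurapeso)}-\|u_2-f\|_{\Lspace{1}(\Rdim,\,\misurapeso)}\big)\le0$, hence $\|u_1-f\|_{\Lspace{1}(\Rdim,\,\misurapeso)}\le\|u_2-f\|_{\Lspace{1}(\Rdim,\,\misurapeso)}$ since $\Lambda_1>\Lambda_2$. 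As $u_1,u_2$ are arbitrary, taking first the supremum over $u_1$ and then the infimum over $u_2$ gives $\distSolDatPLUS(f,\Lambda_1,\misurapeso)\le\distSolDatMINUS(f,\Lambda_2,\misurapeso)$; combined with the trivial bounds $\distSolDatMINUS\le\distSolDatPLUS$ at $\Lambda_1$ and at $\Lambda_2$, this is the asserted chain.

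Next I record that $\distSolDatPLUSMINUS(f,\cdot,\misurapeso)$ are $[0,+\infty)$-valued: testing any solution $u$ against the competitor $u\equiv0$ (for which $\kerTV{0}=0$) gives $\|u-f\|_{\Lspace{1}(\Rdim,\,\misurapeso)}\le\|f\|_{\Lspace{1}(\Rdim,\,\misurapeso)}$, which is finite because $f\in\Lspace{1}(\Rdim)$ and $\misurapeso\in\MisureAmmissibili$ has $\Lspace{\infty}$ density. The displayed chain shows at once that $\Lambda\mapsto\distSolDatPLUS(f,\Lambda,\misurapeso)$ and $\Lambda\mapsto\distSolDatMINUS(f,\Lambda,\misurapeso)$ are decreasing on $(0,+\infty)$, hence each has at most countably many discontinuity points.

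It remains to show $\FunctionalProblemJumps{f,\misurapeso}$ is contained in the (countable) discontinuity set of, say, $\distSolDatMINUS(f,\cdot,\misurapeso)$. Let $\Lambda_0\in\FunctionalProblemJumps{f,\misurapeso}$, so $\distSolDatMINUS(f,\Lambda_0,\misurapeso)<\distSolDatPLUS(f,\Lambda_0,\misurapeso)$. For $\Lambda>\Lambda_0$ the chain gives $\distSolDatMINUS(f,\Lambda,\misurapeso)\le\distSolDatPLUS(f,\Lambda,\misurapeso)\le\distSolDatMINUS(f,\Lambda_0,\misurapeso)$, while for $\Lambda<\Lambda_0$ it gives $\distSolDatMINUS(f,\Lambda,\misurapeso)\ge\distSolDatPLUS(f,\Lambda_0,\misurapeso)$; letting $\Lambda\to\Lambda_0^{\pm}$ shows the right and left limits of $\distSolDatMINUS(f,\cdot,\misurapeso)$ at $\Lambda_0$ differ by at least $\distSolDatPLUS(f,\Lambda_0,\misurapeso)-\distSolDatMINUS(f,\Lambda_0,\misurapeso)>0$, so $\distSolDatMINUS(f,\cdot,\misurapeso)$ is discontinuous at $\Lambda_0$. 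Therefore $\FunctionalProblemJumps{f,\misurapeso}$ is countable. I expect no genuine obstacle: the only points needing a little care are that the pointwise inequality $\|u_1-f\|_{\Lspace{1}(\Rdim,\,\misurapeso)}\le\|u_2-f\|_{\Lspace{1}(\Rdim,\,\misurapeso)}$ holds for \emph{every} pair of solutions, so that the supremum and infimum may be taken freely, and the routine bookkeeping with one-sided limits of monotone functions.
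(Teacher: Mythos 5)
Your proof is correct and is precisely the argument the paper has in mind: it omits the proof and refers to the convexity/crossing argument of Chan--Esedo\=glu's Claim~5, which is exactly what you carry out (test each minimizer against the other, add, cancel the $\kernel$-variation terms, then pass to $\sup$ and $\inf$, and use monotonicity to bound the one-sided limits at a jump point). The only remark worth making is that the middle inequality is only meaningful for $\Lambda_1>\Lambda_2$ strictly (as you correctly assume), since for $\Lambda_1=\Lambda_2$ it would force $\distSolDatPLUS=\distSolDatMINUS$ everywhere; this is an imprecision in the statement, not in your argument.
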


As a consequence, we get the following result, generalizing~\cite{B22}*{Th.~4.4}.

\begin{theorem}[Uniqueness outside the jump set]\label{res:uniqueness_convex}
Let 
\eqref{H:Radial},
\eqref{H:Not_too_singular},
\eqref{H:Not_integrable}
and
\eqref{H:Decreasing_q}
with $q=1$ be in force.
If $E\subset\Rdim$ is a bounded convex set, then 
$\FunctionalProblemSolutions{\chi_{E},\Lambda,\misurapeso}=\set*{\chi_{U_\Lambda}}$
for all $\Lambda\in(0,+\infty)\setminus\FunctionalProblemJumps{\chi_E,\misurapeso}$,
where $U_\Lambda\in\measurablesets$ is such that $U_\Lambda\subset E$.
\end{theorem}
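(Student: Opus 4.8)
The plan is to reduce the statement to a uniqueness property for the geometric problem $\GeometricProblem{E,\Lambda}{\misurapeso}$ and then to identify $\FunctionalProblemJumps{\chi_E,\misurapeso}$ with the set of parameters at which the minimal and maximal solutions of $\GeometricProblem{E,\Lambda}{\misurapeso}$ disagree. Since $E$ is bounded and convex, $|E|<+\infty$ and $P(E)<+\infty$, so $\chi_E\in\BVkerspace(\Rdim)$ by~\eqref{eq:valerio_set} (using~\eqref{H:Not_too_singular}) and $E$ is $\kernel$-admissible. The first — and, I expect, the only delicate — step is to show that, under~\eqref{H:Radial}, \eqref{H:Not_too_singular} and~\eqref{H:Decreasing_q} with $q=1$, \emph{every} $U\in\GeometricProblemSolutions{E,\Lambda,\misurapeso}$ satisfies $U\subset E$ up to $\lebdim$-negligible sets. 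Indeed, any such $U$ has $|U|<+\infty$ (because $\misurapeso(U)\le\misurapeso(U\bigtriangleup E)+\misurapeso(E)<+\infty$), and testing the minimality of $U$ against $U\cap E$ gives
\begin{equation*}
0\le\EnergyGeometricKer{(U\cap E;E,\Lambda,\misurapeso)}-\EnergyGeometricKer{(U;E,\Lambda,\misurapeso)}
=\big(\Pker(U\cap E)-\Pker(U)\big)-\Lambda\,\misurapeso(U\setminus E)
\le-\Lambda\,\misurapeso(U\setminus E),
\end{equation*}
where the last inequality is \cref{res:intersection_convex}; since $\Lambda>0$ and $\misurapeso=\peso\lebdim$ with $\essinf\peso>0$, this forces $|U\setminus E|=0$.

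Next, I would bring in the extremal solutions. Since~\eqref{H:Not_too_singular} implies~\eqref{H:Far_from_zero_int}, \cref{res:existence_min_max} (together with \cref{res:GP_intersection_union}) applies and provides the $\lebdim$-essentially unique minimal solution $E^-_\Lambda$ (contained in every element of $\GeometricProblemSolutions{E,\Lambda,\misurapeso}$) and maximal solution $E^+_\Lambda$ (containing every such element), with $E^-_\Lambda\subset E^+_\Lambda\subset E$ by the previous step. Given any $u\in\FunctionalProblemSolutions{\chi_E,\Lambda,\misurapeso}$, \cref{cor:BINequivalenceFuncGeomPBs}\ref{item:BINfromPtoGP} yields $0\le u\le1$ a.e.\ and $\set*{u>t}\in\GeometricProblemSolutions{E,\Lambda,\misurapeso}$ for every $t\in[0,1)$, so that $E^-_\Lambda\subset\set*{u>t}\subset E^+_\Lambda\subset E$; by the layer-cake formula (\cref{res:layer-cake}) we then get
\begin{equation*}
\|u-\chi_E\|_{\Lspace{1}(\Rdim,\misurapeso)}
=\int_0^1\misurapeso\big(E\setminus\set*{u>t}\big)\,\de t
\in\big[\,\misurapeso(E\setminus E^+_\Lambda),\ \misurapeso(E\setminus E^-_\Lambda)\,\big],
\end{equation*}
and both endpoints are attained, at $u=\chi_{E^+_\Lambda}$ and $u=\chi_{E^-_\Lambda}$ respectively, which are admissible by \cref{cor:BINequivalenceFuncGeomPBs}\ref{item:BINfromGPtoP}. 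Hence $\distSolDatMINUS(\chi_E,\Lambda,\misurapeso)=\misurapeso(E\setminus E^+_\Lambda)$ and $\distSolDatPLUS(\chi_E,\Lambda,\misurapeso)=\misurapeso(E\setminus E^-_\Lambda)$, so $\Lambda\notin\FunctionalProblemJumps{\chi_E,\misurapeso}$ if and only if $\misurapeso(E^+_\Lambda\setminus E^-_\Lambda)=0$, i.e.\ $E^-_\Lambda=E^+_\Lambda$ up to $\lebdim$-negligible sets; in that case every solution $U$ is squeezed between them, so $\GeometricProblemSolutions{E,\Lambda,\misurapeso}=\set*{U_\Lambda}$ up to negligible sets, with $U_\Lambda:=E^-_\Lambda=E^+_\Lambda\subset E$.

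Finally, to conclude, I fix $\Lambda\in(0,+\infty)\setminus\FunctionalProblemJumps{\chi_E,\misurapeso}$ and the corresponding $U_\Lambda\subset E$. For any $u\in\FunctionalProblemSolutions{\chi_E,\Lambda,\misurapeso}$, the uniqueness of the geometric solution forces $\set*{u>t}=U_\Lambda$ for every $t\in[0,1)$, which, together with $0\le u\le1$ a.e., gives $u=\chi_{U_\Lambda}$ a.e.; conversely $\chi_{U_\Lambda}\in\FunctionalProblemSolutions{\chi_E,\Lambda,\misurapeso}$ by \cref{cor:BINequivalenceFuncGeomPBs}\ref{item:BINfromGPtoP}. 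Therefore $\FunctionalProblemSolutions{\chi_E,\Lambda,\misurapeso}=\set*{\chi_{U_\Lambda}}$, which is exactly the assertion. The substantive input is thus concentrated in the containment $U\subset E$, where \eqref{H:Radial}, \eqref{H:Not_too_singular} and \eqref{H:Decreasing_q} with $q=1$ enter precisely to license \cref{res:intersection_convex}; everything else is bookkeeping with the layer-cake decomposition and the existence of extremal geometric solutions (\cref{res:existence_min_max}, ultimately relying on \cref{prop:existenceL1Datum} and hence on~\eqref{H:Not_integrable}).
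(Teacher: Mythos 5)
Your proof is correct and takes essentially the same route as the paper's, which simply defers the details to the proof of \cite{B22}*{Th.~4.4}: in both arguments the substantive input is \cref{res:intersection_convex}, used to confine every geometric solution inside the convex datum, after which the extremal solutions of \cref{res:existence_min_max} and the definition of the jump set yield uniqueness. Your write-up usefully fills in the bookkeeping (the layer-cake identification of $\distSolDatPLUSMINUS$ with $\misurapeso(E\setminus E^{\mp}_\Lambda)$) that the paper leaves to the reference.
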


\begin{proof}
Fix $\Lambda\in (0,+\infty)\setminus \FunctionalProblemJumps{\chi_{E},\misurapeso}$.
By \cref{prop:existenceL1Datum}, $\FunctionalProblemSolutions{\chi_{E},\Lambda,\misurapeso}\ne\emptyset$.
Since $E$ is convex, the conclusion follows by \cref{res:intersection_convex} as in the proof of \cite{B22}*{Th. 4.4}.
\end{proof}

\section{Fidelity analysis}
\label{sec:fidelity}

In this section, we study the behavior of the solutions of the functional and geometric $\kernel$-variation denoising models with respect to the fidelity parameter.
Here we let
\begin{equation}
\label{eq:def_below_above_peso}
\belowpeso=\essinf_{\Rdim}w,
\quad
\abovepeso=\esssup_{\Rdim}w,
\end{equation}
for any given $\nu=w\lebdim\in\MisureAmmissibili$
and notice that $0<\belowpeso\le\abovepeso<+\infty$ by~\eqref{eq:def_misure_ammissibili}.

\subsection{High fidelity}

The following result generalizes~\cite{B22}*{Th.~4.5}.

\begin{theorem}[High fidelity for $C^{1,1}$ regular sets]\label{res:high_fidelity_for_sets}
Let 
\eqref{H:Radial},
\eqref{H:Not_too_singular},
\eqref{H:Not_integrable},
\eqref{H:Positive}
and
\eqref{H:Decreasing_q} with $q=1$ 
be in force.
If $E\subset\Rdim$ is an open set of class~$C^{1,1}$ with $\min\{|E|,|E^c|\}<+\infty$, then there exists $\Lambda_{E,\kernel,\belowpeso}>0$, depending on~$E$, $\kernel$ and~$\belowpeso$ only, such that 
$\GeometricProblemSolutions{E,\Lambda,\misurapeso}=\set*{E}$ and 
$\GeometricProblemSolutions{E^c,\Lambda,\misurapeso}=\set*{E^c}$ 
for $\Lambda\ge\Lambda_{E,\kernel,\belowpeso}$.
\end{theorem}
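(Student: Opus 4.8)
The plan is to follow the strategy of \cite{B22}*{Th.~4.5}: reduce the statement to a ``high fidelity for balls'' fact, proved via the isoperimetric inequality, and then propagate it to $E$ through the Comparison Principle.

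\emph{Reductions.} By \cref{res:basic_props_GSol}\ref{item:complement}, the claim $\GeometricProblemSolutions{\comp E,\Lambda,\misurapeso}=\{\comp E\}$ is equivalent to $\GeometricProblemSolutions{E,\Lambda,\misurapeso}=\{E\}$, so it is enough to prove the latter; since $\comp E$ is $C^{1,1}$ as well, I may assume $|E|<+\infty$. As $E$ is of class $C^{1,1}$ with compact boundary, it satisfies a uniform two-sided ball condition: there is $r_0=r_0(E)>0$ so that each $x\in\partial E$ lies on the boundary of an interior ball $B_{r_0}(\xi)\subset E$ and of an exterior ball $B_{r_0}(\eta)\subset\comp E$; moreover $E$ is bounded, $E$ equals (up to $\lebdim$-negligible sets) the union of all radius-$r_0$ balls contained in it, and $\Rdim\setminus\closure E$ is covered by the radius-$r_0$ balls contained in $\comp E$.

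\emph{Step 1: high fidelity for balls.} Put $v_0=|B_{r_0}|$ and pick $\Lambda_{E,\kernel,\belowpeso}$ strictly larger than $\bigl(2-\tfrac1\dimension\bigr)\,\Pker(B_{r_0})/(\belowpeso\,v_0)$, a finite positive number by \eqref{eq:valerio_set}. I claim $\GeometricProblemSolutions{B,\Lambda,\misurapeso}=\{B\}$ for every ball $B$ of radius $r_0$ and every $\Lambda\ge\Lambda_{E,\kernel,\belowpeso}$. By translation invariance (\cref{res:basic_props_GSol}\ref{item:g_translation}, which leaves $\belowpeso$ unchanged) it suffices to take $B=B_{r_0}$. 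A solution $U$ exists by \cref{res:existence_geom}, and $U\subset B_{r_0}$ by \cref{res:bounded_g_datum} (here \eqref{H:Radial}, \eqref{H:Not_too_singular} and \eqref{H:Decreasing_q} with $q=1$ enter). By \cref{res:isoperimetric} together with the monotonicity of $v\mapsto\beta_\kernel(v)\,v^{\frac1\dimension-2}$ from \cref{res:monotonicity_isop_ratio} (applicable since $q=1<\dimension+1$), for $U\subset B_{r_0}$ one has $\Pker(U)\ge\beta_\kernel(|U|)\ge g(|U|)$, where $g(v):=\beta_\kernel(v_0)(v/v_0)^{2-\frac1\dimension}$ is smooth, increasing and convex on $[0,v_0]$ with $g(v_0)=\Pker(B_{r_0})$. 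Since $\misurapeso(U\triangle B_{r_0})=\misurapeso(B_{r_0}\setminus U)\ge\belowpeso(v_0-|U|)$, convexity of $g$ gives
\begin{equation*}
\EnergyGeometricKer{(U;B_{r_0},\Lambda,\misurapeso)}
\ge g(|U|)+\Lambda\belowpeso(v_0-|U|)
\ge g(|U|)+g'(v_0)(v_0-|U|)
\ge g(v_0)
=\EnergyGeometricKer{(B_{r_0};B_{r_0},\Lambda,\misurapeso)},
\end{equation*}
with strict inequality whenever $|U|<v_0$ (the second estimate is strict by the choice of $\Lambda_{E,\kernel,\belowpeso}$). Hence $B_{r_0}$ is a solution and, $U$ being contained in $B_{r_0}$, every solution must coincide with $B_{r_0}$.

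\emph{Step 2: propagation via comparison.} Fix $\Lambda\ge\Lambda_{E,\kernel,\belowpeso}$ and let $E^-_\Lambda,E^+_\Lambda$ be the minimal and maximal solutions of~\eqref{Pb:GeometricProblem} from \cref{res:existence_min_max} (here \eqref{H:Far_from_zero_int} and \eqref{H:Not_integrable} enter). For each interior ball $B_{r_0}(\xi)\subset E$, the Comparison Principle \cref{res:comparison} — available since \eqref{H:Symmetric} (from \eqref{H:Radial}), \eqref{H:Far_from_zero_int}, \eqref{H:Not_integrable} and \eqref{H:Positive} are in force and the finiteness requirements on $\Pker$ and measures hold — yields $(B_{r_0}(\xi))^-_\Lambda\subset E^-_\Lambda$, hence $B_{r_0}(\xi)\subset E^-_\Lambda$ by Step~1; taking the union over $\xi$ gives $E\subset E^-_\Lambda$. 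Dually, for each exterior ball $B_{r_0}(\eta)\subset\comp E$, \cref{res:comparison} applied to $B_{r_0}(\eta)\subset\comp E$ and~\eqref{eq:min_max_complement} give $B_{r_0}(\eta)=(B_{r_0}(\eta))^-_\Lambda\subset(\comp E)^-_\Lambda=(E^+_\Lambda)^c$; taking the union over $\eta$ gives $E^+_\Lambda\subset\closure E$, i.e.\ $E^+_\Lambda\subset E$ up to negligible sets. Therefore $E\subset E^-_\Lambda\subset E^+_\Lambda\subset E$, so $E^-_\Lambda=E^+_\Lambda=E$, and since any $U\in\GeometricProblemSolutions{E,\Lambda,\misurapeso}$ obeys $E^-_\Lambda\subset U\subset E^+_\Lambda$ (because $E^-_\Lambda\cap U$ and $E^+_\Lambda\cup U$ are again solutions by \cref{res:GP_intersection_union}, and the extremality of $\misurapeso(E^\pm_\Lambda)$ forces $\misurapeso(E^-_\Lambda\setminus U)=\misurapeso(U\setminus E^+_\Lambda)=0$), we conclude $U=E$; the complement case follows from \cref{res:basic_props_GSol}\ref{item:complement}. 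The main obstacle is Step~1 and the accompanying bookkeeping: one must check that a single threshold depending only on $r_0(E)$, $\belowpeso$ and $\kernel$ works for all radius-$r_0$ balls and is unaffected by the weight translation in \cref{res:basic_props_GSol}\ref{item:g_translation}; once this is in place, Step~2 is a routine assembly of the Comparison Principle, the extremal solutions, and stability under intersection and union.
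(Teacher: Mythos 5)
Your proof is correct and follows essentially the same route as the paper: you first establish high fidelity for balls via the isoperimetric inequality and the monotonicity of the isoperimetric ratio (your convexity phrasing is a minor rewording of the paper's Proposition~\ref{res:high_fidelity_for_balls}, yielding a slightly different but equally valid threshold), and then sandwich the extremal solutions $E^\pm_\Lambda$ between interior and exterior balls of a uniform radius using the Comparison Principle and \eqref{eq:min_max_complement}. The only notable difference is that you work with balls of radius exactly $r_0$ rather than radii $\ge r_0$, which makes the uniformity of the single threshold $\Lambda_{E,\kernel,\belowpeso}$ over the whole family of balls immediate.
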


To prove \cref{res:high_fidelity_for_sets}, we start with the case $E$ is a ball, generalizing~\cite{B22}*{Lem.~4.13}.

\begin{proposition}[High fidelity for balls]
\label{res:high_fidelity_for_balls}
Let 
\eqref{H:Radial},
\eqref{H:Not_too_singular},
\eqref{H:Not_integrable}
and
\eqref{H:Decreasing_q} with $q=1$ 
be in force.
If $x\in\Rdim$ and $r>0$, then
$\GeometricProblemSolutions{B_r(x),\Lambda,\misurapeso}
=
\set*{B_r(x)}$
and
$\GeometricProblemSolutions{B_r(x)^c,\Lambda,\misurapeso}
=
\set*{B_r(x)^c}$
for $\Lambda\ge\Lambda_{r,\kernel,\belowpeso}=\frac{2\,\Pker(B_r)}{\belowpeso\,|B_r|}$.
\end{proposition}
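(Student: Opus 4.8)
The plan is to fix an arbitrary $U\in\GeometricProblemSolutions{B_r(x),\Lambda,\misurapeso}$ and prove that $U=B_r(x)$ up to $\lebdim$-negligible sets; since $|B_r(x)|<+\infty$, \cref{res:existence_geom} guarantees that the solution set is nonempty, so this will yield $\GeometricProblemSolutions{B_r(x),\Lambda,\misurapeso}=\set*{B_r(x)}$. First I would use the translation invariance~\eqref{eq:P_K_translation} to reduce to $x=0$ and write $B=B_r$. Because \eqref{H:Radial}, \eqref{H:Not_too_singular} and \eqref{H:Decreasing_q} with $q=1$ are in force, \cref{res:bounded_g_datum} gives $U\subset B$ up to negligible sets, so it remains only to exclude the possibility $|B\setminus U|>0$; I also record that $\Pker(B_r)\in(0,+\infty)$, finiteness being granted by~\eqref{eq:valerio_set}.

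Assume then $|B\setminus U|>0$, so that $|U|<|B|$. Testing the minimality of $U$ in~\eqref{Pb:GeometricProblem} against the competitor $B$ and using $U\subset B$, I would obtain
\begin{equation*}
\Pker(U)+\Lambda\,\misurapeso(B\setminus U)\le\Pker(B),
\qquad
\misurapeso(B\setminus U)\ge\belowpeso\,(|B|-|U|),
\end{equation*}
the second bound because $w\ge\belowpeso$ $\lebdim$-a.e. Since $U$ is a solution, $\Pker(U)<+\infty$ and $|U|\le|B|<+\infty$, hence $\chi_U\in\BVkerspace(\Rdim)$; then the isoperimetric inequality \cref{res:isoperimetric} and the monotonicity of $v\mapsto\beta_\kernel(v)\,v^{\frac1\dimension-2}$ from \cref{res:monotonicity_isop_ratio} (both applicable since $q=1<\dimension+1$) yield
\begin{equation*}
\Pker(U)\ge\beta_\kernel(|U|)\ge\Pker(B)\left(\frac{|U|}{|B|}\right)^{2-\frac1\dimension}.
\end{equation*}

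Writing $t=|U|/|B|\in[0,1)$ and $\alpha=2-\tfrac1\dimension\in[1,2)$, the two displays above combine to $\Lambda\,\belowpeso\,|B|\,(1-t)\le\Pker(B)\,(1-t^{\alpha})$; dividing by $1-t>0$ and using the elementary inequality $\frac{1-t^{\alpha}}{1-t}\le\alpha$ for $t\in[0,1)$ and $\alpha\ge1$ (which holds since $g(t)=\alpha(1-t)-1+t^{\alpha}$ satisfies $g(1)=0$ and $g'(t)=\alpha(t^{\alpha-1}-1)\le0$ on $[0,1]$), I would conclude $\Lambda\,\belowpeso\,|B|\le\alpha\,\Pker(B)<2\,\Pker(B)$, that is, $\Lambda<\Lambda_{r,\kernel,\belowpeso}=\frac{2\,\Pker(B_r)}{\belowpeso\,|B_r|}$, contradicting $\Lambda\ge\Lambda_{r,\kernel,\belowpeso}$. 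Hence $U=B$, which proves $\GeometricProblemSolutions{B_r(x),\Lambda,\misurapeso}=\set*{B_r(x)}$. The identity for the complement then follows at once from \cref{res:basic_props_GSol}\ref{item:complement} together with the relation $\misurapeso(B_r(x)^c\bigtriangleup V^c)=\misurapeso(B_r(x)\bigtriangleup V)$: these give $V\in\GeometricProblemSolutions{B_r(x),\Lambda,\misurapeso}\iff V^c\in\GeometricProblemSolutions{B_r(x)^c,\Lambda,\misurapeso}$, whence $\GeometricProblemSolutions{B_r(x)^c,\Lambda,\misurapeso}=\set*{V^c:V\in\GeometricProblemSolutions{B_r(x),\Lambda,\misurapeso}}=\set*{B_r(x)^c}$.

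I do not expect a genuine obstacle: the argument is essentially an assembly of already-established results. The only points requiring care are the bookkeeping of which hypothesis feeds which cited lemma (\cref{res:bounded_g_datum}, \cref{res:isoperimetric}, \cref{res:monotonicity_isop_ratio}, \cref{res:existence_geom}) and the observation that $\alpha=2-\tfrac1\dimension$ is \emph{strictly} smaller than $2$ — this is precisely the slack that makes the non-sharp threshold $\Lambda_{r,\kernel,\belowpeso}$ with constant $2$ sufficient, endpoint included.
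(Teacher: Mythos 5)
Your proof is correct and follows essentially the same route as the paper's: reduce to $U\subset B_r(x)$ via \cref{res:bounded_g_datum}, test minimality against the ball itself, and combine \cref{res:isoperimetric} with \cref{res:monotonicity_isop_ratio} to force $|U|=|B_r|$. The only difference is cosmetic: where the paper factors $|B_r|^2-|B^{|U|}|^2$ and uses the monotone ratio with exponent $2$, you keep the exponent $2-\frac1\dimension$ and invoke $\frac{1-t^\alpha}{1-t}\le\alpha<2$, which yields the same (in fact marginally sharper) threshold including the endpoint $\Lambda=\Lambda_{r,\kernel,\belowpeso}$.
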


\begin{proof}
Fix $x\in\Rdim$ and $r>0$.
By \cref{res:existence_geom}, $\GeometricProblemSolutions{B_r(x),\Lambda,\nu}\ne\emptyset$, so pick $U\in\GeometricProblemSolutions{B_r(x),\Lambda,\nu}$. 
By \cref{res:bounded_g_datum},  $U\subset B_r(x)$ and thus $B^{|U|}(x)\subset B_r(x)$, where $B^{|U|}(x)=x+B^{|U|}$ (recall~\eqref{eq:ballification} for the latter symbol), so that
\begin{equation}
\label{panza}
|U\bigtriangleup B_r(x)|
=
|B_r(x)|-|U|
=
|B_r(x)|-|B^{|U|}(x)|.
\end{equation} 
Therefore, by \cref{res:isoperimetric}, \eqref{eq:P_K_translation} and~\eqref{panza}, we can estimate
\begin{equation}
\label{fischio1}
\begin{split}
\Pker(B_r(x))
&\ge 
\Pker(U)+\Lambda\nu\big(U\bigtriangleup B_r(x)\big)
\ge
\Pker(B^{|U|})+\Lambda\belowpeso\,|U\bigtriangleup B_r(x)|
\\
&=
\Pker(B^{|U|}(x))+\Lambda\belowpeso\,\big(|B_r(x)|-|B^{|U|}(x)|\big).
\end{split}
\end{equation}
Now, for $\Lambda\belowpeso\ge\frac{2\,\Pker(B_r(x))}{|B_r(x)|}=\frac{2\,\Pker(B_r)}{|B_r|}$ and $|B^{|U|}(x)|<|B_r(x)|$, we can estimate
\begin{equation}
\label{fischio2}
\begin{split}
\Pker(B^{|U|}(x))+\Lambda\belowpeso&\,\big(|B_r(x)|
-|B^{|U|}(x)|\big)
=
\Pker(B^{|U|}(x))+\Lambda\belowpeso\,\frac{|B_r(x)|^2-|B^{|U|}(x)|^2}{|B_r(x)|+|B^{|U|}(x)|}
\\
&>
\Pker(B^{|U|}(x))+\Lambda\belowpeso\,\frac{|B_r(x)|^2-|B^{|U|}(x)|^2}{2|B_r(x)|}
\\
&\ge
\Pker(B^{|U|}(x))+\frac{\Pker(B_r(x))}{|B_r(x)|^2}\,\big(|B_r(x)|^2-|B^{|U|}(x)|^2\big).
\end{split}
\end{equation}
By \cref{res:monotonicity_isop_ratio}, we must have
\begin{equation}
\label{fischio3}
\Pker(B^{|U|}(x))+\frac{\Pker(B_r(x))}{|B_r(x)|^2}\,\big(|B_r(x)|^2-|B^{|U|}(x)|^2\big)
\ge 
\Pker(B_r(x))
\end{equation}
and thus, by combining~\eqref{fischio1}, \eqref{fischio2} and~\eqref{fischio3}, we get that 
\begin{equation*}
\Pker(B_r(x))
>
\Pker(B^{|U|}(x))+\frac{\Pker(B_r(x))}{|B_r(x)|^2}\,\big(|B_r(x)|^2-|B^{|U|}(x)|^2\big)
\ge 
\Pker(B_r(x))
\end{equation*}
whenever $|B^{|U|}(x)|<|B_r(x)|$.
This is clearly a contradiction, so $|B^{|U|}(x)|=|B_r(x)|$, from which $U=B_r(x)$.
The conclusion hence follows from  \cref{res:basic_props_GSol}\ref{item:complement}.
\end{proof}

\begin{remark}[On the constant $\Lambda_{r,\kernel,\belowpeso}$]
\label{rem:valerio_estimate}
By~\eqref{H:Not_too_singular}, one can apply~\eqref{eq:valerio_set} to $B_r$ and get
\begin{equation*}
\Lambda_{r,\kernel,\belowpeso}=\frac{2\,\Pker(B_r)}{\belowpeso\,|B_r|}
\le
\left(2\vee \frac\dimension{r}\right)
\frac{1}{\belowpeso}\int_{\Rdim}(1\wedge|x|)\,K(x)\,\de x
<+\infty
\quad
\text{for}\ r>0.
\end{equation*}
In the case of the fractional perimeter $P_s$ (see~\cite{B22} for the definition), $P_s(B_r)=P_s(B_1)\,r^{\dimension-s}$, so 
$\Lambda_{r,K_s,\belowpeso}=\frac{2P_s(B_1)}{\belowpeso\,r^s}$ as in~\cite{B22}*{Lem.~4.13} (up to multiplicative constants).
However, the proof of \cref{res:high_fidelity_for_balls} differs from the one of~\cite{B22}*{Lem.~4.13}, since $\Pker$ does not enjoy any scaling property, explaining while here we relied on \cref{res:monotonicity_isop_ratio}.
Also note that we do not need the characterization of equality in \cref{res:isoperimetric}, again differently from~\cite{B22}*{Lem.~4.3} 
\end{remark}

\begin{proof}[Proof of~\cref{res:high_fidelity_for_sets}]
Since $\partial E$ is of class $C^{1,1}$, we find $r_0=r_0(E)>0$ and two countable families $\mathscr F_{\rm int}
=
\set*{B_{s_k}(x_k) : x_k\in E,\ s_k\ge r_0}$ and
$\mathscr F_{\rm ext}
=
\set*{B_{t_k}(y_k) : y_k\in E^c,\ t_k\ge r_0}$
with
\begin{equation}
\label{eq:inscatolato}
B_{s_k}(x_k)\subset E
\subset
B_{t_k}(y_k)^c,
\quad
\bigcup_{k\in\N}B_{s_k}(x_k)=E,
\quad
\bigcap_{k\in\N} B_{t_k}(y_k)^c=\closure{E}.
\end{equation}
We hence define
\begin{equation}
\label{eq:rock}
\Lambda_{E,\kernel,\belowpeso}
=
\frac{2\,\Pker(B_{r_0})}{\belowpeso\,|B_{r_0}|}
\end{equation}
and note that $\Lambda_{E,\kernel,\belowpeso}<+\infty$ by~\eqref{H:Not_too_singular} as in \cref{rem:valerio_estimate}.
Let $\Lambda\ge\Lambda_{E,\kernel,\belowpeso}$ and note that 
\begin{equation}
\label{eq:combo}
\GeometricProblemSolutions{B_{s_k}(x_k),\Lambda,\misurapeso}=\set*{B_{s_k}(x_k)},
\quad
\GeometricProblemSolutions{B_{t_k}(y_k)^c,\Lambda,\misurapeso}=\set*{B_{t_k}(y_k)^c},
\end{equation}
for all $k\in\N$, thanks to \cref{res:high_fidelity_for_balls}.
Since either $|E|$ or $|E^c|$ is finite, by \cref{res:existence_min_max} we can find a minimal and a maximal solution $E_\Lambda^-,E_\Lambda^+\in\GeometricProblemSolutions{E,\Lambda,\misurapeso}$ which are uniquely determined up to $\lebdim$-negligible sets. 
Moreover, since either $|E|<+\infty$ or $|E^c|<+\infty$, and $\partial E$ is of class $C^{1,1}$, either $E$ or $E^c$ is bounded, so that $\Pker(E)<+\infty$ because of~\eqref{eq:valerio_embed}, being either $\chi_E\in BV(\Rdim)$ or $\chi_{E^c}\in BV(\Rdim)$.  
Hence, \cref{res:comparison}, in combination with~\eqref{eq:inscatolato} and~\eqref{eq:combo}, implies that 
\begin{equation}
\label{eq:inscatolate_min_max}
B_{s_k}(x_k)
\subset 
E^-_\Lambda
\subset
E^+_\Lambda
\subset 
B_{t_k}(y_k)^c
\end{equation}	
for all $k\in\N$.
By~\eqref{eq:inscatolato} and~\eqref{eq:inscatolate_min_max} we get
$E\subset E^-_\Lambda\subset E^+_\Lambda\subset E$
up to $\lebdim$-negligible sets.
\end{proof}

\begin{definition}[Function with $C^{1,1}$ regular superlevel sets]
\label{def:funz_regular}
A function~$f$ has \emph{uniformly $C^{1,1}$ regular superlevel sets} if there exists $r_0=r_0(f)>0$ such that, for each $t\in\R$, $E_t=\set*{f>t}$, we can find two countable families
$\mathscr F_{\rm int}(t)
=
\set*{B_{s_k}(x_k) : x_k\in E_t,\ s_k\ge r_0}$,
$\mathscr F_{\rm ext}(t)
=
\set*{B_{t_k}(y_k) : y_k\in E^c_t,\ t_k\ge r_0}$,
such that
\begin{equation*}
B_{s_k}(x_k)\subset E_t
\subset
B_{t_k}(y_k)^c,
\quad
\bigcup_{k\in\N}B_{s_k}(x_k)=E_t,
\quad
\bigcap_{k\in\N} B_{t_k}(y_k)^c=\closure{E_t}.
\end{equation*}
\end{definition}

If $f\in\Lspace{1}(\Rdim)$ has uniformly $C^{1,1}$ regular (and thus open, in particular) superlevel sets $E_t=\set*{f>t}$, $t\in\R$, then  either $E_t$ or $(E_t)^c$ is bounded for  $t\ne0$. 

The following result  generalizes~\cite{B22}*{Th.~4.16} and can be seen as a non-local counterpart of~\cite{CE05}*{Th.~5.6}, which is instead proved via a calibration argument.
We omit its proof.

\begin{corollary}[High fidelity for regular $\Lspace{1}$ functions]\label{res:high_fidelity_for_functions}
Let
\eqref{H:Positive},
\eqref{H:Radial},
\eqref{H:Not_too_singular},
\eqref{H:Not_integrable}
and
\eqref{H:Decreasing_q} with $q=1$ 
be in force.
If $f\in\Lspace{1}(\Rdim)$ is as in \cref{def:funz_regular}, then there exists $\Lambda_{f,\kernel,\belowpeso}>0$, depending on~$f$, $\kernel$ and~$\belowpeso$ only, such that 
$\FunctionalProblemSolutions{f,\Lambda,\misurapeso}=\set*{f}$
for $\Lambda\ge\Lambda_{f,\kernel,\belowpeso}$.
\end{corollary}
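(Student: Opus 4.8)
The plan is to reduce \cref{res:high_fidelity_for_functions} to the set-level result \cref{res:high_fidelity_for_sets}, applied simultaneously to all the superlevel sets $E_t=\set*{f>t}$, and then to transfer the conclusion back to the functional problem through \cref{res:basic_props_GSol}. The first and, I expect, only genuinely delicate point is to exhibit a \emph{single} fidelity threshold valid for every level $t$. Let $r_0=r_0(f)>0$ be the radius provided by \cref{def:funz_regular}. For each $t\ne0$ the set $E_t$ is open, of class $C^{1,1}$, admits the two-family ball decomposition of \cref{def:funz_regular} with radii bounded below by the \emph{same} $r_0$, and has either $E_t$ or $\comp{E_t}$ bounded because $f\in\Lspace{1}(\Rdim)$ (see the remark after \cref{def:funz_regular}); moreover \eqref{H:Radial}, \eqref{H:Not_too_singular}, \eqref{H:Not_integrable}, \eqref{H:Positive} and \eqref{H:Decreasing_q} with $q=1$ are all in force. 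Since the threshold $\Lambda_{E,\kernel,\belowpeso}$ produced in the proof of \cref{res:high_fidelity_for_sets} (cf.~\eqref{eq:rock}) depends on $E$ only through the radius lower bound of its ball decomposition, the number
\begin{equation*}
\Lambda_{f,\kernel,\belowpeso}
=
\frac{2\,\Pker(B_{r_0})}{\belowpeso\,|B_{r_0}|},
\end{equation*}
which is finite by~\eqref{H:Not_too_singular} and~\eqref{eq:valerio_set} (as in \cref{rem:valerio_estimate}), makes \cref{res:high_fidelity_for_sets} applicable to every $E_t$ at once: for every $\Lambda\ge\Lambda_{f,\kernel,\belowpeso}$,
\begin{equation*}
\GeometricProblemSolutions{E_t,\Lambda,\misurapeso}=\set*{E_t}
\qquad\text{for all }t\in\R\setminus\set*{0}.
\end{equation*}
It is exactly here that the \emph{uniform} $C^{1,1}$ hypothesis of \cref{def:funz_regular}, as opposed to $C^{1,1}$ regularity of each level set separately, is needed.

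Granting this, the conclusion follows quickly. Fix $\Lambda\ge\Lambda_{f,\kernel,\belowpeso}$. By \cref{prop:existenceL1Datum} (here~\eqref{H:Far_from_zero_int} follows from~\eqref{H:Not_too_singular}, and~\eqref{H:Not_integrable} is assumed) the set $\FunctionalProblemSolutions{f,\Lambda,\misurapeso}$ is non-empty, so pick any $u\in\FunctionalProblemSolutions{f,\Lambda,\misurapeso}$. Since $\EnergyLOneKerTV{(u;f,\Lambda,\misurapeso)}<+\infty$ and $\belowpeso>0$, one has $u-f\in\Lspace{1}(\Rdim)$ and $\kerTV{u}<+\infty$, so $u\in\BVkerspace(\Rdim)$ and \cref{res:basic_props_GSol}\ref{item:fromPtoGP} applies, giving $\set*{u>t}\in\GeometricProblemSolutions{\set*{f>t},\Lambda,\misurapeso}=\set*{E_t}$ for all $t\ne0$. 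Hence $\set*{u>t}=\set*{f>t}$ up to $\lebdim$-negligible sets for every $t\in\R\setminus\set*{0}$, in particular for $\lebone$-a.e.\ $t\in\R$; by the Cavalieri (layer-cake) representation of $\Lspace{1}$ functions this forces $u=f$ a.e.\ in $\Rdim$. As $u$ was an arbitrary element of the non-empty set $\FunctionalProblemSolutions{f,\Lambda,\misurapeso}$, we obtain $\FunctionalProblemSolutions{f,\Lambda,\misurapeso}=\set*{f}$ (and, incidentally, $f\in\BVkerspace(\Rdim)$, since $u=f$ has finite $\kernel$-variation).

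The one step I expect to require real care is the one flagged above — producing a level-uniform threshold — since it rests on reading off from the proof of \cref{res:high_fidelity_for_sets} that the constant there is governed solely by the common regularity radius $r_0(f)$, and not on the individual geometry of the sets $E_t$. The remaining ingredients (existence of $\Lspace{1}$ solutions, the superlevel-set equivalence between the functional and geometric problems, and the elementary fact that an $\Lspace{1}$ function is determined a.e.\ by its superlevel sets at a.e.\ level) are routine given the results already established in \cref{sec:functional_pb,sec:geometric_pb}.
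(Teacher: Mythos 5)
Your proof is correct and is precisely the argument the paper intends (the authors explicitly omit the proof of this corollary). The key point you flagged — that the threshold~\eqref{eq:rock} in the proof of \cref{res:high_fidelity_for_sets} depends on $E$ only through the lower bound $r_0$ on the radii of its interior/exterior ball families, so that the uniform radius $r_0(f)$ of \cref{def:funz_regular} yields the single level-independent constant $\Lambda_{f,\kernel,\belowpeso}=\tfrac{2\,\Pker(B_{r_0})}{\belowpeso\,|B_{r_0}|}$ — is exactly why \cref{def:funz_regular} is formulated with a uniform radius, and the transfer back to $\FunctionalProblemSolutions{f,\Lambda,\misurapeso}$ via \cref{prop:existenceL1Datum}, \cref{res:basic_props_GSol}\ref{item:fromPtoGP} and the layer-cake identity is the standard route used throughout \cref{sec:geometric_pb}.
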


\subsection{Low fidelity}

The following result generalizes~\cite{B22}*{Th.~4.18}. 

\begin{theorem}[Low fidelity]\label{res:low_fidelity}
Let 
\eqref{H:Radial},
\eqref{H:Not_too_singular},
\eqref{H:Not_integrable},
\eqref{H:Decreasing_q} with $q=1$ 
and 
\eqref{H:Doubling} 
be in force.
Given $R<\frac D4$, there exists $\Lambda_{R,\kernel,\abovepeso}>0$, depending on~$R$, $\kernel$ and~$\abovepeso$ only, such that,
if $f\in\Lspace{1}(\Rdim)$ with $\supp f\subset B_R$, then 
$\FunctionalProblemSolutions{f,\Lambda,\misurapeso}
=
\set*{0}$
for $\Lambda<\Lambda_{R,\kernel,\abovepeso}$.
\end{theorem}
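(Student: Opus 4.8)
The plan is to pass, via the layer-cake formula, from the functional problem to a family of geometric problems whose data lie in (or whose complements lie in) $B_R$, and then to exploit that on such sets the $\kernel$-perimeter controls the measure once $R$ is below the scale fixed by \eqref{H:Doubling}.

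\emph{Reduction.} Fix $f\in\Lspace1(\Rdim)$ with $\supp f\subset B_R$. Since \eqref{H:Not_too_singular} implies \eqref{H:Far_from_zero_int}, \cref{prop:existenceL1Datum} gives $\FunctionalProblemSolutions{f,\Lambda,\misurapeso}\ne\emptyset$, and any $u$ in it lies in $\BVkerspace(\Rdim)$. By \cref{res:basic_props_GSol}\ref{item:fromPtoGP}, $\set*{u>t}\in\GeometricProblemSolutions{\set*{f>t},\Lambda,\misurapeso}$ for all $t\ne0$. As $f$ vanishes a.e.\ outside $B_R$, for $t>0$ the datum $\set*{f>t}$ is contained in $B_R$ up to a $\lebdim$-null set (hence has finite measure), while for $t<0$ the complement $\set*{f\le t}\subset B_R$. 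Therefore the theorem follows once we prove the \emph{Claim}: there is $\Lambda_{R,\kernel,\abovepeso}>0$, depending only on $R$, $\kernel$, $\abovepeso$, such that for every $\Lambda<\Lambda_{R,\kernel,\abovepeso}$ and every $E\in\measurablesets$ with $E\subset B_R$ the unique solution of $\GeometricProblem{E,\Lambda}{\misurapeso}$ is $\emptyset$. Indeed, granting the Claim, $\set*{u>t}$ is $\lebdim$-null for every $t>0$, so $u\le0$ a.e.; and for $t<0$, \cref{res:basic_props_GSol}\ref{item:complement} turns the geometric problem with datum $\set*{f>t}$ into the one with datum $\set*{f\le t}\subset B_R$, whence $\set*{u>t}=\Rdim$ up to null sets, so $u\ge0$ a.e. Letting $t\to0^\pm$ yields $u=0$ a.e., and since $\FunctionalProblemSolutions{f,\Lambda,\misurapeso}\ne\emptyset$ we conclude $\FunctionalProblemSolutions{f,\Lambda,\misurapeso}=\set*{0}$.

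\emph{Geometric estimate.} Let $E\subset B_R$ with $|E|<+\infty$ and let $U\in\GeometricProblemSolutions{E,\Lambda,\misurapeso}$, nonempty by \cref{res:existence_geom}. By \cref{res:bounded_g_datum} (this is where \eqref{H:Radial}, \eqref{H:Not_too_singular} and \eqref{H:Decreasing_q} with $q=1$ enter), $U\subset B_R$. Testing minimality of $U$ against $\emptyset$ and using $\misurapeso(E\bigtriangleup U)\ge\misurapeso(E\setminus U)\ge\misurapeso(E)-\misurapeso(U)$, we get
\begin{equation*}
\Pker(U)\le\Lambda\,\misurapeso(U)\le\Lambda\,\abovepeso\,|U|.
\end{equation*}
On the other hand I claim $\Pker(U)\ge h_R\,|U|$ for a constant $h_R>0$ depending only on $R$, $\kernel$, $\dimension$. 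Here \eqref{H:Doubling} and the smallness assumption $R<\tfrac D4$ are used: by \cref{res:pos}, $\kernel=\kappa(|\cdot|)>0$ on $B_{4D}$, and since $\rho\mapsto\kappa(\rho)\,\rho$ is non-increasing by \eqref{H:Decreasing_q} with $q=1$, one gets $\kernel(z)\ge\mu_0:=\kappa(D/2)>0$ for all $|z|\le D/2$. For $x\in U\subset B_R$ one has $U\subset B_R\subset B_{2R}(x)$ and $|x-y|<2R<D/2$ whenever $y\in B_{2R}(x)$, so, writing $\Pker(U)=\int_U\int_{U^c}\kernel(x-y)\,\de y\,\de x$ by \eqref{H:Radial},
\begin{equation*}
\Pker(U)\ge\mu_0\int_U|B_{2R}(x)\setminus U|\,\de x\ge\mu_0\,(2^{\dimension}-1)\,|B_R|\,|U|=:h_R\,|U|.
\end{equation*}
(Alternatively, one may take $h_R=\Cheegker(B_R)=\inf\set*{\Pker(V)/|V|:V\subset B_R,\ |V|>0}$, which is positive by \cref{res:existence_Cheeger} since $B_R$ is $\kernel$-admissible by \eqref{H:Not_too_singular}; this route does not use \eqref{H:Doubling}.)

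\emph{Conclusion.} Put $\Lambda_{R,\kernel,\abovepeso}:=h_R/\abovepeso>0$. If $\Lambda<\Lambda_{R,\kernel,\abovepeso}$ and $|U|>0$, combining the two displays above yields $h_R\,|U|\le\Pker(U)\le\Lambda\,\abovepeso\,|U|<h_R\,|U|$, a contradiction; hence $U=\emptyset$, which proves the Claim and therefore the theorem. The only place requiring genuine care is the reduction: matching $\FunctionalProblemSolutions{f,\Lambda,\misurapeso}$ to the family $\bigl(\GeometricProblemSolutions{\set*{f>t},\Lambda,\misurapeso}\bigr)_{t\ne0}$ across the sign change of $f$ (the superlevels being co-finite for $t<0$) and disposing of the borderline level $t=0$ through the one-sided limits $t\to0^\pm$. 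Once \cref{res:bounded_g_datum} confines the competitors to $B_R$ and the near-origin positivity of $\kernel$ is recorded, the geometric part is immediate.
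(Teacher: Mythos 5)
Your proof is correct, but it takes a genuinely different route from the paper's. The paper works entirely at the functional level: it reduces to $f\ge 0$, confines $\supp u$ to $\closure{B_R}$ via \cref{res:bounded_g_datum}, tests the minimality of $u$ against $v=0$, and then invokes the Lusin-type estimate of \cref{res:lusin} with translations $h$ satisfying $2R\le|h|\le D/2$ (so that $u(\cdot+h)$ and $u$ have disjoint supports) to bound $\kerTV{u}$ from below by a multiple of $\|u\|_{\Lspace{1}(B_R,\misurapeso)}$; this is where \eqref{H:Doubling} and $R<\tfrac D4$ enter, and it produces the explicit constant $\Lambda_{R,\kernel,\abovepeso}=\tfrac{2\phi_\kernel(4R,D)}{\abovepeso C_\kernel}$, which recovers the sharp fractional scaling $R^{-s}$. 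You instead pass through the layer-cake reduction to the geometric problems $\GeometricProblem{\set*{f>t}}{\Lambda,\misurapeso}$ and prove a Poincaré/Cheeger-type bound $\Pker(U)\ge h_R|U|$ for $U\subset B_R$; your reduction across the sign of $f$ (using \cref{res:basic_props_GSol}\ref{item:complement} for $t<0$ and one-sided limits at $t=0$) is carefully and correctly handled, and your two lower bounds for $h_R$ are both valid. What each buys: the paper's argument yields a quantitatively better threshold, while yours is more elementary (no Lusin estimate), and your parenthetical variant $h_R=\Cheegker(B_R)$, positive by \cref{res:existence_Cheeger}, actually dispenses with \eqref{H:Doubling} and with the restriction $R<\tfrac D4$ altogether — a strengthening that goes beyond what \cref{rem:constant_in_low_fidelity} records (there the paper drops \eqref{H:Doubling} only for $q\in[\dimension,\dimension+1)$). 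Two cosmetic points: when $D=+\infty$ the choice $\mu_0=\kappa(D/2)$ is meaningless and should be replaced by $\mu_0=\kappa(2R)$ (which is positive by \cref{res:pos} and still dominates $\kappa(|z|)$ for $|z|\le 2R$ thanks to \eqref{H:Decreasing_q} with $q=1$); and your statement of the Claim should be read up to $\lebdim$-negligible sets, since the geometric problem only sees equivalence classes. Neither affects the validity of the argument.
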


\begin{proof}
Let $\Lambda>0$.
Since $f\in\Lspace{1}(\R^n)$, $\FunctionalProblemSolutions{f,\Lambda,\misurapeso}\ne\emptyset$ by \cref{prop:existenceL1Datum}, so pick $u\in\FunctionalProblemSolutions{f,\Lambda,\misurapeso}$.
We claim that $u=0$. 
Since $u^\pm\in\FunctionalProblemSolutions{f^\pm,\Lambda,\misurapeso}$ by\cref{res:basic_props_Sol}\ref{item:truncation}, we can assume $f\ge0$.
By \cref{res:basic_props_GSol}\ref{item:fromPtoGP}, $\set*{u>t}\in\GeometricProblemSolutions{\set*{f>t},\Lambda,\misurapeso}$ for $t>0$.
Since $\set*{f>t}\subset\supp f\subset B_R$ for $t>0$, by \cref{res:bounded_g_datum} we also get $\set*{u>t}\subset B_R$ for $t>0$, so that $\supp u\subset\overline{B_R}$.
Thus, by testing the minimality of~$u$ against  $v=0$, we get
\begin{equation*}
\kerTV{u}+\Lambda\|u-f\|_{\Lspace{1}(B_R,\,\misurapeso)}
\le
\Lambda\|f\|_{\Lspace{1}(B_R,\,\misurapeso)}.
\end{equation*} 
In addition, by \cref{res:lusin}, we have
\begin{equation*}
C_\kernel\kerTV{u}
\ge 
\phi_\kernel(2|h|,D)
\,
\|u(\cdot+h)-u\|_{\Lspace{1}(\Rdim)}
\end{equation*} 
for $h\in\Rdim$ with $|h|\le\frac D2$, where $C_\kernel>0$ depends on~$\kernel$ only.
On the other side,  we have
\begin{equation*}
\|u(\cdot+h)-u\|_{\Lspace{1}(\Rdim)}
=
2\,\|u\|_{\Lspace{1}(B_R)}\geq\frac{2}{\abovepeso}\,\|u\|_{\Lspace{1}(B_R,\,\misurapeso)}
\end{equation*}
for $h\in\Rdim$ with $|h|\ge2R$. 
Therefore, for $h\in\Rdim$ with $|h|\in\left[2R,\frac D2\right]$, we get that
\begin{equation*}
\frac{2\,\phi_\kernel(2|h|,D)}{\abovepeso\,C_\kernel}
\,
\|u\|_{\Lspace{1}(B_R,\misurapeso)}
+
\Lambda\|u-f\|_{\Lspace{1}(B_R,\misurapeso)}
\le
\Lambda\|f\|_{\Lspace{1}(B_R,\misurapeso)}
\end{equation*}
and thus
\begin{equation*}
\left(\frac{2\,\phi_\kernel(2|h|,D)}{\abovepeso\,C_\kernel}-\Lambda\right)
\,
\|u\|_{\Lspace{1}(B_R,\misurapeso)}
\le
\Lambda
\big(
\|f\|_{\Lspace{1}(B_R,\misurapeso)}
-
\|u-f\|_{\Lspace{1}(B_R,\misurapeso)}
-
\|u\|_{\Lspace{1}(B_R,\misurapeso)}
\big)
\le 0.
\end{equation*}
The conclusion hence follows by choosing 
\begin{equation}
\label{eq:low_constant}
\Lambda_{R,\kernel,\abovepeso}
=
\sup_{h\in B_{D/2}\setminus B_{2R}}
\frac{2\,\phi_\kernel(2|h|,D)}{\abovepeso\,C_\kernel}
=
\frac{2\,\phi_\kernel(4R,D)}{\abovepeso\,C_\kernel}
\in(0,+\infty)
\end{equation}
and the proof is complete.
\end{proof}

\begin{remark}[On the constant $\Lambda_{R,\kernel,\abovepeso}$]
\label{rem:constant_in_low_fidelity}
The proof of \cref{res:low_fidelity} differs from the one of~\cite{B22}*{Th.~4.18}, since we cannot directly rely on \cref{res:sobolev}, due to the implicit  function~$\beta_\kernel$.
Nonetheless, in the fractional case, the definition in~\eqref{eq:low_constant} gives 
\begin{equation*}
\Lambda_{R,\kernel_s,\abovepeso}
=
\frac{2}{\abovepeso\,C_\kernel}
\int_{\Rdim\setminus B_{4R}}|x|^{-\dimension-s}\,\de x
=
\frac{2\dimension|B_1|}{s4^s\,\abovepeso\,C_\kernel}\,\frac1{R^s}
\quad
\text{for}\ R>0,
\end{equation*}
(note that, in this case, \eqref{H:Doubling} holds with $D=+\infty$), which is the bound found in~\cite{B22}*{Th.~4.18} (up to multiplicative constants).
However, the strategy of proof of~\cite{B22}*{Th.~4.18} can be adapted to \cref{res:low_fidelity} via \cref{res:sobolev_finite_supp} provided that  \eqref{H:Decreasing_q}  holds with~$q\in[n,n+1)$, with no need of assuming~\eqref{H:Doubling} in this case.
\end{remark}

\subsection{\texorpdfstring{$(\kernel,\misurapeso)$}{(K,ν)}-Cheeger sets and fidelity}

The following result refines \cref{res:uniqueness_convex} if the datum $E$ is bounded and convex (and, possibly, calibrable).

\begin{theorem}[Relation with fidelity]
\label{res:relation_with_fidelity}
Let 
\eqref{H:Radial},
\eqref{H:Not_too_singular},
\eqref{H:Not_integrable},
\eqref{H:Decreasing_q}
with $q\in[1,\dimension+1)$
and 
\eqref{H:Doubling}
with $D=+\infty$
be in force.
If $E\subset\Rdim$ is bounded and convex, with non-empty interior, then:
\begin{enumerate}[label=(\roman*)]

\item\label{item:lambda_zero_cheeger}
$\Cheegkerpeso(E)
=
\sup\set*{\Lambda>0
:
\emptyset\in\GeometricProblemSolutions{E,\Lambda,\misurapeso}
}\in(0,+\infty)$;

\item\label{item:sotto_cheeger} 
if $\Lambda<\Cheegkerpeso(E)$, then $\GeometricProblemSolutions{E,\Lambda,\misurapeso}=\set*{\emptyset}$;

\item\label{item:uguale_cheeger} 
if $\Lambda=\Cheegkerpeso(E)$, then 
$\GeometricProblemSolutions{E,\Lambda,\misurapeso}=\CheegerkerpesoClass(E)\cup\set*{\emptyset}$ and so
\begin{equation}
\label{eq:uguale_cheeger}
\begin{split}
&\FunctionalProblemSolutions{\chi_{E},\Cheegkerpeso(E),\misurapeso}
\\
&\qquad=\set*{
u\in\BVkerspace(\Rdim;[0,1]):\{u>t\} \in \CheegerkerpesoClass(E)\cup\set*{\emptyset}\ 
\text{for all}\ 
t\in [0,1)
};
\end{split}
\end{equation}
	
\item\label{item:sopra_Cheeger} 
if $\Lambda>\Cheegkerpeso(E)$ and the set~$E$ is $(\kernel,\misurapeso)$-calibrable, then $\GeometricProblemSolutions{E,\Lambda,\misurapeso}=\set*{E}$.

\end{enumerate}
\end{theorem}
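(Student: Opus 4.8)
The plan is to reduce the whole statement to an explicit computation of the geometric energy on subsets of $E$, once it is known that every solution of~\eqref{Pb:GeometricProblem} with datum $E$ lies in $E$. As preliminaries, note that since $E$ is bounded, convex and has non-empty interior, $\chi_E\in BV(\Rdim)$ with $|E|\in(0,+\infty)$, so $\Pker(E)<+\infty$ by~\eqref{eq:valerio_set} and $\misurapeso(E)\in(0,+\infty)$ by~\eqref{eq:def_misure_ammissibili}; any ball $B\Subset E$ has $\Pker(B)<+\infty$ and $|B|\in(0,+\infty)$, so $E$ is $\kernel$-admissible and, testing the Cheeger quotient against $B$, $\Cheegkerpeso(E)<+\infty$. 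By \cref{res:existence_Cheeger}, $\Cheegkerpeso(E)>0$ and $\CheegerkerpesoClass(E)\ne\emptyset$, and by \cref{res:existence_geom}, $\GeometricProblemSolutions{E,\Lambda,\misurapeso}\ne\emptyset$. If now $U\in\GeometricProblemSolutions{E,\Lambda,\misurapeso}$, then $\misurapeso(E\bigtriangleup U)<+\infty$ forces $|U\setminus E|<+\infty$, hence $|U|\le|E|+|U\setminus E|<+\infty$; since \eqref{H:Decreasing_q} holds in particular with $q=1$, \cref{res:intersection_convex} gives $\Pker(U\cap E)\le\Pker(U)$, so
\[
\EnergyGeometricKer{(U;E,\Lambda,\misurapeso)}-\EnergyGeometricKer{(U\cap E;E,\Lambda,\misurapeso)}=\big(\Pker(U)-\Pker(U\cap E)\big)+\Lambda\,\misurapeso(U\setminus E)\ge0,
\]
and minimality of $U$ makes this an equality, so $\misurapeso(U\setminus E)=0$, i.e.\ $U\subset E$ up to $\lebdim$-negligible sets. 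Consequently, for every solution $U$ one has $\EnergyGeometricKer{(U;E,\Lambda,\misurapeso)}=\Pker(U)+\Lambda\big(\misurapeso(E)-\misurapeso(U)\big)$, the identity valid for all $U\subset E$.

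Next I would prove the pivotal fact that $\emptyset\in\GeometricProblemSolutions{E,\Lambda,\misurapeso}$ if and only if $\Lambda\le\Cheegkerpeso(E)$: if $\Lambda\le\Cheegkerpeso(E)$ and $U\subset E$ with $|U|>0$, then $\Pker(U)\ge\Cheegkerpeso(E)\misurapeso(U)\ge\Lambda\misurapeso(U)$ gives $\EnergyGeometricKer{(U;E,\Lambda,\misurapeso)}\ge\Lambda\misurapeso(E)=\EnergyGeometricKer{(\emptyset;E,\Lambda,\misurapeso)}$, and a general competitor reduces to this case by passing to $V\cap E$ as above; conversely, if $\Lambda>\Cheegkerpeso(E)$, any $F\in\CheegerkerpesoClass(E)$ satisfies $\EnergyGeometricKer{(F;E,\Lambda,\misurapeso)}=\Lambda\misurapeso(E)-(\Lambda-\Cheegkerpeso(E))\misurapeso(F)<\EnergyGeometricKer{(\emptyset;E,\Lambda,\misurapeso)}$, so $\emptyset$ is not a solution. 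Part~\ref{item:lambda_zero_cheeger}, together with $\Cheegkerpeso(E)\in(0,+\infty)$, follows at once. For part~\ref{item:sotto_cheeger}, if $\Lambda<\Cheegkerpeso(E)$ and $U\in\GeometricProblemSolutions{E,\Lambda,\misurapeso}$ had $|U|>0$, then, $\emptyset$ being a solution too, $\EnergyGeometricKer{(U;E,\Lambda,\misurapeso)}=\EnergyGeometricKer{(\emptyset;E,\Lambda,\misurapeso)}$ yields $\Pker(U)/\misurapeso(U)=\Lambda<\Cheegkerpeso(E)$, impossible. For the geometric half of part~\ref{item:uguale_cheeger}, when $\Lambda=\Cheegkerpeso(E)$ the same identity shows that $\emptyset$ and every $F\in\CheegerkerpesoClass(E)$ achieve the minimum $\Lambda\misurapeso(E)$, whereas a solution $U$ with $|U|>0$ must satisfy $\Pker(U)=\Cheegkerpeso(E)\misurapeso(U)$, i.e.\ $U\in\CheegerkerpesoClass(E)$. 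For part~\ref{item:sopra_Cheeger}, calibrability means $\Pker(E)=\Cheegkerpeso(E)\misurapeso(E)$, so $E$ is a competitor with $\EnergyGeometricKer{(E;E,\Lambda,\misurapeso)}=\Cheegkerpeso(E)\misurapeso(E)$, and for every $U\subset E$,
\[
\EnergyGeometricKer{(U;E,\Lambda,\misurapeso)}=\Pker(U)+\Lambda\,\misurapeso(E\setminus U)\ge\Cheegkerpeso(E)\,\misurapeso(E)+(\Lambda-\Cheegkerpeso(E))\,\misurapeso(E\setminus U)>\EnergyGeometricKer{(E;E,\Lambda,\misurapeso)}
\]
unless $|E\setminus U|=0$; combined with the containment in $E$, this gives $\GeometricProblemSolutions{E,\Lambda,\misurapeso}=\set*{E}$.

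Finally, for the functional identity~\eqref{eq:uguale_cheeger} I would use the correspondence between~\eqref{Pb:FunctionalProblem} and~\eqref{Pb:GeometricProblem}. The inclusion ``$\subseteq$'' follows from \cref{cor:BINequivalenceFuncGeomPBs}\ref{item:BINfromPtoGP}, since any $u\in\FunctionalProblemSolutions{\chi_E,\Cheegkerpeso(E),\misurapeso}$ lies in $\BVkerspace(\Rdim)$, takes values in $[0,1]$ a.e., and has $\set*{u>t}\in\GeometricProblemSolutions{E,\Lambda,\misurapeso}=\CheegerkerpesoClass(E)\cup\set*{\emptyset}$ for $t\in[0,1)$ by the geometric part. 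For ``$\supseteq$'', given $u\in\BVkerspace(\Rdim;[0,1])$ with $\set*{u>t}\in\CheegerkerpesoClass(E)\cup\set*{\emptyset}$ for all $t\in[0,1)$, one has $u=0$ a.e.\ outside $E$, $u\in\Lspace{1}(\Rdim)$, and by \cref{res:coarea} $\kerTV{u}=\int_0^1\Pker(\set*{u>t})\,\de t\le\Cheegkerpeso(E)\,\misurapeso(E)<+\infty$, so $u\in\BVkerspace(\Rdim)$ with finite functional energy; since $\set*{u>t}\in\GeometricProblemSolutions{\set*{\chi_E>t},\Lambda,\misurapeso}$ for $t\in[0,1)$, while for $t<0$ both $\set*{u>t}$ and $\set*{\chi_E>t}$ coincide with $\Rdim$ and for $t\ge1$ both are empty (and in either case the set is trivially the geometric minimizer of the corresponding problem), \cref{res:basic_props_GSol}\ref{item:fromGPtoP} gives $u\in\FunctionalProblemSolutions{\chi_E,\Cheegkerpeso(E),\misurapeso}$. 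The main obstacle is precisely the containment ``every solution of~\eqref{Pb:GeometricProblem} with datum $E$ lies in $E$'', which makes the explicit energy identity applicable to \emph{every} solution (not only to competitors) and rests on the convexity of $E$ through \cref{res:intersection_convex} and on the strict positivity of the density of $\misurapeso$; the remaining delicate point is the bookkeeping at the threshold levels $t\in\set*{0,1}$ when transferring information between~\eqref{Pb:FunctionalProblem} and~\eqref{Pb:GeometricProblem}, handled by \cref{cor:BINequivalenceFuncGeomPBs} and \cref{res:basic_props_GSol}.
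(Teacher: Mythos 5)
Your proof is correct and follows the same overall strategy as the paper: both hinge on the reduction to competitors contained in $E$ via \cref{res:intersection_convex} (convexity of $E$ together with the strict positivity of the weight), after which the geometric energy of $U\subset E$ becomes $\Pker(U)+\Lambda\big(\misurapeso(E)-\misurapeso(U)\big)$ and every claim reduces to comparing the quotient $\Pker(U)/\misurapeso(U)$ with $\Lambda$; the paper simply defers these computations to \cite{B22}*{Ths.~4.21 and~4.22}, whereas you carry them out in full, including the level-set bookkeeping for~\eqref{eq:uguale_cheeger} via \cref{cor:BINequivalenceFuncGeomPBs} and \cref{res:basic_props_GSol}\ref{item:fromGPtoP}, which is handled correctly (the cases $t<0$ and $t\ge1$ are trivial since $\Rdim$ and $\emptyset$ have zero $\kernel$-perimeter). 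The one genuinely different step is how you obtain the finiteness of the threshold $\sup\set{\Lambda>0:\emptyset\in\GeometricProblemSolutions{E,\Lambda,\misurapeso}}$: the paper gets it from \cref{cor:BINequivalenceFuncGeomPBs} and the low-fidelity \cref{res:low_fidelity}, which is precisely where \eqref{H:Doubling} with $D=+\infty$ enters (cf.\ \cref{rem:small-scale_relation_fidelity_Cheeger} and \cref{rem:no_doubling_in_relation_fidelity}); you instead note that $\Cheegkerpeso(E)<+\infty$ by testing the Cheeger quotient on a ball and that, for $\Lambda>\Cheegkerpeso(E)$, any $(\kernel,\misurapeso)$-Cheeger set of $E$ (which exists by \cref{res:existence_Cheeger}) strictly beats $\emptyset$. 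Your route is more economical: it never uses \eqref{H:Doubling}, so it shows that this hypothesis is superfluous for the whole theorem for every $q\in[1,\dimension+1)$, which improves on \cref{rem:no_doubling_in_relation_fidelity} (where the doubling assumption is only dropped under the stronger requirement $q\in[\dimension,\dimension+1)$).
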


\begin{proof}
Since $E$ is bounded and convex with non-empty interior, $E$ is $\kernel$-admissible by \eqref{H:Not_too_singular}, so $\Cheegkerpeso(E)\in(0,+\infty)$.
Since $E$ is convex, $\Pker(U\cap E)
\le
\Pker(U)
$ for $U\in\measurablesets$ with $|U|<+\infty$ by \cref{res:intersection_convex}.
Moreover, $\misurapeso\big((U\cap E)\bigtriangleup E\big)
\le 
\misurapeso\tonde{U\bigtriangleup E}$
for $U\in\measurablesets$.
Hence~\eqref{Pb:GeometricProblem} is equivalent to the following minimization problem
\begin{equation*}
\inf\big\{\Pker(U)-\Lambda\misurapeso(U) 
:
U\in\measurablesets,\ U\subset E
\big\}.
\end{equation*}
Let us set
$\Lambda_0(E,\misurapeso)
=
\sup\set*{\Lambda>0
:
\emptyset\in\GeometricProblemSolutions{E,\Lambda,\misurapeso}
}$.
Since $E$ is bounded, $\Lambda_0(E,\misurapeso)\in[0,+\infty)$ by \cref{cor:BINequivalenceFuncGeomPBs}\ref{item:BINfromGPtoP} and \cref{res:low_fidelity}. 
As in the proof of~\cite{B22}*{Th.~4.21}, we get $\Lambda_0(E,\misurapeso)=\Cheegkerpeso(E)$, proving~\ref{item:lambda_zero_cheeger}.
Points \ref{item:sotto_cheeger}, \ref{item:uguale_cheeger}, \ref{item:sopra_Cheeger} can be proved as in~\cite{B22}*{Th.~4.21 and Th~4.22}, so we leave the details to the reader.
\end{proof}

\begin{remark}[\cref{res:relation_with_fidelity} at small scales]
\label{rem:small-scale_relation_fidelity_Cheeger}
If we only require that \eqref{H:Doubling} holds with $D<+\infty$ in \cref{res:relation_with_fidelity}, then we have to assume that $\closure{E}\subset B_{D/4}$ in order to ensure that $\Lambda_0(E,\misurapeso)<+\infty$ (recall \cref{res:low_fidelity}).
This yields a \emph{small-scale version} of \cref{res:relation_with_fidelity}.
\end{remark}

\begin{remark}[\cref{res:relation_with_fidelity} under~\eqref{H:Decreasing_q} for $q\in[\dimension,\dimension+1)$]
\label{rem:no_doubling_in_relation_fidelity}
By \cref{rem:constant_in_low_fidelity}, we can drop~\eqref{H:Doubling} in \cref{res:relation_with_fidelity} (and in \cref{rem:small-scale_relation_fidelity_Cheeger}) if~\eqref{H:Decreasing_q} holds with $q\in[n,n+1)$.
\end{remark}

The following result is a consequence of \cref{res:relation_with_fidelity}, \cref{rem:no_doubling_in_relation_fidelity} and \cref{res:calibrable}, and refines \cref{res:high_fidelity_for_balls} and \cref{res:low_fidelity} (applied to $f=\chi_B$) in the case $\misurapeso=\lebdim$ (also improving the constants given by \cref{res:high_fidelity_for_sets} and \cref{res:high_fidelity_for_functions}).

\begin{corollary}[Fidelity for balls]
\label{res:exact_fidelity_ball}
Let
\eqref{H:Radial},
\eqref{H:Not_too_singular},
\eqref{H:Not_integrable}
and
\eqref{H:Decreasing_dim_strinct}
be in force.
If $B$ is a (non-trivial) ball, then
\begin{equation*}
\GeometricProblemSolutions{B,\Lambda}
=
\begin{cases}
\set*{\emptyset}
&
\text{for}\ \Lambda<\frac{\Pker(B)}{|B|},
\\[3mm]
\set*{\emptyset,B}
&
\text{for}\
\Lambda=\frac{\Pker(B)}{|B|},
\\[3mm]
\set*{B}
&
\text{for}\ \Lambda>\frac{\Pker(B)}{|B|}.
\end{cases}
\end{equation*}
\end{corollary}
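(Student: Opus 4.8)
The plan is to obtain the statement as a direct consequence of \cref{res:relation_with_fidelity}, \cref{rem:no_doubling_in_relation_fidelity} and \cref{res:calibrable}; essentially the only work is to verify that the hypotheses of those results hold under the present assumptions and to identify the value $\Cheegker(B)=\Pker(B)/|B|$.

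First I would note that \eqref{H:Decreasing_dim_strinct} implies \eqref{H:Decreasing_q} with $q=\dimension$, hence with every $q\in[1,\dimension+1)$ and, in particular, with some $q\in[\dimension,\dimension+1)$. Together with the standing assumptions \eqref{H:Radial}, \eqref{H:Not_too_singular} and \eqref{H:Not_integrable}, this puts us in the setting of \cref{res:calibrable} and, by \cref{rem:no_doubling_in_relation_fidelity} (which removes the need for \eqref{H:Doubling} precisely when $q\in[\dimension,\dimension+1)$), also in the setting of \cref{res:relation_with_fidelity}. A non-trivial ball $B$ is bounded and convex with non-empty interior, and is $\kernel$-admissible by \eqref{H:Not_too_singular} through \eqref{eq:valerio_set}, so all structural requirements are met.

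Then I would invoke \cref{res:calibrable}: since \eqref{H:Decreasing_dim_strinct} is in force, $B$ is $\kernel$-calibrable and $\CheegerkerClass(B)=\set*{B}$. In particular $B\in\CheegerkerClass(B)$ gives $\Cheegker(B)=\Pker(B)/|B|$, which identifies the threshold appearing in the statement.

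Finally, with $E=B$ and $\misurapeso=\lebdim$, I would read off the three regimes from \cref{res:relation_with_fidelity}: part~\ref{item:lambda_zero_cheeger} gives $\Cheegker(B)\in(0,+\infty)$; part~\ref{item:sotto_cheeger} gives $\GeometricProblemSolutions{B,\Lambda}=\set*{\emptyset}$ for $\Lambda<\Pker(B)/|B|$; part~\ref{item:uguale_cheeger}, combined with $\CheegerkerClass(B)=\set*{B}$, gives $\GeometricProblemSolutions{B,\Lambda}=\set*{\emptyset,B}$ for $\Lambda=\Pker(B)/|B|$; and part~\ref{item:sopra_Cheeger}, using the calibrability of $B$, gives $\GeometricProblemSolutions{B,\Lambda}=\set*{B}$ for $\Lambda>\Pker(B)/|B|$. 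Collecting the three cases yields the asserted formula. I do not anticipate any genuine obstacle: the substantive content lives in \cref{res:relation_with_fidelity} and \cref{res:calibrable}, and what remains is hypothesis bookkeeping together with the immediate identification $\Cheegker(B)=\Pker(B)/|B|$ coming from calibrability.
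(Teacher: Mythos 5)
Your proposal is correct and follows exactly the route the paper intends: the corollary is stated there as a direct consequence of \cref{res:relation_with_fidelity}, \cref{rem:no_doubling_in_relation_fidelity} and \cref{res:calibrable}, and your hypothesis bookkeeping together with the identification $\Cheegker(B)=\Pker(B)/|B|$ coming from calibrability and $\CheegerkerClass(B)=\set*{B}$ is all that is required. The only (harmless) slip is the claim that \eqref{H:Decreasing_dim_strinct} yields \eqref{H:Decreasing_q} for every $q\in[1,\dimension+1)$ --- under \eqref{H:Radial} it yields it only for $q\in[0,\dimension]$ --- but since you ultimately invoke just $q=\dimension$ (which lies in both $[1,\dimension+1)$ and $[\dimension,\dimension+1)$ as needed), nothing is affected.
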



\begin{bibdiv}
\begin{biblist}

\bib{AL89}{article}{
   author={Almgren, Frederick J., Jr.},
   author={Lieb, Elliott H.},
   title={Symmetric decreasing rearrangement is sometimes continuous},
   journal={J. Amer. Math. Soc.},
   volume={2},
   date={1989},
   number={4},
   pages={683--773},
}

\bib{ADJS22}{article}{
   author={Antil, Harbir},
   author={D\'iaz, Hugo},
   author={Jing, Tian},
   author={Schikorra, Armin},
   title={Nonlocal bounded variations with applications},
   date={2022},
   status={preprint},
   eprint={https://arxiv.org/abs/2208.11746},
}

\bib{BP19}{article}{
   author={Berendsen, Judith},
   author={Pagliari, Valerio},
   title={On the asymptotic behaviour of nonlocal perimeters},
   journal={ESAIM Control Optim. Calc. Var.},
   volume={25},
   date={2019},
   pages={Paper No. 48, 27},
}

\bib{B22}{article}{
   author={Bessas, Konstantinos},
   title={Fractional total variation denoising model with $L^1$ fidelity},
   journal={Nonlinear Anal.},
   volume={222},
   date={2022},
   pages={Paper No. 112926},
}

\bib{BLP14}{article}{
   author={Brasco, L.},
   author={Lindgren, E.},
   author={Parini, E.},
   title={The fractional Cheeger problem},
   journal={Interfaces Free Bound.},
   volume={16},
   date={2014},
   number={3},
   pages={419--458},
}

\bib{B02}{article}{
   author={Brezis, Haim},
   title={How to recognize constant functions. A connection with Sobolev
   spaces},
   journal={Uspekhi Mat. Nauk},
   volume={57},
   date={2002},
   number={4(346)},
   pages={59--74},
   translation={
      journal={Russian Math. Surveys},
      volume={57},
      date={2002},
      number={4},
      pages={693--708},
      issn={0036-0279},
   },
}

\bib{B11}{book}{
   author={Brezis, Haim},
   title={Functional analysis, Sobolev spaces and partial differential
   equations},
   series={Universitext},
   publisher={Springer, New York},
   date={2011},
}

\bib{BCCS22}{article}{
   author={Bru\`e, Elia},
   author={Calzi, Mattia},
   author={Comi, Giovanni E.},
   author={Stefani, Giorgio},
   title={A distributional approach to fractional Sobolev spaces and fractional variation: asymptotics II},
   journal={C. R. Math. Acad. Sci. Paris},
   volume={360},
   date={2022},
   pages={589--626},
}

\bib{BN20}{article}{
	author={Bru\`e, Elia},
	author={Nguyen, Quoc-Hung},
	title={On the Sobolev space of functions with derivative of logarithmic
		order},
	journal={Adv. Nonlinear Anal.},
	volume={9},
	date={2020},
	number={1},
	pages={836--849},
}

\bib{BN21}{article}{
   author={Bru\`e, Elia},
   author={Nguyen, Quoc-Hung},
   title={Sharp regularity estimates for solutions of the continuity
   equation drifted by Sobolev vector fields},
   journal={Anal. PDE},
   volume={14},
   date={2021},
   number={8},
   pages={2539--2559},
}

\bib{BCM10}{article}{
   author={Buades, A.},
   author={Coll, B.},
   author={Morel, J. M.},
   title={Image denoising methods. A new nonlocal principle},
   journal={SIAM Rev.},
   volume={52},
   date={2010},
   number={1},
   pages={113--147},
}

\bib{B96}{article}{
   author={Burchard, Almut},
   title={Cases of equality in the Riesz rearrangement inequality},
   journal={Ann. of Math. (2)},
   volume={143},
   date={1996},
   number={3},
   pages={499--527},
}

\bib{C20}{article}{
   author={Cabr\'{e}, Xavier},
   title={Calibrations and null-Lagrangians for nonlocal perimeters and an
   application to the viscosity theory},
   journal={Ann. Mat. Pura Appl. (4)},
   volume={199},
   date={2020},
   number={5},
   pages={1979--1995},
}

\bib{CGO08}{article}{
   author={Caetano, Ant\'{o}nio M.},
   author={Gogatishvili, Amiran},
   author={Opic, Bohumir},
   title={Sharp embeddings of Besov spaces involving only logarithmic
   smoothness},
   journal={J. Approx. Theory},
   volume={152},
   date={2008},
   number={2},
   pages={188--214},
}

\bib{CC07}{article}{
   author={Carlier, Guillaume},
   author={Comte, Myriam},
   title={On a weighted total variation minimization problem},
   journal={J. Funct. Anal.},
   volume={250},
   date={2007},
   number={1},
   pages={214--226},
}

\bib{CL19}{article}{
   author={Caroccia, Marco},
   author={Littig, Samuel},
   title={The Cheeger-$N$-problem in terms of BV-functions},
   journal={J. Convex Anal.},
   volume={26},
   date={2019},
   number={1},
   pages={33--47},
}

\bib{CN18}{article}{
	author={Cesaroni, Annalisa},
	author={Novaga, Matteo},
	title={The isoperimetric problem for nonlocal perimeters},
	journal={Discrete Contin. Dyn. Syst. Ser. S},
	volume={11},
	date={2018},
	number={3},
	pages={425--440},
}

\bib{CN22}{article}{
   author={Cesaroni, Annalisa},
   author={Novaga, Matteo},
   title={$K$-mean convex and $K$-outward minimizing sets},
   journal={Interfaces Free Bound.},
   volume={24},
   date={2022},
   number={1},
   pages={35--61},
}

\bib{CMP15}{article}{
   author={Chambolle, Antonin},
   author={Morini, Massimiliano},
   author={Ponsiglione, Marcello},
   title={Nonlocal curvature flows},
   journal={Arch. Ration. Mech. Anal.},
   volume={218},
   date={2015},
   number={3},
   pages={1263--1329},
}

\bib{CE05}{article}{
	author={Chan, Tony F.},
	author={Esedo\={g}lu, Selim},
	title={Aspects of total variation regularized $L^1$ function approximation},
	journal={SIAM J. Appl. Math.},
	volume={65},
	date={2005},
	number={5},
	pages={1817--1837},
}

\bib{CSV19}{article}{
   author={Cinti, Eleonora},
   author={Serra, Joaquim},
   author={Valdinoci, Enrico},
   title={Quantitative flatness results and $BV$-estimates for stable
   nonlocal minimal surfaces},
   journal={J. Differential Geom.},
   volume={112},
   date={2019},
   number={3},
   pages={447--504},
}

\bib{CD15}{article}{
   author={Cobos, Fernando},
   author={Dom\'{\i}nguez, \'{O}scar},
   title={On Besov spaces of logarithmic smoothness and Lipschitz spaces},
   journal={J. Math. Anal. Appl.},
   volume={425},
   date={2015},
   number={1},
   pages={71--84},
}

\bib{CSS22}{article}{
   author={Comi, Giovanni E.},
   author={Spector, Daniel},
   author={Stefani, Giorgio},
   title={The fractional variation and the precise representative of $BV^{\alpha,p}$ functions},
   journal={Fract. Calc. Appl. Anal.},
   volume={25},
   date={2022},
   number={2},
   pages={520--558},
}

\bib{CS19}{article}{
   author={Comi, Giovanni E.},
   author={Stefani, Giorgio},
   title={A distributional approach to fractional Sobolev spaces and fractional variation: existence of blow-up},
   journal={J. Funct. Anal.},
   volume={277},
   date={2019},
   number={10},
   pages={3373--3435},
}

\bib{CS22-l}{article}{
   author={Comi, Giovanni E.},
   author={Stefani, Giorgio},
   title={Leibniz rules and Gauss-Green formulas in distributional fractional spaces},
   journal={J. Math. Anal. Appl.},
   volume={514},
   date={2022},
   number={2},
   pages={Paper No. 126312, 41},
}

\bib{CS22-a}{article}{
   author={Comi, Giovanni E.},
   author={Stefani, Giorgio},
   title={A distributional approach to fractional Sobolev spaces and fractional variation: Asymptotics I},
   journal={Rev. Mat. Complut.},
   volume={36},
   pages={491–569},
   date={2023},
}

\bib{CS22-f}{article}{
 author={Comi, Giovanni E.},
   author={Stefani, Giorgio},
   title={Failure of the local chain rule for the fractional variation},
   journal={Port. Math.},
   volume={80},
   date={2023},
   number={1-2},
   pages={1--25},
}

\bib{CS23-dm}{article}{
   author={Comi, Giovanni E.},
   author={Stefani, Giorgio},
   title={Fractional divergence-measure fields, Leibniz rule and Gauss–Green formula},
   journal={Boll. Unione Mat. Ital.},
   date={2023},
}

\bib{CS23-on}{article}{
   author={Comi, Giovanni E.},
   author={Stefani, Giorgio},
   title={On sets with finite distributional fractional perimeter},
   date={2023},
   note={Preprint, available at \href{https://arxiv.org/abs/2303.10989}{arXiv:2303.10989}},
}

\bib{CdP20}{article}{
   author={Correa, Ernesto},
   author={de Pablo, Arturo},
   title={Remarks on a nonlinear nonlocal operator in Orlicz spaces},
   journal={Adv. Nonlinear Anal.},
   volume={9},
   date={2020},
   number={1},
   pages={305--326},
}

\bib{DNP21}{article}{
   author={De Luca, Lucia},
   author={Novaga, Matteo},
   author={Ponsiglione, Marcello},
   title={The 0-fractional perimeter between fractional perimeters and Riesz
   potentials},
   journal={Ann. Sc. Norm. Super. Pisa Cl. Sci. (5)},
   volume={22},
   date={2021},
   number={4},
   pages={1559--1596},
}

\bib{DPV12}{article}{
   author={Di Nezza, Eleonora},
   author={Palatucci, Giampiero},
   author={Valdinoci, Enrico},
   title={Hitchhiker's guide to the fractional Sobolev spaces},
   journal={Bull. Sci. Math.},
   volume={136},
   date={2012},
   number={5},
   pages={521--573},
}

\bib{DGM04}{article}{
   author={Duzaar, Frank},
   author={Gastel, Andreas},
   author={Mingione, Giuseppe},
   title={Elliptic systems, singular sets and Dini continuity},
   journal={Comm. Partial Differential Equations},
   volume={29},
   date={2004},
   number={7-8},
   pages={1215--1240},
}

\bib{E90}{book}{
   author={Ekeland, Ivar},
   title={Convexity methods in Hamiltonian mechanics},
   series={Ergebnisse der Mathematik und ihrer Grenzgebiete (3) [Results in
   Mathematics and Related Areas (3)]},
   volume={19},
   publisher={Springer-Verlag, Berlin},
   date={1990},
}

\bib{FFMMM15}{article}{
	author={Figalli, A.},
	author={Fusco, N.},
	author={Maggi, F.},
	author={Millot, V.},
	author={Morini, M.},
	title={Isoperimetry and stability properties of balls with respect to
		nonlocal energies},
	journal={Comm. Math. Phys.},
	volume={336},
	date={2015},
	number={1},
	pages={441--507},
}

\bib{F-PhD}{article}{
   author={Foghem Gounoue, Guy Fabrice},
   title={$L^2$-Theory for Nonlocal Operators on Domains},
   date={2020},
   note={Ph.D.\ Thesis, Universität Bielefeld, available at \href{https://doi.org/10.4119/unibi/2946033}{10.4119/unibi/2946033}},
}

\bib{F21}{article}{
   author={Foghem Gounoue, Guy Fabrice},
   title={Nonlocal Gagliardo-Nirenberg-Sobolev type inequality},
   date={2021},
   note={Preprint, available at \href{https://arxiv.org/abs/2105.07989}{arXiv:2105.07989}},
}

\bib{F20}{article}{
  author={Foghem Gounoue, Guy Fabrice},
   title={A remake of Bourgain-Brezis-Mironescu characterization of Sobolev
   spaces},
   journal={Partial Differ. Equ. Appl.},
   volume={4},
   date={2023},
   number={2},
   pages={Paper No. 16, 36},
}

\bib{FKV20}{article}{
   author={Foghem Gounoue, Guy Fabrice},
   author={Kassmann, Moritz},
   author={Voigt, Paul},
   title={Mosco convergence of nonlocal to local quadratic forms},
   journal={Nonlinear Anal.},
   volume={193},
   date={2020},
   pages={111504, 22},
}

\bib{FPSS22}{article}{
   author={Franceschi, Valentina},
   author={Pinamonti, Andrea},
   author={Saracco, Giorgio},
   author={Stefani, Giorgio},
   title={The Cheeger problem in abstract measure spaces},
   date={2022},
   note={Preprint, available at \href{https://arxiv.org/abs/2207.00482}{arXiv:2207.00482}},
}

\bib{FS08}{article}{
   author={Frank, Rupert L.},
   author={Seiringer, Robert},
   title={Non-linear ground state representations and sharp Hardy
   inequalities},
   journal={J. Funct. Anal.},
   volume={255},
   date={2008},
   number={12},
   pages={3407--3430},
}

\bib{GO07}{article}{
   author={Gilboa, Guy},
   author={Osher, Stanley},
   title={Nonlocal linear image regularization and supervised segmentation},
   journal={Multiscale Model. Simul.},
   volume={6},
   date={2007},
   number={2},
   pages={595--630},
}

\bib{GO08}{article}{
   author={Gilboa, Guy},
   author={Osher, Stanley},
   title={Nonlocal operators with applications to image processing},
   journal={Multiscale Model. Simul.},
   volume={7},
   date={2008},
   number={3},
   pages={1005--1028},
}

\bib{G14-C}{book}{
   author={Grafakos, Loukas},
   title={Classical Fourier analysis},
   series={Graduate Texts in Mathematics},
   volume={249},
   edition={3},
   publisher={Springer, New York},
   date={2014},
   pages={xviii+638},
}

\bib{G06}{article}{
   author={Grieser, Daniel},
   title={The first eigenvalue of the Laplacian, isoperimetric constants, and the max flow min cut theorem},
   journal={Arch. Math. (Basel)},
   volume={87},
   date={2006},
   number={1},
   pages={75--85},
}

\bib{JW20}{article}{
   author={Jarohs, Sven},
   author={Weth, Tobias},
   title={Local compactness and nonvanishing for weakly singular nonlocal
   quadratic forms},
   journal={Nonlinear Anal.},
   volume={193},
   date={2020},
   pages={111431, 15},
}

\bib{KOJ05}{article}{
   author={Kindermann, Stefan},
   author={Osher, Stanley},
   author={Jones, Peter W.},
   title={Deblurring and denoising of images by nonlocal functionals},
   journal={Multiscale Model. Simul.},
   volume={4},
   date={2005},
   number={4},
   pages={1091--1115},
}

\bib{K21}{article}{
   author={Kreuml, Andreas},
   title={The anisotropic fractional isoperimetric problem with respect to unconditional unit balls},
   journal={Commun. Pure Appl. Anal.},
   volume={20},
   date={2021},
   number={2},
   pages={783--799},
}

\bib{L15}{article}{
   author={Leonardi, Gian Paolo},
   title={An overview on the Cheeger problem},
   conference={
      title={New trends in shape optimization},
   },
   book={
      series={Internat. Ser. Numer. Math.},
      volume={166},
      publisher={Birkh\"{a}user/Springer, Cham},
   },
   date={2015},
   pages={117--139},
}

\bib{LL01}{book}{
   author={Lieb, Elliott H.},
   author={Loss, Michael},
   title={Analysis},
   series={Graduate Studies in Mathematics},
   volume={14},
   edition={2},
   publisher={American Mathematical Society, Providence, RI},
   date={2001},
}

\bib{L14}{article}{
   author={Ludwig, Monika},
   title={Anisotropic fractional perimeters},
   journal={J. Differential Geom.},
   volume={96},
   date={2014},
   number={1},
   pages={77--93},
}

\bib{MRT19-a}{article}{
   author={Maz\'{o}n, Jos\'{e} M.},
   author={Rossi, Julio Daniel},
   author={Toledo, J. Juli\'{a}n},
   title={Nonlocal perimeter, curvature and minimal surfaces for measurable
   sets},
   journal={J. Anal. Math.},
   volume={138},
   date={2019},
   number={1},
   pages={235--279},
}

\bib{MRT19-b}{book}{
   author={Maz\'{o}n, Jos\'{e} M.},
   author={Rossi, Julio Daniel},
   author={Toledo, J.  Juli\'{a}n},
   title={Nonlocal perimeter, curvature and minimal surfaces for measurable
   sets},
   series={Frontiers in Mathematics},
   publisher={Birkh\"{a}user/Springer, Cham},
   date={2019},
}

\bib{MST22}{article}{
   author={Maz\'{o}n, Jos\'{e} M.},
   author={Solera, Marcos},
   author={Toledo, Juli\'{a}n},
   title={$({\rm BV},L^p)$-decomposition, $p=1,2$, of functions in metric
   random walk spaces},
   journal={Adv. Calc. Var.},
   volume={15},
   date={2022},
   number={3},
   pages={515--550},
}

\bib{NO21}{article}{
   author={Novaga, Matteo},
   author={Onoue, Fumihiko},
   title={Local Hölder regularity of minimizers for nonlocal denoising problems},
   date={2022},
   journal={Commun. Contemp. Math.}, 
}

\bib{P20}{article}{
	author={Pagliari, Valerio},
	title={Halfspaces minimise nonlocal perimeter: a proof {\it via}
		calibrations},
	journal={Ann. Mat. Pura Appl. (4)},
	volume={199},
	date={2020},
	number={4},
	pages={1685--1696},
}

\bib{P11}{article}{
   author={Parini, Enea},
   title={An introduction to the Cheeger problem},
   journal={Surv. Math. Appl.},
   volume={6},
   date={2011},
   pages={9--21},
}

\bib{ROF92}{article}{
   author={Rudin, Leonid I.},
   author={Osher, Stanley},
   author={Fatemi, Emad},
   title={Nonlinear total variation based noise removal algorithms},
   journal={Phys. D},
   volume={60},
   date={1992},
   number={1-4},
   pages={259--268},
}

\bib{S18}{article}{
   author={Stefani, Giorgio},
   title={On the monotonicity of perimeter of convex bodies},
   journal={J. Convex Anal.},
   volume={25},
   date={2018},
   number={1},
   pages={93--102},
}

\bib{YGO07}{article}{
   author={Yin, Wotao},
   author={Goldfarb, Donald},
   author={Osher, Stanley},
   title={The total variation regularized $L^1$ model for multiscale
   decomposition},
   journal={Multiscale Model. Simul.},
   volume={6},
   date={2007},
   number={1},
   pages={190--211},
}

\end{biblist}
\end{bibdiv}

\end{document}